\theoremstyle{plain}
\newtheorem{question}{Question}
\newtheorem{theorem}{Theorem}
\newtheorem{lemma}{Lemma}
\newtheorem{proposition}{Proposition}
\newtheorem{corollary}{Corollary}
\theoremstyle{definition}
\newtheorem{example}{Example}
\newtheorem{remark}{Remark}
\newtheorem{definition}{Definition}
\newtheorem{claim}{Claim}
\numberwithin{equation}{section}
\newcommand{\cg}{\ensuremath{\mathfrak{g}}\xspace}
\newcommand{\ccG}{\ensuremath{\mathfrak{G}}\xspace}
\newcommand{\cm}{\ensuremath{\mathfrak{m}}\xspace}
\newcommand{\cM}{\ensuremath{\mathfrak{M}}\xspace}
\newcommand{\SK}{\ensuremath{/\hspace*{-0.1cm}/}\xspace}
\newcommand{\TC}{\mathrm{TC}}
\newcommand{\poly}{\ensuremath{{\rm poly}}}
\newcommand{\Max}{{\rm Max}}
\DeclareMathOperator{\vcd}{VC-dim}
\definecolor{color1}{RGB}{255,128,0}
\newcommand{\cH}{\ensuremath{\mathcal{H}}\xspace}
\newcommand{\cK}{\ensuremath{\mathcal{K}}\xspace}
\newcommand{\cS}{\ensuremath{\mathcal{S}}\xspace}
\DeclareMathOperator{\Imp}{\iota}
\DeclareMathOperator{\Proj}{\pi}
\DeclareMathOperator{\conv}{\frak c}
\DeclareMathOperator{\C}{\frak C}
\DeclareMathOperator{\Lk}{Lk}
\newtheorem{case-int}{Case}
\newtheorem{case-pc}{Case}
\newtheorem{case-x|y}{Case}
\newtheorem{case-x|k}{Case}
\newtheorem{case-k|x}{Case}
\newtheorem{case-ext}{Case}
\newtheorem{case-Wa}{Case}
\newtheorem{case-CG}{Case}
\newtheorem{case-A5}{Case}
\newtheorem{subcase-int}{Subcase}
\newtheorem{subcase-pc}{Subase}
\newtheorem{subcase-x|y}{Subcase}
\newtheorem{subcase-x|k}{Subcase}
\newtheorem{subcase-ext}{Subcase}
\newtheorem{subsubcase-x|k}{Subsubcase}
\newtheorem{subsubcase-k|x}{Subsubcase}
\tikzset{vertex/.style={
  fill=black,
  draw=black,
  circle,
  inner sep=1pt,
  minimum size = 3pt
  }
}
\begin{document}
	
	\centerline{\bf\Large Separation axiom $S_3$ for geodesic convexity in graphs}

	\bigskip
	\centerline{\sc \large Victor Chepoi}
	
	\bigskip
	\centerline{LIS, Aix-Marseille Universit\'e, Facult\'e des Sciences de Luminy,} 
	\centerline{F-13288 Marseille Cedex 9,
	France}
	\centerline{ {\sf victor.chepoi@lis-lab.fr} }

%

\medskip\noindent
{\bf Abstract.}  {\footnotesize Semispaces of a convexity space $(X,\C)$ are maximal convex sets  missing a point. 
The separation axiom $S_3$ asserts that any point $x_0\in X$ and any convex set $A$ not containing $x_0$
can be separated by complementary halfspaces (convex sets with convex complements) or, equivalently, that all semispaces are halfspaces. 
The characterization of semispaces and the  $S_3$-axiom in linear spaces are classical results in convexity.
In this paper, we study $S_3$ for geodesic convexity in graphs and the structure of semispaces in $S_3$-graphs.
We characterize $S_3$-graphs and their semispaces in terms of separation  by halfspaces of vertices $x_0$ and special sets, called maximal $x_0$-proximal sets and in terms of convexity of their
mutual shadows $x_0/K$ and $K/x_0$.
In $S_3$-graphs $G$ satisfying the triangle condition (TC),
 maximal proximal sets are the pre-maximal cliques of $G$ (i.e., cliques $K$ such that $K\cup\{ x_0\}$ are maximal cliques). This allows to characterize  the $S_3$-graphs satisfying (TC) in a structural way and to enumerate their semispaces efficiently. 
In case of meshed graphs (an important subclass of graphs satisfying (TC)), the $S_3$-graphs have been characterized by excluding five forbidden subgraphs.
On the way of proving this result, we also establish some properties of  meshed graphs, which maybe of independent interest. In particular,
we show that any connected, locally-convex set of a meshed graph is convex. We also provide several examples of $S_3$-graphs, including  the basis graphs of matroids.
Finally, we consider the (NP-complete) halfspace separation problem, describe two methods of its solution, and apply them to particular classes of graphs and graph-convexities.
}

\tableofcontents

\bigskip\bigskip
\section{Introduction}\label{s:introduction}

\subsection*{History of the separation axioms} Halfspaces  and semispaces  are two relevant types of convex sets in linear spaces. Later, they have been
generalized and  studied in the theory of abstract convexity. A \emph{halfspace} 
is a convex set with a convex complement and a \emph{semispace} (called also a copoint  and a hypercone)
is a maximal by inclusion convex set missing a point.  Halfspaces 
in linear spaces
arise in separation theorems, which are fundamental mathematical results with numerous applications, in particular, in optimization and machine learning.
One of the first separation results in ${\mathbb R}^d$ is Farkas’s lemma \cite{Fa}, which is
motivated by separating a point from a convex polyhedral cone and is at the heart of linear optimization. Extensions are the well-known hyperplane
separation theorem by Minkowski \cite{Mi}, which states
that disjoint convex sets are always separable by hyperplanes, and the Hahn-Banach theorem \cite{Ba,Hahn} in topology.
Kakutani \cite{Ka} and Tukey \cite{Tu} proved the following separation theorem in linear spaces $L$: if $A$ and $B$ are disjoint convex sets, then there exists a halfspace
$H$ such that $A\subseteq H$ and $B\subseteq L\setminus H$ (Stone \cite{St} proved a similar result for ideals in distributive lattices). For a presentation of various separation theorems
in linear spaces, see the books by K\"othe  \cite{Ko} and Valentine \cite{Va}. 
Semispaces have been independently introduced by Hammer \cite{Ha}, K\"othe \cite{Ko}, and Motzkin \cite{Mo} (for 3-dimensional Euclidean space) and their initial motivations were again the separation theorems.
On the other hand, the semispaces form the minimal intersection base for convex sets, i.e., any convex set
is the intersection of semispaces and any family of convex sets satisfying this intersection property contains all semispaces. Semispaces also correspond to meet-irreducible elements of the lattice of
convex sets.
As intersection bases of a convexity/closure space or meet-irreducibles of a lattice,  semispaces have found applications in artificial intelligence \cite{Kh} and lattice theory \cite{DaPr}.  
Hammer \cite{Ha}, Klee \cite{Klee}, and  K\"othe \cite{Ko} presented several characterizations of semispaces in linear spaces. By these results, the semispaces at $x_0$ are the maximal
convex cones which exclude their vertex $x_0$ (which justifies the term ``hypercone'' used in \cite{Ko}). 
More recently, Edelman and Jamison \cite{EdJa} and Jamison \cite{Ja,Ja2,Ja_hyper,Ja3} used the term ``copoint'' (originating from matroid theory) to designate semispaces; this is the accepted term now.
Since  we also deal with halfspaces,  we prefer to use the original term ``semispace'', used in  \cite{Ha} and  \cite{Klee}.
The period when most of previous papers on halfspaces and semispaces 
appeared is essentially the same as the period when the axiomatic approach to convexity was introduced by Levi \cite{Le}.
Ellis \cite{El} was the first to consider the Kakutani separation axiom in convexity spaces. He formulated the axiom of Join-Hull Commutativity (JHC) and proved that under JHC,
the Kakutani separation property is equivalent to the Pasch axiom from geometry. The Pasch axiom and the Peano axiom
(which is equivalent to JHC) are the principal axioms 
of join geometries, investigated in the book by Prenowitz and Jantosiak \cite{PrJa}.
A systematic study of various aspects of convexity spaces
started in the 70ths of the last century (with the papers by Calder \cite{Ca}, Ekhoff \cite{Ek}, Kay and Womble \cite{KaWo}, Hammer \cite{Ha,Ha2}, and the dissertation of Jamison \cite{Ja}).
Convexity spaces endowed with topology have been investigated by van de Vel \cite{VdVtopo}. Starting from the 80th also various models of discrete convexity, in particular, convex geometries \cite{EdJa} (alias greedoids \cite{KoLoSch}) and geodesic
convexity in graphs \cite{Ch_thesis}, have been investigated (for an earlier survey, see \cite{Du_discrete}).  For an exposition of the theory of convexity spaces see the books by  Soltan \cite{So} and van de Vel \cite{VdV}.
In analogy to well-known separation axioms $T_1-T_4$ in set-theoretical topology, Jamison \cite{Ja} introduced the separation axioms $S_2,S_3,$ and $S_4$ for convexity spaces; they
concern the separation of pairs of points, points and convex sets, and pairs of disjoint convex sets by complementary halfspaces, respectively. The $S_4$-axiom is also called the ``Kakutani separation axiom''.
The $S_3$-axiom is equivalent to the fact that each semispace is a halfspace, and thus to the fact that each convex set is an intersection of halfspaces.  Jamison \cite{Ja} proved that for
domain-finite convexity spaces, $S_4$ is equivalent to the separation by complementary halfspaces
of any two disjoint polytopes (convex hulls of finite points) and
Chepoi \cite{Ch_S3,Ch_thesis,Ch_separa} proved that for convexity spaces of arity $n$, $S_4$ is equivalent to  the separation by complementary halfspaces
of any two disjoint $n$-polytopes (convex hulls of  $n$ points). In the important case of arity $n=2$ (covering the geodesic convexity in metric spaces),
by this result, $S_4$ is equivalent to the Pasch axiom. 
Since JHC-convexity spaces have arity 2, this is a far-reaching generalization of Ellis's result. 
Similar kind of characterizations of $S_3$-convexity spaces are missing.

Separations axioms have been also investigated for particular types of convexities, in particular, for several other types of convexities in linear spaces and for geodesic and induced path (monophonic) convexities
in graphs. Soltan \cite{So_normed,So}
characterized the finite-dimensional normed vector spaces with $S_4$ geodesic convexity (induced by the norm) as the normed spaces which are direct sums of strongly normed or 2-dimensional subspaces.
He also presented an example of a 3-dimensional normed space with $S_3$ but not $S_4$ convexity. The convex sets in $H$-convexity (an
important notion of convexity  in ${\mathbb R}^d$), are intersections of certain halfspaces (see for example the book \cite{So}), thus $H$-convexity is an $S_3$-convexity.
Ordered incidence geometries \cite{BTBI} and max-plus convexity \cite{BrHo1,BrHo2} are other examples of $S_4$-convexities. Semispaces of max-plus convexity have been investigated by Nitica and Singer \cite{NiSi1,NiSi2}.
Using the equivalence between $S_4$ and the Pasch axiom in case of geodesic convexity in graphs,
$S_4$-weakly modular graphs have been characterized in terms of forbidden subgraphs \cite{Ch_local,Ch_separa} 
and $S_4$-bipartite graphs have been characterized in terms of forbidden partial cube minors
\cite{Ch_bipartite, Ch_separa}; this equivalence was also used in \cite{SeHoWr} to prove that outerplanar graphs are $S_4$. Chordal graphs with $S_3$ geodesic convexity have been characterized in \cite{SoCh_chordal} in terms of
forbidden graphs. Finally, it was shown in \cite{AlKn,Ba_intrinsic,Ch_thesis,Ch_bipartite}
that $S_3$-bipartite graphs are exactly the partial cubes, i.e., the graphs that can be isometrically embedded into hypercubes. Bandelt \cite{Ba_intrinsic} characterized graphs in which the induced path convexity is $S_2$, $S_3$, or $S_4$ in terms of forbidden
subgraphs (for $S_3$, three of the six forbidden subgraphs for induced path convexity are also forbidden for $S_3$ for geodesic convexity, see the first three graphs from Figure \ref{non-S3} below).

More recently, semispaces and  halfspaces have found applications in Computer Science and Discrete Mathematics. As we already noted, the separation theorems in linear spaces
have important applications in optimization. Such separation results are also heavily used in the
machine learning algorithms, such as, the Perceptron algorithm \cite{MiPa} and the Support Vector Machines (SVMs) \cite{BoGuVa}. To learn concepts or classify data in metric spaces, Hein et al. \cite{HeBouSch} generalize SVMs to arbitrary metric spaces by defining  separation by means of (isometric or low-distortion) embeddings of metric spaces into Banach and Hilbert spaces.
They prove that the SVM algorithm works for all metric spaces that can be isometrically embedded into a Hilbert space. Since such embeddings do not always exist, some authors develop learning algorithms,
based on separation, directly in the original metric space. In particular, Seiffarth, Horváth, and Wrobel \cite{SeHoWr} introduced the halfspace separation problem in convexity spaces, asking to decide if two disjoint
convex sets $A,B$ are separable by complementary halfspaces and to compute these halfspaces if they exist. They proved that this problem is NP-complete already for geodesic convexity in graphs (see also the thesis \cite{Se}
for an overview of this research area and \cite{ThGa} for the use of halspaces in graphs in machine learning).  Under the separation axiom $S_3$ in convexity spaces, Moran and  Yehudayoff \cite{MoYe} prove  the existence of weak $\epsilon$-nets (a notion which is used
in PAC-learning, combinatorics,  and discrete geometry). Under $S_3$ axiom for geodesic convexity in graphs, Small \cite{Sm}  characterizes and investigates  the Tukey depth medians in $S_3$-graphs
(this notion also occur in  \cite{Se}). Halfspaces and hyperplanes are also important in the theory of CAT(0) cube complexes \cite{Sa} and Coxeter groups \cite{AbBr,Da}
in geometric group theory and of median graphs in  metric graph theory \cite{BaCh_survey}. Geodesic convexity in graphs found also applications in metric graph theory \cite{BaCh_survey} and in incidence geometry
\cite{Shu}.  Finite convexity spaces  found numerous applications in formal concept analysis, database theory, and
propositional logic. Since semispaces form the minimal intersection base, they are used for  compact representation of  finite convexity spaces and their efficient enumeration is an important algorithmic question \cite{Kh}.

\subsection*{Our results and motivation} In this paper, we investigate the separation property $S_3$ for geodesic convexity in graphs and the structure of semispaces in $S_3$-graphs. We characterize the $S_3$-graphs
in terms of separation by halfspaces of vertices and convex sets of special form, called maximal proximal sets. We also characterize semispaces of $S_3$-graphs in terms of convexity of point-shadows of maximal proximal sets. 
While the structure of  maximal proximal sets is quite general, we show that in $S_3$-graphs $G$ satisfying the triangle condition (TC) the maximal proximal sets are precisely
the maximal cliques of $G$ minus a vertex. This allows us to design a polynomial algorithm (in the number of vertices of $G$ and the number of semispaces)  for enumerating semispaces of $S_3$-graphs satisfying (TC) (notice that their
number can range from logarithmic to exponential). In case of meshed graphs (an important and general  subclass of graphs satisfying (TC)), we further specify the characterization of $S_3$-graphs and  show that they are exactly the meshed graphs not containing five
forbidden subgraphs. This immediately leads us to a polynomial time recognition of meshed $S_3$-graphs. To prove this result, we establish some properties of meshed graphs, which maybe of independent interest. In particular,
we show that any connected, locally-convex set of a meshed graph is convex, an analog of Nakajima-Tietze theorem for Euclidean convexity. Finally, we provide several examples of $S_3$-graphs and meshed $S_3$-graphs, from which one can build
larger examples of $S_3$-graphs by taking Cartesian products and gated amalgamations. In particular, we show that the important class of basis graphs of matroids is a subclass of meshed $S_3$-graphs. We also show that three classes
of planar graphs of combinatorial nonpositive curvature ((3,6)-,(4,4), and (6,3)-graphs) are $S_3$-graphs. Finally, we consider the halfspace separation problem, describe two methods of its solution, and apply them to particular classes of graphs and graph-convexities.

The triangle condition (TC) is a metric condition on a graph $G$, which requires that for any vertex $u$ and any edge $vw$ such that $u$ has the same distance $k$ to
$v$ and $w$, there exists a common neighbor $x$ of $v,w$ at distance $k-1$ from $u$. Solely (TC) is too weak to have any impact on the structure of $G$ (all bipartite
graphs satisfy (TC)), however combined with other similar conditions 
it implies that the triangle-square complex of the graph $G$ is simply connected and the cycles of $G$ can be paved with triangles and squares in a  structured way. The quadrangle
condition (QC) requires  that for any vertex $u$ and pair of vertices $v,w$ at distance 2 such that $u$ has the same distance $k$ to
$v$ and $w$ and has distance $k+1$ to a common neighbor $z$ of $v$ and $w$, then $v$ and $w$ have a common neighbor $x$ having distance $k-1$ to $u$. Finally, the weak quadrangle
condition (QC$^-$) requires that  for any vertex $u$ and pair of vertices $v,w$ at distance 2, $v$ and $w$ have a common neighbor $x$ such that $2d(u,x)\le d(u,v)+d(u,w)$.
The graphs satisfying (TC) and (QC) have been called  weakly modular graphs \cite{BaCh_helly,Ch_metric} and  the graphs satisfying (QC$^-$) have been called meshed graphs \cite{BaMuSo}.
Weakly modular graphs are meshed and meshed graphs satisfy (TC). Meshedness can be viewed as a kind of convexity condition of the distance function (essential in CAT(0) spaces \cite{BrHa}).

Several important classes of graphs studied in metric graph theory and occurring in geometric group theory are weakly modular: those are median  graphs, bridged graphs, and Helly graphs.
These classes have a rich combinatorial, metric, and geometric structure and have been characterized in a multitude of ways.
Median graphs are the bipartite weakly modular graphs in which the vertex $x$ in (QC) is unique. Topologically,
median graphs are exactly the 1-skeleta of CAT(0) cube complexes \cite{Ch_CAT,Ro}, which are central objects in geometric group theory
\cite{Bo,Sa,Sa_survey}.  They have been characterized by Gromov \cite{Gr} in a local-to-global way  as simply connected cube
complexes in which the links of vertices are simplicial flag complexes. Hyperplanes and halfspaces play an important role in the theory of CAT(0) cube complexes and median graphs \cite{Sa}.
Bridged graphs are the graphs in which all isometric cycles have length 3 and they are exactly the graphs in which
all balls around convex sets are convex \cite{FaJa,SoCh}. It was shown in \cite{Ch_CAT} that bridged graphs are exactly the 1-skeleta of simply connected simplicial flag complexes
in which the links of vertices are 6-large (do not contain induced 4- and 5-cycles). Later this class of simplicial complexes was rediscovered and investigated in \cite{JaSw} and dubbed
systolic complexes. Systolic complexes satisfy many global properties of CAT(0) spaces (contractibility, fixed point property)
and were suggested in \cite{JaSw} as a variant of simplicial complexes of combinatorial nonpositive curvature (for their generalization, see \cite{ChOs}).  Finally, Helly graphs are the graphs in which the
family of all balls satisfy the Helly property (i.e., any collection of pairwise intersecting balls has a non-empty intersection). They are discrete analogs of injective metric spaces \cite{ArPa} and
have been metrically characterized in \cite{BaPr}.  In \cite{CCHO}, Helly graphs have been characterized in a local-to-global way as the graphs whose clique complexes are simply connected and the maximal
cliques satisfy the Helly property. One important distinguishing property of Helly graphs (similar to injective spaces) is that any graph isometrically embeds into a unique smallest Helly graph (in case of
injective spaces, this is called the injective hull or the tight span of the metric space and its existence has been discovered in \cite{Dr,Is_inj}). For other classes of weakly modular graphs, see the survey
\cite{BaCh_survey} and the paper \cite{CCHO}. 
Finally, in \cite{CCHO} a Cartan-Hadamard-like local-to-global characterization of all weakly modular graphs was given: they are the graphs with simply connected triangle-square complexes
and in which (TC) and (QC) are satisfied locally (when the vertices $u,v,w$ belong to balls of radius 2 or 3).

It was shown in \cite{CCHO}  that a similar local-to-global characterization is  impossible for meshed graphs. On the other hand, meshed graphs comprise several important classes of graphs which
are not weakly modular: it was shown in \cite{Ch_delta} that basis graphs of matroids and of even
$\Delta$-matroids are meshed. Matroids constitute a fundamental combinatorial structure and remain an active research domain since their definition by Whitney in 1935. Matroids can be equivalently defined in a multitude of ways;
in terms of bases they are defined as set-systems satisfying the basis exchange axiom: for any bases $A,B$ and any $a\in A\setminus B$ there exists $b\in B\setminus A$ such that $A\setminus \{ a\}\cup \{ b\}$ is a base.
Even $\Delta$-matroids \cite{Bou,ChKa,DrHa} are defined by replacing the set difference by the symmetric set difference and requiring that all bases have the same parity. Even $\Delta$-matroids generalize classical matroids
and are fundamental examples of Coxeter matroids \cite{BoGeWh} and jump systems \cite{BouCu,Lo}.  The vertices of a basis graph are the basis of a matroid or an even $\Delta$-matroid and two such bases are adjacent
if they differ in two elements. Basis graphs are exactly the 1-skeleta of the basis polytope, which is the usual convex hull of (0,1)-vectors corresponding to bases \cite{BoGeWh}. Basis graphs of matroids have been
nicely characterized in a metric way by Maurer \cite{Mau}.  One of his principal conditions is the Positioning Condition (PC), which can be viewed as a stronger version of (QC$^-$).
Maurer's theorem was extended in \cite{Ch_delta} to all even $\Delta$-matroids. Answering a conjecture by Maurer, a local-to-global characterization of basis graphs of matroids and of even $\Delta$-matroids
was given in \cite{ChChOs_bgm}: basis graphs are exactly the graphs with simply connected triangle-square complexes and which satisfy the metric conditions of \cite{Mau} or \cite{Ch_delta} locally, in balls of radius 3.
Finally, note that basis graphs of matroids and of even $\Delta$-matroids are isometric subgraphs of Johnson graphs and of half-cubes, respectively. All this shows the importance of weakly modular and meshed
graphs in metric graph theory, geometric group theory,  CAT(0) geometry, and combinatorics and can be considered  as the ``positive'' reason for studying geodesic convexity in meshed and weakly modular graphs.

There is also a ``negative'' reason for investigating geodesic convexity in classes of graphs. First, by the results of Burris \cite{Bu} and Duchet \cite{Du_retracts}, each convexity space $(X,\C)$
with convex points can be embedded in a graph $G=(V,E)$ ($V$ is finite if $X$ is finite) such that any convex set of $\C$ is a subspace of $G$, i.e., the intersection of a geodesically convex set of $G$ with $X$.
This embedding preserve the convexity parameters (Helly, Radon, Caratheodory, 	and Tverberg numbers) but not the separation properties. Earlier and in the same vein,  de Groot  \cite{dGr} proved that for any closed set $A$ of a metric space
$(X,d)$ there exists a metric $d'$ on $X$ topologically equivalent to $d$ and which coincides with $d$ on $A$, such that $A$ is a geodesically convex set of $(X,d')$. Last but not least, by Proposition 8.45 of \cite{BrHa},
any geodesic metric space $(X,d)$ is $(3,1)$-quasi-isometric to a graph $G = (V,E)$. This graph $G$ is constructed in the following way: let $V$ be a maximal $\frac{1}{3}$-net ($V$ is a subset of $X$ such that $d(x,y)> \frac{1}{3}$ for any $x,y\in V$)  and
two points $x,y\in V$ are adjacent in $G$ if and only if $d(x,y)\le 1$. These universality results explain why graphs endowed with standard graph-metric are very general and why most computational problems for geodesic convexity in graphs are NP-complete  \cite{CoDoSa,Douetal,Douetalbis}.  On the other hand, weakly modular and meshed graphs
are still universal since they contain Helly graphs and any graph $G$ embeds in a Helly graph as a subgraph (add one or several  vertices adjacent to all vertices of $G$) or as an isometric subgraph  (via the injective hull construction).

\subsection*{Organisation} The rest of the paper is organized as follows. In Section \ref{s:preliminaries} we present classical notions that we use: convexity spaces, separation axioms, and notions from metric graph theory.
We also present some illustrative examples of $S_3$-graphs. In Section \ref{s:s3graphs} we characterize $S_3$-graphs and their semispaces in terms of convexity of shadows of maximal proximal sets. These results are specified
in Section \ref{s:s3graphstc} to the case of graphs satisfying the triangle condition (TC). Alltogether, these results of Sections \ref{s:s3graphs} and \ref{s:s3graphstc} represent the first main result of the paper.
In Section \ref{s:meshed} we present known and new properties of meshed graphs, which are used in the next section (but which also maybe useful in other contexts). In Section \ref{s:mesheds3graphs}
we characterize meshed $S_3$-graphs in terms of forbidden graphs. This is the second main result of the paper. In Section \ref{s:examples} we prove that several classes of graphs are $S_3$-graphs
or meshed $S_3$-graphs. In the final Section \ref{s:halfspacesep} we consider the halfspace separation problem. Together with the formulation of the results of the paper and their proofs,
we also formulate several open questions. The results and the questions are discussed in numerous remarks.
In some of them, we use notions which are not defined in the paper but can be easily found in the references cited in respective remarks.

\subsection*{Dedication} I would like to dedicate this paper to the memory of Andreas Dress, Maurice Pouzet, and Petru Soltan. 

\section{Preliminaries} \label{s:preliminaries} In this section, we recall the basic notions and results about convexity spaces and graphs.

\subsection{Convexity spaces and separation axioms}
In this subsection, we follow the books by Soltan \cite{So} and van de Vel \cite{VdV}.
A \emph{convexity space} (or a \emph{closure space}) is a pair $(X,\C)$ where $X$ is a set  and $\C$ is a family of subsets of $X$ such that $\varnothing,X\in \C$ and $\C$ is closed by taking intersections:
for $C_i\in \C, i\in I$,  $\bigcap_{i\in I} C_i\in \C$. The elements of $X$ are called  \emph{points} and the elements of $\C$ are called \emph{convex sets}.
Let $\conv(A)$ denotes the \emph{convex hull} of $A\subseteq X$: $\conv(A)$ is the intersection of all convex sets containing $A$.
A \emph{polytope} is the convex hull of a finite set of points and a $k$-\emph{polytope}
is the convex hull of at most $k$ points. A convexity space $(X,\C)$ is called \emph{domain-finite} (an \emph{alignement}  or \emph{algebraic}) if for any set $A$, $\conv(A)$ is the union of $\conv(A')$
such that $A'\subseteq A$ and $|A'|<\infty$. A convexity space $(X,\C)$ has \emph{arity $n$} if $A\in \C$ if and only if $\conv(A')\subset A$ for any $A'\subset A$ with $|A'|\le n$.
We will consider  only domain-finite convexity spaces.  Furthermore, we will additionally suppose that all points of $X$ are convex sets of $\C$. A subset $Y\subset X$ of a convexity space $(X,\C)$ induces
in a natural way a convexity space $(Y,\C|_Y)$ on $Y$, where $\C|_Y=\{ A\cap Y: A\in \C\}$. Then $(Y, \C|_Y)$ is called a \emph{subspace} of $(X,\C)$.  
%
%
%
%
%
For a convexity space $(X,\C)$, the \emph{join} of two sets $A$ and $B$ is the set $A\ast B=\bigcup_{a\in A,b\in B} \conv(a,b)$.

A convexity space $(X,\C)$
is called \emph{Join-Hull Commutative} (JHC for short) if for any point $x$ and any convex set $C$, $\conv(x\cup C)=x\ast C$.
Join-Hull Commutativity is  equivalent to the more general property
that $\conv(A\cup B)=A\ast B$ for any two convex sets $A,B$~\cite{KaWo}. JHC-convexity spaces have arity 2 and can be characterized in the following way via the Peano axiom:

\begin{theorem} \cite{Ca} \label{JHC}  A convexity space $(X,\C)$ satisfies JHC if and only if it satisfies the Peano axiom:  for any $u,v,w\in X$,  $x\in \conv(w,v)$,
and $y\in \conv(u,x)$ there exists $z\in \conv(u,v)$ such that $y\in \conv(w,z)$.
\end{theorem}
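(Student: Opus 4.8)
The plan is to prove the two implications separately; the forward one is a one-line consequence of the definition of JHC, and the converse is the substantive half. For JHC $\Rightarrow$ Peano: given $u,v,w\in X$ with $x\in\conv(w,v)$ and $y\in\conv(u,x)$, apply JHC to the point $w$ and the convex set $C:=\conv(u,v)$, which gives $\conv(w\cup C)=w\ast C=\bigcup_{z\in C}\conv(w,z)$. As $v\in C$, the convex set $\conv(w\cup C)$ contains $w$ and $v$, hence contains $x\in\conv(w,v)$; it also contains $u\in C$, so it contains $\conv(u,x)$ and thus $y$. Therefore $y\in w\ast C$, i.e.\ $y\in\conv(w,z)$ for some $z\in C=\conv(u,v)$ --- exactly the Peano conclusion.

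For Peano $\Rightarrow$ JHC, fix a point $x_0$ and a nonempty convex set $C$ (the case $C=\varnothing$ being a matter of convention). The inclusion $x_0\ast C\subseteq\conv(x_0\cup C)$ always holds, and $x_0\ast C$ clearly contains $x_0\cup C$, so it suffices to show that $x_0\ast C$ is convex; and since the convexity space has arity $2$, it is enough to check that $x_0\ast C$ is closed under taking segments. So let $p\in\conv(x_0,c_1)$ and $q\in\conv(x_0,c_2)$ with $c_1,c_2\in C$, let $r\in\conv(p,q)$, and aim at $r\in x_0\ast C$. Apply Peano with $(u,v,w,x,y):=(q,c_1,x_0,p,r)$ --- legitimate since $p\in\conv(x_0,c_1)=\conv(w,v)$ and $r\in\conv(q,p)=\conv(u,x)$ --- to obtain $z_1\in\conv(q,c_1)$ with $r\in\conv(x_0,z_1)$. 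Apply Peano again with $(u,v,w,x,y):=(c_1,c_2,x_0,q,z_1)$ --- legitimate since $q\in\conv(x_0,c_2)=\conv(w,v)$ and $z_1\in\conv(q,c_1)=\conv(c_1,q)=\conv(u,x)$ --- to obtain $z_2\in\conv(c_1,c_2)$ with $z_1\in\conv(x_0,z_2)$. Since $C$ is convex, $z_2\in\conv(c_1,c_2)\subseteq C$; and since $x_0,z_1\in\conv(x_0,z_2)$ and this set is convex, $\conv(x_0,z_1)\subseteq\conv(x_0,z_2)$, whence $r\in\conv(x_0,z_2)\subseteq x_0\ast C$.

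I expect the converse to be the only delicate part, and within it two points. First, the two successive applications of Peano need attention in the bookkeeping; it is cleanest to extract once the equivalent symmetric form of Peano, namely $u\ast\conv(w,v)=w\ast\conv(u,v)$, after which the segment-closedness of $x_0\ast C$ falls out immediately. Second --- and this is the real obstacle --- one needs arity $2$ to pass from ``closed under segments'' to ``convex'': in an arbitrary domain-finite convexity space a segment-closed set need not be convex, so the converse is properly a statement about interval (arity-$2$) convexity spaces, whereas the forward implication needs no such hypothesis, consistently with the fact that JHC itself forces arity $2$.
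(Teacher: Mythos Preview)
The paper does not prove this theorem; it is quoted from Calder~\cite{Ca} (whose paper, as its title indicates, is about \emph{interval} convexities, i.e.\ arity-$2$ spaces). Your argument is correct in both directions: the JHC\,$\Rightarrow$\,Peano implication is the standard one-liner, and for Peano\,$\Rightarrow$\,JHC your two nested applications of Peano cleanly establish that $x_0\ast C$ is closed under $2$-point hulls. Your closing remark is exactly right and worth emphasizing: the passage from segment-closed to convex genuinely requires arity~$2$ (a trivial counterexample is $X=\{a,b,c,d\}$ with convex sets $\varnothing$, singletons, pairs, and $X$ --- Peano holds vacuously since all segments are pairs, yet $a\ast\{b,c\}=\{a,b,c\}\neq X=\conv(a,b,c)$), so the converse as literally stated in the theorem is a statement about arity-$2$ convexities, consistent with Calder's original setting and with the geodesic-convexity context of the paper.
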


\begin{definition} [Halfspaces and separation axioms] A \emph{halfspace} (also called a \emph{hemispace}) is a convex set $H$ with convex complement $X\setminus H$; clearly, $X\setminus H$ is also a halfspace.
Two disjoint sets $A,B$ are \emph{separable by halfspaces} (or simply, \emph{separable}) if there exists a halfspace $H$ such that $A\subseteq H$ and $B\subseteq X\setminus H$.
In convexity theory, the following
separation axioms have been  considered \cite{Ja,VdV}:
\begin{enumerate}
\item[$S_2$]: any two distinct points $p,q$ of $X$ are separable by halfspaces;
\item[$S_3$]: any convex set $A$ and any point $p\notin A$ are separable by halfspaces;
\item[$S_4$]: any two disjoint convex sets $A,B$ are separable by halfspaces.
\end{enumerate}
\end{definition}

Note that $S_4$ implies $S_3$ and $S_3$ implies $S_2$. (In \cite[p.477]{KaWo} it is wrongly asserted that
for domain-finite convexities, $S_3$ and $S_4$ are equivalent.) Notice also that any subspace of an $S_3$-space is $S_3$ (this is not true for $S_4$), furthermore, the $S_3$-spaces define a variety sensu \cite{Ja4}.

\begin{definition} [Semispaces] For $x_0\in X$, a \emph{semispace} (a \emph{copoint} or a \emph{hypercone}) at  $x_0$  is a maximal
by inclusion convex set $S\in \C$ not containing $x_0$. Then $x_0$ is called the \emph{attaching point} of $S$.
\end{definition}

The family of all semispaces is an \emph{intersection base} in the sense that each convex set $A\in \C$ is the intersection of semispaces and no other minimal subfamily of $\C$ has this property.

A \emph{simplicial complex} on a set $X$ is a family of sets $\mathfrak X$ (called \emph{simplices}) such that  $\sigma\in {\mathfrak X}$ and  $\sigma'\subseteq \sigma$ imply
$\sigma'\in {\mathfrak X}$. Since the intersection of simplices is a simplex, the pair $(X,{\mathfrak X}\cup \{ X\})$ is a convexity space, called  \emph{simplicial convexity}. \emph{Facets} of
$\mathfrak X$ are the maximal by inclusion simplices of $\mathfrak X$. A simplex $\sigma\in {\mathfrak X}$ is called a \emph{free face} if  $\sigma\subsetneq \sigma'$
and $\sigma\varsubsetneq \sigma''$ for $\sigma',\sigma''\in {\mathfrak X}$ imply $\sigma'=\sigma''$. If $\sigma$ is a free face and $\sigma'$ is the unique simplex such that $\sigma\subsetneq \sigma'$, then
$|\sigma|=|\sigma'|-1$ and $\sigma'$ is a facet.

\begin{example} \label{exemple-simplicial-convexity}
The semispaces of the simplicial convexity $(X,{\mathfrak X}\cup \{ X\})$  are the facets and the free faces of the simplicial complex $\mathfrak X$. Indeed, given $x_0\in X$, pick any semispace $S$
with attaching point $x_0$. Then $S$ is a simplex $\sigma$ of $\mathfrak X$. By definitions of semispaces and simplicial complexes, either $\sigma$ is a maximal simplex of $\mathfrak X$ or any simplex $\sigma'$
properly including $\sigma$ has the  form $\sigma'=\sigma\cup \{ x_0\}$. In the second case, since $\sigma$ is a semispace at $x_0$,  $\sigma'$ is the unique simplex properly containing $\sigma$, thus $\sigma$ is a free face.
If $X$ is finite, say $|X|=n$ and $\mathfrak X$ contains $m$ facets, then one can easily see that $\mathfrak X$ contains at most $nm$ free faces. Thus the simplicial convexity contains at most $m(n+1)$
semispaces.
\end{example}

The following characterization of $S_3$ is well-known and easy to prove, see \cite{So,VdV}:

\begin{theorem} \label{copoints-S3} A convexity space $(X,\C)$ (with convex points) satisfies the separation axiom $S_3$ if and only if any semispace of $(X,\C)$ is a halfspace.
Consequently, the $S_3$-convexity spaces are exactly the convexity spaces in which all convex sets are intersections of halfspaces.
\end{theorem}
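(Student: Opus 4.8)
The plan is to establish the stated equivalence by proving the two implications separately, and then to read off the ``consequently'' clause.

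\emph{From $S_3$ to semispaces being halfspaces.} Let $S$ be a semispace with attaching point $x_0$; thus $S\in\C$ and $x_0\notin S$. Applying $S_3$ to the convex set $S$ and the point $x_0\notin S$, we obtain a halfspace $H$ with $S\subseteq H$ and $x_0\in X\setminus H$. Now $H$ is a convex set not containing $x_0$ while $S$ is, by definition, maximal among such sets, so $H=S$; hence $S$ is a halfspace.

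\emph{From semispaces being halfspaces to $S_3$.} Let $A\in\C$ with $x_0\notin A$, and order by inclusion the family of all $C\in\C$ with $A\subseteq C$ and $x_0\notin C$, which is nonempty since it contains $A$. The union of a chain in this family again avoids $x_0$ and is again convex: any finite subset of the union lies in a single member of the chain, hence so does its convex hull, and then domain-finiteness gives that the union is convex. By Zorn's lemma the family has a maximal element $S$, which is a semispace at $x_0$: a convex set properly containing $S$ also contains $A$, so by maximality it contains $x_0$. By hypothesis $S$ is a halfspace, and $A\subseteq S$, $x_0\in X\setminus S$, so $A$ and $x_0$ are separated by complementary halfspaces; hence $S_3$ holds.

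\emph{The ``consequently'' clause.} If $S_3$ holds and $A\in\C$, then for each $x_0\in X\setminus A$ choose a halfspace $H_{x_0}\supseteq A$ with $x_0\notin H_{x_0}$; clearly $A=\bigcap_{x_0\notin A}H_{x_0}$, with the usual convention that an empty intersection equals $X$ (covering $A=X$). Conversely, if every convex set is an intersection of halfspaces, then given $A\in\C$ and $x_0\notin A$, writing $A=\bigcap_{j}H_j$ forces $x_0\notin H_j$ for some $j$, and this $H_j$ separates $A$ from $x_0$. The argument is short and largely a direct unwinding of the definitions; the only step that is not is the appeal to Zorn's lemma, and there the single thing to verify — that a chain union of convex sets avoiding $x_0$ is again convex — is precisely what the standing domain-finiteness assumption provides.
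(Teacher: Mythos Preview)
Your proof is correct and is precisely the standard argument; the paper itself does not give a proof but states the result as ``well-known and easy to prove'' with references to \cite{So,VdV}. Your use of Zorn's lemma with domain-finiteness to extend $A$ to a semispace, and the maximality argument to conclude $S=H$ in the forward direction, are exactly what those references contain.
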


To characterize the  separation properties $S_3$ and $S_4$, the following notion of shadow is important:

\begin{definition} [Shadows] \cite{Ch_separa,Ch_S3}
Given two  sets $A$ and $B$ of a convexity space $(X,\C)$, the \emph{shadow of $A$ with respect to $B$}
is the set $$A/B=\{ x\in X: \conv(B\cup \{ x\})\cap A\ne \varnothing\}.$$
If $B=\{ x_0\}$, then we will write $x_0/A$ and $A/x_0$ instead of
$\{ x_0\}/A$ and $A/\{ x_0\}$. 
\end{definition}

\begin{remark}  In the paper \cite{Ch_S3}, which introduced shadows, they were called ```penumbras'' (or ``twilights'').
In \cite{VdV} the term ``extension'' was used. The term ``extension''  originates from
join geometries   \cite{PrJa}, where this term and  the notation $A/B$ designate the union $\bigcup_{a\in A, b\in B} a/b$.  The notion of shadow is more general
that the notion of extension, therefore we prefer to use the term ``shadow''.  Since join geometries are JHC convexity spaces, the two notions
coincide for convex sets $A$ and $B$ in join geometries.
Note also that the shadow $A/x_0$ can be viewed as an extension:
$$A/x_0=\bigcup_{a\in A} a/x_0=\{ x\in X: \conv(x_0,x)\cap A\ne\varnothing\}.$$
\end{remark}

Now, we recall the characterization of convexity spaces satisfying the separation axiom $S_4$. This separation axiom is often called
the \emph{Kakutani separation property}~\cite{VdV}, due to Kakutani \cite{Ka}
who first considered this property (see also the paper by Tukey \cite{Tu}). Jamison \cite{Ja} proved an equivalence
between $S_4$ and the separation of disjoint polytopes by halfspaces and van de Vel \cite{VdVtopo} characterized $S_4$ in terms of screening with half-spaces. $S_4$ can be also
characterized in terms of convexity of shadows:


Finally, in case of convexities of arity $n$, $S_4$ has been characterized in the following way by the author of this paper:

\begin{theorem} \cite{Ch_S3,Ch_thesis,Ch_separa} \label{S4arity} Let $(X,\C)$ be a convexity space.
\begin{itemize}
\item[(1)]  $(X,\C)$ satisfies the separation axiom $S_4$ if and only if the shadows $A/B$ and $B/A$ are convex for any two convex sets $A,B$.
\item[(2)]  If $(X,\C)$ has arity $n$, then $(X,\C)$  satisfies the separation axiom $S_4$ if and only if for any $n$-polytope $A$ and any $(n-1)$-polytope $B$, the
shadow $A/B$ is convex and if and only if any two disjoint $n$-polytopes $A$ and  $B$ are separable.
\item[(3)]  If $(X,\C)$ has arity 2, then $(X,\C)$ satisfies  $S_4$ if and only if it satisfies the Pasch axiom:  for any $u,v,w\in X$, $x\in \conv(w,u), y\in \conv(w,v)$, there
exists $z\in \conv(u,y)\cap \conv(v,x)$.
\end{itemize}
\end{theorem}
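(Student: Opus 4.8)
The plan is to prove (1) from first principles, to deduce (2) from (1) by exploiting arity, and to obtain (3) as the case $n=2$. For (1), assume first that $(X,\C)$ satisfies $S_4$ and let $A,B$ be convex; it suffices to show $A/B$ is convex, the case of $B/A$ being symmetric. If $A\cap B\neq\varnothing$ then $B$ meets $A$, so $\conv(B\cup\{x\})\supseteq B$ meets $A$ for every $x$, whence $A/B=X$. Otherwise, suppose $z\in\conv(A/B)\setminus(A/B)$, i.e.\ $\conv(B\cup\{z\})\cap A=\varnothing$; applying $S_4$ to the disjoint convex sets $\conv(B\cup\{z\})$ and $A$ gives a halfspace $H$ with $A\subseteq H$ and $B\cup\{z\}\subseteq X\setminus H$. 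For every $w\in A/B$ the set $\conv(B\cup\{w\})$ meets $A\subseteq H$, and since $X\setminus H$ is convex and contains $B$ this forces $w\in H$; hence $A/B\subseteq H$, so $\conv(A/B)\subseteq H$, contradicting $z\in X\setminus H$.

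Conversely, assume every such shadow is convex and let $A,B$ be disjoint convex sets. Domain-finiteness makes unions of chains of convex sets convex and preserves disjointness along chains, so Zorn's lemma yields disjoint convex sets $H\supseteq A$, $H'\supseteq B$ with $(H,H')$ maximal under componentwise inclusion. The shadows $H'/H$ and $H/H'$ are convex by hypothesis, contain $H'$ and $H$, and -- as $H,H'$ are convex and disjoint -- stay disjoint from $H$ and $H'$ respectively; maximality then forces $H'=H'/H$ and $H=H/H'$. If some $x\notin H\cup H'$ existed, then $x\notin H=H/H'$ would give $\conv(H'\cup\{x\})\cap H=\varnothing$, so $\bigl(H,\conv(H'\cup\{x\})\bigr)$ would be a strictly larger admissible pair -- impossible. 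Hence $H\cup H'=X$, so $X\setminus H=H'$ is convex and $H$ separates $A$ from $B$.

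For (2), write (a) for $S_4$, (b) for ``$A/B$ is convex for every $n$-polytope $A$ and $(n-1)$-polytope $B$'', and (c) for ``any two disjoint $n$-polytopes are separable''. Then (a)$\Rightarrow$(b) is a special case of (1), and (a)$\Rightarrow$(c) holds since $n$-polytopes are convex. Moreover (c)$\Rightarrow$(b) just repeats the forward argument of (1): when $B$ is an $(n-1)$-polytope and $z$ a point, $\conv(B\cup\{z\})$ is an $n$-polytope, so (c) produces the separating halfspace $H$ and the rest is verbatim. The substantive implication is (b)$\Rightarrow$(a); together with (a)$\Rightarrow$(c)$\Rightarrow$(b) it closes the cycle. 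For it I would first recall Jamison's reduction -- obtained by rerunning the Zorn argument of (1) and using domain-finiteness to extract, from a hypothetical $x\notin H\cup H'$, two disjoint polytopes lying in $H$ and $H'$ that, once separated, force $x$ to both sides -- so that $S_4$ is equivalent to the separability of any two disjoint polytopes. One then has to upgrade ``disjoint $n$-polytopes separable'' to ``disjoint polytopes separable'' using arity $n$ together with (b), and \emph{I expect this step to be the main obstacle.} Peeling generators off a polytope $A=\conv(a_1,\dots,a_k)$, separating each face $\conv(\{a_1,\dots,a_k\}\setminus\{a_i\})$ from $B$ and gluing the resulting halfspaces fails, and one cannot retreat to a Carath\'eodory bound, since arity $n$ does not force the Carath\'eodory number to be $\le n$ (the Euclidean plane has arity $2$ but Carath\'eodory number $3$). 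The correct argument must carry (b) along as an invariant controlling the convexity of the relevant shadows of the partially built sets at every step; organizing this induction is the delicate part, after which (1) reassembles everything into $S_4$.

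For (3), specialize (2) to $n=2$: a $2$-polytope over a $1$-polytope is the convex hull $\conv(a,b)$ of two points over a point $c$ (the empty $1$-polytope yields $A/\varnothing=A$, trivially convex), and ``disjoint $2$-polytopes separable'' means disjoint hulls $\conv(a,b)$, $\conv(c,d)$ separable. It remains to identify condition (b) -- ``$\conv(a,b)/c$ convex for all $a,b,c$'' -- with the Pasch axiom. For Pasch $\Rightarrow$ (b): given $x_1,x_2\in\conv(a,b)/c$ with witnesses $p_i\in\conv(c,x_i)\cap\conv(a,b)$ and $z\in\conv(x_1,x_2)$, one application of Pasch with $(u,v,w)=(c,x_2,x_1)$ to the data $p_1\in\conv(x_1,c)$ and $z\in\conv(x_1,x_2)$ yields $z'\in\conv(c,z)\cap\conv(x_2,p_1)$, and a second application with $(u,v,w)=(p_1,c,x_2)$ to $z'\in\conv(x_2,p_1)$ and $p_2\in\conv(x_2,c)$ yields $z''\in\conv(p_1,p_2)\cap\conv(c,z')$; since $\conv(p_1,p_2)\subseteq\conv(a,b)$ and $\conv(c,z')\subseteq\conv(c,z)$ (both being convex hulls containing the relevant endpoints), $z''$ witnesses $z\in\conv(a,b)/c$, and in arity $2$ this suffices for convexity of $\conv(a,b)/c$. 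For (b) $\Rightarrow$ Pasch: given $x\in\conv(w,u)$ and $y\in\conv(w,v)$, the points $v$ and $w$ both lie in the convex set $\conv(v,x)/u$ -- witnessed by $v$ itself and by $x$ respectively -- so $y\in\conv(w,v)\subseteq\conv(v,x)/u$, i.e.\ $\conv(u,y)\cap\conv(v,x)\neq\varnothing$, which is the conclusion of Pasch. Together with (1) this gives $S_4\Leftrightarrow(\text{b})\Leftrightarrow\text{Pasch}$, and generalizes Ellis's theorem by dropping the Join-Hull Commutativity hypothesis.
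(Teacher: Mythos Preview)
The paper does not prove Theorem~\ref{S4arity}; it is stated as a citation from \cite{Ch_S3,Ch_thesis,Ch_separa}, so there is no in-paper proof to compare against. Evaluating your argument on its own merits:

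Part~(1) is correct and standard. Your argument for Pasch $\Leftrightarrow$ (b) in Part~(3) is also correct: the two applications of Pasch for Pasch $\Rightarrow$ (b) are cleanly organized, and the observation that $v,w\in\conv(v,x)/u$ for (b) $\Rightarrow$ Pasch is exactly right.

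However, there is a genuine gap in Part~(2), and you name it yourself: the implication (b) $\Rightarrow$ (a) --- that convexity of $A/B$ for every $n$-polytope $A$ and $(n-1)$-polytope $B$ suffices for $S_4$ --- is only sketched, not proved. You correctly observe that neither ``peeling generators'' nor a Carath\'eodory bound will work, and that the induction must carry (b) as an invariant, but you stop short of actually organizing that induction. This gap propagates to Part~(3): your concluding sentence ``Together with (1) this gives $S_4\Leftrightarrow(\text{b})\Leftrightarrow\text{Pasch}$'' is not justified, because (1) only gives $S_4\Rightarrow(\text{b})$; the direction $(\text{b})\Rightarrow S_4$ is precisely the missing step of Part~(2). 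So as written, you have proved (1) and the equivalence Pasch $\Leftrightarrow$ (b), but neither (2) nor (3) is complete.

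The heart of the original argument in \cite{Ch_separa} is indeed an induction that bootstraps from (b) to convexity of shadows $A/B$ for arbitrary convex $A,B$, after which (1) applies; the induction is on the size of the generating sets and uses the arity-$n$ hypothesis at each step to reduce to the $n$-polytope/$(n{-}1)$-polytope case. Filling this in is the real content of the theorem.
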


\begin{remark}
The last assertion of Theorem \ref{S4arity} generalizes the result of Ellis \cite{El}, which established the same equivalence under JHC. Ellis \cite{El} presented an example of a convexity space of arity 2 showing that the Peano axiom
is independent from the Pasch axiom. (See also the paper \cite{vdVS4} by van de Vel for the equivalence between $S_4$ and Pasch axiom under JHC.)  Ellis \cite{El}  proved his result for pairs of
convexity spaces on the same ground set with the motivation to generalize Stone's theorem \cite{St} for ideals and filters in distributive lattices. In \cite{Ch_S3}, we also proved Theorem \ref{S4arity} for pairs of convexity spaces; in this
form, this result was later rediscovered by Kubi\'s \cite{Ku}. Note that in several cases, for example for ordered incidence geometries \cite{BTBI} or for $\mathbb{B}$-convexity
or max-plus convexity \cite{BrHo1,BrHo2}, to establish $S_4$, the authors establish the Peano axiom and then prove the Pasch axiom. Theorem \ref{S4arity} shows that it is sufficient to prove only the Pasch axiom.
\end{remark}

Convexity spaces satisfying the separation axiom $S_3$  can be characterized in the following way:

\begin{proposition} \cite{Ch_S3,Ch_thesis} \label{S3shadows1} A convexity space $(X,\C)$ satisfies the separation axiom $S_3$ if and only if for any convex set $A$ and any point $x_0\notin A$, the shadow $x_0/A$ is convex.
\end{proposition}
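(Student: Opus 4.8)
The plan is to deduce both directions from Theorem \ref{copoints-S3} (which says $S_3$ is equivalent to every semispace being a halfspace) together with the identity $x_0/S=X\setminus S$ for a semispace $S$ at $x_0$. Recall that by the definition of shadows, $x_0/A=\{x\in X:\ x_0\in\conv(A\cup\{x\})\}$.

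For the ``if'' direction, assume every shadow $x_0/A$ with $A$ convex and $x_0\notin A$ is convex. I would take an arbitrary semispace $S$ with attaching point $x_0$ and show $x_0/S=X\setminus S$: if $x\in S$ then $\conv(S\cup\{x\})=S\not\ni x_0$, so $x\notin x_0/S$; if $x\notin S$ then $\conv(S\cup\{x\})$ is convex and properly contains the maximal $x_0$-avoiding convex set $S$, hence it must contain $x_0$, so $x\in x_0/S$. Applying the hypothesis to $A=S$ gives that $X\setminus S=x_0/S$ is convex, i.e. $S$ is a halfspace; Theorem \ref{copoints-S3} then yields $S_3$.

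For the ``only if'' direction, assume $S_3$ and let $A$ be convex with $x_0\notin A$. Given $y,z\in x_0/A$ and $w\in\conv(y,z)$, I would argue by contradiction that $x_0\in\conv(A\cup\{w\})$. If not, $\conv(A\cup\{w\})$ is convex and avoids $x_0$, so $S_3$ provides a halfspace $H$ with $A\cup\{w\}\subseteq H$ and $x_0\notin H$, and with $X\setminus H$ convex. Since $y\in x_0/A$ means $x_0\in\conv(A\cup\{y\})$ while $A\subseteq H$ and $x_0\notin H$, we must have $y\notin H$ (otherwise $\conv(A\cup\{y\})\subseteq H$), and likewise $z\notin H$; convexity of $X\setminus H$ forces $w\in\conv(y,z)\subseteq X\setminus H$, contradicting $w\in H$. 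Hence $w\in x_0/A$ and $x_0/A$ is convex.

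The routine parts are the set-theoretic manipulations; the only place that needs care --- the ``hard part'', such as it is --- is using the maximality of the semispace correctly in the first direction (every convex proper superset of $S$ must contain $x_0$) and, symmetrically, ensuring that in the second direction the inclusions $A\subseteq H$ and $x_0\notin H$ genuinely force the shadow-witnesses $y,z$ out of $H$. No arity or domain-finiteness hypotheses beyond the standing ones (needed only to guarantee that semispaces exist, hence already absorbed into Theorem \ref{copoints-S3}) enter the argument.
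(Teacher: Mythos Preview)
Your ``if'' direction is exactly the paper's: identify $X\setminus S$ with $x_0/S$ via maximality of the semispace, then invoke Theorem~\ref{copoints-S3}.

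For the ``only if'' direction there is a small but genuine gap. You verify only that $x_0/A$ is closed under $\conv(y,z)$ for \emph{pairs} $y,z$, and then assert that ``no arity hypotheses enter the argument''. In a convexity space that is not of arity~$2$, closure under $2$-ary hulls does not imply convexity. The repair is immediate and uses nothing new: take an arbitrary finite subset $Y\subseteq x_0/A$ and $w\in\conv(Y)$, produce the halfspace $H\supseteq A\cup\{w\}$ avoiding $x_0$ exactly as you do, and observe that each $y\in Y$ is forced out of $H$ by the same reasoning; then $w\in\conv(Y)\subseteq X\setminus H$, contradiction. Domain-finiteness (assumed throughout the paper) then gives convexity of $x_0/A$.

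With that fix, your ``only if'' argument is actually more direct than the paper's. The paper proves the four-way equivalence of Proposition~\ref{S3shadows} and therefore detours through polytopes: it first shows $x_0/P$ is convex for polytopes $P$ (via essentially your separation argument applied with a finite $Y$), and then uses domain-finiteness to reduce an arbitrary convex $A$ to the polytope case. You apply $S_3$ directly to $\conv(A\cup\{w\})$, which is shorter when only the two-condition equivalence of Proposition~\ref{S3shadows1} is at stake; the paper's detour is there only because it is simultaneously establishing the intermediate conditions (ii) and (iii).
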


A convexity space $(X,\C)$  satisfies the \emph{sandglass axiom} \cite{Ch_S3} if for any six vertices $u,u',v,v',x,y$ such that $y\in \conv(u,u')\cap \conv(v,v')$ and $x\in \conv(u,v)$ there exists
a point $x'\in \conv(u',v')$ such that $y\in \conv(x,x')$.  For JHC-convexities, the following characterization of $S_3$ holds:

\begin{proposition} \label{sandglass1}  \cite{Ch_S3,Ch_thesis} A JHC-convexity space $(X,\C)$ satisfies the separation axiom $S_3$ if and only if $\C$ satisfies the sandglass property.
\end{proposition}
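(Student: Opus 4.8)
The plan is to route everything through the shadow characterization of $S_3$ from Proposition~\ref{S3shadows1} together with Join-Hull Commutativity, which makes both sides of the equivalence transparent once the right shadow identity is on the table. The key preliminary observation is that for a point $x_0$ and any set $A$, JHC gives $\conv(A\cup\{x\})=\bigcup_{a\in A}\conv(a,x)$, hence
$$x_0/A=\{x\in X:x_0\in\conv(A\cup\{x\})\}=\bigcup_{a\in A}(x_0/a),$$
so that $x\in x_0/A$ precisely when there is a witness $a\in A$ with $x_0\in\conv(a,x)$. This is exactly the shape of both the data one feeds into the sandglass axiom and the conclusion it produces.

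For the implication ``sandglass $\Rightarrow S_3$'', by Proposition~\ref{S3shadows1} it suffices to show that $x_0/A$ is convex for every convex $A$ and every $x_0\notin A$. Given $p,q\in x_0/A$ and $x\in\conv(p,q)$, I would pick witnesses $a,b\in A$ with $x_0\in\conv(a,p)$ and $x_0\in\conv(b,q)$, and then apply the sandglass axiom with $u=p$, $u'=a$, $v=q$, $v'=b$, $y=x_0$ and the given $x$: the hypotheses $y\in\conv(u,u')\cap\conv(v,v')$ and $x\in\conv(u,v)$ hold by construction, so it yields $x'\in\conv(u',v')=\conv(a,b)\subseteq A$ with $x_0\in\conv(x,x')$, whence $x\in x_0/A$.

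For the converse ``$S_3\Rightarrow$ sandglass'', given $u,u',v,v',x,y$ with $y\in\conv(u,u')\cap\conv(v,v')$ and $x\in\conv(u,v)$, I would set $A:=\conv(u',v')$. If $y\in A$ we are done with $x'=y$, since $y\in\conv(x,y)$; otherwise Proposition~\ref{S3shadows1} ensures $y/A$ is convex. Since $u'\in A$ and $y\in\conv(u,u')$, the shadow identity gives $u\in y/A$, and symmetrically $v\in y/A$; convexity of $y/A$ then forces $x\in\conv(u,v)\subseteq y/A$, and unwinding the identity once more produces the required $x'\in A=\conv(u',v')$ with $y\in\conv(x,x')$.

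The argument is essentially bookkeeping around the definitions, so I do not expect a genuine obstacle; the points that need attention are matching the six points of the sandglass configuration to the correct slots in each direction, handling the degenerate case $y\in\conv(u',v')$ in the converse separately (Proposition~\ref{S3shadows1} requires $y\notin A$), and recording the JHC shadow identity cleanly at the outset so that both implications reduce to it.
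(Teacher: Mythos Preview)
Your proof is correct. The paper does not include a proof of this proposition (it only cites \cite{Ch_S3,Ch_thesis}), so there is no in-paper argument to compare against; your route via Proposition~\ref{S3shadows1} and the JHC shadow identity $x_0/A=\bigcup_{a\in A}(x_0/a)$ is the natural one and almost certainly what the cited sources do. One tiny wording point: the identity $\conv(A\cup\{x\})=\bigcup_{a\in A}\conv(a,x)$ requires $A$ to be convex (that is how JHC is stated), not ``any set $A$'' as you write---but since both applications you make are to convex $A$, this does not affect the argument.
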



\begin{remark} \label{complexity}
From the computational point of view,  the characterizations of JHC-convexity spaces via Peano axiom (Theorems~\ref{JHC}),  of $S_4$-convexity spaces of arity $n$ (Theorem~\ref{S4arity}), and of JHC-convexity spaces satisfying $S_3$ (Proposition~\ref{sandglass1})
can be considered as efficient, since they can be tested in time polynomial in the size of $X$ (for more details, see the paper  \cite{SeHoWr} and the thesis \cite{Se}).
Indeed, Peano, Pasch, and sandglass axioms are 6- or 7-point conditions and the convexity of shadows $A/B$ for $n$-polytopes $A$ and $(n-1)$-polytopes $B$ (characterizing the $S_4$ axiom for convexity spaces of arity $n$)
can be viewed as a condition on $3n$ points.
On the other hand, Proposition \ref{S3shadows1} does not provide any insight on the complexity of testing the property $S_3$.

The importance of characterizing separation properties by convexity of shadows stems from the fact that in convexity spaces where convex hulls can be computed efficiently, the construction of shadows  and testing their
convexity can be done efficiently.
\end{remark}

The following questions are to our knowledge open (already for geodesic convexity in  graphs): 

\begin{question} \label{S3problem1} Is it possible to characterize convexity spaces of arity $n$ (arity $2$ or geodesic convexity in graphs) satisfying the separation axiom $S_3$ via a condition
(a) on specific subsets or (b) on subsets with a fixed number of points?
\end{question}

\begin{question} \label{S3problem-decision} What is the complexity of deciding if a finite convexity space (a finite convexity space of arity 2 or geodesic convexity in graphs) is $S_3$?
\end{question}

\begin{question} \label{S3problem2} Characterize the semispaces in convexity spaces of arity $n$ (arity $2$ or geodesic convexity in graphs).
\end{question}

\begin{remark}
We believe that the answer to Question \ref{S3problem1} is negative even for geodesic convexity in graphs.  In \cite{Ch_S3} we asserted that the answer
to Question \ref{S3problem1} is negative in case of arity $n=2$ but the provided example is not correct. Furthermore, we believe that deciding
if a finite convexity space is $S_3$ is NP-complete (and again, already for geodesic convexity in graphs). Proposition \ref{S3shadows1} shows that this decision problem is in NP.
A positive answer to any version of Question \ref{S3problem1}(b) would imply a positive answer to Question \ref{S3problem-decision}. On the other hand, under the assumption that P$\ne$NP, the NP-completness
of deciding $S_3$ would imply that Question \ref{S3problem1}(b) has a negative answer.

Most of the remaining open questions are specifications of Questions \ref{S3problem1}, \ref{S3problem-decision}, and \ref{S3problem2}. The results of this paper can be viewed as contributions to these three fundamental
questions.
\end{remark}


\subsection{Interval spaces}
Let $V$ be any set, whose elements are called \emph{points}. For each pair $u,v\in V$, let $u\circ v$ be a subset of $V$, called the \emph{interval} between $u$ and $v$.
Then $(V,\circ)$ is called an \emph{interval space}~\cite{VdV} if $u\in u\circ v$ and $u\circ v=v\circ u$.
The interval space $(V,\circ)$ is  \emph{geometric}
if $u\circ u=\{ u\}$, $w\in u\circ v$ implies $u\circ w\subseteq u\circ v$, and $v,w\in u\circ x$ and $v\in u\circ w$ implies $w\in v\circ x$ for all $u,v,w,x\in V$~\cite{BaVdVVe,Ve}.
A particular instance of geometric interval space is any metric space $(V,d)$ where the intervals are the metric intervals $[u,v]=\{ x\in V: d(u,x)+d(x,v)=d(u,v)\}$.
Notice that  the convexity in any interval space has arity 2.
With any interval space on $V$ one can associate a graph $G=(V,E)$ such that $u\sim v$ if and only if $u\circ v=\{ u,v\}$. An interval space $(V,\circ)$ is \emph{graphic}
if $u\circ v=[u,v]$ for any pair $u,v\in V$, where $[u,v]$ is the interval between $u$ and $v$ in $G$. Clearly, the convex sets of a graphic interval space coincide with the
geodesically convex sets of its graph.  A simple sufficient condition for a discrete interval space to be graphic was given in~\cite{BaCh_helly}.

\begin{definition} [Triangle Condition] An interval space satisfies the \emph{triangle condition} if
\begin{enumerate}
\item[(TC)] for any  $u,v,w\in V$ such that $u\circ v\cap u\circ w=\{ u\}, u\circ v\cap v\circ w=\{ v\},$ and $u\circ w\cap v\circ w=\{ w\}$, the intervals $u\circ v,u\circ w,v\circ w$ are edges whenever at least one of them is an edge.
\end{enumerate}
For graphs, the triangle condition can be reformulated as follows:
\begin{enumerate}
\item[(TC)] for any  $u,v,w\in V$ with $1=d(v,w)<d(u,v)=d(u,w)$ there exists a common neighbor
$x$ of $v$ and $w$ such that $d(u,x)=d(u,v)-1.$
\end{enumerate}
\end{definition}


\begin{theorem}\cite{BaCh_helly}\label{graphic-triangle-condition}  A discrete geometric interval space satisfying the triangle condition is graphic.
\end{theorem}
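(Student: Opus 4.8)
The plan is to fix the graph $G=(V,E)$ associated with the interval space, i.e.\ $u\sim v$ iff $u\circ v=\{u,v\}$, and to prove $u\circ v=[u,v]_G$ for all $u,v\in V$ by induction on $|u\circ v|$ (finite, since the space is discrete), where $[u,v]_G$ denotes the metric interval in $G$. The first job is to extract the order structure hidden in the geometric axioms. Define $x\le_u y$ iff $x\in u\circ y$; using $u\circ u=\{u\}$, the monotonicity axiom ($w\in u\circ v\Rightarrow u\circ w\subseteq u\circ v$) and the third geometric axiom, one checks that $\le_u$ is a partial order with least element $u$, and that $(u\circ v,\le_u)$ has greatest element $v$. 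Moreover $x\in u\circ v$ forces $u\circ x\subseteq u\circ v$ and (applying monotonicity to $x\in v\circ u$) $x\circ v\subseteq u\circ v$, while antisymmetry of $\le_v$ shows that $x\neq u$ implies $u\notin x\circ v$, so $|x\circ v|<|u\circ v|$; symmetrically $|u\circ x|<|u\circ v|$ for $x\neq v$. This is exactly what will let the induction hypothesis be applied to the sub‑intervals $w\circ v$, $u\circ x$, etc.

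The second step is to show that in $(u\circ v,\le_u)$ every covering pair is an edge of $G$: if $b$ covers $a$ and $c\in a\circ b$, then $c\in a\circ b\subseteq u\circ b\subseteq u\circ v$ and $c\le_u b$, while one application of the third geometric axiom gives $a\in u\circ c$, so $a\le_u c\le_u b$; since $c\neq a,b$ would contradict covering, $a\circ b=\{a,b\}$. It follows that the atoms of $(u\circ v,\le_u)$ are precisely the neighbours of $u$ lying in $u\circ v$, that every point of $u\circ v$ other than $u$ lies above some atom, that (once more via the third axiom) $u\circ v=\{u\}\cup\bigcup\{\,w\circ v : w\text{ an atom}\,\}$, and that every maximal $\le_u$‑chain from $u$ to $v$ is a $u$–$v$ path in $G$; in particular $G$ is connected (within each interval) and $d_G$ is well defined.

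By the induction hypothesis each $w\circ v$ (and each $u\circ x$, $x\neq v$) being smaller, we get $w\circ v=[w,v]_G$ for every atom $w$, so the decomposition becomes $u\circ v=\{u\}\cup\bigcup\{\,[w,v]_G : w\text{ an atom of }(u\circ v,\le_u)\,\}$. Comparing with the standard decomposition $[u,v]_G=\{u\}\cup\bigcup\{\,[w,v]_G : w\sim u,\ d_G(w,v)=d_G(u,v)-1\,\}$, the whole theorem reduces to the single claim: \emph{the atoms of $(u\circ v,\le_u)$ are exactly the neighbours $w$ of $u$ with $d_G(w,v)=d_G(u,v)-1$.} This is the heart of the argument and the only place the triangle condition is used. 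One inclusion says every ``forward'' neighbour of $u$ towards $v$ lies in $u\circ v$; the other says no atom of $(u\circ v,\le_u)$ is too far from $v$. Both are obtained by a descent: from a putative bad configuration — a forward neighbour $w\notin u\circ v$, respectively an atom $w$ with $d_G(w,v)\ge d_G(u,v)$ — one uses intermediate points supplied by Steps 2–3 along maximal chains inside the strictly smaller intervals $w\circ v,\ u\circ w,\dots$ to produce a triple of points whose three intervals pairwise meet only in their shared endpoints and one of which is an edge; the triangle condition then forces all three to be edges, so the triple spans a triangle of $G$, collapsing the distances and contradicting $|u\circ v|\ge 3$.

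I expect the main obstacle to be exactly this last step: organising the descent so that it terminates (this is where discreteness/finiteness of intervals is used) and stays inside the correct intervals at every replacement (this is where interval‑monotonicity from Step 1 is used), and then checking that the resulting thin triple meets the three ``meet‑only‑at‑endpoints'' hypotheses of (TC). Steps 1–3 are ``soft'', relying only on the geometric axioms and finiteness; once the boxed claim is established, substituting it into the displayed decomposition yields $u\circ v=[u,v]_G$ and closes the induction.
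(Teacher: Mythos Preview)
The paper does not prove this statement itself; it is quoted from Bandelt--Chepoi and invoked without argument, so there is no in-paper proof to compare against. Your framework (the partial order $\le_u$, that covers are edges of $G$, the atom decomposition $u\circ v=\{u\}\cup\bigcup_{w}w\circ v$, and the reduction to the boxed claim) is correct and is the natural route; the gap is that the boxed claim is not proved, and your descent sketch does not close it.

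Concretely, for the direction ``forward neighbour $w\Rightarrow w\in u\circ v$'' one wants to apply (TC) to the triple $(u,w,v)$. Two of the three ``meet only at endpoints'' hypotheses can be arranged: $u\circ w\cap u\circ v=\{u\}$ holds by assumption, and $u\circ v\cap w\circ v=\{v\}$ can be forced by descending to $p\in(u\circ v\cap w\circ v)\setminus\{v\}$, since $|u\circ p|<|u\circ v|$. But the remaining hypothesis $u\notin w\circ v$ is \emph{not} available from your induction on $|u\circ v|$: when $w\notin u\circ v$ you have no bound on $|w\circ v|$, and if $u\in w\circ v$ then monotonicity even gives $u\circ v\subsetneq w\circ v$, so the ``descent'' goes the wrong way. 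For the other direction (``an atom $w$ with $d_G(w,v)=d_G(u,v)$ is impossible'') you do have $w\circ v=[w,v]_G$ by induction, but every triple containing both $u$ and $w$ fails the first (TC) hypothesis since $w\in u\circ v\cap u\circ w$; your sketch does not say which triple to use, and ``the triple spans a triangle of $G$, contradicting $|u\circ v|\ge 3$'' is not a contradiction --- a $K_3$ inside $G$ is perfectly compatible with $|u\circ v|\ge 3$. One repair is to induct on $d_G(u,v)$ (or jointly on both parameters): then for a forward neighbour $w$ one has $d_G(w,v)<d_G(u,v)$, so $w\circ v=[w,v]_G$ by hypothesis, whence $u\notin w\circ v$ for free; now the descent on $u\circ v\cap w\circ v$ produces $p$ with $d_G(u,p)<d_G(u,v)$ and $w\in[u,p]_G=u\circ p\subseteq u\circ v$, the desired contradiction. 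The opposite direction still needs its own argument.
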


Burris \cite{Bu} proved that any convexity
space $(X,\C')$ is a subspace of some interval space $(V,\circ)$  ($(V,\circ)$ is finite if $(X,\C')$ is finite). Based on this result,
Duchet \cite{Du_retracts} proved that any convexity space $(X,\C)$ with convex points is a subspace  of the geodesic convexity of some graph $G=(V,E)$ (if  $(X,\C)$  is finite, then $V$ is also finite).
Note that however the graph $G$ is not necessarily $S_3$ if $(X,\C)$ is. Thus one can ask if \emph{any $S_3$-space is a subspace of an $S_3$-graph.}

In case of geodesic convexity in graphs, the separation axiom $S_4$ implies that the intervals are convex sets (i.e., $\conv(u,v)=[u,v]$), thus the Pasch axiom from Theorem \ref{S4arity}
can be rewritten in the following form:   for any $u,v,w\in V$, for any $x\in [w,u], y\in [w,v]$, there
exists $z\in [u,y]\cap [v,x]$.  In the case of graphs (and of interval spaces) one can also consider a
stronger version of the join operation: for the rest of the paper, the \emph{join} of two sets $A$ and $B$ is the set $A\ast B=\bigcup_{a\in A, b\in B} [a,b]$ and we call  the convexity
\emph{join-hull commutative} if for any convex set $A$ and any vertex $x$, $\conv(A\cup\{x\})=x\ast A$. Then JHC is equivalent to the fact that the join $A\ast B$ of two convex sets $A$ and $B$ is convex and is equivalent
to the following version of the Peano axiom: for any $u,v,w\in X$, for any $x\in [w,v]$, and for any $y\in [u,x]$ there exists $z\in [u,v]$ such that $y\in [w,z]$.

\begin{definition} [$S_3$,  Pasch, and Peano graphs]  We call a graph $G$ an \emph{$S_3$-graph}  if its geodesic convexity satisfies the $S_3$-property.
We call a graph $G$ a \emph{Pasch graph} (or an \emph{$S_4$-graph}) if its geodesic convexity satisfies the separation
property  $S_4$. Finally,  a graph $G$ is a \emph{Peano graph} if the geodesic convexity satisfies  the JHC property and a \emph{Pasch-Peano graph}  if $G$ is a Pasch and Peano graph.
\end{definition}

For a study of Pasch-Peano graphs, see \cite{BaChvdV} (its results have been presented in the book \cite{VdV}).  Peano and Pasch axioms can be efficiently verified,  
however,  $S_3$ is much more elusive. 
In case of bipartite graphs, $S_3$ can be characterized in a pretty way: 

\begin{proposition} \cite{AlKn,Ba_intrinsic,Ch_thesis,Ch_bipartite,Dj}\label{copoints-bipartite} For the geodesic convexity of a bipartite graph $G=(V,E)$, the following conditions are equivalent:
\begin{enumerate}
\item[(i)] the geodesic convexity of $G$ satisfies the separation axiom $S_2$;
\item[(ii)] the geodesic convexity of $G$ satisfies the separation axiom $S_3$;
\item[(iii)] $G$ is a partial cube, i.e., $G$ has an isometric embedding in a hypercube;
\item[(iv)] for any edge $uv$ of $G$, the sets $W(u,v)=\{ x\in V: d(u,x)<d(v,x)\}$ and $W(v,x)=\{ x\in V: d(v,x)<d(v,y)\}$ are complementary halfspaces of $G$.
\end{enumerate}
\end{proposition}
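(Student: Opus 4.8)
The plan is to establish the cycle $(ii)\Rightarrow(i)\Rightarrow(iv)\Rightarrow(ii)$ together with the equivalence $(iv)\Leftrightarrow(iii)$. The implication $(ii)\Rightarrow(i)$ is immediate, since a single vertex is a convex set, so $S_3$ trivially yields $S_2$. For $(i)\Rightarrow(iv)$, I would fix an edge $uv$ and, using $S_2$, produce a halfspace $H$ with $u\in H$ and $v\notin H$; the goal is to show that necessarily $H=W(u,v)$. Bipartiteness of $G$ forces $d(u,x)$ and $d(v,x)$ to differ by exactly one for every vertex $x$, so $\{W(u,v),W(v,u)\}$ is a partition of $V$; and convexity of $H$ precludes a vertex $x\in H$ with $d(v,x)<d(u,x)$, since then $v\in[u,x]\subseteq H$. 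Hence $H\subseteq W(u,v)$, and symmetrically $V\setminus H\subseteq W(v,u)$; comparing these two partitions of $V$ gives $H=W(u,v)$, so every $W(u,v)$ is a halfspace.

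For $(iv)\Rightarrow(ii)$, let $A$ be a convex set and $x_0\notin A$ (the case $A=\varnothing$ being trivial). I would choose $y\in A$ at minimum distance from $x_0$ and a neighbour $v$ of $y$ lying on a geodesic from $x_0$ to $y$, so that $d(v,x_0)<d(y,x_0)$ and hence $x_0\in W(v,y)$. The key point is that $A\subseteq W(y,v)$: if some $z\in A$ had $d(v,z)<d(y,z)$, then, $vy$ being an edge of a bipartite graph, $v\in[y,z]\subseteq A$ by convexity of $A$, so $v\in A$ with $d(x_0,v)<d(x_0,y)$, contradicting the choice of $y$. Thus $A\subseteq W(y,v)$ and $x_0\in W(v,y)=V\setminus W(y,v)$, which are complementary halfspaces by $(iv)$; so $A$ and $x_0$ are separated and $(ii)$ holds. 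Finally, $(iv)\Leftrightarrow(iii)$ is Djokovi\'c's characterization of partial cubes \cite{Dj}: a bipartite graph all of whose sets $W(u,v)$ are convex is a partial cube, and conversely in a partial cube each $W(u,v)$ is a halfspace.

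The whole argument is short once $(i)\Rightarrow(iv)$ is available, and the only external input is Djokovi\'c's theorem, which is what brings partial cubes into the picture. The step that needs a little care is precisely that identification --- that an arbitrary halfspace separating the two endpoints of an edge must coincide with the canonical set $W(u,v)$ --- which is where bipartiteness (forcing the two $W$-sets to partition $V$) is used in an essential way. It is also worth noting that one should \emph{not} try to route $(iii)\Rightarrow(ii)$ through gatedness of convex sets: convex sets of partial cubes need not be gated (for instance, a sub-path of length two in $C_6$ is convex but has no gate for the opposite vertex), whereas the distance-minimising argument above sidesteps gatedness entirely.
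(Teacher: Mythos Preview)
Your argument is correct and follows the standard route. Note that the paper itself does not give a self-contained proof of this proposition: it simply attributes $(iii)\Leftrightarrow(iv)$ to Djokovi\'c and the remaining equivalences to \cite{AlKn,Ba_intrinsic,Ch_thesis,Ch_bipartite}. Your key step $(i)\Rightarrow(iv)$ --- that any halfspace separating the endpoints of an edge $uv$ must equal $W(u,v)$ --- is precisely the content of the unnumbered lemma the paper later states and proves in Section~\ref{s:halfspacesep} (``If $G=(V,E)$ is a bipartite graph, $uv$ an edge of $G$, and $H',H''$ are complementary halfspaces with $u\in H'$ and $v\in H''$, then $H'=W(u,v)$ and $H''=W(v,u)$''), and your $(iv)\Rightarrow(ii)$ via a closest vertex of $A$ is the same idea behind Lemma~\ref{copoint}. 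So while the paper defers the proof to the literature, the argument you have written is the expected one and matches the fragments the paper does spell out elsewhere.
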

The equivalence (iii)$\Longleftrightarrow$(iv) is the content of Djokovi\'{c}'s theorem \cite{Dj}. The equivalence between the conditions (i),(ii),(iii), and (iv) has been independently established in \cite{AlKn,Ba_intrinsic,Ch_thesis,Ch_bipartite}.
$S_4$-bipartite graphs have been characterized in  \cite{Ch_bipartite,Ch_separa}  as partial cubes with  forbidden pc-minors.

\subsection{Graphs}
All graphs $G=(V,E)$ in this paper are
undirected, connected, and contain no multiple edges, neither loops, and are not necessarily finite or locally finite.
We write $u \sim v$ if $u, v \in V$ are adjacent and $u\nsim v$ if $u,v$ are not adjacent. Furthermore, we say that a set $A\subset V$ and a vertex $x_0\notin A$ are
\emph{adjacent} (notation $x_0\sim A$) if $x_0$ is adjacent to a vertex of $A$. For a subset
$A\subseteq V,$ the subgraph of $G=(V,E)$  {\it induced by} $A$
is the graph $G(A)=(A,E')$ such that $uv\in E'$ if and only if $uv\in E$. A set $P$ is an \emph{induced path} if $G(P)$ is a path of $G$.
The \emph{length} of a $(u,v)$-path $P$ is the number  of
edges in $P$. A \emph{shortest $(u,v)$-path} (or a
$(u,v)$-\emph{geodesic})
is a $(u,v)$-path with a minimum number of edges.  The \emph{distance} $d_G(u,v)$ between two vertices $u$ and $v$ of  $G$ is the length of a $(u,v)$-geodesic.
If there is no ambiguity, we will write $d(u,v)=d_G(u,v)$.
As above, the \emph{interval} $[u,v]$ between $u$ and $v$ is the set of all vertices on $(u,v)$-geodesics, i.e.  $[u,v]=\{w\in V: d(u,w)+d(w,v)=d(u,v)\}$. 
If $d(u,v)=2$, then $[u,v]$ is called a \emph{2-interval}. 
For a vertex $v$ of $G$ and an integer $r\ge 1$, we denote  by $B_r(v)$
the \emph{ball} in $G$
(and the subgraph induced by this ball)  of radius $r$ centered at  $v$, i.e.,
$B_r(v)=\{ x\in V: d(v,x)\le r\}.$ More generally, the $r$--{\it ball  around a set} $A\subseteq V$
is the set (or the subgraph induced by) $B_r(A)=\{ v\in V: d(v,A)\le r\},$ where $d(v,A)=\mbox{min} \{ d(v,x): x\in A\}$.
As usual, $N(v)=B_1(v)\setminus\{ v\}$ denotes the set of neighbors of a vertex $v$ in $G$.
%
A graph $G=(V,E)$
is {\it isometrically embeddable} into a graph $G'=(W,F)$
if there exists a mapping $\varphi : V\rightarrow W$ such that $d_{G'}(\varphi (u),\varphi(v))=d_G(u,v)$ for $u,v\in V$.
We call a subgraph $H$  of $G$ an \emph{isometric subgraph}  if $d_H(u,v)=d_G(u,v)$
for each pair of vertices $u,v$ of $H$. The induced subgraph $H$ of $G$
(or the vertex set of $H$) is called {\it  (geodesically) convex}
if it includes the interval of $G$ between any pair of its
vertices. The smallest convex subgraph containing a given set $A$
is called the {\it convex hull} of $A$ and is denoted by $\conv(A)$.
An induced subgraph $H$ (or the corresponding vertex set of $H$)
of a graph $G$
is {\it gated} \cite{DrSch} if for every vertex $x$ outside $H$ there
exists a vertex $x'$ in $H$ (the {\it  gate} of $x$)
such that  $x'\in [x,y]$ for any $y$ of $H$. Since the intersection
of gated sets is gated, the gated sets of a graph
define a convexity space. A graph $G$ is a {\it gated amalgamation}
of two
graphs $G_1$ and $G_2$ if $G_1$ and $G_2$ are (isomorphic to) two intersecting
gated subgraphs of $G$ whose union is all of $G.$ Finally, a set $S$
is \emph{monophonically convex} (or \emph{induced path convex}) if for
any $u,v\in S$, $S$ contains all vertices on induced $(u,v)$-paths of $G$.
Gated sets and monophonically convex sets are geodesically convex.

As usually, $C_n$ is the  \emph{cycle} on $n$ vertices, $K_n$ is the
complete graph on $n$ vertices, and
$K_{n,m}$ is the complete bipartite graph with $n$ and $m$ vertices on each
side. $C_4$ is called a \emph{square}  	and $C_3=K_3$ is called a \emph{triangle}. 
By $W_n$ we denote
the \emph{wheel} consisting of $C_n$ and a central vertex adjacent to all vertices of
$C_n$. By $W^-_n$ we mean the wheel $W_n$ minus an edge connecting the central vertex to a vertex of $C_n$.
Analogously $K^-_4$ and $K^-_{3,3}$ are the graphs obtained from $K_4$ and $K_{3,3}$ by removing one edge. 
An $n$--{\it octahedron} $O_n$ (or, a {\it hyperoctahedron}, for short) is the complete graph $K_{2n}$
on $2n$ vertices minus a perfect matching. The unique vertex of $O_n$ not adjacent to a vertex $x$ is denoted by $x^*$ (consequently, $x^**=x$). Let $G_{i}$, $i \in \Lambda$ be an arbitrary family of graphs. The
\emph{Cartesian product} $\prod_{i \in \Lambda} G_{i}$ is a graph
whose vertices are all functions $x: i \mapsto x_{i}$, $x_{i} \in
V(G_{i})$.  Two vertices $x,y$ are adjacent if there exists an index
$j \in \Lambda$ such that $x_{j} y_{j} \in E(G_{j})$ and $x_{i} =
y_{i}$ for all $i \neq j$. Note that a Cartesian product of infinitely
many nontrivial graphs is disconnected. Therefore, in this case the
connected components of the Cartesian product are called {\it weak Cartesian products}.
%
A {\it hypercube} $H(X)$ is a graph having the finite subsets of $X$ as vertices
and two  such sets $A,B$ are adjacent in $H(X)$
iff $|A\triangle B|=1$ (where the {\it symmetric difference} of $A$ and $B$ is written and defined by $A\triangle B=(A\setminus
B)\cup (B\setminus A)$). Hypercubes are (weak) Cartesian products of edges (of $K_2$). More generally, \emph{Hamming graphs} are (weak) Cartesian products of cliques.
A {\it half-cube} $\frac{1}{2}H(X)$ has the vertices
of a hypercube $H(X)$ corresponding to finite subsets of $X$ of even cardinality
as vertices and two  such vertices are adjacent in $\frac{1}{2}H(X)$  iff  their  distance
in $H(X)$ is 2 (analogously one can define a half-cube on finite subsets of odd cardinality).  For a positive integer $k$, the {\it Johnson graph}  $J(X,k)$ has the subsets of $X$ of size $k$ as vertices and
two such vertices are adjacent in $J(X,k)$ iff their  distance in $H(X)$ is $2$.
All Johnson graphs $J(X,k)$ with even $k$ are isometric subgraphs of the half-cube $\frac{1}{2}H(X)$.  If $X$ is finite and $|X|=n$,
then the hypercube, the half-cube, and the Johnson graphs are usually denoted by $H_n, \frac{1}{2}H_n,$ and $J(n,k),$
respectively. 

\subsection{Examples of $S_3$-graphs} In order to give some intuition about $S_3$-graphs and their semispaces, we conclude the preliminary section with several
simple examples of such graphs. Some of these examples are
mentioned in other places of the paper.

\begin{example} (Trees) Each vertex $x_0$ of a tree $T$ has $\deg(x_0)$ semispaces attached at $x_0$: each of them is obtained by removing an edge incident to $x_0$ (but not removing its ends)
and considering the resulting subtree of $T$ not containing the vertex $x_0$. Its complement if the subtree containing $x_0$ and is also convex, thus semispaces of $T$ are halfspaces.
\end{example}

\begin{example} (Hypercubes and partial cubes) The $d$-dimensional hypercube $H_d$ has $2d$ semispaces corresponding to the $2d$ facets of $H_d$.
For each vertex $x_0$ of $H_d$ there are $d$ semispaces having $x_0$ as the attaching vertex.  Each such semispace is defined by an edge $x_0v$ incident to $x_0$: when removing
all edges parallel to the edge $x_0v$ the hypercube $H_d$ is partitioned in two subhypercubes of dimension $d-1$ and the subhypercube not containing $x_0$ is the respective semispace.
Again, semispaces of $H_d$ are halfspaces and, vice-versa, halfspaces of $H_d$ are semispaces. Since $H_d$ has $2^d$ vertices and $2d$ semispaces, $H_d$ is an example of a graph having
a logarithmic number of semispaces (Hamming graphs are other such examples).

Partial cubes and median graphs have the same structure of semispaces as hypercubes and trees. Namely, by Proposition \ref{copoints-bipartite},
for any vertex $x_0$ and any neighbor $v$ of $x_0$, the sets $W(x_0,v)=x_0/v$ and $W(v,x_0)=v/x_0$ are complementary halfspaces (they correspond to the two connected components of $G$ obtained by removing all edges parallel to $x_0v$
and also to the two sets of vertices having the same value 0 or 1 in a fixed coordinate of the isometric embedding of $G$ into a hypercube). Then $W(v,x_0)$ is a semispace at $x_0$: indeed,
any $z$ in $W(x_0,v)$ is closer to $x_0$ than to $v$, thus $x_0\in [z,v]$ and $z$ cannot belong to a semispace containing $W(v,x_0)$ and avoiding $x_0$. Since each halfspace of $G$ has the form $W(x_0,v)$ or $W(v,x_0)$
for some edge $x_0v$ of $G$, halfspaces and semispaces of partial cubes are the same. The square grid in any dimension is a median graph, thus an $S_3$-graph.
\end{example}

\begin{example} (Hyperoctahedra and complete graphs) \label{ex-hyper} The $d$-dimensional hyperocahedron $O_d$ is the dual of the hypercube $H_d$: $O_d$ has $2d$ vertices and $2^d$ maximal cliques (corresponding to the vertices of $H_d$).
$O_d$ has diameter 2. Each vertex $x$ and its unique non-neighbor $x^*$ correspond to complementary halfspaces of $H_d$.  The semispaces attached to a given vertex $x_0$ of $O_d$ are
precisely the $2^{d-1}$ maximal cliques of $Q_d$ containing the unique vertex $x^*_0$ of $O_d$ not adjacent to $x_0$.
Each such semispace $S=\{u_1,\ldots,u_d\}$ is a halfspace: its complement is the maximal clique $S^*=\{u^*_1,\ldots,u^*_d\}$.
The hyperoctahedron $O_d$ is an example of an $S_3$-graph with an exponential number of semispaces.

In a complete graph $K_d$, the semispaces are the sets of the form $V\setminus \{ x_0\}$ and each vertex $x_0$ is the attaching vertex of a unique semispace $V\setminus \{ x_0\}$, which is also a halfspace.
On the other hand, $K_d$ contains halfspaces that are not semispaces: any pair of the form $A,V\setminus A$ with $A\ne\varnothing, V$ are complementary halfspaces, however $A$ is a semispace if and only if $|A|=d-1$.
\end{example}

\begin{example} \label{petersen} (Petersen graph and Platonic solids) The semispaces of the Petersen graph $P_{10}$ are the 5-cycles. Since their complements are also 5-cycles and the 5-cycles of $P_{10}$ are convex, the semispaces of $P_{10}$ are halfspaces.
Each vertex $x_0$ of $P_{10}$ is the attaching vertex of 3 adjacent semispaces.

The semispaces of the graph of the icosahedron are the 5-wheels  (which are convex subgraphs). Since, the complements of 5-wheels are also 5-wheels, the icosahedron is an $S_3$-graph. For each vertex $x_0$ there are 5 semispaces
adjacent to $x_0$ and one semispace non adjacent to $x_0$, which is the 5-wheel centered at the unique vertex at distance 3 from $x_0$. Analogously, one can analyse the semispaces of the dodecahedron and show that it is an $S_3$-graph.
Since the cliques, hyperoctahedra, and hypercubes of all dimensions are $S_3$-graphs, we deduce that 1-skeleta of all Platonic solids in ${\mathbb R}^3$ are $S_3$-graphs. In fact, in \cite{BaChvdV} it is shown that they are
Pasch-Peano graphs (see also \cite[Subsection 4.24.3]{VdV}).
\end{example}

\begin{example} (Triangular and king grids) Let $T_6$ denote the plane graph defined by the tiling of the plane into equilateral triangles with side 1. Each vertex $x_0$ has 6 semispaces attached and adjacent to $x_0$.
To define them, we pick each of the 6 triangles $x_0uv$ incident to $x_0$ and consider the union $W(u,x_0)\cup W(v,x_0)$, which consists of vertices of $T_6$ which are closer to the pair $u,v$ than to $x_0$. This set coincides with the
shadow $\{ u,v\}/x_0$.  Geometrically one can view the shadow $\{ u,v\}/x_0$ as follows: removing from the grid $T_6$ the edges (but leaving their ends) from the zipped zone defined by the edges $x_0u$ and $x_0v$,  $T_6$ will be
partitioned into two connected subgraphs. One can easily show (and this follows from more general results) that both these subgraphs
define complementary halfspaces of $T_6$.  Then $\{ u,v\}/x_0=W(u,x_0)\cup W(v,x_0)$ is those of the two subgraphs which does not contain the vertex $x_0$. Thus each semispace of $G$ is a halfspace. The triangulation $T_6$
can be viewed as the discrete analog of the Euclidean plane and the semispaces of $T_6$ can be viewed as analogs of hypercones  in Euclidean spaces sensu \cite{Ko}.

A similar construction holds for all \emph{bridged  triangulations}, i.e., 2-connected plane graphs in which all inner faces are triangles and all inner vertices have degree $\ge 6$, see \cite[Proof of Theorem 2]{BaCh_wmg}.
Analogs of the triangular grid $T_6$ are the (bridged) triangulations $T_k$ of the hyperbolic plane in which all inner vertices have degree $k$ for some $k\ge 7$.

Now, consider the king grid ${\mathbb Z}^2_{\infty}$, i.e., ${\mathbb Z}^2$ endowed with the $\ell_{\infty}$-metric. Then each vertex $x_0$ belongs to four maximal cliques, which are all $K_4$. The four semispaces at $x_0$ and adjacent to $x_0$ are defined
by these four cliques. Suppose without loss of generality that $x_0$ is the vertex $(0,0)$ and that the maximal clique has the form $K\cup \{ x_0\}$, where $K=\{ (1,0),(0,1),(1,1)\}$. Then the diagonal line defined by the points $(\frac{1}{2},0)$
and $(0,\frac{1}{2})$ partition the grid into two convex subgrids. Then one can easily see that the subgrid not containing $x_0$ has the form $K/x_0$ and is a semispace at $x_0$. Furthermore, this subgrid is a halfspace of ${\mathbb Z}^2_{\infty}$.
\end{example}

It is no coincidence that in several examples the semispaces have the form $K/x_0$ for a maximal clique of the form $K\cup \{ x_0\}$: we will show that all semispaces in $S_3$-graphs satisfying the triangle condition (TC) have this form.

\section{$S_3$-Graphs} \label{s:s3graphs}
In this section, first we prove the characterization of  $S_3$ separation property in general convexity spaces and establish some properties of $S_3$-graphs (with respect to the geodesic convexity).
Then we present a characterization of  semispaces of $S_3$-graphs as shadows of vertices on special sets. Finally,
we present a characterization of $S_3$-graphs, which is a refinement of the characterization of $S_3$-convexity spaces and that uses the structure of semispaces.

\subsection{$S_3$-convexity spaces}

We characterize $S_3$ in terms of convexity of shadows. Proposition \ref{S3shadows} 
is from  the paper \cite{Ch_S3}, which was published in the local University press. 
For completeness, we provide its proof. 
We start with a simple observation:

\begin{lemma} \label{shadow-separation} Let $(X,\C)$ be a convexity space and let $H',H''$ be two complementary halfspaces separating two disjoint convex sets $A$ and  $B$,  say $A\subseteq H'$ and $B\subseteq H''$.
Then the shadow $A/B$ belongs to $H'$ and the shadow $B/A$ belongs to $H''$. Furthermore, $\conv(A/B)\subseteq H'$ and $\conv(B/A)\subseteq H''$.
\end{lemma}

\begin{proof} To prove that $B/A\subseteq H''$,  pick any $y\in B/A$. Then there exists  $z\in \conv(A\cup \{ y\})\cap B$. Necessarily $y$ must belong to $H''$, otherwise, if $y\in H'$, then  $z\in H'\cap B\subseteq H'\cap H''$, which is impossible.
Consequently, $B/A\subseteq H''$. Since $H''$ is convex, $\conv(B/A)\subseteq H''$.
\end{proof}

%

\begin{proposition} \label{S3shadows} For a convexity space $(X,\C)$ the following conditions are equivalent:
\begin{enumerate}
\item[(i)] $\C$ satisfies the separation property $S_3$;
\item[(ii)]  any polytope $P$ and any point $x_0\notin P$ are separable; 
\item[(iii)] for any polytope $P$ and any point $x_0\notin P$, the shadow $x_0/P$ is convex;
\item[(iv)] for any convex set $A$ and any point $x_0\notin A$, the shadow $x_0/A$ is convex.
\end{enumerate}
\end{proposition}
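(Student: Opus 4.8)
The plan is to prove the cycle of implications
(i)$\Rightarrow$(ii)$\Rightarrow$(iii)$\Rightarrow$(iv)$\Rightarrow$(i), exploiting domain-finiteness of $(X,\C)$ at the two places where it is essential. The implication (i)$\Rightarrow$(ii) is immediate from the definition of $S_3$: a polytope $P$ is in particular a convex set, so if $x_0\notin P$ then $P$ and $x_0$ are separable by complementary halfspaces. For (ii)$\Rightarrow$(iii), suppose $P$ is a polytope and $x_0\notin P$. By (ii) there are complementary halfspaces $H',H''$ with $P\subseteq H'$ and $x_0\in H''$. By Lemma~\ref{shadow-separation} applied to the disjoint convex sets $P$ and $\{x_0\}$ we get $\conv(x_0/P)\subseteq H''$; in particular $x_0/P\subseteq H''$, so $x_0/P$ misses nothing that would obstruct convexity — more precisely, I will show $x_0/P=H''$, or at least that $x_0/P$ is exactly the halfspace $H''$. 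The inclusion $x_0/P\subseteq H''$ is Lemma~\ref{shadow-separation}; for the reverse inclusion, take $y\in H''$ and note that the segment $\conv(x_0,y)$ need not meet $P$ directly, so this equality may fail. The correct move is instead: $\conv(x_0/P)\subseteq H''$ and $x_0\in x_0/P$ (trivially, since $\conv(P\cup\{x_0\})\cap P=P\neq\varnothing$), and one checks directly that $x_0/P$ is convex by a short argument — but this is essentially what we want to prove. So the cleaner route for (ii)$\Rightarrow$(iii) is: the shadow $x_0/P$ is always \emph{contained} in the halfspace $H''$ given by (ii), and symmetrically $\conv(P/x_0)\subseteq H'$; since $x_0/P\subseteq H''$ and $H''$ is convex, $\conv(x_0/P)\subseteq H''$, and one shows $H''\subseteq x_0/P$ by observing that any $z\notin x_0/P$ satisfies $\conv(x_0,z)\cap P=\varnothing$, hence the pair $\{z\},P$ is separated on the $x_0$-side; combining separators (here domain-finiteness lets us reduce to finitely many points) forces $z\in H'$. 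Thus $x_0/P=H''$ is convex.

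The implication (iii)$\Rightarrow$(iv) is where domain-finiteness does the real work. Let $A$ be convex and $x_0\notin A$; I want to show $x_0/A$ is convex. Take $p,q\in x_0/A$ and $r\in\conv(p,q)$; I must show $r\in x_0/A$, i.e. $\conv(x_0,r)\cap A\neq\varnothing$. From $p,q\in x_0/A$ there are points $a_p\in\conv(x_0,p)\cap A$ and $a_q\in\conv(x_0,q)\cap A$. Consider the polytope $P:=\conv(\{x_0,p,q\})$ — more precisely $P:=\conv(\{a_p,a_q\})$ together with whatever finite subset is needed. Since $(X,\C)$ is domain-finite, $\conv(x_0,r)=\bigcup\{\conv(F):F\subseteq\{x_0,r\}\text{ finite}\}$, which is just $\conv(x_0,r)$ itself as $\{x_0,r\}$ is finite, so domain-finiteness is rather needed to handle $A$: replace $A$ by $A':=\conv(\{a_p,a_q\})\subseteq A$, a $2$-polytope. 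Now $p,q\in x_0/A'$: indeed $a_p\in\conv(x_0,p)\cap A'$ and similarly for $q$. Apply (iii) to the polytope $A'$ and the point $x_0$ (noting $x_0\notin A'$ since $A'\subseteq A$): the shadow $x_0/A'$ is convex, so $r\in\conv(p,q)\subseteq x_0/A'$, giving $\conv(x_0,r)\cap A'\neq\varnothing$, whence $\conv(x_0,r)\cap A\neq\varnothing$ and $r\in x_0/A$. So $x_0/A$ is convex, which is exactly (iv).

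Finally (iv)$\Rightarrow$(i) is Proposition~\ref{S3shadows1}, which is already available in the excerpt (it states that $(X,\C)$ satisfies $S_3$ iff $x_0/A$ is convex for every convex $A$ and every $x_0\notin A$); alternatively one reproves it directly: by Theorem~\ref{copoints-S3} it suffices to show every semispace is a halfspace. Let $S$ be a semispace at $x_0$; I claim $X\setminus S=x_0/S$. If $y\notin S$, maximality of $S$ gives $\conv(S\cup\{y\})\ni x_0$, and by domain-finiteness $x_0\in\conv(F\cup\{y\})$ for some finite $F\subseteq S$, so $x_0\in\conv(\conv(F),y)$ with $\conv(F)\subseteq S$; a standard argument then yields a point of $S$ on the segment $\conv(x_0,y)$, i.e. $y\in x_0/S$. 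Conversely if $y\in x_0/S$ then $\conv(x_0,y)$ meets $S$, so $y\notin S$ (else $x_0\in S$, as $S$ would have to contain the point of $\conv(x_0,y)\cap S$ — wait, one argues: if $y\in S$ and $z\in\conv(x_0,y)\cap S$ then since $S$ is convex $\conv(x_0,y)$... this needs care, as $x_0\notin\conv(x_0,y)$-meeting-$S$ does not immediately follow). The robust statement is $X\setminus S\subseteq x_0/S$ always, and $x_0/S\subseteq X\setminus S$ follows from $x_0\notin S$ together with convexity of $S$; hence $X\setminus S=x_0/S$, which is convex by (iv), so $S$ is a halfspace.

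\textbf{Main obstacle.} The delicate point is making rigorous the ``combining separators / taking a point on the segment'' steps in (ii)$\Rightarrow$(iii) and (iv)$\Rightarrow$(i): one must use domain-finiteness to reduce an infinite convex set to a polytope before invoking the finite separation or the explicit description of the shadow, and one must be careful that $x_0\notin A'$ for the sub-polytope $A'$. I expect the cleanest organization is exactly the chain above, pushing essentially all the subtlety into the single reduction ``replace $A$ by the $2$-polytope $\conv(\{a_p,a_q\})$'' in (iii)$\Rightarrow$(iv), and citing Proposition~\ref{S3shadows1} for (iv)$\Rightarrow$(i) rather than reproving it.
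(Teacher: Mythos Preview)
Your proposal has a systematic error: you have confused the shadow $x_0/A$ with the shadow $A/x_0$ throughout. By definition, $x_0/A=\{x:\conv(A\cup\{x\})\ni x_0\}$, whereas $A/x_0=\{x:\conv(x_0,x)\cap A\neq\varnothing\}$. In your (iii)$\Rightarrow$(iv) step you write ``$r\in x_0/A$, i.e.\ $\conv(x_0,r)\cap A\neq\varnothing$'' and ``there are points $a_p\in\conv(x_0,p)\cap A$'': both of these are the condition for membership in $A/x_0$, not $x_0/A$. The same confusion appears in (ii)$\Rightarrow$(iii) (``any $z\notin x_0/P$ satisfies $\conv(x_0,z)\cap P=\varnothing$'') and in your direct argument for (iv)$\Rightarrow$(i). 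A second, independent problem in (iii)$\Rightarrow$(iv) is that you only check pairs $p,q$; since $(X,\C)$ is not assumed to have arity $2$, convexity requires $\conv(F)\subseteq x_0/A$ for every finite $F\subseteq x_0/A$, not just $2$-element $F$.

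The paper's proof avoids both issues. For (ii)$\Rightarrow$(iii) it argues by contradiction: if $x_0/P$ is not convex, pick a finite $Y\subseteq x_0/P$ and $y\in\conv(Y)\setminus(x_0/P)$; since $y\notin x_0/P$ means $x_0\notin\conv(P\cup\{y\})=:P'$ and $P'$ is still a polytope, (ii) gives halfspaces $H',H''$ with $x_0\in H'$, $P'\subseteq H''$; then Lemma~\ref{shadow-separation} forces $Y\subseteq x_0/P\subseteq H'$, so $y\in\conv(Y)\subseteq H'$, contradicting $y\in P'\subseteq H''$. Your attempt to show $x_0/P=H''$ cannot work: in general $x_0/P$ is strictly smaller than any separating halfspace (e.g.\ a ray in $\mathbb R^2$ when $P$ is a point). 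For (iii)$\Rightarrow$(iv), the paper takes an arbitrary finite $Z=\{z_1,\dots,z_k\}\subseteq x_0/A$ and $z\in\conv(Z)\setminus(x_0/A)$; from $x_0\in\conv(A\cup\{z_i\})$ and domain-finiteness it extracts finite $A_i\subseteq A$ with $x_0\in\conv(A_i\cup\{z_i\})$, sets $P=\conv\bigl(\bigcup_i A_i\bigr)$, and applies (iii) to this polytope. Your idea of passing to a sub-polytope of $A$ is the right instinct, but you need finite witnessing sets $A_i$ rather than single points $a_p$, and you need to handle arbitrary finite $Z$. Your citation of Proposition~\ref{S3shadows1} for (iv)$\Rightarrow$(i) is fine.
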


\begin{proof} Clearly, (i)$\Rightarrow$(ii). Now we prove that (ii)$\Rightarrow$(iii). 
Suppose by way of contradiction that for a polytope $P$ and $x_0\notin P$, the shadow  $x_0/P$ is not convex. Then there exists a finite set $Y\subset x_0/P$ and $y\in \conv(Y)\setminus x_0/P$. Let $P'=\conv(P\cup \{ y\})$. By the choice of $y$,
we have $x_0\notin P'$.  Since $\C$ is $S_3$, there exist complementary halfspaces $H',H''$ such that
$x_0\in H'$ and $P'\subseteq H''$. Since $P\subseteq P'$, $H'$ and $H''$ separate $x_0$ and $P$. By Lemma \ref{shadow-separation}, $Y\subseteq x_0/P\subseteq H'$. Since $y\in \conv(Y)\subseteq H'$ and $y\in P'\subseteq H''$, we
obtain a contradiction. Thus $x_0/P$ is convex. By Lemma \ref{shadow-separation}, $\conv(P/x_0)\subseteq H''$ and $x_0\in H'$, thus  $x_0\notin \conv(P/x_0)$. This establishes (i)$\Rightarrow$(ii).

Now we prove (iii)$\Rightarrow$(iv). Suppose $x_0/A$ is not convex for a convex set $A$. Then $x_0\notin A$, otherwise $x_0/A=X$.  Since $\C$ is domain-finite,
there exists a finite set $Z\subseteq x_0/A$ and a point
$z\in \conv(Z)\setminus (x_0/A)$. Let $Z=\{ z_1,\ldots,z_k\}$. Since $x_0\notin A$ and  $x_0\in \conv(A\cup \{ z_i\})$ for any $z_i\in S$,  by domain-finiteness of $\C$ there exists a finite subset $A_i\subseteq A$ such that
$x_0\in \conv(A_i\cup \{ z_i\})$, $i=1,\ldots,k$. Let $P=\conv(\bigcup_{i=1}^k A_i)$. Then $P$ is a polytope and $P\subseteq A$. Since $z_1,\ldots, z_k\in x_0/P$ and $z\in \conv(z_1,\ldots,z)\setminus (x_0/A)$, we get
$z\notin x_0/P$, contrary to assumption (iii).

To prove (iv)$\Rightarrow$(i), suppose that $(X,\C)$ satisfies the property (iii) and let $S$ be a semispace of $x_0$.
By Theorem \ref{copoints-S3} we have to prove that $X\setminus S$ is convex.
By maximality of $S$, we have $x_0\in \conv(S\cup \{ y\})$ for any point $y\in X\setminus S$. Consequently, $X\setminus S\subseteq x_0/S$. Since $S$ is convex and $x_0\notin S$, the sets $S$ and $x_0/S$ are disjoint, thus $x_0/S\subseteq X\setminus S$,
yielding $X\setminus S=x_0/S$.  Since $x_0/S$ is convex (because the shadows , the sets $S$ and $X\setminus S$ are complementary halfspaces.
\end{proof}

The following question can be viewed as a specification of Question \ref{S3problem1}:

\begin{question} \label{question1} Similarly to Theorem \ref{S4arity} and Proposition \ref{sandglass}, does there exist $k\in {\mathbb N}$ depending of the arity $n$ of a convexity space $(X,\C)$ such that  in Proposition \ref{S3shadows}(iii)
the convexity of the shadows $x_0/P$ for arbitrary polytopes $P$ can be replaced by the convexity of the shadows $x_0/P$ for $k$-polytopes $P$?
\end{question}

%
%
%
%
%
%
%

\subsection{Properties of $S_3$-graphs}
We present two useful properties of $S_3$-graphs and semispaces.  

\begin{lemma} \label{S3=>convexintervals} If $G$ is an $S_3$-graph, then the intervals of $G$ are convex.
\end{lemma}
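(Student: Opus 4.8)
\emph{Plan.} The assertion is equivalent to $\conv(\{u,v\})=[u,v]$ for all vertices $u,v$, and since $[u,v]\subseteq\conv(\{u,v\})$ always holds, only the inclusion $\conv(\{u,v\})\subseteq[u,v]$ needs proof. I would argue by a minimal counterexample: assume some interval of $G$ is not convex and pick $u,v$ with $d(u,v)$ minimum such that $[u,v]$ is not convex. Then there are $a,b\in[u,v]$ and a vertex $z\in[a,b]\setminus[u,v]$. A short distance computation using only the always-valid monotonicity $[u,x]\subseteq[u,v]$ and $[x,v]\subseteq[u,v]$ for $x\in[u,v]$ shows that $a,b\notin\{u,v\}$ (otherwise $[a,b]=[u,b]$ or $[a,b]=[b,v]$ would already lie in $[u,v]$), hence $d(u,b)<d(u,v)$ and $d(a,v)<d(u,v)$, so by minimality the intervals $[u,b]$ and $[a,v]$ are convex; moreover $[u,b]\subseteq[u,v]$ and $[a,v]\subseteq[u,v]$, so $z$ avoids both of them. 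Finally $z\in[a,b]\subseteq\conv(\{a,b\})\subseteq\conv(\{u,v\})$, so $z$ lies in every convex set, and in particular every halfspace, that contains both $u$ and $v$, and likewise in every convex set containing both $a$ and $b$.

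Next I would invoke the $S_3$ axiom twice. Since $[u,b]$ is convex and $z\notin[u,b]$, there are complementary halfspaces $H\supseteq[u,b]$ and $\overline H\ni z$. As $u\in H$ while $z\notin H$, we cannot have $v\in H$ (else $z\in\conv(\{u,v\})\subseteq H$), so $v\in\overline H$; and as $b\in H$ while $z\notin H$, we cannot have $a\in H$ (else $z\in\conv(\{a,b\})\subseteq H$), so $a\in\overline H$. Symmetrically, separating $z$ from the convex set $[a,v]$ gives complementary halfspaces $H'\supseteq[a,v]$ and $\overline{H'}\ni z$ with $u,b\in\overline{H'}$. Thus $u,b\in H\cap\overline{H'}$, while $a,v\in\overline H\cap H'$, and $z\in\overline H\cap\overline{H'}$. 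One may also separate $z$ from the convex intervals $[u,a]$ and $[b,v]$ to obtain two further halfspaces that pin down $u,v,a,b$ in four pairwise distinct ``quadrants'' of the resulting halfspace arrangement, all of them avoiding $z$.

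The contradiction should now be read off the geodesic realizing $z\in[a,b]$: since $a,z\in\overline H$ and $\overline H$ is convex we get $[a,z]\subseteq\overline H$, since $z,b\in\overline{H'}$ we get $[z,b]\subseteq\overline{H'}$, and combining this crossing information with $[u,b]\subseteq H$, $[a,v]\subseteq H'$ and the fact that $[u,v]$ is a connected subgraph containing $[u,b]\cup[a,v]$ yet lying on both sides of $\partial H$ and of $\partial H'$, one is forced either to place a vertex of $[u,v]$ simultaneously inside and outside one of the halfspaces, or to push $z$ into a halfspace containing $u$ and $v$. I expect this last combinatorial step --- deciding exactly which convex subsets of $[u,v]$ to separate $z$ from and then extracting the clash --- to be the genuine obstacle, and it is the part most likely to require an extra idea (e.g.\ strengthening the minimality to also minimize $d(a,b)$ and replacing $z$ by a neighbour of it on $[a,b]$, or using the additional halfspaces from the previous paragraph). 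Apart from this, the only non-elementary input is the $S_3$ axiom itself, used in the form ``a point and a disjoint convex set are separated by complementary halfspaces'' (equivalently Theorem \ref{copoints-S3}, that every convex set is an intersection of halfspaces).
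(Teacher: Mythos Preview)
Your proof is explicitly incomplete: you set up two (or four) halfspace separations and then say the contradiction ``should now be read off'', while conceding that this last step ``is the part most likely to require an extra idea''. It does require one, and in fact the fix you yourself suggest in parentheses --- also minimizing $d(a,b)$ and pushing $z$ to be a neighbour of $b$ (or $a$) --- is exactly what makes the argument go through, and is what the paper does.

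Here is the missing mechanism. After choosing $(u,v)$ minimal, choose $(a,b)$ inside $[u,v]$ with $d(a,b)$ minimal subject to $[a,b]\not\subseteq[u,v]$; then one may take $z\in[a,b]\setminus[u,v]$ adjacent to $b$. Because $z\sim b$ and $z\notin[u,v]$ while $b\in[u,v]$, the two distance increments $d(u,z)-d(u,b)$ and $d(v,z)-d(v,b)$ are each in $\{-1,0,1\}$ and their sum is positive; hence at least one of them equals $1$, say $d(v,z)=d(v,b)+1$, i.e.\ $b\in[z,v]$. Now a \emph{single} application of $S_3$ suffices: separate $z$ from the convex interval $[u,b]$ by complementary halfspaces $H\ni z$ and $H'\supseteq[u,b]$. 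If $v\in H$ then $b\in[z,v]\subseteq H$, contradicting $b\in H'$; so $v\in H'$. But then $u,v\in H'$ forces $z\in\conv(u,v)\subseteq H'$, the desired contradiction.

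The point is that your two-halfspace argument stalls because neither separation by itself forces $u$ or $v$ into a specific side; the adjacency $z\sim b$ produces the betweenness relation $b\in[z,v]$, and that single relation is what pins $v$ down. Without it, the ``quadrant'' picture you sketch is consistent and no contradiction emerges.
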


\begin{proof} Suppose by way of contradiction that not all intervals of $G$ are convex and let $u,v$ be a closest pair of vertices such that $[u,v]$ is not convex. Let $x,y$ be a closest pair of vertices
of $[u,v]$ such that there exists $z\in [x,y]\setminus [u,v]$. Then we assert that $[z,y]\cap [u,v]=\{ y\}$. Indeed, if $y'\in [z,y]\cap [u,v]$ and $y'\ne y$, since $y'\in [z,y]$ and $z\in [x,y]$, we get that
$z\in [x,y']$ and $y'\in [u,v]$, contrary to the minimality choice of the pair $x,y$. Thus $[z,y]\cap [u,v]=\{ y\}$ and therefore we can suppose without loss of generality that $z$ is adjacent to $y$.
From the minimality choice of the pair $x,y$ we conclude that each of the vertices $x,y$ is different from each of the vertices $u,v$.
Since $z\sim y$ and $z\notin [u,v]$, at least one of the inclusions $y\in [z,u]$ or $y\in [z,v]$ hold, say $y\in [z,v]$.
Since $y\ne v$, by the minimality choice of $u,v$, the interval $[u,y]$ is convex. Since $G$ is an $S_3$-graph, there exist complementary halfspaces $H',H''$ such that $z\in H'$ and $[u,y]\subseteq H''$.
Since $y\in [z,v]$, $v$ cannot belong to $H'$, thus $v\in H''$. Consequently, $z\in \conv(u,v)\subseteq H''$. Since $z\in H'$, we obtain a contradiction.
\end{proof}


\begin{lemma} \label{copoint} If $S$ is a semispace of $G$, then $S$ is a semispace attached to a vertex $x_0$ adjacent to $S$. Consequently, any convex set $A$ is the intersection of semispaces
at vertices $x_0$ adjacent to $A$.
\end{lemma}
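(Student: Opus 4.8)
The plan is to prove Lemma~\ref{copoint} in two parts: first that every semispace is attached to a vertex adjacent to it, and then to deduce the consequence about convex sets.

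\textbf{Part 1: a semispace is adjacent to its attaching vertex.} Let $S$ be a semispace of $G$ with attaching point $x_0$, so $S$ is a maximal convex set with $x_0\notin S$. Suppose for contradiction that $x_0\not\sim S$, i.e., $x_0$ has no neighbor in $S$. Since $G$ is connected, pick a vertex $y\in S$ with $d(x_0,y)$ minimum among vertices of $S$; by assumption $d(x_0,y)\ge 2$. Let $x$ be the neighbor of $y$ on a shortest $(x_0,y)$-path, so $x\in [x_0,y]$, $x\sim y$, and $d(x_0,x)=d(x_0,y)-1$; by minimality of $y$ we have $x\notin S$. By maximality of $S$, the convex hull $\conv(S\cup\{x\})$ contains $x_0$. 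The key step is to locate $x_0$ more precisely: I claim $x\in \conv(S\cup\{x_0\})$ leads to a contradiction, or rather, I will argue directly that $x$ can be added to $S$ without capturing $x_0$. Here is where I would use the structure: since $d(x_0,\cdot)$ is minimized on $S$ at $y$ and $x$ is strictly closer to $x_0$ than all of $S$, intuitively $x$ ``points away'' from $x_0$. More carefully, I would consider the ball $B = B_{d(x_0,y)-1}(x_0)$; every vertex of $S$ lies outside $B$ (as $d(x_0,s)\ge d(x_0,y)$ for $s\in S$) while $x\in B$. If balls around $x_0$ were convex this would immediately give $x_0\notin\conv(S\cup\{x\})$ since that hull would avoid $B$... but balls need not be convex in an arbitrary $S_3$-graph. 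So instead I expect the real argument uses the semispace-as-halfspace equivalence only at the level already available, or a minimal counterexample / shadow argument: since $x_0\in\conv(S\cup\{x\})$, domain-finiteness gives a finite $S_0\subseteq S$ with $x_0\in\conv(S_0\cup\{x\})$, and then one analyzes a shortest path or an interval $[x_0,x]$ — note $d(x_0,x)<d(x_0,s)$ for every $s\in S_0$ — to derive that in fact $x_0\in\conv(S_0)\subseteq S$, the desired contradiction. I would push the minimality of $y$ and the fact that $x\in[x_0,y]$ to set up an induction on $d(x_0,y)$, peeling off one vertex at a time.

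\textbf{Part 2: the consequence.} Given a convex set $A$ and a point $x_0\notin A$... wait, here the statement is about arbitrary convex $A$ being an intersection of semispaces at vertices adjacent to $A$. Recall from the preliminaries that the family of all semispaces is an intersection base, so $A=\bigcap\{S : S\text{ a semispace},\ A\subseteq S\}$. For each such semispace $S$ with attaching point $x_S$, Part 1 gives $x_S\sim S$; since $A\subseteq S$ and we may harmlessly restrict to the semispaces whose attaching point lies outside $A$ (those containing $A$ must have $x_S\notin A$), I would further argue that replacing each such $S$ by a semispace containing $A$ and adjacent to $A$ suffices. Concretely, if $A\subseteq S$ and $x_S\notin A$, then $x_S$ is adjacent to $S\supseteq A$; the point is to arrange adjacency to $A$ itself. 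I would take, for each vertex $v$ adjacent to $A$ with $v\notin A$, a semispace $S_v$ at $v$ containing $A$ (exists since $A$ is convex and $v\notin A$), and show $A=\bigcap_v S_v$ where $v$ ranges over vertices adjacent to $A$: the inclusion $\subseteq$ is clear, and for $\supseteq$, if $w\notin A$ then on a shortest $(w,A)$-path the neighbor $v'$ of $A$ satisfies... one checks $w\notin S_{v'}$ using convexity of $A$ and $v'\in[w,a]$ for appropriate $a\in A$, or more simply one invokes the intersection-base property together with Part~1.

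\textbf{Main obstacle.} The crux is Part~1: showing $x_0\in\conv(S\cup\{x\})$ with $x\in[x_0,y]$, $d(x_0,x)<d(x_0,S)$ actually forces $x_0\in\conv(S)$. In a general $S_3$-graph we cannot invoke convexity of balls; the argument must run purely through convexity of intervals (Lemma~\ref{S3=>convexintervals}) and domain-finiteness, likely via a carefully chosen minimal counterexample, so getting the induction/minimality bookkeeping right is where the work lies.
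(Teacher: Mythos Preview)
Your Part~1 attacks a statement that is \emph{strictly stronger} than what the lemma claims, and that stronger statement is false. You read the lemma as: ``if $S$ is a semispace with attaching point $x_0$, then $x_0\sim S$.'' But the lemma only asserts the \emph{existence} of an attaching vertex adjacent to $S$; a semispace may well have non-adjacent attaching points. Concretely, in the $6$-cycle $C_6$ on vertices $0,1,\dots,5$, the set $S=\{0,1,2\}$ is convex and is a semispace at vertex $4$: adding either $3$ or $5$ forces the convex hull to be all of $C_6$ (since $[0,3]=[2,5]=V$). Yet $d(4,S)=2$. So the contradiction you are hunting for in Part~1 --- that $x_0\in\conv(S\cup\{x\})$ with $x$ strictly closer to $x_0$ than all of $S$ should somehow force $x_0\in\conv(S)$ --- simply does not exist, and no amount of bookkeeping with Lemma~\ref{S3=>convexintervals} or domain-finiteness will produce it.

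The paper's argument is the move you are missing: it does not try to show the given attaching vertex is adjacent, but instead \emph{switches} attaching vertices. Given $S$ a semispace at some vertex $x$, take $y\in S$ closest to $x$ and let $x_0$ be the neighbour of $y$ in $[x,y]$ (so $x_0\sim S$ by construction). Then $S$ is also a semispace at $x_0$: if some convex $S'\supsetneq S$ avoids $x_0$, maximality of $S$ at $x$ forces $x\in S'$, whence $x_0\in[x,y]\subseteq S'$, a contradiction. That is the whole proof --- three lines, no induction, no $S_3$ hypothesis.

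Your direct approach at the end of Part~2 is correct and is essentially the paper's argument for the second assertion: for $w\notin A$, the vertex $v'$ adjacent to $A$ on a shortest $(w,A)$-path works, since any semispace at $v'$ containing $A$ must exclude $w$ (else $v'\in[w,a]$ would land in it). Note, however, that this argument already contains the idea that makes Part~1 work once you allow yourself to change the attaching vertex rather than fix it.
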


\begin{proof} Let $S$ be a semispace at  $x$. Let $y$ be a closest to $x$ vertex of $S$ and  $x_0$ be a neighbor of $y$ in $[x,y]$.
We assert that $S$ is a semispace at $x_0$.
Otherwise,  there exists a convex set $S'$ properly containing $S$ and avoiding $x_0$.
Since $S$ is a semispace at $x$, necessarily $S'$ contains $x$. Since $x,y\in S'$,  this is impossible because
$x_0\in [x,y]\subset S'$. Thus $S$ is a semispace at $x_0$. Analogously, given a convex subgraph $A$ and a vertex $x\notin A$,
let $y$ be a closest to $x$ of $A$ and $x_0$ be a neighbor of $y$ in $[x,y]$. Then the semispace $S$ at $x_0$ containing $A$
does not contain $x$, establishing the second assertion.
\end{proof}

\subsection{Semispaces in $S_3$-graphs}
For a graph $G=(V,E)$, denote by $\cS:=\cS(G)$ the set of all semispaces of  $G$ and by $\cS_{x_0}$ the set of all
semispaces  having $x_0$ as an attaching vertex and adjacent to $x_0$. Lemma \ref{copoint} shows that $\cS$ is a union
$\bigcup_{x_0\in V }\cS_{x_0}$ (but not necessarily a disjoint union).

In this subsection we characterize the semispaces of $S_3$-graphs in terms of shadows. Before presenting this characterization, we recall   the structure of semispaces in linear spaces, established by
Hammer \cite{Ha}, Klee \cite{Klee}, and K\"othe \cite{Ko}, and which we present based on the book \cite{Ko}. A \emph{cone} with vertex $x_0$ in a linear space $L$
is a subset $K(x_0)$ of $L$ that contains every point $x_0+\rho(x-x_0), \rho>0,$  whenever it contains $x$. A  \emph{convex cone} is a cone which is also a convex set. If $K(\circ)$ is a cone
with vertex at the origin of coordinates $\circ$,  then $-K(\circ)$ is also a cone.
The cone $K^*(x_0):=x_0-K(\circ)$ is called the cone  \emph{diametrically opposite}  to the cone $K(x_0)=x_0+K(\circ)$.  A cone is  \emph{truncated} if it does not contain its vertex.
The cone  \emph{generated} by a set $M$ is the smallest cone with vertex $x_0$ which contains all points of $M$ (any cone can be generated in this way). Finally, a  \emph{hypercone} at $x_0$
is a maximal convex truncated cone with vertex $x_0$. By \cite[pp.188-189]{Ko}, the hypercones at $x_0$ are exactly the semispaces at $x_0$ and for each such hypercone,
the space  $L$ is the disjoint union of $\{ x_0\}$, the hypercone $K(x_0)$ and the diametrically opposite hypercone $K^*(x_0)$.
A  cone $K(x_0)$ generated by a  set $M$ can be viewed as the union of the join $x_0*M$ and of the shadow $M/x_0$.
Then  $K(x_0)$ can be viewed as  directed union of shadows $M_i/x_0$ for a sequence $M_i$ of sections of $K(x_0)$
with parallel hyperplanes converging to a support hyperplane of $K(x_0)\cup \{ x_0\}$ passing via $x_0$.

This geometric intuition is behind the following definitions for graphs.  Roughly speaking, in case of graphs $G=(V,E)$, instead of viewing the semispaces
at $x_0$ as directed unions of shadows, we prove that the semispaces at $x_0$ and adjacent to $x_0$ are the shadows $K/x_0$ of $x_0$ on certain
sets $K$, which we call maximal $x_0$-proximal sets. While in $S_3$-graphs the shadows $A/x_0$ of convex sets $A$ are not
necessarily convex, this is the case for shadows $K/x_0$. Furthermore, analogously to linear spaces,  the complements of such shadows will be the
shadows $x_0/K$ (which will be also convex).  
While in $S_3$-graphs, the structure of  maximal proximal sets can be quite general, in the next section we will show that
in case of $S_3$-graphs satisfying the triangle condition (TC), the  maximal $x_0$-proximal sets together with $x_0$ are precisely the maximal cliques of $G$.

%
%
%
%
%

\begin{definition} [Imprint, $\Imp$-convex set]  The \emph{imprint} of a vertex $x_0$ on a set $A\subseteq V$ is the set $\Imp_{x_0}(A)=\{ z\in A: [x,z]\cap A=\{ z\}\}$. A convex set $A$ adjacent to $x_0$ is called
 \emph{$\Imp$-convex} if $A=\conv(\Imp(A))$.
\end{definition}

Note that for any vertex $y\in A$ there
exists a vertex $y'\in \Imp_{x_0}(A)\cap [x_0,y]$: as $y'$ one can take a closest to $x_0$ vertex of $A\cap [x_0,y]$.
Note also that in case of convex polyhedra and convex polyhedral cones $P$ in linear spaces, $\Imp_{x_0}(P)$ is the set of all facets of $P$ that are visible from $x_0$.

\begin{definition} [$\Upsilon_{x_0}, \Upsilon^*_{x_0},$ and $\preceq_{x_0}$]
 A set $K\subseteq V$ of $G=(V,E)$ is called  \emph{$x_0$-proximal}  if
\begin{enumerate}
\item[(P1)]  $\Imp_{x_0}(K)=K$,
\item[(P2)]  the convex hull $\conv(K)$ of $K$ does not contain the vertex $x_0$.
\end{enumerate}
Denote by $\Upsilon_{x_0}$ the  family of all $x_0$-proximal sets and let $\Upsilon^*_{x_0}=\{ K\in \Upsilon_{x_0}: x_0\sim K\}$.
For two subsets of vertices $K,K'$ of $G$, define $K\preceq_{x_0} K'$ if and only if $K\subseteq K'/x_0$.
\end{definition}

Imprints, shadows, and $x_0$-proximal sets satisfy the following simple properties: 
\begin{enumerate}
\item[(1)] $A\subseteq \Imp_{x_0}(A)/x_0$.
\item[(2)] $\Imp_{x_0}(A)=\Imp_{x_0}(A/x_0)$.
\item[(3)] $B\subseteq A/x_0$ implies that $B/x_0\subseteq A/x_0$.
\item[(4)] $\Imp_{x_0}(\Imp_{x_0}(A))=\Imp_{x_0}(A)$.
\item[(5)] if $K\subset K'$, then $K\preceq_{x_0} K'$.
\end{enumerate}

We continue with some properties of the binary relation $\preceq_{x_0}$ on $\Upsilon_{x_0}$. 

\begin{lemma} \label{proximal3} If $K\in \Upsilon_{x_0}$, then $\Imp_{x_0}(\conv(K))\in \Upsilon_{x_0}$ and $K\preceq_{x_0} \Imp_{x_0}(\conv(K))$.
\end{lemma}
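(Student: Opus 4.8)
The plan is to verify the two assertions separately, using the elementary properties (1)--(5) of imprints and shadows listed just above. Write $L := \Imp_{x_0}(\conv(K))$ for brevity.

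First I would check that $L \in \Upsilon_{x_0}$, i.e.\ that $L$ satisfies (P1) and (P2). Property (P1) is immediate from property (4): $\Imp_{x_0}(L) = \Imp_{x_0}(\Imp_{x_0}(\conv(K))) = \Imp_{x_0}(\conv(K)) = L$. For (P2) I must show $x_0 \notin \conv(L)$. Here the key observation is that $\conv(L) = \conv(\Imp_{x_0}(\conv(K))) \subseteq \conv(\conv(K)) = \conv(K)$, since $L \subseteq \conv(K)$ and $\conv(K)$ is convex. As $K \in \Upsilon_{x_0}$, property (P2) for $K$ gives $x_0 \notin \conv(K)$, hence $x_0 \notin \conv(L)$. (It is worth noting that $\conv(L) = \conv(K)$ actually holds: every $y \in K \subseteq \conv(K)$ has a vertex $y' \in \Imp_{x_0}(\conv(K)) \cap [x_0,y] = L \cap [x_0,y]$, and since $K$ satisfies (P1) we in fact get $K \subseteq L$, whence $\conv(K) \subseteq \conv(L)$; but only the inclusion $x_0 \notin \conv(L)$ is needed here.) This establishes $L \in \Upsilon_{x_0}$.

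Next I would prove $K \preceq_{x_0} L$, which by definition means $K \subseteq L/x_0$. By property (1) applied to the set $A = \conv(K)$, we have $\conv(K) \subseteq \Imp_{x_0}(\conv(K))/x_0 = L/x_0$. Since $K \subseteq \conv(K)$, this gives $K \subseteq L/x_0$, i.e.\ $K \preceq_{x_0} L$, as required.

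I do not anticipate a serious obstacle here: the statement is essentially a bookkeeping exercise combining the definitions of $\Imp_{x_0}$, $\conv$, and $\preceq_{x_0}$ with the already-established properties (1) and (4). The only point requiring a moment's care is (P2) for $L$, where one must not forget that $L$ lies \emph{inside} $\conv(K)$ (so its convex hull cannot escape $\conv(K)$ and therefore still misses $x_0$); everything else is a direct substitution.
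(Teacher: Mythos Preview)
Your proof is correct and follows essentially the same route as the paper: property (4) gives (P1), the inclusion $\conv(L)\subseteq\conv(K)$ gives (P2), and property (1) applied to $A=\conv(K)$ gives $K\subseteq L/x_0$. One small caution: in your parenthetical aside, the argument offered does not actually establish $K\subseteq L$ (knowing $y'\in[x_0,y]\cap\conv(K)$ and $[x_0,y]\cap K=\{y\}$ does not force $y'=y$, since $y'$ need only lie in $\conv(K)$, not in $K$); but as you yourself note, this aside is not needed for the lemma.
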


\begin{proof} That $\Imp_{x_0}(\conv(K))\in \Upsilon_{x_0}$ follows from the property (4), the inclusion $\conv(\Imp_{x_0}(\conv(K))\subseteq \conv(K)$, and since $x_0\notin \conv(K)$.
That  $K\preceq_{x_0} \Imp_{x_0}(\conv(K))$ follows from the fact that $K\subset \Imp_{x_0}(K)/x_0\subseteq \Imp_{x_0}(\conv(K))$ and Lemma \ref{proximal}.
\end{proof}

\begin{lemma} \label{proximal} If $K,K'\in \Upsilon(x_0)$ and  $K\ne K'$, then $K\preceq_{x_0} K'$ if and only if $K/x_0\varsubsetneq K'/x_0$.
\end{lemma}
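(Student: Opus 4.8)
The plan is to prove the two implications separately, using the transitivity-type property (3) of shadows (namely $B \subseteq A/x_0 \Rightarrow B/x_0 \subseteq A/x_0$) together with the defining property (P1), $\Imp_{x_0}(K) = K$, of $x_0$-proximal sets. First I would handle the easy direction: suppose $K/x_0 \varsubsetneq K'/x_0$. Since property (1) gives $K \subseteq \Imp_{x_0}(K)/x_0 = K/x_0$ (using (P1)), we get $K \subseteq K/x_0 \subsetneq K'/x_0$, which by definition is exactly $K \preceq_{x_0} K'$. So that direction is essentially immediate from (1) and (P1).

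For the converse, assume $K \preceq_{x_0} K'$, i.e. $K \subseteq K'/x_0$. Applying property (3) with $B = K$ and $A = K'$ yields $K/x_0 \subseteq K'/x_0$. It remains to upgrade this inclusion to a \emph{strict} one, using $K \ne K'$. The key observation is that $K$ and $K'$ can be recovered from their shadows via the imprint: by property (2), $\Imp_{x_0}(K) = \Imp_{x_0}(K/x_0)$, and since $K$ is $x_0$-proximal, $\Imp_{x_0}(K) = K$; hence $K = \Imp_{x_0}(K/x_0)$, and likewise $K' = \Imp_{x_0}(K'/x_0)$. Therefore if we had $K/x_0 = K'/x_0$, then $K = \Imp_{x_0}(K/x_0) = \Imp_{x_0}(K'/x_0) = K'$, contradicting $K \ne K'$. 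Thus $K/x_0 \subsetneq K'/x_0$, as desired.

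The main (and only mild) obstacle is making sure that the imprint really is determined by the shadow, i.e. that property (2) applies here; this is where it is essential that $K, K' \in \Upsilon_{x_0}$ so that (P1) is available to close the loop $K \mapsto K/x_0 \mapsto \Imp_{x_0}(K/x_0) = K$. No metric or convexity arguments beyond the formal properties (1)--(5) of imprints and shadows are needed, so the proof is short.
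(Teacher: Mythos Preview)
Your proof is correct and follows essentially the same route as the paper's: both use property~(3) to get $K/x_0\subseteq K'/x_0$ from $K\preceq_{x_0}K'$, and then use that an $x_0$-proximal set is recovered from its shadow via the imprint (property~(2) together with (P1)) to rule out equality. You are simply more explicit about the easy converse direction, which the paper leaves implicit.
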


\begin{proof} If $K\preceq_{x_0} K'$, then $K\subseteq K'/x_0$ and thus $K/x_0\subseteq K'/x_0$ by the definition of shadows. If $K/x_0=K'/x_0$, since $K$ and $K'$ are $x_0$-proximal, necessarily $K=K'$.
\end{proof}

Lemma \ref{proximal} implies that $\preceq_{x_0}$ is transitive on  $\Upsilon(x_0)$ and $\Upsilon^*(x_0)$.
Since $\preceq_{x_0}$ is also reflexive and antisymmetric, 
the following holds: 

\begin{lemma} \label{posets} $(\Upsilon_{x_0}, \preceq_{x_0})$ and $(\Upsilon^*_{x_0}, \preceq_{x_0})$ are partially ordered sets.
\end{lemma}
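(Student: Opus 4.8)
The plan is to show that $\preceq_{x_0}$ is a partial order on both $\Upsilon_{x_0}$ and $\Upsilon^*_{x_0}$ by verifying reflexivity, antisymmetry, and transitivity, noting that the latter two are already essentially proved in Lemma~\ref{proximal}. Since $\Upsilon^*_{x_0}\subseteq \Upsilon_{x_0}$ and the relation $\preceq_{x_0}$ is inherited, it suffices to treat $\Upsilon_{x_0}$; the claim for $\Upsilon^*_{x_0}$ then follows by restriction.

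\begin{proof}
We verify that $\preceq_{x_0}$ is reflexive, antisymmetric, and transitive on $\Upsilon_{x_0}$; the same then holds on the subset $\Upsilon^*_{x_0}$ with the restricted relation.

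\emph{Reflexivity.} Let $K\in \Upsilon_{x_0}$. By property (1), $K\subseteq \Imp_{x_0}(K)/x_0$, and since $K$ is $x_0$-proximal, $\Imp_{x_0}(K)=K$ by (P1), so $K\subseteq K/x_0$. Hence $K\preceq_{x_0} K$.

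\emph{Antisymmetry.} Suppose $K,K'\in \Upsilon_{x_0}$ with $K\preceq_{x_0} K'$ and $K'\preceq_{x_0} K$. If $K\ne K'$, then by Lemma~\ref{proximal}, $K\preceq_{x_0} K'$ gives $K/x_0\varsubsetneq K'/x_0$, while $K'\preceq_{x_0} K$ gives $K'/x_0\varsubsetneq K/x_0$, a contradiction. Hence $K=K'$.

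\emph{Transitivity.} Suppose $K,K',K''\in \Upsilon_{x_0}$ with $K\preceq_{x_0} K'$ and $K'\preceq_{x_0} K''$. If any two of $K,K',K''$ coincide the conclusion is immediate, so assume they are pairwise distinct. By Lemma~\ref{proximal}, $K/x_0\varsubsetneq K'/x_0$ and $K'/x_0\varsubsetneq K''/x_0$, hence $K/x_0\subseteq K''/x_0$. In particular $K\subseteq K/x_0\subseteq K''/x_0$ (using reflexivity $K\subseteq K/x_0$ proved above), so $K\preceq_{x_0} K''$ by definition of $\preceq_{x_0}$.

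Thus $(\Upsilon_{x_0},\preceq_{x_0})$ is a partially ordered set, and since $\Upsilon^*_{x_0}\subseteq \Upsilon_{x_0}$ with $\preceq_{x_0}$ restricted, $(\Upsilon^*_{x_0},\preceq_{x_0})$ is a partially ordered set as well.
\end{proof}

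There is no real obstacle here: the statement is a bookkeeping consequence of Lemma~\ref{proximal} (antisymmetry and transitivity) together with property~(1) and axiom~(P1) (reflexivity). The only point requiring a moment of care is making sure the degenerate cases where some of the sets coincide are handled separately, since Lemma~\ref{proximal} is stated only for distinct pairs; in those cases the desired relations hold trivially or by reflexivity.
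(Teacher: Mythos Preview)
Your proof is correct and follows essentially the same approach as the paper, which simply notes that Lemma~\ref{proximal} gives transitivity while reflexivity and antisymmetry are clear; you have merely spelled out the details (using property~(1) with (P1) for reflexivity and Lemma~\ref{proximal} for antisymmetry and transitivity).
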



\begin{definition} [Maximal $x_0$-proximal sets and maximal $\Imp$-convex sets]   $\Max(\Upsilon_{x_0})$ (respectively, $\Max(\Upsilon^*_{x_0})$)
denotes the set of all maximal elements of the partial order $(\Upsilon_{x_0},\preceq_{x_0})$ (respectively, of $(\Upsilon^*_{x_0},\preceq_{x_0})$). 
We call the sets of $\Max(\Upsilon_{x_0})$ \emph{maximal $x_0$-proximal sets} and the convex sets of the form $\conv(K), K\in \Max(\Upsilon_{x_0})$ \emph{maximal $\Imp$-convex sets}.
\end{definition}

By Zorn's lemma, $\Max(\Upsilon_{x_0})\ne \varnothing$ for any vertex $x_0$ in any graph $G$. The maximal $x_0$-proximal sets and the maximal $\Imp$-convex sets have the following property:

\begin{lemma} \label{proximal-property} For a graph $G=(V,E)$, a vertex $x_0$ of $G$, and a set $K\in \Max(\Upsilon_{x_0})$, the following properties hold:
\begin{itemize}
\item[(i)] $K=\Imp_{x_0}(\conv(K))$;
\item[(ii)] $\conv(K)/x_0=K/x_0$;
\item[(iii)]  the set $V$ is the  union of the shadows $K/x_0$ and $x_0/K$.
\end{itemize}
\end{lemma}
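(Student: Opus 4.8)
The plan is to establish the three assertions in the order (i), (ii), (iii), using the elementary properties (1)--(5) of imprints and shadows together with Lemmas \ref{proximal3} and \ref{proximal}, and the $x_0$-proximality axioms (P1), (P2). For (i), I would argue by maximality: by Lemma \ref{proximal3}, $\Imp_{x_0}(\conv(K))$ is again an $x_0$-proximal set and $K\preceq_{x_0}\Imp_{x_0}(\conv(K))$. Since $K$ is a maximal element of $(\Upsilon_{x_0},\preceq_{x_0})$, this forces $K\preceq_{x_0}\Imp_{x_0}(\conv(K))$ to be an equality of the induced shadows; by property (P1) applied to both $x_0$-proximal sets (via Lemma \ref{proximal}, equal shadows among $x_0$-proximal sets means equal sets) we conclude $K=\Imp_{x_0}(\conv(K))$. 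One subtlety to check is that Lemma \ref{proximal3} only asserts $K\preceq_{x_0}\Imp_{x_0}(\conv(K))$, not the reverse, so I need to invoke maximality of $K$ carefully: a maximal element $K$ cannot be strictly below another element, and $K\preceq_{x_0} K'$ with $K$ maximal and the relation a partial order forces $K=K'$ (here $K'=\Imp_{x_0}(\conv(K))$), provided $K'\in\Upsilon_{x_0}$, which is exactly the content of Lemma \ref{proximal3}.

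For (ii), the inclusion $K/x_0\subseteq\conv(K)/x_0$ is immediate from $K\subseteq\conv(K)$ and the definition of shadows. For the reverse inclusion $\conv(K)/x_0\subseteq K/x_0$, I would use (i): since $K=\Imp_{x_0}(\conv(K))$, property (1) gives $\conv(K)\subseteq\Imp_{x_0}(\conv(K))/x_0 = K/x_0$, and then property (3) ($B\subseteq A/x_0$ implies $B/x_0\subseteq A/x_0$) applied with $B=\conv(K)$ and $A=K$ yields $\conv(K)/x_0\subseteq K/x_0$. Combining the two inclusions gives $\conv(K)/x_0=K/x_0$.

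For (iii), I need to show every vertex $v\in V$ lies in $K/x_0$ or in $x_0/K$. Suppose $v\notin x_0/K$, i.e. $\conv(x_0,v)\cap K=\varnothing$; I must show $v\in K/x_0$, that is, $\conv(x_0,v)\cap\conv(K)\neq\varnothing$ (using (ii) this is the same as $\conv(x_0,v)\cap K\ne\varnothing$ after passing to the imprint — more precisely $v\in\conv(K)/x_0=K/x_0$). The natural approach is to consider the set $K\cup\{v\}$, or rather a suitable $x_0$-proximal set derived from it, and use maximality of $K$ to force $v$ into the shadow. Concretely: if $v\notin K/x_0$ as well, then $\conv(K\cup\{v\})$ still misses $x_0$ (one would need to verify this — this is where (P2) and the failure of $v\in x_0/K$ interact), so $\Imp_{x_0}(\conv(K\cup\{v\}))$ is an $x_0$-proximal set strictly dominating $K$ in $\preceq_{x_0}$, contradicting maximality. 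I expect the main obstacle to be precisely the verification that $x_0\notin\conv(K\cup\{v\})$ when $v\notin x_0/K$: by definition of the shadow $x_0/K$, $v\notin x_0/K$ means $\conv(K\cup\{v\})\cap\{x_0\}=\varnothing$ — wait, that is exactly $x_0\notin\conv(K\cup\{v\})$ by unwinding the shadow definition $x_0/K=\{x: \conv(K\cup\{x\})\cap\{x_0\}\ne\varnothing\}$. So in fact this verification is essentially definitional, and the real work is checking (P1) for the new set, which is handled by property (4). Thus the delicate point reduces to making sure the strict domination $K\prec_{x_0}\Imp_{x_0}(\conv(K\cup\{v\}))$ is genuine, i.e. that $v$ witnesses $K\ne\Imp_{x_0}(\conv(K\cup\{v\}))$ via Lemma \ref{proximal}, because $v\in\Imp_{x_0}(\conv(K\cup\{v\}))/x_0$ but $v\notin K/x_0$ by assumption.
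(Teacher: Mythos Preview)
Your argument is correct and follows essentially the same route as the paper: parts (i) and (ii) are identical in spirit (the paper just unwinds properties (1)--(5) elementwise where you cite them), and for (iii) both you and the paper derive a contradiction by producing an $x_0$-proximal set strictly above $K$ from $K\cup\{v\}$---the paper uses $(K\setminus v/x_0)\cup\{v\}$ directly, while you pass through $\Imp_{x_0}(\conv(K\cup\{v\}))$, which is an equally valid (and arguably cleaner) choice. One presentational slip to fix: at the start of (iii) you unpack ``$v\notin x_0/K$'' as ``$\conv(x_0,v)\cap K=\varnothing$'' and ``$v\in K/x_0$'' as ``$\conv(x_0,v)\cap\conv(K)\ne\varnothing$'', both of which are the wrong shadows; you correct yourself later, but the opening sentence should read ``$v\notin x_0/K$, i.e.\ $x_0\notin\conv(K\cup\{v\})$'' and ``$v\in K/x_0$, i.e.\ $\conv(x_0,v)\cap K\ne\varnothing$''.
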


\begin{proof} To prove (i), first notice that $x_0\notin \conv(\Imp_{x_0}(\conv(K))$ since $x_0\notin \conv(K)$ and $\Imp_{x_0}(\conv(K))\subseteq \conv(K)$. Therefore $K':=\Imp_{x_0}(\conv(K))$ belongs to $\Upsilon_{x_0}$. Since $K\subseteq \conv(K)$ and $\Imp_{x_0}(K)=K$, by property (5) we conclude that $K\prec K'$. From the choice of $K$ from $\Max(\Upsilon_{x_0})$,  $K=K'$, establishing (i). To prove (ii), first, since $K\subseteq \conv(K)$, we have the inclusion $K/x_0\subseteq \conv(K)/x_0$. Conversely, pick any $z\in \conv(K)/x_0$. Then there exists $y\in \conv(K)$ such that $y\in [x_0,z]$. Since $K=\Imp_{x_0}(\conv(K))$, there exists $y'\in K$ such that $y'\in [x_0,y]$. Consequently, $y'\in [x_0,z]$, proving that $z\in K/x_0$.

Finally, to prove (iii), pick any vertex $v\in V\setminus (x_0/K)$. If $x_0\notin \conv(K\cup \{ v\})$, consider the set $K'=(K\setminus v/x_0)\cup \{ v\}$. Since $K'\subseteq K\cup \{ v\}$, we also have $x_0\notin \conv(K')$.
Since $K\in \Upsilon_{x_0}$ and $K'$ was defined as $K\setminus v/x_0)\cup \{ v\}$, we conclude that $\Imp_{x_0}(K')=K'$. Thus $K'$ satisfies the conditions (P1) and (P2), whence $K'\in \Upsilon_{x_0}$.
Since $K\preceq_{x_0} K'$ and $K$ is different from $K'$, we obtain a contradiction with the assumption $K\in \Max(\Upsilon_{x_0})$. This contradiction shows that $x_0\in \conv(K\cup \{ v\})$ for any $v\in V\setminus (x_0/K)$, i.e.,
$V\setminus (K/x_0)\subseteq x_0/K$. Consequently, $V$ is the union of $x_0/K$ and $K/x_0$.
\end{proof}

\begin{remark} The shadows $K/x_0$ and $x_0/K$ from Lemma \ref{proximal-property}(iii) are not necessarily disjoint.
\end{remark}

%

We continue with the  following characterization of semispaces of $S_3$-graphs.

\begin{theorem} \label{S3proximal} Let $G=(V,E)$ be an $S_3$-graph and $x_0$ be an arbitrary vertex of $G$. If $S$ is a semispace at $x_0$, then $\Imp_{x_0}(S)\in \Max(\Upsilon_{x_0})$ and $S=\Imp_{x_0}(S)/x_0$. Conversely,
if $K\in \Max(\Upsilon_{x_0})$, then $K/x_0$ is a semispace at $x_0$. Consequently, there exists a bijection between the semispaces at $x_0$ and the sets of $\Max(\Upsilon_{x_0})$ and a bijection between the semispaces  of
$\cS_{x_0}$  and the sets of $\Max(\Upsilon^*_{x_0})$.
\end{theorem}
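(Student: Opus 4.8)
The plan is to prove that the two maps $S\mapsto\Imp_{x_0}(S)$ (on semispaces at $x_0$) and $K\mapsto K/x_0$ (on $\Max(\Upsilon_{x_0})$) are mutually inverse bijections; the forward direction carries the weight, and it is the only place where the hypothesis $S_3$ is really used. \emph{Step 1: if $S$ is a semispace at $x_0$, then $\Imp_{x_0}(S)\in\Upsilon_{x_0}$ and $S=S/x_0=\Imp_{x_0}(S)/x_0$.} Property (P1) for $\Imp_{x_0}(S)$ is property~(4), and (P2) holds since $\conv(\Imp_{x_0}(S))\subseteq\conv(S)=S$ and $x_0\notin S$, so $\Imp_{x_0}(S)\in\Upsilon_{x_0}$. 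Now I would invoke $S_3$ twice: by Lemma~\ref{S3=>convexintervals} intervals are convex (so $\conv(x_0,z)=[x_0,z]$ for every $z$), and by Theorem~\ref{copoints-S3} the semispace $S$ is a halfspace, so $V\setminus S$ is convex. If $z\in S/x_0$, then $[x_0,z]$ meets $S$; were $z\notin S$, both $x_0$ and $z$ would lie in the convex set $V\setminus S$, forcing $[x_0,z]\subseteq V\setminus S$ and contradicting $[x_0,z]\cap S\ne\varnothing$. Hence $S/x_0\subseteq S$, and with the trivial inclusion $S\subseteq S/x_0$ this gives $S=S/x_0$. Since $\Imp_{x_0}(S)\subseteq S$, property~(1) together with $S/x_0=S$ yields $S\subseteq\Imp_{x_0}(S)/x_0\subseteq S/x_0=S$.

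\emph{Step 2: the ambient-semispace trick.} Rather than attacking the convexity of $K/x_0$ directly — the introduction warns that $A/x_0$ need not be convex even for convex $A$ — I would use the following observation for both remaining points. Given any $K'\in\Upsilon_{x_0}$, the set $\conv(K')$ is convex and, by (P2), misses $x_0$, so by a standard Zorn's lemma argument (as in the proof of Lemma~\ref{copoint}) it lies in some semispace $S'$ at $x_0$; then $K'\subseteq\conv(K')\subseteq S'=\Imp_{x_0}(S')/x_0$ by Step~1, i.e.\ $K'\preceq_{x_0}\Imp_{x_0}(S')$ with $\Imp_{x_0}(S')\in\Upsilon_{x_0}$. \emph{Converse:} if $K\in\Max(\Upsilon_{x_0})$, apply this with $K'=K$; since $K$ is maximal in the poset $(\Upsilon_{x_0},\preceq_{x_0})$ (Lemma~\ref{posets}) and $K\preceq_{x_0}\Imp_{x_0}(S')$, we must have $K=\Imp_{x_0}(S')$, whence $K/x_0=\Imp_{x_0}(S')/x_0=S'$ is a semispace at $x_0$. \emph{Maximality of the forward map:} if $\Imp_{x_0}(S)\preceq_{x_0}K'$ with $K'\in\Upsilon_{x_0}$, pick $S'\supseteq\conv(K')$ as above; then $\Imp_{x_0}(S)\subseteq K'/x_0\subseteq S'/x_0=S'$ (monotonicity of shadows and $S'/x_0=S'$ from Step~1), so $S=\Imp_{x_0}(S)/x_0\subseteq S'/x_0=S'$, and maximality of the semispace $S$ forces $S=S'$, hence $K'/x_0\subseteq S'/x_0=S$. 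But $\Imp_{x_0}(S)\preceq_{x_0}K'$ with $\Imp_{x_0}(S)\ne K'$ would give $S=\Imp_{x_0}(S)/x_0\subsetneq K'/x_0\subseteq S$ by Lemma~\ref{proximal}, a contradiction; so $\Imp_{x_0}(S)=K'$, i.e.\ $\Imp_{x_0}(S)\in\Max(\Upsilon_{x_0})$.

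\emph{Step 3: the bijection and its restriction.} The composites $S\mapsto\Imp_{x_0}(S)\mapsto\Imp_{x_0}(S)/x_0=S$ (Step~1) and $K\mapsto K/x_0\mapsto\Imp_{x_0}(K/x_0)=\Imp_{x_0}(K)=K$ (property~(2), then (P1)) show the two maps are mutually inverse, giving the bijection between semispaces at $x_0$ and $\Max(\Upsilon_{x_0})$. For the restriction I would check two elementary adjacency transfers. First, $x_0\sim S$ iff $x_0\sim\Imp_{x_0}(S)$: the nontrivial direction is that a neighbour $s\in S$ of $x_0$ has $[x_0,s]=\{x_0,s\}$, which meets $S$ only in $s$ (as $x_0\notin S$), so $s\in\Imp_{x_0}(S)$. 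Second, $\Max(\Upsilon^*_{x_0})=\Max(\Upsilon_{x_0})\cap\Upsilon^*_{x_0}$: if $K\in\Upsilon^*_{x_0}$ is maximal in $\Upsilon^*_{x_0}$ and $K\preceq_{x_0}K'$ for $K'\in\Upsilon_{x_0}$, a neighbour $k\in K$ of $x_0$ lies in $K'/x_0$, hence in $K'$ (since $[x_0,k]=\{x_0,k\}$ and $x_0\notin K'$), so $K'\in\Upsilon^*_{x_0}$ and thus $K=K'$, i.e.\ $K$ is maximal in $\Upsilon_{x_0}$ as well. Restricting the bijection then yields the bijection $\cS_{x_0}\leftrightarrow\Max(\Upsilon^*_{x_0})$. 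I expect the main obstacle to be conceptual rather than computational: one must resist trying to prove $K/x_0$ convex head-on for $K\in\Max(\Upsilon_{x_0})$ and instead always pass to an ambient semispace containing $\conv(K')$, letting maximality in $(\Upsilon_{x_0},\preceq_{x_0})$ close the argument; once that idea is in place every remaining verification is short and uses only the listed properties (1)--(4) of imprints and shadows together with Lemmas~\ref{S3=>convexintervals}, \ref{proximal}, \ref{posets} and Theorem~\ref{copoints-S3}.
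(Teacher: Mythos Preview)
Your proof is correct and follows essentially the same approach as the paper: both arguments hinge on embedding an arbitrary $K'\in\Upsilon_{x_0}$ in a semispace $S'$ at $x_0$ and showing $K'\preceq_{x_0}\Imp_{x_0}(S')$, then letting maximality (of $K$ in the poset, or of $S$ among semispaces) close the loop. The only packaging difference is that the paper invokes $S_3$ as explicit halfspace separation of $x_0$ and $\conv(K)$ together with Lemma~\ref{shadow-separation}, whereas you invoke the equivalent Theorem~\ref{copoints-S3} once to establish $S=S/x_0$ for every semispace and then reuse that identity throughout; your version also spells out the bijection and the restriction to $\Upsilon^*_{x_0}$ in more detail than the paper does.
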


\begin{proof} Let $S$ be a semispace at $x_0$. Since $S$ is a convex set not containing $x_0$ and $\Imp_{x_0}(S)\subseteq S$, $\conv(\Imp_{x_0}(S))$ also does not contain the vertex $x_0$. Hence $\Imp_{x_0}(S)$ is $x_0$-proximal and thus
$\Imp_{x_0}(S)\in \Upsilon_{x_0}$. If we assume that $\Imp_{x_0}(S)\notin \Max(\Upsilon_{x_0})$, then there exists $K\in \Upsilon_{x_0}, K\ne \Imp_{x_0}(S)$ such that $\Imp_{x_0}(S)\preceq_{x_0} K$. Then $x_0\notin \conv(K)$.
By the $S_3$-axiom, $x_0$ and $\conv(K)$ can be separated by complementary halfspaces $H'$ and $H''$, where $x_0\in H'$ and $\conv(K)\subseteq H''$.  By Lemma \ref{shadow-separation}, the shadow $\conv(K)/x_0$
is contained in $H''$. From the definition of $\Imp_{x_0}(S)$ and since $\Imp_{x_0}(S)\preceq_{x_0} K$, we deduce that  $S\subseteq \Imp_{x_0}(S)/x_0\subseteq K/x_0\subseteq \conv(K)/x_0\subseteq H''$. Since $S$ is a
semispace at $x_0$, we must have $S=H''$. This implies that $\Imp_{x_0}(S)/x_0=K/x_0$, contrary to the assumption that $K\ne \Imp_{x_0}(S)$ and $\Imp_{x_0}(S)\preceq_{x_0} K$. This contradiction shows that
$\Imp_{x_0}(S)\in \Max(\Upsilon_{x_0})$. That $S=\Imp_{x_0}(S)/x_0$ follows since the shadow $\Imp_{x_0}(S)/x_0$ is convex (which we prove next), $S$ is included in $\Imp_{x_0}(S)/x_0$ by property (1), and $S$ is a semispace.

Pick any set $K\in \Max(\Upsilon_{x_0})$. We assert that the shadow $K/x_0$ is a semispace at $x_0$ (and thus, is convex). Since $x_0\notin \conv(K)$, by the separation axiom $S_3$,  $x_0$ and $\conv(K)$ can be separated by complementary halfspaces $H'$ and $H''$, where $x_0\in H'$ and $\conv(K)\subseteq H''$.
Let $S$ be a semispace at $x_0$ containing $H''$. Consider the imprint $\Imp_{x_0}(S)$. Since $x_0\notin S$ and $\conv(\Imp_{x_0}(S))\subseteq S$, necessarily
$x_0\notin \conv(\Imp_{x_0}(S))$, whence $\Imp_{x_0}(S)\in \Upsilon_{x_0}$. Since $K/x_0\subseteq H''\subseteq S\subseteq \Imp_{x_0}(S)/x_0$, we must have $K\preceq_{x_0}\Imp_{x_0}(S)$. Since  $K\in \Max(\Upsilon_{x_0})$, we deduce that
$K=\Imp_{x_0}(S)$ and thus $K/x_0=H''=S$. Consequently, any shadow $K/x_0$ with $K\in \Max(\Upsilon_{x_0})$ is convex and furthermore is a semispace at $x_0$. This completes  the proof of the direct implication (convexity of $K/x_0$)
and establishes the converse implication.
\end{proof}

 From Theorem \ref{S3proximal}  it follows that $|\cS_{x_0}|=|\Max(\Upsilon^*_{x_0})|$.  Now, we will compare the number  $|\cS|$ of semispaces of $G$ with the size of the set of pairs $\mathcal M=\{(x_0,K): x_0\in V, K\in \Max(\Upsilon^*_{x_0})\}$.
 Notice that $|\mathcal M|=\sum_{x_0\in V} |\Max(\Upsilon^*_{x_0})|$.


\begin{proposition} \label{numberproxima} If $G=(V,E)$  is a graph with $n$ vertices, then $\frac {|\mathcal M|}{n}\le |\cS|\le |\mathcal M|$.
\end{proposition}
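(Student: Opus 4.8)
The upper bound $|\cS| \le |\mathcal M|$ is immediate: by Lemma \ref{copoint} every semispace $S$ is a semispace at some vertex $x_0$ adjacent to $S$, and by Theorem \ref{S3proximal} (applied as $S = \Imp_{x_0}(S)/x_0$ with $\Imp_{x_0}(S) \in \Max(\Upsilon^*_{x_0})$) this gives, for each $S$, at least one pair $(x_0, K) \in \mathcal M$ with $S = K/x_0$. Hence the assignment $(x_0, S) \mapsto (x_0, \Imp_{x_0}(S))$ from the set $\{(x_0, S) : x_0 \in V, S \in \cS_{x_0}\}$ to $\mathcal M$ is a bijection (Theorem \ref{S3proximal} again), and since $|\cS| \le \sum_{x_0 \in V} |\cS_{x_0}| = |\mathcal M|$ — the inequality because the union $\cS = \bigcup_{x_0} \cS_{x_0}$ need not be disjoint — the upper bound follows.

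For the lower bound $|\mathcal M|/n \le |\cS|$, the plan is to bound, for a \emph{fixed} semispace $S \in \cS$, the number of pairs in $\mathcal M$ that produce $S$, i.e.\ the number of vertices $x_0$ with $S \in \cS_{x_0}$ (for each such $x_0$ there is exactly one $K \in \Max(\Upsilon^*_{x_0})$ with $K/x_0 = S$, by the bijection in Theorem \ref{S3proximal}). So it suffices to show that each semispace $S$ is a semispace at \emph{at most} $n$ distinct attaching vertices — which is trivially true since there are only $n$ vertices in total. Summing over $S \in \cS$: the total number of pairs $(x_0, S)$ with $S \in \cS_{x_0}$ is $\sum_{x_0} |\cS_{x_0}| = |\mathcal M|$, and this is at most $n \cdot |\cS|$ because each of the $|\cS|$ semispaces contributes at most $n$ such pairs. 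Rearranging gives $|\mathcal M| \le n |\cS|$, i.e.\ $|\mathcal M|/n \le |\cS|$, which is the claimed bound.

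I do not expect a genuine obstacle here; the statement is essentially a counting identity relating $\mathcal M = \{(x_0,K): K \in \Max(\Upsilon^*_{x_0})\}$ to $\cS$ via the per-vertex bijections of Theorem \ref{S3proximal}. The only point requiring a moment's care is that the union $\cS = \bigcup_{x_0 \in V} \cS_{x_0}$ is not disjoint, which is precisely why one gets the two-sided estimate rather than an equality: the factor $n$ on one side accounts for the fact that a single semispace can be attached at many vertices, and the inequality $|\cS| \le |\mathcal M|$ on the other side accounts for the overcounting in $\sum_{x_0}|\cS_{x_0}|$.
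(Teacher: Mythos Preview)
Your proof is correct and follows essentially the same approach as the paper's: both arguments rest on the per-vertex bijection $\cS_{x_0}\leftrightarrow\Max(\Upsilon^*_{x_0})$ from Theorem~\ref{S3proximal} (so that $|\mathcal M|=\sum_{x_0}|\cS_{x_0}|$), together with Lemma~\ref{copoint} ensuring every semispace lies in some $\cS_{x_0}$, and the trivial observation that it lies in at most $n$ of them. The only cosmetic difference is that the paper builds an explicit injection $f:\cS\to\mathcal M$ by fixing a total order on $V$ to break ties, whereas your double-counting argument sidesteps this and is slightly more economical.
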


\begin{proof}  Consider any fixed total order on the vertices of $G$. Define the following map $f: \cS\rightarrow \mathcal M$: for $S\in \cS$ we set $f(S)=(x_0,K)$ if (1) $(x_0,K)\in \mathcal M$, (2) $S=K/x_0$, and among all pairs
$(x,K')\in \mathcal M$ satisfying the conditions (1) and (2), the chosen pair $(x_0,K)$ has
$x_0$ with the smallest index (with respect to the total order).  By Lemma  \ref{copoint}, any semispace $S$ is a semispace at a vertex $x$ adjacent to $S$. By Theorem \ref{S3proximal},
there exists $K'\in \Max(\Upsilon^*_{x})$ such that $S=K'/x/$. Therefore, the map $f$ is well-defined. Since each pair $(x,K)$ of $\mathcal M$ defines a single semispace $K/x$, the map $f$ is injective,
establishing the upper bound  $|\cS|\le |\mathcal M|$.  To prove the lower bound, pick any semispace $S\in \mathcal S$. By Lemma \ref{copoint}, $S=K/x_0$ for at least one pair $(x_0,K)\in \mathcal M$. On the other hand, since each pair
$(x_0,K)\in \mathcal M$ defines a single semispace $K/x_0$, the semispace $S$ can be generated as $K/x_0$ by at most $n$ pairs $(x_0,K)\in \mathcal M$. This proves that   $|\cS|\ge \frac {|\mathcal M|}{n}$.
\end{proof}

\subsection{Characterization of $S_3$-graphs}
We prove the following characterization of $S_3$-graphs: 


\begin{theorem} \label{S3graphs} For a graph $G=(V,E)$, the following conditions are equivalent:
\begin{enumerate}
\item[(i)] $G$ is an $S_3$-graph;
\item[(ii)] for any $x_0\in V$ and $K\in \Max(\Upsilon^*_{x_0})$, the shadows $K/x_0$ and $x_0/K$ are convex and disjoint;
\item[(iii)] for any $x_0\in V$ and  $K\in \Max(\Upsilon^*_{x_0})$, $x_0$ and $\conv(K)$  are separable. 
\end{enumerate}
\end{theorem}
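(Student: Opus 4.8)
The plan is to prove the cycle of implications (i)$\Rightarrow$(iii)$\Rightarrow$(ii)$\Rightarrow$(i), with essentially all of the work in the last step.

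The implication (i)$\Rightarrow$(iii) is immediate: if $K\in\Max(\Upsilon^*_{x_0})$ then (P2) gives $x_0\notin\conv(K)$, so the $S_3$-axiom applied to the convex set $\conv(K)$ and the point $x_0$ produces the required separating complementary halfspaces. For (iii)$\Rightarrow$(ii), fix $x_0$ and $K\in\Max(\Upsilon^*_{x_0})$ and let $H',H''$ be complementary halfspaces with $x_0\in H'$ and $\conv(K)\subseteq H''$. Applying Lemma~\ref{shadow-separation} to the disjoint convex sets $\{x_0\}$ and $\conv(K)$ gives $x_0/\conv(K)\subseteq H'$ and $\conv(K)/x_0\subseteq H''$; by Lemma~\ref{proximal-property}(ii) the second shadow equals $K/x_0$, and since $K\subseteq\conv(K)$ we also get $x_0/K\subseteq x_0/\conv(K)\subseteq H'$. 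As $H'\cap H''=\varnothing$ the shadows $K/x_0$ and $x_0/K$ are disjoint; and since $V=(K/x_0)\cup(x_0/K)$ by Lemma~\ref{proximal-property}(iii) while $V=H'\cup H''$, these inclusions force $K/x_0=H''$ and $x_0/K=H'$. In particular both shadows are (complementary) halfspaces, so they are convex and disjoint, which is (ii).

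The core is (ii)$\Rightarrow$(i), and by Theorem~\ref{copoints-S3} it suffices to show that every semispace $S$ of $G$ is a halfspace. By Lemma~\ref{copoint}, $S$ is a semispace attached to some vertex $x_0$ with $x_0\sim S$; fixing a neighbour $y\in S$ of $x_0$ and using $[x_0,y]=\{x_0,y\}$ and $x_0\notin S$ we get $y\in\Imp_{x_0}(S)$, while $\conv(\Imp_{x_0}(S))\subseteq S$ gives (P2) and property~(4) gives (P1); hence $\Imp_{x_0}(S)\in\Upsilon^*_{x_0}$. The crucial step is then to extend $\Imp_{x_0}(S)$ to a \emph{maximal} $x_0$-proximal set, i.e.\ to find $K\in\Max(\Upsilon^*_{x_0})$ with $\Imp_{x_0}(S)\preceq_{x_0}K$ (see next paragraph). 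Given such a $K$, condition (ii) says $K/x_0$ and $x_0/K$ are convex and disjoint, and Lemma~\ref{proximal-property}(iii) says their union is $V$, so $K/x_0$ is a halfspace, and it misses $x_0$ because $x_0\in x_0/K$. On the other hand $\Imp_{x_0}(S)\preceq_{x_0}K$ together with property~(1) gives $S\subseteq\Imp_{x_0}(S)/x_0\subseteq K/x_0$, so $K/x_0$ is a convex set containing $S$ and missing $x_0$; maximality of $S$ forces $S=K/x_0$, which is a halfspace.

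It remains to justify the extension step by Zorn's lemma (for finite $G$ it is trivial). Along a $\preceq_{x_0}$-chain $(K_i)$ in $\Upsilon^*_{x_0}$ the shadows $K_i/x_0$ form an inclusion-chain (Lemma~\ref{proximal}); put $U=\bigcup_i K_i/x_0$ and $K=\Imp_{x_0}(U)$. Using that $\conv(x_0,w)$ is convex one checks $U/x_0=U$, and property~(1) then gives $\Imp_{x_0}(U)/x_0=U$, so $K/x_0=U$ and (P1) holds by property~(4); $K$ is $x_0$-adjacent since some $K_i\subseteq U=K/x_0$ contains a neighbour of $x_0$. The only delicate point is (P2): if $x_0\in\conv(K)=\conv(\Imp_{x_0}(U))$, then by domain-finiteness $x_0\in\conv(F)$ for a finite $F\subseteq\Imp_{x_0}(U)$, whence $F\subseteq K_j/x_0$ for a single $j$ by the chain property; but any $z\in F$ lies in $\Imp_{x_0}(U)$ and in $K_j/x_0\subseteq U$, so $[x_0,z]\cap(K_j/x_0)=\{z\}$, i.e.\ $z\in\Imp_{x_0}(K_j/x_0)=\Imp_{x_0}(K_j)=K_j$ (properties~(2) and (P1)); thus $F\subseteq K_j$ and $x_0\in\conv(F)\subseteq\conv(K_j)$, contradicting $K_j\in\Upsilon_{x_0}$. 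Hence $K\in\Upsilon^*_{x_0}$ bounds the chain and Zorn's lemma applies. The principal obstacle throughout is precisely this passage from an arbitrary semispace to a maximal $x_0$-proximal set, since only for maximal $x_0$-proximal sets do Lemma~\ref{proximal-property} and condition (ii)/(iii) apply; verifying that a chain of $x_0$-proximal sets is bounded by an $x_0$-proximal set is the non-routine part, and it rests on domain-finiteness together with the observation above that an imprint point of the union which happens to lie in some member of the chain already belongs to that member.
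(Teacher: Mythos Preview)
Your proof is correct and follows essentially the same strategy as the paper: the core step of showing that every semispace $S$ at $x_0\sim S$ is a halfspace by extending $\Imp_{x_0}(S)$ to some $K\in\Max(\Upsilon^*_{x_0})$ and then invoking the hypothesis is identical to the paper's (iii)$\Rightarrow$(i). The only differences are that you run the cycle in the order (i)$\Rightarrow$(iii)$\Rightarrow$(ii)$\Rightarrow$(i) (making (i)$\Rightarrow$(iii) immediate and thereby avoiding the appeal to Theorem~\ref{S3proximal} that the paper uses for (i)$\Rightarrow$(ii)), and that you spell out the Zorn's-lemma extension step in full, which the paper leaves implicit.
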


\begin{proof} 
To prove (i)$\Rightarrow$(ii), let $K\in \Max(\Upsilon^*_{x_0})$. By Theorem \ref{S3proximal}, $S=K/x_0$ is a semispace with $x_0$ as attaching vertex. Since $G$ is an $S_3$-graph and
$S$ is a semispace of $G$, the complement $V\setminus (K/x_0)$ is convex. By Lemma \ref{proximal-property}, $V\setminus (K/x_0)$ is contained in $x_0/K$. It remains to show that $x_0/K$ coincides with $V\setminus (K/x_0)$.
Indeed, otherwise there exists $z\in K/x_0=S$ belonging to $x_0/K$. Then $x_0\in \conv(K\cup \{ z\})\subset S$, in contradiction with the assumption that $S=K/x_0$ is a convex set  not containing $x_0$. This shows that
$x_0/K=V\setminus (K/x_0)$, establishing (i)$\Rightarrow$(iv).

To prove (ii)$\Rightarrow$(iii), by Lemma \ref{proximal-property}(iii), the sets $K/x_0$ and $x_0/K$ cover the vertex set $V$ of $G$. Since
$K/x_0$ and $x_0/K$ are convex and disjoint, the sets $K/x_0$ and $x_0/K$ are complementary halfspaces. By Lemma \ref{proximal-property}(i)$\&$(ii),
$\conv(K)/x_0=K/x_0$, yielding $\conv(K)\subseteq K/x_0$. Consequently, $x_0$ and $\conv(K)$ are separated by the complementary halfspaces $x_0/K$ and $K/x_0$.

Finally, we  prove (iii)$\Rightarrow$(i). Let $S$ be a semispace of $G$ and let $x_0$ be an attaching vertex of $S$ with $x_0\sim S$ ($x_0$ exists by Lemma \ref{copoint}). Let $K'=\Imp_{x_0}(S)$.
Then $\conv(K')\subseteq S$ and $K'=\Imp_{x_0}(K')$, thus $K'\in \Upsilon^*(x_0)$. Let $K\in \Max(\Upsilon^*_{x_0})$ such that $K'\preceq_{x_0} K$. This implies that $K'\subseteq K/x_0$ and since $K'=\Imp_{x_0}(S)$, we have
$S\subseteq K'/x_0$. This implies that $S\subseteq K/x_0$. By (iii), $x_0$ and $\conv(K)$ can be separated by complementary halfspaces $H',H''$, say $x_0\in H'$ and $\conv(K)\in H''$. Lemma  \ref{shadow-separation}
implies that $\conv(K)/x_0\subseteq H''$. Consequently, $S\subseteq \conv(K)/x_0\subseteq H''$. Since $S$ is a semispace at $x_0$ and $x_0\notin H''$, we must have $S=H''$, yielding $S=\conv(K)/x_0=H''$. Thus,
$S$ is a halfspace of $G$.
\end{proof}

\begin{remark} Conditions (ii) and (iii) of Theorem \ref{S3graphs}  can be considered as a kind of compactness criteria for $S_3$-separation in graphs, similar to Theorem \ref{S4arity} for $S_4$-separation for arity $n$. At the difference
with Theorem \ref{S4arity}, they do not involve  a fixed number of vertices. 
\end{remark}

%
%
%

\subsection{Maximal $x_0$-proximal sets} In this subsection, we provide a characterization of maximal $x_0$-proximal sets.
We start with the following  property of proximal sets.

\begin{lemma} \label{proximal-flag} $\Upsilon_{x_0}$ is a simplicial complex on $V\setminus \{ x_0\}$. All sets $K\in \Max(\Upsilon_{x_0})$ are facets of $\Upsilon_{x_0}$.
\end{lemma}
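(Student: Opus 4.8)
The plan is to obtain both assertions by unwinding the definitions of $\Upsilon_{x_0}$ and of $\preceq_{x_0}$, using the elementary properties (1)--(5) listed after the definition of $\Upsilon_{x_0}$ and the fact (Lemma~\ref{posets}) that $\preceq_{x_0}$ is a partial order. First I would check that $\Upsilon_{x_0}$ is closed under passing to subsets, which is exactly what ``simplicial complex'' requires. Take $K\in\Upsilon_{x_0}$ and $K'\subseteq K$. Condition (P2) is hereditary because $\conv(K')\subseteq\conv(K)$ and $x_0\notin\conv(K)$ give $x_0\notin\conv(K')$. For (P1), fix $z\in K'$; since $K$ satisfies (P1) we have $[x_0,z]\cap K=\{z\}$, hence $\{z\}\subseteq[x_0,z]\cap K'\subseteq[x_0,z]\cap K=\{z\}$, so $\Imp_{x_0}(K')=K'$. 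Thus $K'\in\Upsilon_{x_0}$.

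Next I would note that every member of $\Upsilon_{x_0}$ is a subset of $V\setminus\{x_0\}$: if $K\in\Upsilon_{x_0}$ then $K\subseteq\conv(K)$ and $x_0\notin\conv(K)$ by (P2). Conversely, for each $v\in V\setminus\{x_0\}$ the singleton $\{v\}$ is $x_0$-proximal, since $\Imp_{x_0}(\{v\})=\{v\}$ trivially and $x_0\notin\{v\}=\conv(\{v\})$ (points being convex). Together with the hereditary property above, this shows $\Upsilon_{x_0}$ is a simplicial complex on the ground set $V\setminus\{x_0\}$.

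For the second statement I would argue by contradiction. Recall that a facet of a simplicial complex is a simplex that is maximal with respect to inclusion. Let $K\in\Max(\Upsilon_{x_0})$ and suppose $K$ is not a facet, so there is $K'\in\Upsilon_{x_0}$ with $K\subsetneq K'$. By property (5), the inclusion $K\subsetneq K'$ yields $K\preceq_{x_0}K'$. Since $\preceq_{x_0}$ is a partial order on $\Upsilon_{x_0}$ and $K$ is a maximal element of $(\Upsilon_{x_0},\preceq_{x_0})$, the relation $K\preceq_{x_0}K'$ forces $K=K'$, contradicting $K\subsetneq K'$. Hence every $K\in\Max(\Upsilon_{x_0})$ is a facet of $\Upsilon_{x_0}$.

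There is no genuine obstacle in this argument: the only points needing a line of reasoning are the heredity of (P1) under restriction and the translation from the inclusion order to $\preceq_{x_0}$ via property (5). (I would not claim the converse inclusion --- a facet of $\Upsilon_{x_0}$ need not be $\preceq_{x_0}$-maximal, since $K'/x_0$ can properly contain $K'$ --- and indeed the lemma only asserts $\Max(\Upsilon_{x_0})\subseteq\{\text{facets}\}$.)
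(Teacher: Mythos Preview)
Your proof is correct and follows essentially the same approach as the paper: heredity of (P1) and (P2) under passing to subsets for the first assertion, and property (5) combined with $\preceq_{x_0}$-maximality for the second. You are simply more explicit than the paper in verifying (P1) for $K'$ and in noting that singletons lie in $\Upsilon_{x_0}$ so the ground set is all of $V\setminus\{x_0\}$.
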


\begin{proof} Let $K\in \Upsilon_{x_0}$ and $K'\subseteq K$. Since $\Imp_{x_0}(K)=K$ and $x_0\notin \conv(K)$ we will also have $\Imp_{x_0}(K')$ and $x_0\notin \conv(K')$, thus $K'$ satisfies the
conditions (P1) and (P2), whence $K'\in \Upsilon_{x_0}$. Now, if we suppose that  $K\in \Max(\Upsilon_{x_0})$ and $K\subset K'$ for $K'\in \Upsilon_{x_0}$, by property (5) we conclude that $K\preceq_{x_0} K'$,
a contradiction with the maximality choice of $K$. Hence $K$ is a facet of $\Upsilon_{x_0}$.
\end{proof}

The \emph{$1$-squeleton} of a simplicial complex $\mathfrak X$ on $V$ is the graph $G({\mathfrak X})$ having $V$ as the set of vertices and $u,v\in V$ are adjacent in
$G({\mathfrak X})$ if and only if $\{ u,v\}$ is a simplex of $\mathfrak X$. A simplicial complex $\mathfrak X$ is called a \emph{flag} (or \emph{clique) complex} if $\sigma\in {\mathfrak X}$ if and only if $\sigma$ is a clique
of $G(\mathfrak X)$. Any flag complex $\mathfrak X$ can be retrieved from its 1-skeleton $G({\mathfrak X})$ by taking the cliques of $G({\mathfrak X})$  as the simplices of $\mathfrak X$.

\begin{remark}
Note that $\Upsilon_{x_0}$ is not a flag simplicial complex: for the $S_3$-graph $\Gamma$  from Figure \ref{non-convexshadows}, each of the pairs $\{ y,z\},\{ z,w\}$, and $\{ y,w\}$ belong to $\Upsilon_{x_0}$ (because
$x_0$ does not belong to the intervals defined by these three pairs), however $\{ y,z,w\}\notin \Upsilon_{x_0}$ since $u\in [y,w]$ and $x_0\in [u,z]$.
\end{remark}

Not every facet of $\Upsilon_{x_0}$ belongs to $\Max(\Upsilon_{x_0})$. Thus in $\Upsilon_{x_0}$ we have two types of maximality: by inclusion and by the partial order $\preceq_{x_0}$. By Lemma
\ref{proximal-flag},  the maximality by $\preceq_{x_0}$ implies the maximality by inclusion. We continue with the characterization of the facets of $\Upsilon_{x_0}$ that are maximal $x_0$-proximal sets. 
This characterization can be viewed as a kind of local optimality condition:

\begin{proposition} \label{proximal4} For an $S_3$-graph $G=(V,E)$ and a set $K\subseteq V$, $K\in  \Max(\Upsilon_{x_0})$ if and only if $K$ is a facet of $\Upsilon_{x_0}$ satisfying the following condition:

\begin{enumerate}
\item[(P3)] for any $y\in V\setminus K$ such that $R:=(y/x_0)\cap K\ne \varnothing$, we have $x_0\in \conv(K\setminus R\cup \{ y\})$.
\end{enumerate}
\end{proposition}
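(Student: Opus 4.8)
The plan is to prove the two implications separately, exploiting Theorem \ref{S3proximal}, which identifies $\Max(\Upsilon_{x_0})$ with the semispaces at $x_0$ (and whose proof in turn produces, for each $K\in\Upsilon_{x_0}$, a complementary halfspace separating $x_0$ from $\conv(K)$ when $G$ is an $S_3$-graph).

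First I would prove that every $K\in\Max(\Upsilon_{x_0})$ satisfies (P3). Fix $y\in V\setminus K$ with $R:=(y/x_0)\cap K\ne\varnothing$, and consider $K':=(K\setminus R)\cup\{y\}$. Note $R$ is exactly the set of vertices of $K$ lying on a geodesic from $x_0$ to $y$; since $y\notin K$ and $R\ne\varnothing$, we have $K'\ne K$ and $K'\subseteq (K\setminus R)\cup\{y\}$. The key sub-claim is that if we assume, for contradiction, $x_0\notin\conv(K'\cup$ \dots$)$, i.e. $x_0\notin\conv(K\setminus R\cup\{y\})=\conv(K')$, then $K'$ satisfies (P1) and (P2): (P2) is the contradiction hypothesis, and for (P1) one checks $\Imp_{x_0}(K')=K'$ by observing that no vertex of $K\setminus R$ lies on a geodesic $[x_0,z]$ for another $z\in K\setminus R$ (that would contradict $\Imp_{x_0}(K)=K$), and that $y$ cannot lie on $[x_0,z]$ for $z\in K\setminus R$ and no $z\in K\setminus R$ lies on $[x_0,y]$ (that would put $z$ into $R$). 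Hence $K'\in\Upsilon_{x_0}$. Then $K\preceq_{x_0}K'$: indeed every vertex of $K$ is in $K'/x_0$, since each $r\in R$ lies on $[x_0,y]$ with $y\in K'$, and each $z\in K\setminus R$ is in $K'$ itself. Since $K\ne K'$, this contradicts $K\in\Max(\Upsilon_{x_0})$. That $K$ is a facet of $\Upsilon_{x_0}$ is Lemma \ref{proximal-flag}.

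Conversely, suppose $K$ is a facet of $\Upsilon_{x_0}$ satisfying (P3); I want $K\in\Max(\Upsilon_{x_0})$. Suppose not: then there is $K''\in\Upsilon_{x_0}$, $K''\ne K$, with $K\preceq_{x_0}K''$, i.e. $K\subseteq K''/x_0$ and $x_0\notin\conv(K'')$. I would pick a vertex $y\in K''\setminus(K/x_0)$ — such $y$ exists, for otherwise $K''\subseteq K/x_0$, whence (as $K,K''$ are both $x_0$-proximal and $K''/x_0\supseteq K/x_0$) Lemma \ref{proximal} would force $K/x_0=K''/x_0$ and then $K=K''$, a contradiction. For this $y$: since $y\in K''$ and $K''$ is $x_0$-proximal with $K\subseteq K''/x_0$, every $z\in K$ lies on some $[x_0,w]$ with $w\in K''$; the main point is to show $R:=(y/x_0)\cap K\ne\varnothing$ and $x_0\notin\conv(K\setminus R\cup\{y\})$, contradicting (P3). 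The inclusion $x_0\notin\conv(K\setminus R\cup\{y\})$ should follow because $K\setminus R\cup\{y\}\subseteq$ (a set whose convex hull lies inside $\conv(K'')$-type region, using that the part of $K$ "shadowed past $y$" is captured by $y$) — here is where I expect to need to massage the definitions carefully, possibly replacing $K''$ by $\Imp_{x_0}(\conv(K''))$ via Lemma \ref{proximal3} to get a cleaner maximal-ish object, and to invoke the $S_3$-separation of $x_0$ from $\conv(K'')$ (valid by hypothesis that $G$ is $S_3$) to control convex hulls.

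\textbf{Main obstacle.} The delicate point is the converse direction: turning the failure of $\preceq_{x_0}$-maximality into a single witness vertex $y$ violating (P3). One must argue that there is a $y\in K''$ such that replacing the shadowed part $R$ of $K$ by $y$ still avoids $x_0$ in its convex hull; the shadow $y/x_0$ may meet $K$ in a complicated set, and $\conv(K\setminus R\cup\{y\})$ must be shown to sit inside a halfspace avoiding $x_0$. I expect the clean way through is to use the $S_3$-halfspace $H''\supseteq\conv(K'')$ with $x_0\in H'$ (available since $G$ is $S_3$): then $K\setminus R\subseteq K\subseteq K''/x_0\subseteq\conv(K'')/x_0\subseteq H''$ by Lemma \ref{shadow-separation}, and $y\in K''\subseteq H''$, so $\conv(K\setminus R\cup\{y\})\subseteq H''\not\ni x_0$, giving the contradiction with (P3) once $R\ne\varnothing$ is secured — and $R\ne\varnothing$ holds because $y\notin K/x_0$ forces $y$ to "shadow" at least one vertex of $K$ when $K\preceq_{x_0}K''$, which needs the property $\Imp_{x_0}(K'')=K''$ together with $K\subseteq K''/x_0$.
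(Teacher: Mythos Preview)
Your overall strategy matches the paper's: in both directions you manufacture an $x_0$-proximal set violating maximality (forward) or exhibit a witness $y$ violating (P3) using a halfspace containing $\conv(K'')$ (converse). The paper proceeds the same way, except that in the converse it takes $K'\in\Max(\Upsilon_{x_0})$ dominating $K$ and uses directly that $K'/x_0$ is a semispace (Theorem~\ref{S3proximal}) in place of your generic $S_3$-halfspace; the effect is identical.

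There is, however, a genuine gap at exactly the point you flagged as the main obstacle: your argument that $R=(y/x_0)\cap K\ne\varnothing$ does not go through from the ingredients you list. Knowing $y\in K''$, $y\notin K/x_0$, $K\subseteq K''/x_0$ and $\Imp_{x_0}(K'')=K''$ does \emph{not} force some $z\in K$ to satisfy $y\in[x_0,z]$; the vertices of $K$ may all be shadowed by other members of $K''$. What rescues you is the \emph{facet} hypothesis, which you never invoke here: if $R=\varnothing$, then $K\cup\{y\}$ satisfies (P1) (your own checks give this, since $y\notin K/x_0$ rules out any $w\in K$ on $[x_0,y]$, and $R=\varnothing$ rules out $y$ on any $[x_0,z]$), and (P2) follows from your halfspace argument since $K\cup\{y\}\subseteq K''/x_0\subseteq H''$. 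Hence $K\cup\{y\}\in\Upsilon_{x_0}$ properly contains $K$, contradicting facetness. So you should split into $R=\varnothing$ (contradicts facet) and $R\ne\varnothing$ (contradicts (P3)). The paper avoids this case split by choosing $y$ more directly: since $K,K'$ are distinct facets, pick $z\in K\setminus K'$; as $z\in K'/x_0$ there is $y\in K'$ with $y\in[x_0,z]$, and $y\notin K$ by (P1) for $K$, so $z\in R$ right away.

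One minor slip in the forward direction: the parenthetical ``that would put $z$ into $R$'' conflates $z\in[x_0,y]$ with $y\in[x_0,z]$. The statement is still true, but for a different reason: pick any $r\in R$ (this is where $R\ne\varnothing$ is used), then $y\in[x_0,r]$ and $z\in[x_0,y]$ give $z\in[x_0,r]$, whence (P1) for $K$ forces $z=r\in R$.
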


\begin{proof} First, let $K\in \Max(\Upsilon_{x_0})$. By Lemma \ref{proximal-flag}, $K$ is a facet of $\Upsilon_{x_0}$. To establish (P3) pick any vertex $y\in V\setminus K$. First suppose that $y\in K/x_0=\bigcup_{t\in K} t/x_0$, say $y\in z/x_0$ for $z\in K$.
If $(y/x_0)\cap K\ne \varnothing$, say  $u\in (y/x_0)\cap K$, then we deduce that $z\in [x_0,y]$ and $y\in [x_0,u]$, whence $z\in [x_0,u]$, contrary to the assumption that $K$ satisfies condition (P1).
This proves that $(y/x_0)\cap K=\varnothing$ whenever $y\in K/x_0$. Now suppose that $y\notin K/x_0$ and that $R=(y/x_0)\cap K\ne \varnothing$. Suppose by way of contradiction that
$x_0\notin \conv(K\setminus R\cup \{ y\})$. Let $A=\conv(K\setminus R\cup \{ y\})$ and $K'=\Imp_{x_0}(A)$. Then $K'$ satisfies condition (P1) because $\Imp_{x_0}$ is idempotent (property (4)).  Also $K'$ satisfies
condition (P2) since $\conv(K')\subseteq A$ and $x_0\notin A$. Finally, $K\setminus R\cup \{ y\}\subseteq A\subseteq K'/x_0$ from the definition of imprints. Since $R\subseteq y/x_0\subseteq K'/x_0$, we conclude that
$K\subseteq K'/x_0$, yielding  $K\preceq_{x_0} K'$, a contradiction.

Conversely, suppose by way of contradiction that  a facet $K$ of $\Upsilon_{x_0}$ satisfies condition (P3) but  $K\notin \Max(\Upsilon_{x_0})$, namely suppose that $K\preceq_{x_0} K'$ for $K'\in  \Max(\Upsilon_{x_0})$.
Since $K$ and $K'$ are facets of $\Upsilon_{x_0}$, $K\setminus K'\ne\varnothing$ and $K'\setminus K\ne\varnothing$. By definition of $\preceq_{x_0}$, $K\subseteq x_0/K'=\bigcup_{y\in K'} y/x_0$.
Therefore there exists a vertex $y\in K'\setminus K$ such that $R:=K\cap (y/x_0)\ne \varnothing$. By definition of $\Upsilon_{x_0}$, $R\subseteq K\setminus K'$. By  Theorem \ref{S3proximal}
the shadow $K'/x_0$ is a semispace at $x_0$. Since $y\in K'$ and $K\subseteq K'/x_0$, from the convexity of $K'/x_0$ we conclude that $\conv(K\setminus R\cup \{ y\})\subseteq \conv(K\cup \{ y\})\subseteq K'/x_0$.
Since $x_0\notin x_0/K'$, we also have $x_0\notin \conv(K\setminus R\cup \{ y\})$, violating condition (P3). This concludes the proof.
\end{proof}

The conditions (P1)-(P3) of Proposition \ref{proximal4} can be efficiently tested for any given set $K\subseteq V$, therefore Proposition \ref{proximal4} is an efficient characterization of maximal $x_0$-proximal sets.
Additionally, if $K\in \Upsilon_{x_0}$ does not belong to $\Max(\Upsilon_{x_0})$, by Proposition \ref{proximal4} either there exists $y\notin K$ such that $K\cup \{ y\}\in \Upsilon_{x_0}$ or there exists
$y\notin K$ such that   $R:=(y/x_0)\cap K\ne \varnothing$ and $x_0\notin \conv(K\setminus R\cup \{ y\})$. In the first case, $K\preceq_{x_0} K\cup \{ y\}$ and in the second case $K\preceq_{x_0} K\setminus R\cup \{ y\}$.
This leads us to the following observation:

\begin{corollary}  \label{extensionUpsilon} Given  $K\in {\Upsilon_{x_0}}$ (or $K \in {\Upsilon^*_{x_0}}$) of an $S_3$-graph $G$, one can find in polynomial time in the size of $G$ a set $K'\in \Max({\Upsilon_{x_0}})$ (respectively, $K'\in \Max({\Upsilon^*_{x_0}})$ such that $K\preceq_{x_0} K'$.
\end{corollary}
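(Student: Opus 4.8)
The plan is to turn the observation preceding the statement into an explicit greedy procedure and to bound the number of its rounds by a polynomial in $n:=|V|$. Starting from the given $K\in\Upsilon_{x_0}$, in each round I would first test, via Proposition~\ref{proximal4}, whether $K\in\Max(\Upsilon_{x_0})$, i.e.\ whether $K$ is a facet of $\Upsilon_{x_0}$ and (P3) holds; if so, I would output $K$ and stop. Otherwise, by the contrapositive of Proposition~\ref{proximal4}, at least one of the following holds: (a) there is $y\in V\setminus K$ with $K\cup\{y\}\in\Upsilon_{x_0}$, or (b) there is $y\in V\setminus K$ with $R:=(y/x_0)\cap K\neq\varnothing$ and $x_0\notin\conv(K\setminus R\cup\{y\})$, and in this second case one automatically has $y\notin K/x_0$ (otherwise $(y/x_0)\cap K=\varnothing$, exactly as shown in the proof of Proposition~\ref{proximal4}). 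I would then pick one such $y$ and replace $K$ by $K\cup\{y\}$ in case (a), and by $K':=\Imp_{x_0}(\conv(K\setminus R\cup\{y\}))$ in case (b). As established inside the proof of Proposition~\ref{proximal4}, in case (b) this $K'$ lies in $\Upsilon_{x_0}$ (it satisfies (P1) by idempotency of $\Imp_{x_0}$, i.e.\ property (4), and (P2) because $\conv(K')\subseteq\conv(K\setminus R\cup\{y\})$ while $x_0\notin\conv(K\setminus R\cup\{y\})$), and $K\preceq_{x_0}K'$ follows from properties (1) and (3); in case (a) one has $K\preceq_{x_0}K\cup\{y\}$ by property (5). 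Hence the procedure moves up a $\preceq_{x_0}$-chain inside $\Upsilon_{x_0}$ and, by transitivity (Lemma~\ref{posets}), ends at some $K'$ with $K\preceq_{x_0}K'$ whose terminal value is a facet of $\Upsilon_{x_0}$ satisfying (P3), hence lies in $\Max(\Upsilon_{x_0})$ by Proposition~\ref{proximal4}.

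The termination and complexity estimate is the heart of the argument, and the right potential is $|K/x_0|$ (the naive choice $|K|$ need not increase in case (b)). Writing $K'$ for the set replacing $K$ in a given round, we have $K\preceq_{x_0}K'$, hence $K/x_0\subseteq K'/x_0$, and I claim this inclusion is strict. In case (a), $K\cup\{y\}\in\Upsilon_{x_0}$ forces $y\in\Imp_{x_0}(K\cup\{y\})$, i.e.\ $[x_0,y]\cap(K\cup\{y\})=\{y\}$, so $[x_0,y]\cap K=\varnothing$ and thus $y\notin K/x_0$, while $y\in K'=K\cup\{y\}\subseteq K'/x_0$. In case (b), $y\notin K/x_0$ as noted, while $y\in K\setminus R\cup\{y\}\subseteq\conv(K\setminus R\cup\{y\})\subseteq\Imp_{x_0}(\conv(K\setminus R\cup\{y\}))/x_0=K'/x_0$ by property (1). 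So $|K/x_0|$ strictly increases every round, and since $K/x_0\subseteq V$ with $|V|=n$, there are at most $n$ rounds. Each round examines the $O(n)$ candidate vertices $y$ and, for each, computes a geodesic interval, a shadow $y/x_0$, an imprint, and a geodesic convex hull; all of these are polynomial-time computable (convex hulls by iterated interval closure, intervals from a single precomputed all-pairs distance matrix), so the whole procedure runs in time polynomial in the size of $G$.

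For the variant with $\Upsilon^*_{x_0}$ I would run exactly the same procedure and only add the remark that adjacency to $x_0$ is preserved along the chain: if $K$ contains a neighbor $k$ of $x_0$ and $K\preceq_{x_0}K'$, then $k\in K'/x_0$, so $[x_0,k]\cap K'\neq\varnothing$; since $d(x_0,k)=1$ and $x_0\notin K'$ (because $x_0\notin\conv(K')$), necessarily $k\in K'$, whence $K'$ is again adjacent to $x_0$. Thus the whole chain stays in $\Upsilon^*_{x_0}$, and its terminal element $K'$, being a $\preceq_{x_0}$-maximal element of $\Upsilon_{x_0}$ that also lies in the sub-poset $\Upsilon^*_{x_0}$, is maximal in $(\Upsilon^*_{x_0},\preceq_{x_0})$, i.e.\ $K'\in\Max(\Upsilon^*_{x_0})$. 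The main obstacle I anticipate is purely bookkeeping---making sure the case-(b) replacement genuinely returns to $\Upsilon_{x_0}$ and that the monotone quantity must be $|K/x_0|$ and not $|K|$; beyond Propositions~\ref{proximal4} and~\ref{proximal} and the elementary properties (1)--(5), no new ingredient seems to be needed.
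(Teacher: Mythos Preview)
Your proposal is correct and follows essentially the same greedy approach as the paper's paragraph preceding the corollary. You supply two details the paper leaves implicit: an explicit termination bound via the strictly increasing potential $|K/x_0|$ (which, together with Lemma~\ref{proximal}, caps the number of rounds at $n$), and the observation that adjacency to $x_0$ propagates along any $\preceq_{x_0}$-chain, handling the $\Upsilon^*_{x_0}$ variant; the only cosmetic difference is that in case~(b) the paper replaces $K$ directly by $K\setminus R\cup\{y\}$ (which already lies in $\Upsilon_{x_0}$), whereas you pass to $\Imp_{x_0}(\conv(K\setminus R\cup\{y\}))$ as in the proof of Proposition~\ref{proximal4}---either choice works.
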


\begin{example} \label{proximal_diameter}
For general $S_3$-graphs, maximal $x_0$-proximal sets may have
arbitrary diameters. For example, let $x_0$ be a vertex of the odd cycle $G=C_{2n+1}$ and $u,v$ be the neighbors of $x_0$. Then $\Max(\Upsilon^*_{x_0})$ consists of two sets $\{ u,u^*\}$ and $\{ v,v^*\}$ of diameter $n$, where
$u^*$ is the vertex opposite to the edge $x_0u$ and $v^*$ is the vertex opposite to the edge $x_0v$.  The graph $G$ has two semispaces $S',S''$ attached to
$x_0$ (which are both adjacent to $x_0$): $S'$ is the path of length $n$ between $u$ and $u^*$ and $S''$ is the path of length $n$ between $v$ and $v^*$.
\end{example}

\section{$S_3$-graphs satisfying (TC)} \label{s:s3graphstc}
In this section, we provide more efficient characterizations of $S_3$-graphs satisfying the triangle condition (TC) and of their semispaces. Namely,
we show that for such graphs,  the set  $\Max(\Upsilon^*_{x_0})$ of  maximal $x_0$-proximal sets consists precisely of the cliques $K$
such that $K\cup \{x_0\}$ is a maximal clique of $G$.
Together with Theorem \ref{S3proximal}, this characterizes semispaces
of $S_3$-graphs satisfying (TC), allowing to efficiently enumerate them.
Furthermore, we show that in graphs satisfying (TC), the separation axiom
$S_3$ is equivalent to the convexity of the shadows $K/x_0$ and of the extended shadows $x_0\SK K$ 
(for the definition, see below) for all maximal cliques $K\cup \{x_0\}$  of $G$.

\subsection{The structure of semispaces}
We start with the following definition and notations:

\begin{definition}[Pointed maximal clique]
A \emph{pointed maximal clique} of a graph $G$ is a pair $(x_0,K)$, where $K$ is a clique, $x_0$ is a vertex not belonging to $K$, and $K\cup \{ x_0\}$ is a maximal by inclusion clique of $G$. Denote by $\cK_{x_0}$ the set of all cliques $K$ such that
$(x_0,K)$ is a pointed maximal clique. Let also $\cK$ denote the set of all maximal cliques of $G$.
\end{definition}

We continue by showing that in graphs satisfying (TC) the imprint $\Imp_{x_0}(A)$ of a vertex $x_0$ on a convex set $A$ adjacent to $x_0$ is a clique:

\begin{lemma} \label{convex-vertex} Let $G$ be a graph satisfying (TC). If $A$ is a convex set of $G$ and $x_0\notin A$ is a vertex adjacent
to $A$, then $K=N(x_0)\cap A$ is a clique and $K=\Imp_{x_0}(A)$.
\end{lemma}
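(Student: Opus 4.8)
The plan is to prove the two assertions separately, invoking convexity of $A$ for both but the triangle condition only for the harder one; recall that ``$x_0$ adjacent to $A$'' means $N(x_0)\cap A\ne\varnothing$ and that $\Imp_{x_0}(A)=\{z\in A:[x_0,z]\cap A=\{z\}\}$. First I would dispatch the easy facts. To see that $K=N(x_0)\cap A$ is a clique, suppose $u,v\in K$ with $u\nsim v$; since $d(u,x_0)=d(x_0,v)=1$ this forces $d(u,v)=2$ and $x_0\in[u,v]$, so convexity of $A$ gives $x_0\in[u,v]\subseteq A$, contradicting $x_0\notin A$. The inclusion $K\subseteq\Imp_{x_0}(A)$ is immediate: if $z\in K$ then $[x_0,z]=\{x_0,z\}$, and $x_0\notin A$ gives $[x_0,z]\cap A=\{z\}$.

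For the reverse inclusion $\Imp_{x_0}(A)\subseteq K$ I would argue contrapositively: given $z\in A$ with $d(x_0,z)=k\ge 2$ (every vertex of $A\setminus K$ is of this form, since $x_0\notin A$), I must exhibit a vertex of $A\cap[x_0,z]$ other than $z$. Since $K\ne\varnothing$, pick $w\in K$ minimizing $m:=d(w,z)$; as $d(x_0,w)=1$ we have $k-1\le m\le k+1$. If $m=k-1$, then $d(x_0,w)+d(w,z)=k$, so $w\in[x_0,z]\cap A$ with $w\ne z$ (as $k-1\ge 1$), and we are done. If $m=k+1$, then $d(w,z)=d(w,x_0)+d(x_0,z)$, which forces $[x_0,z]\subseteq[w,z]$; since $w,z\in A$, convexity yields $x_0\in[w,z]\subseteq A$, a contradiction, so this case does not occur.

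The crux is the case $m=k$. Here I would apply (TC) to the vertex $z$ and the edge $x_0w$, which is legitimate since $1=d(x_0,w)<d(z,x_0)=d(z,w)=k$: this produces a common neighbor $x$ of $x_0$ and $w$ with $d(z,x)=k-1$. Then $d(w,x)+d(x,z)=1+(k-1)=k=d(w,z)$, so $x\in[w,z]$, hence $x\in A$ by convexity of $A$; thus $x\in N(x_0)\cap A=K$ while $d(x,z)=k-1<m$, contradicting the minimality of $m$. So necessarily $m=k-1$, which finishes the proof. I expect this case to be the only real obstacle: neither convexity alone nor the triangle inequality yields a neighbor of $x_0$ in $A$ strictly closer to $z$, and the triangle condition is precisely the tool that pulls a vertex of the convex interval $[w,z]$ into $N(x_0)$. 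One should also note that the minimum defining $w$ exists even when $G$, hence $K$, is infinite, since $d(\cdot,z)$ takes only the values $k-1,k,k+1$ on $K$.
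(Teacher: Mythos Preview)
Your proof is correct and follows essentially the same approach as the paper's: both show $K$ is a clique via convexity, then for $z\in A$ at distance $k\ge 2$ from $x_0$ pick any $w\in K$, rule out $d(w,z)=k+1$ by convexity, and in the case $d(w,z)=k$ apply (TC) to the edge $x_0w$ and the vertex $z$ to produce a neighbor of $x_0$ in $[w,z]\subseteq A$ at distance $k-1$ from $z$. Your framing via a minimizing $w$ and deriving a contradiction is a cosmetic variant of the paper's direct construction of $y'\in K\cap[x_0,y]$.
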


\begin{proof} Since $A$ is convex and $x_0\notin A$, necessarily $K=N(x_0)\cap A$ is a nonempty clique of $G$. To prove that $K=\Imp_{x_0}(A)$,  it suffices to show that for any $y\in A$ there exists $y'\in K$ such that $y'\in [x_0,y]$. If $y\in K$, then this is obvious. So, let $d(y,x_0)=k>1$. Since $A$ is convex and $x_0\notin A$, we conclude that $d(y,z)\le k$ for any $z\in K$. If there exists a vertex $z\in K$ such that $d(y,z)=k-1$, then $z\in [x_0,y]$ and we can set $y'=z$. Now, let $d(y,z)=k$ for any $z\in K$. Since $d(y,x_0)=d(y,z)=k$,  by triangle condition  there exists a vertex $y'\sim x_0,z$ at distance $k-1$ from $y$. Since $y'\in [z,y]$, we conclude that $y'\in A$, and thus $y'\in K$. Consequently, $K=\Imp_{x_0}(A)$. 
\end{proof}

The following result establishes a bijection between the pointed maximal cliques $(x_0,K)$ and the minimal $x_0$-proximal sets in graphs with triangle condition:

\begin{proposition} \label{TC+proximal} If a graph $G$ satisfies (TC), then $\cK_{x_0}=\Max(\Upsilon^*_{x_0})$  for any vertex $x_0$. Consequently, $|\mathcal M|=\sum_{x_0\in V} |\cK_{x_0}|$.
\end{proposition}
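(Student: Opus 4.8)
The plan is to establish the two inclusions $\Max(\Upsilon^*_{x_0})\subseteq\cK_{x_0}$ and $\cK_{x_0}\subseteq\Max(\Upsilon^*_{x_0})$ separately, relying on two elementary facts that I would record first. (a) Every clique $C$ of $G$ is convex, since any two of its vertices are adjacent and hence $[u,v]=\{u,v\}\subseteq C$; thus $\conv(C)=C$. (b) If $C$ is a clique with $x_0\notin C$ all of whose vertices are neighbors of $x_0$, then $C\in\Upsilon^*_{x_0}$: (P1) holds because $[x_0,z]=\{x_0,z\}$ for each $z\in C$, and (P2) holds because $\conv(C)=C\not\ni x_0$ by (a). The third ingredient is Lemma~\ref{convex-vertex}: for any $K\in\Upsilon^*_{x_0}$ the set $\conv(K)$ is convex, avoids $x_0$ (by (P2)) and is adjacent to $x_0$, so writing $\phi(K):=N(x_0)\cap\conv(K)$ we get $\phi(K)=\Imp_{x_0}(\conv(K))$, a nonempty clique; by (b) it lies in $\Upsilon^*_{x_0}$ and $\phi(K)\cup\{x_0\}$ is a clique. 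Finally, property~(1) applied to $\conv(K)$ gives $\conv(K)\subseteq\Imp_{x_0}(\conv(K))/x_0=\phi(K)/x_0$, hence $K\subseteq\phi(K)/x_0$, i.e.\ $K\preceq_{x_0}\phi(K)$ for every $K\in\Upsilon^*_{x_0}$.

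For the inclusion $\Max(\Upsilon^*_{x_0})\subseteq\cK_{x_0}$, I would take $K\in\Max(\Upsilon^*_{x_0})$ and use $K\preceq_{x_0}\phi(K)$ together with maximality to conclude $K=\phi(K)$, so that $K$ and $K\cup\{x_0\}$ are cliques. Were $K\cup\{x_0\}$ not a maximal clique, a vertex $w\notin K\cup\{x_0\}$ adjacent to all of $K\cup\{x_0\}$ would make $K\cup\{w\}$ a clique of neighbors of $x_0$ not containing $x_0$, hence a member of $\Upsilon^*_{x_0}$ by (b), with $K\subsetneq K\cup\{w\}$ and therefore $K\preceq_{x_0}K\cup\{w\}$ by property~(5)---contradicting maximality. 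So $K\cup\{x_0\}$ is a maximal clique and $K\in\cK_{x_0}$.

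For the reverse inclusion, let $K\in\cK_{x_0}$; then $K\in\Upsilon^*_{x_0}$ by (b). Assuming $K\preceq_{x_0}K'$ for some $K'\in\Upsilon^*_{x_0}$, I would first note that each $z\in K$ is a neighbor of $x_0$, so $\conv(x_0,z)=\{x_0,z\}$ and $z\in K'/x_0$ forces $z\in K'$ (as $x_0\notin K'$); thus $K\subseteq K'\subseteq\conv(K')$ and, since $K\subseteq N(x_0)$, also $K\subseteq\phi(K')$. Then $\phi(K')\cup\{x_0\}$ is a clique containing the maximal clique $K\cup\{x_0\}$, so $\phi(K')=K$, whence $K'\preceq_{x_0}\phi(K')=K$ by the first step; antisymmetry of $(\Upsilon^*_{x_0},\preceq_{x_0})$ (Lemma~\ref{posets}) gives $K=K'$, so $K$ is maximal, i.e.\ $K\in\Max(\Upsilon^*_{x_0})$. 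The final identity $|\mathcal M|=\sum_{x_0\in V}|\cK_{x_0}|$ is then immediate from $|\mathcal M|=\sum_{x_0\in V}|\Max(\Upsilon^*_{x_0})|$.

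The routine parts are the clique-convexity fact and the verifications of (P1)/(P2). I expect the main obstacle to be the reverse inclusion $\cK_{x_0}\subseteq\Max(\Upsilon^*_{x_0})$: showing that a pointed maximal clique cannot sit strictly below a larger $x_0$-proximal set $K'$ requires ``rounding'' $\conv(K')$ back into $N(x_0)$ via Lemma~\ref{convex-vertex} and then playing the maximality of $K\cup\{x_0\}$ against antisymmetry of $\preceq_{x_0}$.
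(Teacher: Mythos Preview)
Your proof is correct. The forward inclusion $\Max(\Upsilon^*_{x_0})\subseteq\cK_{x_0}$ follows the paper's argument almost verbatim: both observe that $K\preceq_{x_0}\Imp_{x_0}(\conv(K))$ and invoke Lemma~\ref{convex-vertex} to see that the latter is a clique of neighbours of $x_0$, then finish with the obvious maximality step.

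The reverse inclusion, however, you handle differently and more elegantly. The paper argues directly: given $K\preceq_{x_0}K'$ with $K'\ne K$, it picks $u\in K'\setminus K$, observes that $d(u,y)=d(u,x_0)$ for every $y\in K$ (using (P1), (P2)), and then applies (TC) to the edge $x_0y$ and the vertex $u$ to produce a neighbour $y'\sim x_0,y$ one step closer to $u$; the convexity of $\conv(K')$ forces $y'$ to be adjacent to all of $K$, contradicting either the maximality of $K\cup\{x_0\}$ or the equidistance of $u$. Your route instead recycles the map $\phi$: you show $K\subseteq K'$ (using that the vertices of $K$ are neighbours of $x_0$), hence $K\subseteq\phi(K')$, and then the maximality of the clique $K\cup\{x_0\}$ forces $\phi(K')=K$; combined with $K'\preceq_{x_0}\phi(K')$ and antisymmetry of $\preceq_{x_0}$ (Lemma~\ref{posets}) this gives $K'=K$. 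Your argument is more structural (it uses (TC) only through Lemma~\ref{convex-vertex}) and highlights that $\phi$ is a $\preceq_{x_0}$-increasing retraction of $\Upsilon^*_{x_0}$ onto the cliques in $N(x_0)$; the paper's argument is more hands-on and makes the role of (TC) explicit at the point of contradiction.
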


\begin{proof} First,  let $K\in \cK_{x_0}$. Since $K$ is a clique,  $\Imp_{x_0}(K)=K$, thus $K$ satisfies (P1). Since $K$ is convex and does not contain $x_0$, $K$ satisfies (P2). Hence $K\in \Upsilon_{x_0}$.
If $K\notin \Max(\Upsilon^*_{x_0})$, then there exists
a set $K'\in \Upsilon_{x_0}$ such that $K\preceq_{x_0} K'$. Since $x_0$ is adjacent to all vertices of $K$, necessarily $K$ is a proper subset of $K'$. Let $u\in K'\setminus K$. Since $x_0\notin \conv(K')$, $x_0\notin [u,y]$ for any $y\in K$.
Since $K'$ is $x_0$-proximal, $K'=\Imp_{x_0}(K')$, we also have $y\notin [x_0,u]$ for any $y\in K$. Consequently, $d(u,y)=d(u,x_0)=k$ for any $y\in K$. By (TC) there exists a vertex $y'\sim x_0,y$ at distance $k-1$ from $u$. Since $y'\in [u,y]\subset \conv(K')$ and $x_0\notin \conv(K')$,
the vertex $y'$ must be adjacent to all vertices $z\in K\setminus \{ y\}$. Since $y'$ is also adjacent to $x_0$, either $y'\notin K$ and we obtain a contradiction with the assumption that $(x_0,K)$ is a pointed maximal clique or $y'\in K$ and we obtain a contradiction that $u$ has distance $k$ to all vertices of $K$. This establishes that $K\in \Max(\Upsilon^*_{x_0})$.

Conversely, pick any $K\in  \Max(\Upsilon^*_{x_0})$. Since $\Max(\Upsilon^*_{x_0})\subseteq \Max(\Upsilon_{x_0})$,
by Lemma \ref{proximal3}, $K=\Imp_{x_0}(\conv(K))$.  Since $x_0$ is adjacent to $K$ (and thus to $\conv(K)$), by  Lemma \ref{convex-vertex} $\Imp_{x_0}(\conv(K))=N(x_0)\cap \conv(K)$ is a clique.
Thus $K$ is a clique of $G$ whose all vertices are adjacent to $x_0$. If $(x_0,K)$ is not a pointed maximal clique and $(x_0,K')$ is a pointed maximal clique with $K\subsetneq K'$, then $K'\in \Upsilon^*_{x_0}$ 
and $K\preceq_{x_0} K'$ from the definition of $\preceq_{x_0}$. This contradicts that  $K\in \Max(\Upsilon^*_{x_0})$.
\end{proof}

Combining Proposition \ref{TC+proximal} and Theorem \ref{S3proximal}, we obtain the following characterization of semispaces in $S_3$-graphs satisfying (TC):

\begin{theorem} \label{semispaces-trianglecodition} If $G$ is an $S_3$-graph satisfying the triangle condition (TC), then $S$ is a semispace at a vertex $x_0$ adjacent to $S$  if and only if there exists a
pointed maximal clique $(x_0,K)$ such that  $S=K/x_0$. Furthermore, the number $|\cS|$ of semispaces of $G$ and the number $|\cK|$ of maximal cliques of $G$ satisfy the inequality
$\frac {|\cK|}{n}\le |\cS|\le |\cK|$.
\end{theorem}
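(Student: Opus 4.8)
The plan is to read off both parts of the statement from the machinery already assembled. For the characterization, note first that, by Lemma~\ref{copoint}, every semispace of $G$ is a semispace at some attaching vertex $x_0$ adjacent to it, that is, a member of $\cS_{x_0}$. By Theorem~\ref{S3proximal} the assignment $K\mapsto K/x_0$ is a bijection between $\Max(\Upsilon^*_{x_0})$ and $\cS_{x_0}$, and by Proposition~\ref{TC+proximal} the hypothesis (TC) gives $\Max(\Upsilon^*_{x_0})=\cK_{x_0}$, the family of cliques $K$ for which $(x_0,K)$ is a pointed maximal clique. Composing these two facts, $S\in\cS_{x_0}$ if and only if $S=K/x_0$ for some $K\in\cK_{x_0}$, i.e. if and only if there is a pointed maximal clique $(x_0,K)$ with $S=K/x_0$; this is the asserted equivalence.

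For the counting I would first rewrite the quantity $|\mathcal M|$ in clique terms. By Proposition~\ref{TC+proximal} we have $|\mathcal M|=\sum_{x_0\in V}|\cK_{x_0}|$, and each pair $(x_0,K)\in\mathcal M$ is the same datum as a maximal clique $C$ together with a distinguished vertex $x_0\in C$, via $C=K\cup\{x_0\}$; hence $|\mathcal M|=\sum_{C\in\cK}|C|$. Proposition~\ref{numberproxima} supplies $\frac{|\mathcal M|}{n}\le|\cS|\le|\mathcal M|$, and the lower bound follows at once: since $G$ is connected every maximal clique is nonempty, so $|\mathcal M|=\sum_{C\in\cK}|C|\ge|\cK|$ and therefore $|\cS|\ge\frac{|\mathcal M|}{n}\ge\frac{|\cK|}{n}$.

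For the upper bound $|\cS|\le|\cK|$ the plan is to produce a map $\psi\colon\cS\to\cK$ directly from the characterization and to prove it injective: to a semispace $S$ assign the maximal clique $\psi(S)=\Imp_{x_0}(S)\cup\{x_0\}$ that generates it, where $x_0$ is chosen canonically among the attaching vertices of $S$ adjacent to $S$ (for instance the one of smallest index, as in the proof of Proposition~\ref{numberproxima}). Granting injectivity of $\psi$, the bound is immediate. I expect this injectivity to be the main obstacle. The delicate configuration is a single maximal clique $C$ that carries several candidate semispaces $(C\setminus\{x\})/x$ as $x$ ranges over $C$: to conclude one would need to show that the canonical selection of attaching vertex, together with the structure of pointed maximal cliques forced by (TC), prevents two distinct semispaces from being assigned the same clique — equivalently, that whenever $\psi(S_1)=\psi(S_2)=C$ the two semispaces share their canonical attaching vertex $x_0$ and hence coincide, since $S_i=\Imp_{x_0}(S_i)/x_0$ is then determined by $C$ and $x_0$. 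Establishing that this collapse always occurs is exactly where the work lies, and is what separates the sharp bound $|\cS|\le|\cK|$ from the looser estimate $|\cS|\le|\mathcal M|$ obtained by merely composing Propositions~\ref{numberproxima} and~\ref{TC+proximal}.
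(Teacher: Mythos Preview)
Your argument for the characterization of semispaces and for the lower bound $|\cS|\ge|\cK|/n$ is correct and is exactly the paper's: combine Theorem~\ref{S3proximal} (bijection $\cS_{x_0}\leftrightarrow\Max(\Upsilon^*_{x_0})$) with Proposition~\ref{TC+proximal} ($\Max(\Upsilon^*_{x_0})=\cK_{x_0}$), and then feed $|\mathcal M|=\sum_{C\in\cK}|C|\ge|\cK|$ into Proposition~\ref{numberproxima}.

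Your instinct that the upper bound $|\cS|\le|\cK|$ is where the difficulty lies is well-founded---in fact the bound as stated is \emph{false}, so your injective map $\psi\colon\cS\to\cK$ cannot exist. The complete graph $K_d$ is a counterexample: it is an $S_3$-graph satisfying (TC), it has a single maximal clique $C=V$, yet it has $d$ semispaces $V\setminus\{x_0\}$, one for each $x_0\in V$ (each arising from the pointed maximal clique $(x_0,V\setminus\{x_0\})$). Thus $|\cS|=d>1=|\cK|$. The paper's own one-line justification (``follows from\ldots Proposition~\ref{numberproxima} and the fact that each maximal clique of $G$ contains at most $n$ and at least one vertex'') actually only yields
\[
\frac{|\cK|}{n}\;\le\;\frac{|\mathcal M|}{n}\;\le\;|\cS|\;\le\;|\mathcal M|\;=\;\sum_{C\in\cK}|C|\;\le\;n\,|\cK|,
\]
i.e.\ the upper bound $|\cS|\le n|\cK|$ (equivalently $|\cS|\le|\mathcal M|$), not $|\cS|\le|\cK|$. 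So the stated upper bound appears to be a typo, and your ``main obstacle'' is genuinely insurmountable; the sharp bound you were trying to reach is simply not true. The looser estimate you already obtained by composing Propositions~\ref{numberproxima} and~\ref{TC+proximal} is what the paper's argument actually delivers, and it suffices for the subsequent applications (the enumeration complexity in Theorem~\ref{enumerate} is unaffected since the extra factor of $n$ is absorbed into $\poly(n)$).
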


The inequality $\frac {|\cK|}{n}\le |\cS|\le |\cK|$ in Theorem \ref{semispaces-trianglecodition} follows from equality $|\mathcal M|=\sum_{x_0\in V} |\cK_{x_0}|$ of Proposition \ref{TC+proximal}, inequality $\frac {|\mathcal M|}{n}\le |\cS|\le |\mathcal M|$ of Proposition
\ref{numberproxima} and the fact that each maximal clique of $G$ contains at most $n$ and at least one vertex.

%
%

\begin{definition}[Graph with convex point-shadows]
A graph $G$ is a \emph{graph with convex clique-shadows} if for any pointed maximal clique  $(x_0,K)$ of $G$ the shadow $K/x_0$ is convex.
\end{definition}

From Theorem \ref{semispaces-trianglecodition}, any $S_3$-graph satisfying (TC) is a graph with convex clique-shadows. However, there exist graphs
satisfying (TC) and having convex clique-shadows, which are not $S_3$-graphs. The characterization of semispaces in $S_3$-graphs provided by
Theorem \ref{semispaces-trianglecodition} in fact characterizes the graphs with convex clique-shadows:

\begin{proposition} \label{copoints-triangle-condition} For a graph $G$  satisfying (TC) and having convex intervals, the following conditions are equivalent:
\begin{enumerate}
\item[(i)] $G$ is a graph with convex clique-shadows;
\item[(ii)] the semispaces of $G$ are  the shadows $K/x_0$, where $(x_0,K)$ is a pointed maximal clique and $x_0$ is an attaching point of $K/x_0$.
\end{enumerate}
\end{proposition}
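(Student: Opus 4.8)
The plan is to prove the two directions (i)$\Leftrightarrow$(ii) by exploiting the bijection of Proposition \ref{TC+proximal} between pointed maximal cliques $(x_0,K)$ and the maximal $x_0$-proximal sets $\Max(\Upsilon^*_{x_0})$, together with the fact (Lemma \ref{copoint}) that every semispace is a semispace attached to some vertex $x_0$ adjacent to it. Note that unlike Theorem \ref{semispaces-trianglecodition}, here $G$ is not assumed to be $S_3$, so we may not invoke Theorem \ref{S3proximal}; the equivalence must be established directly from the hypotheses ``(TC)'' and ``convex intervals''.

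First I would prove (ii)$\Rightarrow$(i). This direction is essentially immediate: suppose the semispaces of $G$ are exactly the shadows $K/x_0$ over pointed maximal cliques $(x_0,K)$. Take any pointed maximal clique $(x_0,K)$. By hypothesis, $K/x_0$ is a semispace, hence in particular a convex set. Thus every clique-shadow is convex, which is exactly condition (i).

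The substantive direction is (i)$\Rightarrow$(ii), and this is where the main work lies. Assume $G$ has convex clique-shadows. We must show: every shadow $K/x_0$ over a pointed maximal clique is a semispace at $x_0$, and conversely every semispace of $G$ arises this way. For the first claim, fix a pointed maximal clique $(x_0,K)$; by Proposition \ref{TC+proximal}, $K\in \Max(\Upsilon^*_{x_0})$, so by Lemma \ref{proximal-property}(iii) the set $V$ is the union of $K/x_0$ and $x_0/K$, and by Lemma \ref{proximal-property}(i)--(ii), $\conv(K)/x_0=K/x_0$ so $\conv(K)\subseteq K/x_0$ and $x_0\notin K/x_0$ (the latter because $x_0\notin\conv(K)$ and $\conv(K)/x_0 \ni x_0$ would force $x_0\in\conv(K)$). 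Now $K/x_0$ is convex by hypothesis and does not contain $x_0$; I would argue it is in fact \emph{maximal} such: if $S'\supsetneq K/x_0$ is convex and avoids $x_0$, pick $v\in S'\setminus(K/x_0)$. By Lemma \ref{proximal-property}(iii), $v\in x_0/K$, i.e. $x_0\in\conv(K\cup\{v\})\subseteq\conv(S')=S'$, contradicting $x_0\notin S'$. Hence $K/x_0$ is a semispace at $x_0$, and it is adjacent to $x_0$ since $K\in\Upsilon^*_{x_0}$ means $x_0\sim K\subseteq K/x_0$. For the converse inclusion, let $S$ be any semispace of $G$; by Lemma \ref{copoint} it is a semispace at some $x_0$ with $x_0\sim S$. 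Set $K'=\Imp_{x_0}(S)$. By Lemma \ref{convex-vertex} (using (TC) and convexity of $S$), $K'=N(x_0)\cap S$ is a clique and $K'=\Imp_{x_0}(S)$; extend $K'$ to a pointed maximal clique $(x_0,K)$ with $K'\subseteq K$. Then $K\in\Max(\Upsilon^*_{x_0})$ by Proposition \ref{TC+proximal}, so by the part just proved $K/x_0$ is a semispace at $x_0$. It remains to identify $S$ with $K/x_0$: we have $S\subseteq \Imp_{x_0}(S)/x_0 = K'/x_0 \subseteq K/x_0$ (first inclusion by property (1), the last because $K'\subseteq K$ hence $K'/x_0\subseteq K/x_0$ by property (3)), and since $S$ is a semispace at $x_0$ while $K/x_0$ is a convex set avoiding $x_0$ that contains $S$, maximality of $S$ forces $S=K/x_0$.

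The step I expect to be the main obstacle is verifying that $S\subseteq K/x_0$ where $K$ is a pointed maximal clique \emph{containing} $K'=\Imp_{x_0}(S)$, rather than equal to it: one must check that enlarging $K'$ to $K$ does not cause $x_0$ to enter $\conv(K)$, which is exactly the content of $(x_0,K)$ being a pointed maximal clique ($x_0\notin K$ and $K\cup\{x_0\}$ a clique, so $\conv(K)=K$ as $K$ is a clique in a graph with convex intervals, hence trivially $x_0\notin\conv(K)$). With $\conv(K)=K$ in hand, the inclusions are routine, and the argument closes. A secondary point to be careful about is that in the converse one should confirm $x_0$ is indeed an attaching point of $K/x_0$ \emph{and} adjacent to it, which follows since $K'=N(x_0)\cap S\neq\varnothing$ gives $x_0\sim K'\subseteq K$.
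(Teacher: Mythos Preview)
Your proof is correct and follows essentially the same overall strategy as the paper's: for (i)$\Rightarrow$(ii) you establish both directions of the semispace--shadow correspondence, and (ii)$\Rightarrow$(i) is immediate. The one methodological difference worth noting is in showing that a clique-shadow $K/x_0$ is a semispace: you route through the proximal-set machinery (Proposition~\ref{TC+proximal} to get $K\in\Max(\Upsilon^*_{x_0})$, then Lemma~\ref{proximal-property}(iii) to get $V=(K/x_0)\cup(x_0/K)$, then maximality follows), whereas the paper argues directly by taking a semispace $S\supseteq K/x_0$, observing $K\subseteq N(x_0)\cap S$, and using that $K\cup\{x_0\}$ is a \emph{maximal} clique to force $N(x_0)\cap S=K$ and hence $S\subseteq K/x_0$. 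The paper's route is slightly more self-contained (it does not need the inclusion $\Max(\Upsilon^*_{x_0})\subseteq\Max(\Upsilon_{x_0})$, which you implicitly use when invoking Lemma~\ref{proximal-property}); your route is cleaner once that inclusion is granted. Both are valid.
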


\begin{proof} To prove (i)$\Rightarrow$(ii), let $S$ be a semispace at a vertex $x_0\sim S$. 
Then $K$ is a clique of $G$. We assert that $(x_0,K)$ is a pointed maximal clique and that $S=K/x_0$.
Let $K^+$ be any maximal clique of $G$ containing $K\cup \{ x_0\}$ and let $K'=K^+\setminus \{ x_0\}$.  Then $K'/x_0$ is convex by (i). By Lemma \ref{convex-vertex} and since $K\subseteq K'$, we have $S\subseteq K/x_0\subseteq K'/x_0$.
Since $S$ is a semispace at $x_0$, necessarily $S=K'/x_0$. Consequently, $S=K/x_0=K'/x_0$, which implies that $K=K'$ and $K^+=K\cup \{ x_0\}$. Hence $(x_0,K)$ is a pointed maximal clique and $S=K/x_0$.
Now, let $(x_0,K)$ be a pointed maximal clique of $G$. Consider the shadow $K/x_0$, which is convex by condition (i). Let $S$ be a semispace at $x_0$ containing the convex set $K/x_0$. By Lemma \ref{convex-vertex}, $K'=N(x_0)\cap S$ is a clique,
such that $S\subseteq K'/x_0$. Since $K\subset K'$ and $(x_0,K)$ is a pointed maximal clique, we conclude that $K'=K$. Consequently, $S\subseteq K/x_0$, establishing that $S=K/x_0$.  This shows  (i)$\Rightarrow$(ii).

To prove (ii)$\Rightarrow$(i), let $(x_0,K)$ be a pointed maximal clique. We assert that  $K/x_0$ is convex. Let $S$ be a semispace at $x_0$ containing the convex set $K$.
By condition (ii), $S=K'/x_0$, where $K'\cup \{ x_0\}$ is a maximal clique of $G$. Since $K\subseteq S$, $K\subseteq K'$. If $x_0$
has a neighbor $y\in K'\setminus K$, by maximality of $(x_0,K)$,  $K$ contains a vertex $z$ not adjacent to $y$. But then $x_0\in [y,z]$, contrary to the convexity of $S$.
Thus $K=N(x_0)\cap S=K'$. Consequently, $S=K/x_0$, whence $G$ is a graph with convex clique-shadows. 
\end{proof}

We conclude this subsection with the following example of graphs with convex clique-shadows:

\begin{proposition} \label{JHC-convexcliqueshadow}  If $G$ is a JHC-graph satisfying (TC),  then $G$ is a graph with convex clique-shadows.
\end{proposition}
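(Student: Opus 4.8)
The plan is to show directly that for any pointed maximal clique $(x_0,K)$ of a JHC-graph $G$ satisfying (TC), the shadow $K/x_0$ is convex; by Proposition \ref{copoints-triangle-condition} (or just by definition) this is exactly what ``graph with convex clique-shadows'' means. First I would recall that since $G$ is a JHC-graph, for any convex set $A$ and any vertex $x$ we have $\conv(A\cup\{x\})=x\ast A=\bigcup_{a\in A}[x,a]$; in particular, JHC-convexity has arity $2$, so a set is convex as soon as it contains $[u,v]$ for every pair $u,v$ of its vertices. Also, JHC implies that the intervals of $G$ are convex (the join of two points $\conv(u,v)$ equals $[u,v]$), so Proposition \ref{copoints-triangle-condition} applies; moreover by Lemma \ref{convex-vertex}, since $K$ is a clique adjacent to $x_0$ (indeed $K\subseteq N(x_0)$), we have $K=\Imp_{x_0}(K)=N(x_0)\cap\conv(K)$, and $\conv(K)=K$ since a clique is convex in any graph with convex intervals... actually a clique is always convex, so $\conv(K)=K$.

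Now fix $y,z\in K/x_0$ and a vertex $w\in[y,z]$; the goal is $w\in K/x_0$, i.e.\ $[x_0,w]\cap K\neq\varnothing$. Since $y\in K/x_0$, there is $a\in K$ with $a\in[x_0,y]$; similarly $b\in K$ with $b\in[x_0,z]$. The key step is to use the Peano axiom, which holds in $G$ because $G$ is JHC (Theorem \ref{JHC}, in the interval-space form stated in the excerpt): for any $u,v,w'\in V$, any $x'\in[w',v]$, and any $y'\in[u,x']$, there exists $z'\in[u,v]$ such that $y'\in[w',z']$. I would apply this with the role of the ``apex'' played by $x_0$: from $a\in[x_0,y]$ and $w\in[y,z]$... this does not immediately fit, so instead I would argue via the join formulation. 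Consider the clique $K$, which is convex; then $\conv(K\cup\{x_0\})=x_0\ast K=\bigcup_{c\in K}[x_0,c]$. By the triangle condition, I claim $\bigcup_{c\in K}[x_0,c]=\conv(K\cup\{x_0\})$ is precisely the set of vertices whose interval to $x_0$ meets $K$, and dually $K/x_0$ consists of the vertices $v$ with $x_0\notin\conv(K\cup\{v\})$... this needs care. The cleaner route: show that $V\setminus(K/x_0)$ is convex, which by arity $2$ means showing that if $y,z\notin K/x_0$ then $[y,z]\cap(K/x_0)=\varnothing$. Here is where I would use (TC) together with JHC: if some $w\in[y,z]$ lay in $K/x_0$, pick $a\in[x_0,w]\cap K$; then $w\in\conv(x_0,a)$ wait $w\in[a,?]$ — I would instead use that $w\in\conv(y,z)$ and $a\in[x_0,w]$, so by Peano (with $u=x_0$, apex data) there is $z'\in[x_0,?]$... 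Let me reorganize: the real engine is that in a JHC-graph, $K/x_0$ being a shadow of a convex set onto a point, one wants it convex; but general shadows of convex sets need not be convex in $S_3$-graphs. The special feature of cliques plus (TC) is Lemma \ref{convex-vertex}: a convex set $A$ adjacent to $x_0$ has $\Imp_{x_0}(A)=N(x_0)\cap A$ a clique. So the natural strategy is: let $S$ be a semispace at $x_0$ containing the convex set $K/x_0$ ($K/x_0$ is convex iff it equals every such $S$); by Lemma \ref{convex-vertex} $K'=N(x_0)\cap S$ is a clique with $S\subseteq K'/x_0$ and $K\subseteq K'$; by maximality of the pointed clique $(x_0,K)$ conclude $K'=K$, hence $S=K/x_0$ — but this circularly assumes $S\supseteq K/x_0$ forces equality, which needs $K/x_0$ convex to begin with. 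The honest fix: prove $K/x_0$ convex from scratch using JHC.

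Thus the concrete plan: take $y,z\in K/x_0$, $w\in[y,z]$, and vertices $a\in[x_0,y]\cap K$, $b\in[x_0,z]\cap K$. Apply the Peano axiom with $u:=x_0$, $v:=z$, $w':=y$, $x':=w\in[y,z]=[w',v]$: since $y\in[x_0,y]$ trivially... I need a point of $[u,x']=[x_0,w]$ to feed in as $y'$ — take $y'$ to be any vertex of $[x_0,w]$, say $w$ itself is in $[x_0,w]$, giving $z'\in[x_0,z]$ with $w\in[y,z']$. Iterating / combined with the symmetric application gives a vertex on $[x_0,w]$ lying in $\conv(K)=K$; more precisely one shows $[x_0,w]$ must meet the clique $K$ because $a,b\in K\cap B_1(x_0)$ and $w$ ``sees $x_0$ through the region spanned by $a$ and $b$''. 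The technical heart — and the step I expect to be the main obstacle — is turning this geometric picture into a rigorous deduction: showing that $[x_0,w]\cap K\neq\varnothing$, for which I would combine JHC (to get $\conv(K\cup\{x_0\})=\bigcup_{c\in K}[x_0,c]$ and that $w\in\conv(y,z)\subseteq\conv((K/x_0)\cup\{?\})$) with (TC) in the form of Lemma \ref{convex-vertex} applied to $A=\conv(K/x_0)$ once we know it is adjacent to $x_0$. Concretely I would set $A=\conv(\{y,z,\text{finitely many points}\})$, note $x_0\notin A$ is possible to fail — so instead argue: if $w\notin K/x_0$ then $x_0\notin\conv(K\cup\{w\})$; but $w\in[y,z]$ with $y,z\in K/x_0$ and JHC gives $\conv(\{y,z\}\cup K)=$ join, forcing $x_0\in\conv(K\cup\{w\})$, a contradiction. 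Filling in exactly why JHC forces $x_0\in\conv(K\cup\{w\})$ — this is the one place where real work is needed, and it is where I would use that $a=[x_0,y]\cap K$, $b=[x_0,z]\cap K$ together with the convexity of $K$ let us ``collapse'' $y$ and $z$ toward $x_0$ through $K$, then apply the join identity $\conv(K\cup\{w\})=\bigcup_{c\in K}[c,w]$ and the fact that $w\in[y,z]\subseteq\conv(\{a,b,\ldots\}\cup\{x_0\})$ to locate $x_0$ on some $[c,w]$.
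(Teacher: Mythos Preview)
Your proposal never actually completes a proof. You cycle through several approaches and leave each one unfinished, with the final sentence admitting that ``filling in exactly why JHC forces $x_0\in\conv(K\cup\{w\})$'' is still open. More importantly, you dismiss as ``circular'' precisely the approach that works and that the paper uses.

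The error is in your setup of the semispace argument. You wrote: ``let $S$ be a semispace at $x_0$ containing the convex set $K/x_0$''. But you do not yet know $K/x_0$ is convex, and you do not need to. Instead take $S$ to be a semispace at $x_0$ containing the clique $K$ (this exists since $x_0\notin K$). Now there is no circularity: first, $N(x_0)\cap S=K$, because any $z\in N(x_0)\cap S$ with $z\notin K$ would be adjacent to every vertex of $K\cup\{x_0\}$ (since $x_0\notin S$ and $K\subseteq S$ are convex), contradicting maximality of the clique $K\cup\{x_0\}$. Hence by Lemma~\ref{convex-vertex}, $K=\Imp_{x_0}(S)$ and $S\subseteq K/x_0$. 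For the reverse inclusion, suppose some $u\in K/x_0$ lies outside $S$; say $s\in[x_0,u]\cap K$. Since $S$ is a semispace at $x_0$, we have $x_0\in\conv(S\cup\{u\})$, and \emph{this} is where JHC is used: $\conv(S\cup\{u\})=u\ast S$, so $x_0\in[u,v]$ for some $v\in S$. By Lemma~\ref{convex-vertex} again there is $t\in K$ with $t\in[x_0,v]$. But then, since $s\sim t$,
\[
d(u,v)\le d(u,s)+1+d(t,v)=(d(u,x_0)-1)+1+(d(x_0,v)-1)<d(u,x_0)+d(x_0,v)=d(u,v),
\]
a contradiction. Hence $S=K/x_0$, and in particular $K/x_0$ is convex.

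So the ``real work'' you were missing is this single metric inequality; JHC is used exactly once, to extract a specific $v\in S$ with $x_0\in[u,v]$, and (TC) enters only through Lemma~\ref{convex-vertex}. Your attempted direct route via Peano on $y,z,w$ is not needed and, as your own write-up shows, does not close cleanly.
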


\begin{proof} Let $(x_0,K)$ be a pointed maximal clique and let $S$ be a semispace at $x_0$ containing $K$. Then $K=N(x_0)\cap S$. Indeed, if $N(x_0)\cap S$ contains a vertex $z\notin K$, then
since $x_0\notin S$ and $K\subseteq S$, we conclude that $z$ is adjacent to all vertices of $K\cup \{ x_0\}$, contrary to the assumption that $(x_0,K)$ is a pointed maximal clique. Thus $K=N(x_0)\cap S$.
By Lemma \ref{convex-vertex}, $S\subseteq K/x_0$. To establish the converse inclusion, suppose by way of contradiction that there exists  $u\in (K/x_0)\setminus S$, say $s\in [x_0,u]$ for $s\in K$.
Since $S$ is a semispace at $x_0$, we get $x_0\in \conv(S\cup \{ u\})$. Since $G$ is a JHC-graph, there exists a vertex $v\in S$ such that $x_0\in [u,v]$. By Lemma \ref{convex-vertex},
there exists a vertex $t\in K$ such that $t\in [x_0,v]$.
Since $s\in [x_0,u]$ and $s$ and $t$ are adjacent, we get $d(u,s)+d(s,t)+d(t,v)=d(u,s)+1+d(v,t)<d(u,s)+2+d(t,v)=d(u,x_0)+d(x_0,v)$, contrary to the assumption  $x_0\in [u,v]$. This establishes that $S=K/x_0$ and thus that
the shadow $K/x_0$ is convex.
\end{proof}

\begin{remark} The third graph of Figure \ref{non-S3} satisfies (TC), is JHC, but not $S_3$.
\end{remark}
%


\subsection{Characterization of $S_3$-graphs} The goal of this subsection is to characterize $S_3$-graphs satisfying (TC) in terms of convexity of shadows defined with respect to
pointed maximal cliques $(x_0,K)$ of $G$.  From Lemma \ref{proximal-property} and Proposition \ref{TC+proximal}, for any pointed maximal clique $(x_0,K)$, the shadows $x_0/K$ and $K/x_0$
cover the vertex set $V$ of $G$. The next lemma gives a different description of the set $V\setminus (K/x_0)$:

\begin{lemma} \label{extended} Let $K'$ be a clique of a graph $G=(V,E)$. Then for any vertex $x_0\in K'$ and $K=K'\setminus \{ x_0\}$, the vertex-set $V$ of $G$ is the disjoint union of the sets $K/x_0$, $W_=(K'):=\{ v\in V: d(v,y)=d(v,z) \mbox{ for all } y,z\in K'\}$, and
$x_0|K:=\bigcup_{y\in K}~~ x_0/y$.
\end{lemma}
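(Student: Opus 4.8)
The plan is to prove the equality of sets via a distance trichotomy of the vertices relative to the clique $K'$. Since $K'$ is a clique, for every vertex $v$ the distances $d(v,u)$, $u\in K'$, take at most two consecutive values, namely $m(v):=\min_{u\in K'}d(v,u)$ and possibly $m(v)+1$. This partitions $V$ into three blocks: $V_{=}:=\{v: d(v,u)=m(v)\text{ for all }u\in K'\}$, which is exactly $W_=(K')$; $V_{\mathrm{far}}:=\{v: d(v,x_0)=m(v)+1\}$; and $V_{\mathrm{near}}:=\{v: d(v,x_0)=m(v)\text{ and }d(v,y)=m(v)+1\text{ for some }y\in K\}$. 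Because $x_0\in K'$ and $K\ne\varnothing$, these blocks are pairwise disjoint and exhaust $V$: a vertex with all distances to $K'$ equal lies in $V_{=}$, and otherwise it lies in $V_{\mathrm{far}}$ or $V_{\mathrm{near}}$ according as $d(v,x_0)$ equals $m(v)+1$ or $m(v)$ (in the latter case some $u\in K'$, necessarily in $K$, has $d(v,u)=m(v)+1$).

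It then suffices to identify the three blocks with the three sets. First, $V_{\mathrm{far}}\subseteq K/x_0$ and $V_{\mathrm{near}}\subseteq x_0|K$: if $v\in V_{\mathrm{far}}$, choose $y\in K$ with $d(v,y)=m(v)=d(v,x_0)-1$; then $d(x_0,y)+d(y,v)=d(x_0,v)$, so $y\in[x_0,v]\subseteq\conv(x_0,v)$ and hence $v\in y/x_0\subseteq K/x_0$. Symmetrically, if $v\in V_{\mathrm{near}}$, choose $y\in K$ with $d(v,y)=d(v,x_0)+1$; then $x_0\in[y,v]\subseteq\conv(y,v)$, so $v\in x_0/y\subseteq x_0|K$. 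Together with $V_{=}=W_=(K')$, this already gives $V=(K/x_0)\cup W_=(K')\cup(x_0|K)$.

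The remaining, and only delicate, point is the disjointness, which amounts to the reverse inclusions $K/x_0\subseteq V_{\mathrm{far}}$ and $x_0|K\subseteq V_{\mathrm{near}}$. Here I would invoke that the intervals of $G$ are convex — the standing situation wherever this lemma is applied, since $S_3$-graphs have convex intervals by Lemma \ref{S3=>convexintervals} and the later results on $(\mathrm{TC})$-graphs assume it (cf. Proposition \ref{copoints-triangle-condition}) — so that $\conv(x_0,v)=[x_0,v]$ and $\conv(y,v)=[y,v]$ for all vertices. Then $v\in y/x_0$ with $y\in K$ means $y\in[x_0,v]$, i.e. $d(v,y)=d(v,x_0)-1$, forcing $d(v,x_0)=\max_{u\in K'}d(v,u)$ and $v\in V_{\mathrm{far}}$; and $v\in x_0/y$ with $y\in K$ means $x_0\in[y,v]$, i.e. $d(v,y)=d(v,x_0)+1$, forcing $d(v,x_0)=m(v)$ and $v\in V_{\mathrm{near}}$. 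Since $V_{=},V_{\mathrm{far}},V_{\mathrm{near}}$ partition $V$, the sets $K/x_0$, $W_=(K')$, $x_0|K$ coincide with them and are pairwise disjoint. I expect this last step to be the crux: without convexity of intervals the hull $\conv(x_0,v)$ can be strictly larger than $[x_0,v]$ and the three sets may genuinely overlap, so it is precisely the convex-intervals context (equivalently, the $S_3$/$\mathrm{(TC)}$-with-convex-intervals setting) that upgrades the covering into a disjoint union.
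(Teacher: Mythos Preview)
Your distance-trichotomy argument is exactly the paper's approach: it too splits $V$ according to whether $d(z,x_0)$ equals the common value, the maximum, or the minimum of $\{d(z,u):u\in K'\}$, and records that each case forces membership in the corresponding set. The paper's write-up stops there, leaving the reverse inclusions---and hence the disjointness---implicit. You are more scrupulous in isolating that these reverse inclusions need $\conv(x_0,v)=[x_0,v]$, and your instinct that this is the crux is correct: without convex intervals the three sets can genuinely overlap (already in $W_4^-$, taking $K'$ to be the unique edge in the part of size~$3$, the degree-$2$ vertex lies in all three sets), so the convex-intervals hypothesis you invoke is exactly what upgrades the covering to a disjoint union, and it is present in every later application of the lemma.
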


\begin{proof} Since $x_0\sim y$ for any $y\in K$, for any vertex $z$ of $G$, we have $|d(z,x_0)-d(z,y)|\le 1$. Therefore, for any $z\in V$,  exactly one of the possibilities holds:
\begin{enumerate}
\item[(1)] $d(z,y)<d(z,x_0)$ for some $y\in K$, in which case $z\in y/x_0\subseteq K/x_0$;
\item[(2)]  $d(z,x_0)<d(z,y)$ for some $y\in K$, in which case $z\in x_0/y\subseteq x_0|K$;
\item[(3)] $d(z,x_0)=d(z,y)$ for any $y\in K$, in which case $z\in W_=(K')$.
\end{enumerate}
This concludes the proof.
\end{proof}

We refer to the set $x_0|K=\bigcup_{y\in K}~~ x_0/y$ as the \emph{union shadow} and we call $x_0\SK K:=x_0|K \cup W_=(K\cup \{ x_0\})$ the \emph{extended shadow} of $x_0$ with respect to $K$. By Lemma \ref{extended}, $V$ is the disjoint union
of the shadow $K/x_0$ and of the extended shadow $x_0\SK K$.  Notice that $V$ can be also written as the union of $W_=(K\cup \{ x_0\})$ and the shadows $K/x_0$  and $x_0/K$. However, while the sets $W_{=}(K\cup \{ x_0\})$ and $K/x_0$ are disjoint, the shadow $x_0/K=\{ u\in V: x_0\in \conv(K\cup \{ u\})$ is
not necessarily disjoint from $W_=(K\cup \{ x_0\})$ and from $K/x_0$. 
%
Using all this, we can characterize the $S_3$-graphs with (TC) as follows:

\begin{theorem} \label{S3graphs-trianglecondition} For a  graph $G=(V,E)$ satisfying (TC) the following conditions are equivalent:
\begin{enumerate}
\item[(i)] $G$  is an $S_3$-graph;
\item[(ii)] for any pointed maximal clique $(x_0,K)$, the shadows $x_0/K$ and $K/x_0$ are convex and disjoint;
\item[(iii)] for any pointed maximal clique $(x_0,K)$, $x_0$ and $K$ are separable; 
\item[(iii)]  for any pointed maximal clique $(x_0,K)$,  the shadow $K/x_0$ and the extended shadow  $x_0\SK K$ are convex.
\end{enumerate}
\end{theorem}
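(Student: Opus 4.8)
The plan is to prove the cycle of implications $(i)\Rightarrow(ii)\Rightarrow(iii)\Rightarrow(iii')\Rightarrow(i)$, where I write $(iii')$ for the fourth (duplicately numbered) condition about $K/x_0$ and $x_0\SK K$ being convex. The whole argument rests on Proposition \ref{TC+proximal}, which identifies $\Max(\Upsilon^*_{x_0})$ with $\cK_{x_0}$: this lets me transport Theorem \ref{S3graphs} — the general $S_3$-characterization in terms of maximal $x_0$-proximal sets — verbatim into the language of pointed maximal cliques. Note first that if $G$ satisfies (TC) and is $S_3$, then by Lemma \ref{S3=>convexintervals} its intervals are convex, so we may freely invoke Lemma \ref{convex-vertex} and Lemma \ref{extended} throughout.

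\medskip

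\noindent\emph{$(i)\Leftrightarrow(ii)\Leftrightarrow(iii)$:} This is almost immediate from Theorem \ref{S3graphs} combined with Proposition \ref{TC+proximal}. Indeed, by Proposition \ref{TC+proximal} a set $K$ lies in $\Max(\Upsilon^*_{x_0})$ if and only if $(x_0,K)$ is a pointed maximal clique; since every semispace is attached to a vertex it is adjacent to (Lemma \ref{copoint}), quantifying over $\Max(\Upsilon^*_{x_0})$ for all $x_0$ is the same as quantifying over all pointed maximal cliques. So conditions (ii) and (iii) of Theorem \ref{S3graphs-trianglecondition} are literally conditions (ii) and (iii) of Theorem \ref{S3graphs} rewritten, and the equivalences follow. (One should still spell out that ``$x_0$ and $K$ are separable'' is equivalent to ``$x_0$ and $\conv(K)$ are separable'', which holds because a halfspace containing $K$ is convex and hence contains $\conv(K)$.)

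\medskip

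\noindent\emph{$(ii)\Rightarrow(iii')$ and $(iii')\Rightarrow(i)$:} Here the point of Lemma \ref{extended} enters. By that lemma, for a pointed maximal clique $(x_0,K)$ the vertex set $V$ is the \emph{disjoint} union $K/x_0 \cupdot (x_0\SK K)$, where $x_0\SK K = (x_0|K)\cup W_=(K\cup\{x_0\})$. Given (ii), the shadow $K/x_0$ is convex, and its complement is exactly $x_0\SK K$; moreover $x_0/K$ is convex by (ii) and, by Lemma \ref{proximal-property}(iii) (via Proposition \ref{TC+proximal}), $K/x_0\cup x_0/K = V$, so $x_0/K \supseteq V\setminus(K/x_0) = x_0\SK K$; combined with $x_0/K\subseteq x_0\SK K$ (which follows from Lemma \ref{extended}, since $x_0/K$ is disjoint from $K/x_0$ by (ii)), we get $x_0\SK K = x_0/K$, which is convex. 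Hence (iii') holds. Conversely, assuming (iii'): $K/x_0$ is convex and its complement $x_0\SK K$ is convex, so $K/x_0$ is a halfspace; then running the argument of the implication $(iii)\Rightarrow(i)$ in Theorem \ref{S3graphs} — take a semispace $S$ at $x_0$ with $x_0\sim S$, set $K'=\Imp_{x_0}(S)$, which by Lemma \ref{convex-vertex} is a clique, extend $(x_0,K')$ to a pointed maximal clique $(x_0,K)$, observe $S\subseteq K'/x_0\subseteq K/x_0$, and conclude $S = K/x_0$ by maximality of the semispace — shows every semispace is a halfspace, i.e.\ $G$ is $S_3$ by Theorem \ref{copoints-S3}.

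\medskip

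\noindent The main obstacle I anticipate is bookkeeping the three overlapping covers of $V$ attached to a pointed maximal clique: the disjoint decomposition $V = (K/x_0)\,\cupdot\,(x_0\SK K)$ from Lemma \ref{extended}, versus the non-disjoint cover $V = (K/x_0)\cup(x_0/K)$ from Lemma \ref{proximal-property}(iii), versus the refined non-disjoint cover by $W_=(K\cup\{x_0\})$, $K/x_0$, $x_0/K$. The delicate step is pinning down, under hypothesis (ii), the precise inclusion $x_0/K = x_0\SK K$, i.e.\ showing the convex set $x_0/K$ does not ``spill'' into $K/x_0$ (this uses disjointness of $x_0/K$ and $K/x_0$, which is part of (ii)) and does cover all of $W_=(K\cup\{x_0\})$ (this uses that $K/x_0$ and $x_0/K$ together exhaust $V$). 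Once these set identities are nailed down, every implication is a short rewriting of an already-proven statement.
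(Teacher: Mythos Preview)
Your proposal is correct and follows essentially the same approach as the paper: the equivalence $(i)\Leftrightarrow(ii)\Leftrightarrow(iii)$ is obtained exactly as you say, by transporting Theorem~\ref{S3graphs} through Proposition~\ref{TC+proximal}, and your argument for $(iii')\Rightarrow(i)$ is the proof of Proposition~\ref{copoints-triangle-condition} unfolded inline (the paper simply cites that proposition). The only cosmetic difference is that the paper proves $(i)\Rightarrow(iii')$ directly---using Theorem~\ref{semispaces-trianglecodition} to see that $K/x_0$ is a semispace, hence its complement $x_0\SK K$ is convex---whereas you go $(ii)\Rightarrow(iii')$ by establishing the set identity $x_0\SK K = x_0/K$; both routes are short and equivalent. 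One small cleanup: your opening remark that $S_3$ plus (TC) gives convex intervals is unnecessary, since Lemmas~\ref{convex-vertex} and~\ref{extended} require only (TC), and in the $(iii')\Rightarrow(i)$ direction you are not yet assuming $S_3$.
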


\begin{proof} The equivalence between the conditions (i), (ii), and (iii) follows from  Theorem \ref{S3graphs} and Proposition \ref{TC+proximal}.  
To prove the implication (i)$\Rightarrow$(iv), let $G$ be an $S_3$-graph satisfying (TC) and let $(x_0,K)$ be a pointed maximal clique of $G$. By Theorem \ref{semispaces-trianglecodition},
$K/x_0$ is a semispace with attaching point $x_0$. Since $G$ is an $S_3$-graph, the complement of $K/x_0$ is convex by Proposition \ref{S3graphs}. By Lemma \ref{extended},  $V\setminus (K/x_0)$ is the extended shadow
$x_0\SK K$. Consequently, both sets $K/x_0$ and $x_0\SK K$ are convex.

To prove the implication (iv)$\Rightarrow$(i), let $G$ be a graph satisfying (TC) such that $K/x_0$ and   $x_0\SK K$ are convex for any pointed maximal clique $(x_0,K)$. Let
$S$ be a semispace with an adjacent attaching vertex $x_0$. Since the shadows $K/x_0$ are convex for all pointed maximal cliques $(x_0,K)$, we can apply Proposition \ref{copoints-triangle-condition}.
By this result, we have $S=K'/x_0$, where $K'=N(x_0)\cap S$ and $(x_0,K')$ is a pointed maximal clique of $G$. Since the extended shadow $x_0\SK K'$ is convex and coincides with $V\setminus (K/x_0)=V\setminus S$,
$S$ is a halfspace of $G$.
\end{proof}

%
%

\begin{remark} Theorems \ref{semispaces-trianglecodition} and \ref{S3graphs-trianglecondition}  
provide simple structural characterizations  of $S_3$-graphs satisfying  (TC) and of their semispaces.
Most likely, Theorem \ref{S3graphs-trianglecondition} is the best one can get for $S_3$-graphs.  Nevertheless, it does not provide a polynomial-time
recognition algorithm of $S_3$-graphs satisfying (TC). We are inclined to believe that the following question has a positive answer:
\end{remark}

\begin{question} Are $S_3$-graphs satisfying (TC) recognizable in polynomial time?
\end{question}

\begin{remark}
The structure of semispaces in non-$S_3$-graphs $G$ satisfying (TC) is much more complicated and widely open. In particular,
the semispaces are not of the form $K/x_0$, where $K\cup \{ x_0\}$ is a (non-necessarily
maximal) clique of $G$ and $G$ may have a polynomial number of maximal cliques but an exponential number of semispaces.
\end{remark}


\begin{remark} The meshed (in fact, weakly modular) graph $\Gamma$ from Figure \ref{non-convexshadows} shows that in the characterization of $S_3$-graphs from Theorem \ref{S3graphs-trianglecondition}
we cannot replace the requirement that the extended shadow $K\SK x$ is convex by the requirement that the union shadow $K|x$ is convex (which would be a natural relaxation of
condition (iii) of Proposition \ref{S3graphs}). Indeed, one can check that the convexity of $H$ is $S_3$. However, if we set $K=\{ y,z\}$, then $u,v\in K|x$, while $w\in [u,v]\setminus (K|x)$.
Notice that $w$ has distance 2 from all vertices of the maximal clique $K'=\{x,y,z\}$. Thus the set $W_=(K')$ is nonempty because it contains the vertex $w$. In many subclasses of weakly modular graphs
(such as bridged or Helly graphs), $W_=(K')=\varnothing$ for any maximal clique $K'$. Therefore, for such classes of graphs, $S_3$ is equivalent to the convexity of the shadow $x/K$ and of the union
shadow $K|x$ for each pointed maximal clique $(x,K)$.

Finally notice that the convexity of $H$ is not JHC:  indeed, $w\ast \{ y,z\}=w\ast [y,z]=[w,y]\cup [w,z]=\{ w,u,s,t,v,y,z\}$, however $x\in [u,v]\subset \conv(w,y,z)$. This shows that the separation property $S_3$ in
meshed (weakly modular graphs) does not imply JHC.
\end{remark}

\begin{remark} Theorem \ref{S3graphs-trianglecondition} can be viewed as a generalization of Proposition \ref{copoints-bipartite}, characterizing
the bipartite $S_3$-graphs as bipartite graphs in which  the shadows $x/y$ and $y/x$ are convex for any two adjacent vertices $x$ and $y$.  Indeed, bipartite
graphs satisfy (TC) because they do not contain triplets $u,v,w$ such that $d(x,y)=d(x,z)$ and $d(y,z)=1$.
Furthermore, the edges are exactly the maximal cliques of bipartite graphs. Finally, $W_=(xy)=\varnothing$ for any edge $xy$.
\end{remark}

\begin{figure}
  \begin{center}
    \begin{tikzpicture}[x=1cm,y=1cm]

\filldraw[black]  (0,0) circle (2pt);
\filldraw[black]  (-1.5,-1.5) circle (2pt);
\filldraw[black]  (1.5,-1.5) circle (2pt);
\filldraw[black]  (-0.3,-1.2) circle (2pt);
\filldraw[black]  (0.4,-1.7) circle (2pt);
\filldraw[black]  (0,-3.2) circle (2pt);
\filldraw[black]  (-1.2,-4) circle (2pt);
\filldraw[black]  (1.2,-4) circle (2pt);
\draw (0,0) -- (-1.5,-1.5);
\draw (0,0) -- (1.5,-1.5);
\draw (0,0) -- (-0.3,-1.2);
\draw (0,0) -- (0.4,-1.7);
\draw (-1.5,-1.5) -- (-1.2,-4);
\draw (-1.2,-4) -- (1.2,-4);
\draw (1.5,-1.5) -- (1.2,-4);
\draw (0,-3.2) -- (-1.2,-4);
\draw (0,-3.2) -- (1.2,-4);
\draw (-0.3,-1.2) -- (-1.5,-1.5);
\draw (-0.3,-1.2) -- (1.5,-1.5);
\draw (-0.3,-1.2) -- (0,-3.2);
\draw (0,-3.2) -- (-1.5,-1.5);
\draw (0,-3.2) -- (1.5,-1.5);
\draw (0.4,-1.7) -- (1.2,-4);
\draw (0.4,-1.7) -- (-1.2,-4);
\draw (-1.5,-1.5) -- (0.4,-1.7);
\draw (-0.3,-1.2) -- (-1.2,-4);

\node[] at (0,0.3) {$w$};
\node[] at (-1.5,-1.3) {$u$};
\node[] at (1.5,-1.3)  {$v$};
\node[] at (-0.4,-1.)  {$s$};
\node[] at (0.6,-1.5)  {$t$};
\node[] at  (0.4,-3.2)    {$x_0$};
\node[] at (-1.,-4.15)  {$y$};
\node[] at  (1.,-4.15) {$z$};
\draw[color=red!80, very thick](0,-3.2) circle (4pt);
\draw [color=red!80, very thick] plot [smooth cycle] coordinates {(-1.2,-4.3) (-1.2,-3.7)  (1.2,-3.7)  (1.2,-4.3)};
  \end{tikzpicture}
  \end{center}
  	\caption{A meshed $S_3$-graph $\Gamma$ with non-convex union shadow $K|x_{0}$ with $K=\{ y,z\}$}\label{non-convexshadows}
\end{figure}
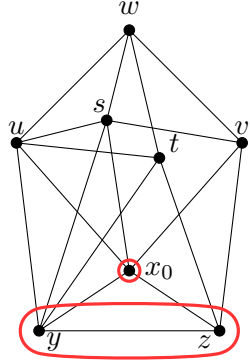

\subsection{Enumeration of semispaces}

In this subsection, using Theorem \ref{semispaces-trianglecodition} we show how to efficiently enumerate the semispaces of $S_3$-graphs satisfying (TC). 


\subsubsection{Enumeration of semispaces in convexity spaces}
Finite convexity spaces (under the guise of closure spaces) have found numerous applications in Computer Science: in formal concept analysis \cite{GaWi}, database theory, and
propositional logic \cite{Kh}. Implication bases and intersection bases are two adopted ways of  compact representation of  a finite convexity space $(X,\C)$. An \emph{implication} is an expression $A\rightarrow B$, where $A$ and
$B$ are subsets of $X$ modeling a causality relation between $A$ and $B$ in  $\C$:  if a convex set includes $A$, it must also include $B$ (this is equivalent to $B\subseteq \conv(A)$). Every convexity space $(X,\C)$ can be represented by a set $\Sigma$ of implications, called an \emph{implication base} such that the convexity space defined by this set of implications coincides with the input space $\C$. 
On the other hand, as we mentioned in the introduction, the semispaces of $(X,\C)$  constitute  the unique minimal collection of sets of $\C$ from which  $\C$ can be reconstructed
by taking intersections. In lattice theory, semispaces are called \emph{meet-irreducibles} \cite{DaPr} and in Horn logic they are called \emph{characteristic models} \cite{Kh}. Khardon \cite{Kh} formulated the problem of algorithmic translation between the
implication base and the intersection base of a closure/convexity space.\footnote{I learned about this problem from my colleague Oscar Defrain.} The question of existence of output polynomial algorithms is open in both directions. In particular, it is open for geodesic convexity in graphs. 
On the other hand, it was shown in \cite{LaLeKa} that there does not exist an output-polynomial algorithm for enumerating the facets of a finite simplicial complex $(X,{\mathfrak X})$ unless $P=NP$ (given the subrutine which indicates in unit time whether or not a given subset of $X$ is a simplex of $\mathfrak X$). Such algorithms exist for flag simplicial complexes, i.e., for maximal cliques of a graph \cite{JhYaPa,TsIdArSh}. Since the number
of semispaces of the simplicial convexity of $(X,{\mathfrak X})$ is linearly related to the number of facets of $\mathfrak X$ and each semispace is either a facet or is derived from a facet
by removing one element (Example \ref{exemple-simplicial-convexity}), the result
of \cite{LaLeKa} implies that enumerating semispaces of the simplicial convexity 
is hard (but the translation from implication bases to intersection bases is easy since any implication base must contains an implication for each facet of $\mathfrak X$). It is also hard to enumerate semispaces of a
convexity space attached to a given point $x_0$ \cite{KaSiSt}. In case of geodesic convexity in graphs (and, more generally, in finite convexity
spaces of fixed arity), any implication base will have length polynomial in the size of the vertex-set $V$, therefore the translation from the implication base to the intersection
base is equivalent to the problem of enumeration of the semispaces of a graph $G$ in time polynomial in the number
of vertices and the number of semispaces of $G$. For other related results and bibliography on this topic, see  Vilmin \cite[Chapter 2]{Vi}.


\subsubsection{Enumeration of semispaces of $S_3$-graphs satisfying (TC)}
Let $G$ be an $S_3$-graph satisfying (TC). By Theorem \ref{S3proximal}, there is a bijection between the set $\cS_{x_0}$ of semispaces attached at $x_0$ and adjacent to $x_0$ and the set $\Max(\Upsilon^*_{x_0})$
of maximal $x_0$-proximal sets adjacent to $x_0$. By Theorem \ref{semispaces-trianglecodition}, $\Max(\Upsilon^*_{x_0})$ is the set of all maximal cliques of the subgraph $G(N(x_0))$ of $G$
induced by the neighborhood $N(x_0)$ of $x_0$. Therefore, the semispaces of $\cS_{x_0}$  can be enumerated in the following way. First we enumerate all maximal cliques of the graph $G(N(x_0))$
using an algorithm for enumerating the maximal cliques of a graph with polynomial delay. For this, one can use the algorithms of \cite{TsIdArSh} or \cite{JhYaPa} for enumerating maximal independent sets
of a graph. For each such maximal clique $K$ of $G(N(x_0))$, $(x_0,K)$ is a pointed maximal clique of $G$, and by Theorem \ref{semispaces-trianglecodition}, the shadow $K/x_0$ is a semispace at $x_0$ and adjacent to $x_0$. Furthermore,
for any two different maximal cliques $K,K'$ of $G(N(x_0))$, $K/x_0$ and $K'/x_0$ are different semispaces and each semispace of $\cS_{x_0}$ occurs in this way.

To enumerate the semispaces $S\in {\mathcal S}$ of an $S_3$-graph $G$ satisfying (TC) in output polynomial time, we fix a total order on the vertices of $G$ and enumerate the maximal cliques of $G$ using \cite{TsIdArSh} or \cite{JhYaPa}.
Then we sort the set $\cK$ of maximal cliques of $G$ in lexicographic order (alternatively, the algorithm of \cite{JhYaPa} generate the maximal cliques of $G$ in increasing lexicographic order). By Theorem \ref{semispaces-trianglecodition},
$\frac {|\cK|}{n}\le |\cS|\le |\cK|$, thus for an output polynomial algorithm for $\cS$ we can first enumerate and then sort $\cK$. For each  $K'\in \cK$ we define an array $A(K')$ with $|K'|$ entries, corresponding to the $|K'|$
pointed maximal cliques $(x_0,K)$ with $K'=K\cup \{ x_0\}$ and $x_0\notin K$. The corresponding entry of $A(K')$ contains  $(x_0,K)$ and  a boolean variable $\varphi(x_0,K)$, which we initialize by setting $\varphi(x_0,K)=0$. 
We traverse the lexicographically ordered list $\cK$ again and perform the following operation. Let $K'$ be the current clique of $\cK$. We traverse the array $A(K')$ and consider each pointed maximal
clique $(x_0,K)$ with $K'=K\cup \{ x_0\}$ and such that $\varphi(x_0,K)=0$ (if $\varphi(x_0,K)=1$ for all $(x_0,K)$ with $K'=K\cup \{ x_0\}$, then we pass to the next clique of the list $\cK$). Then, following Theorem \ref{semispaces-trianglecodition} we return the shadow $S=K/x_0$ to the output list of semispaces of $G$ and set $\varphi(x_0,K)=1$. Furthermore, we consider each vertex $y_0\in V\setminus S$ which is adjacent to $S$ and the set $L=N(y_0)\cap S$ (since $S$ is convex, $L$ is a clique). Then we test if $L':=L\cup \{ y_0\}$ is a maximal clique of $G$ and if this is the case, then we construct the shadow $S'=L/y_0$. If $S'=S$ (this means that the semispace $S'$ has been already discovered as $S$), then using \emph{binary search} in the sorted list $\cK$ we find the entry for  $L'$ and  in the array $A(L')$ of $L'$ we search for the entry of the pointed maximal clique $(y_0,L)$ and set $\varphi(y_0,L)=1$. This ensures that each semispace is enumerated only once.

For each $K'\in \cK$, the complexity of all steps done to discover new semispaces of the form $K/x_0$ with $K'=K\cup \{ x_0\}$ is polynomial in $n$, denote it by $\poly(n)$. Indeed, for each $y_0$ adjacent to $K/x_0$,  computing the clique $L$, testing if $L'=L\cup \{ y_0\}$ is a maximal clique, computing the shadow $L/y_0$ and testing if $L/y_0=K/x_0$ requires polynomial time. Finally, the binary search over the sorted list $\cK$ of maximal cliques requires $\log |\cK|=O(n)$ time. The correctness of the algorithm
follows from Theorem \ref{semispaces-trianglecodition}. Consequently, we obtain the following result:

\begin{theorem} \label{enumerate} The set $\cS$ of semispaces of an $S_3$-graph $G$ with $n$ vertices and satisfying (TC) can be computed in  $O(\poly(n)|\cS|)=O(\poly(n)|\cK|)$, where $\cK$ is the set of maximal cliques of $G$.
\end{theorem}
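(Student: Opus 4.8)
The plan is to verify that the procedure sketched above is correct and runs within the claimed bound; everything rests on Theorem~\ref{semispaces-trianglecodition}. First I would record the three polynomial-time primitives the algorithm needs. (1) By Lemma~\ref{S3=>convexintervals} the intervals of $G$ are convex, so $\conv(x_0,v)=[x_0,v]$; hence for a clique $K$ with $x_0\sim K$ the shadow $K/x_0=\{v\in V:[x_0,v]\cap K\ne\varnothing\}=\{v\in V:\exists y\in K,\ d(y,v)=d(x_0,v)-1\}$ is read off the BFS-distance matrix in time $\poly(n)$. (2) By Lemma~\ref{convex-vertex}, if $S$ is convex and $y_0\notin S$ with $y_0\sim S$, then $L=N(y_0)\cap S$ is a clique, so deciding whether $L\cup\{y_0\}\in\cK$ is a $\poly(n)$ test. (3) By Theorem~\ref{semispaces-trianglecodition} one has $|\cK|/n\le|\cS|\le|\cK|$, so a running time of $O(\poly(n)|\cK|)$ is the same as $O(\poly(n)|\cS|)$.

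Next I would prove \emph{correctness}. By Lemma~\ref{copoint} every semispace $S$ of $G$ is a semispace at some $x_0$ with $x_0\sim S$, and then by Theorem~\ref{semispaces-trianglecodition}, $(x_0,K)$ with $K=N(x_0)\cap S$ is a pointed maximal clique and $S=K/x_0$; thus $S$ is recorded by the slot $(x_0,K)$ of $A(K\cup\{x_0\})$ and is emitted the first time some generating slot for $S$ is reached in the traversal of the sorted list $\cK$. It remains to see that $S$ is emitted \emph{only once}. When $S=K/x_0$ is first returned, the algorithm scans every $y_0\in V\setminus S$ with $y_0\sim S$; I claim this scan reaches every other pointed maximal clique $(y_0,L)$ with $L/y_0=S$. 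Indeed, if $(y_0,L)$ is such a pair then, $S$ being a semispace at $y_0$, we have $y_0\notin S$, while $L\subseteq S$ gives $y_0\sim S$; and by Lemma~\ref{convex-vertex} applied to the convex set $S$, necessarily $L=N(y_0)\cap S$, which is exactly what the algorithm recomputes. So the tests $L\cup\{y_0\}\in\cK$ and $L/y_0=S$ succeed, and $\varphi(y_0,L)$ is set to $1$ via binary search in $\cK$; hence the slot $(y_0,L)$ is later skipped. Thus the emitted sets are exactly the semispaces of $G$, each once.

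Then I would bound the \emph{running time}. Enumerating $\cK$ with polynomial delay via \cite{TsIdArSh} or \cite{JhYaPa} costs $O(\poly(n)|\cK|)$; sorting $\cK$ lexicographically and building the arrays $A(K')$ adds $O(\poly(n)|\cK|)$. In the main traversal, handling a clique $K'\in\cK$ means examining its at most $n$ slots: a flagged slot costs $O(1)$, and an unflagged slot $(x_0,K)$ triggers one shadow computation $K/x_0$ plus, for each of the at most $n$ candidate vertices $y_0$, a clique test, a shadow computation $L/y_0$, an equality test, and one binary search in $\cK$ (each $O(n\log|\cK|)$), all $\poly(n)$. Hence each $K'$ is processed in $\poly(n)$ time, the total is $O(\poly(n)|\cK|)$, and by item (3) this equals $O(\poly(n)|\cS|)$.

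The point needing the most care is the deduplication in the second paragraph: one must be certain that at the instant $S$ is first produced, \emph{all} of its (up to $n$, by Proposition~\ref{numberproxima}) generating pointed maximal cliques are detected and flagged — this is precisely where the uniqueness $L=N(y_0)\cap S$ from Lemma~\ref{convex-vertex} enters — and that no generating slot is ever "missed". The latter cannot happen because a slot's flag is set only when its semispace is actually output, so whichever generating slot for $S$ is reached first in the traversal of $\cK$ both emits $S$ and flags all the others; every semispace therefore contributes exactly one element to the output.
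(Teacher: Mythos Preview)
Your proposal is correct and follows essentially the same approach as the paper: enumerate and sort $\cK$, attach an array of pointed-slot flags to each maximal clique, traverse $\cK$ emitting $K/x_0$ for each unflagged slot, and immediately flag every other generating slot via the scan over $y_0\in V\setminus S$ with $y_0\sim S$. The paper's own justification of the deduplication step is terse (``This ensures that each semispace is enumerated only once''); your argument that any generating pair $(y_0,L)$ with $L/y_0=S$ must satisfy $L=N(y_0)\cap S$, using maximality of $L\cup\{y_0\}$ together with convexity of $S$ (or equivalently Lemma~\ref{convex-vertex}), is exactly the missing detail and is correct.
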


\begin{remark} Our algorithm is output polynomial but we do not know if it has polynomial delay (i.e., whether the time between the  enumeration of two consecutive semispaces is polynomial).
\end{remark}

\begin{remark}  The $S_3$-graphs satisfying (TC)  may have an exponential number of cliques and thus of semispaces.  This is the case of the $d$-dimensional hyperoctahedron $O_d$. One can easily check that $O_d$ satisfies the triangle and the quadrangle conditions (and thus $O_n$ is weakly modular, see the definition below). As noticed in Example \ref{ex-hyper}, $O_d$ contains $2d$ vertices and $2^d$ semispaces corresponding to the $2^d$ maximal cliques.
Since the class of $S_3$-graphs satisfying (TC) is closed by taking Cartesian products and gated amalgamations, we can construct other examples of $S_3$-graphs with an exponential number of cliques. $O_d$ is the
graph $K_{2,\ldots,2}$, where the graph $K_{a_1,\ldots,a_k}$ consists of $k$ disjoint independent sets $A_1,\ldots,A_k$ of sizes $a_1,\ldots,a_k$ and all edges $uv$ with $u\in A_i,v\in A_j$ and $i\ne j$. The graph $K_{3,\ldots,3}$ with $k=\frac{n}{3}$ is
the \emph{Moon-Moser graph} and is known to have the maximum number of maximal cliques among all graphs with $n$ vertices. While all graphs $K_{a_1,\ldots,a_k}$ are weakly modular (and thus meshed and satisfy (TC)) most of them are not $S_3$.
\end{remark}

\begin{remark}
To enumerate the semispaces of an $S_3$-graph $G$ we have to be able to enumerate the sets of $\Max(\Upsilon^*_{x_0})$, i.e.,  the maximal $x_0$-proximal sets of $G$ adjacent to $x_0$.
However,  to $\Max(\Upsilon^*_{x_0})$ one cannot associate a flag simplicial complex (as was the case of graphs satisfying (TC)).
Therefore, we cannot use the method of   \cite{TsIdArSh} and \cite{JhYaPa} for enumerating maximal cliques. The \emph{proximity search} proposed in \cite{CoGrMaUnVe} is a more general and powerful enumeration paradigm, which applies
to the enumeration of all maximal by inclusion solutions of some problem on a finite universe.  Roughly speaking, 
the proximity search constructs a \emph{neighboring relation} between the maximal solutions 
and a \emph{proximity relation} $\theta$ between all pairs $R,R'$ of maximal solutions 
such that each maximal solution has a polynomial number of neighbors and for each pair $R,R'$ of maximal solutions, $R$ has a neighbor $R_0'$ such that $|\theta(R_0',R')|>|\theta(R,R')|$.
While the family $\Max(\Upsilon^*_{x_0})$ does not consists exactly of the maximal by inclusion subsets
of $\Upsilon^*_{x_0}$, we still hope (based on Proposition \ref{proximal4} and Corollary \ref{extensionUpsilon}) that
the proximity search can be used to enumerate $\Max(\Upsilon^*_{x_0})$. Therefore, we believe that the following
question has a positive answer:
\end{remark}

\begin{question} Design an output polynomial  algorithm for enumerating the semispaces of an $S_3$-graph.
\end{question}

\section{Meshed graphs}\label{s:meshed}
In this section, we recall the definitions and the properties of meshed and weakly modular graphs. We also establish that in meshed graphs local convexity implies convexity
and that pre-median meshed graphs are fiber-complemented. The two results were previously known only for weakly modular graphs.  The first result is heavily used in the proof of
the characterization of $S_3$-meshed graphs via forbidden subgraphs. Since $S_3$-meshed graphs are pre-median,  the second result sheds some light about the product-amalgamation structure of
$S_3$-meshed graphs. We hope that both results will be also used in other contexts.

\subsection{Meshed and weakly modular graphs}

A graph $G$ is called \emph{weakly modular}  \cite{BaCh_helly,Ch_metric} if $G$ satisfies the triangle condition (TC) and the following quadrangle condition:
\begin{enumerate}
\item[(QC)] for any $u,v,w,z\in V$ with
$d(v,z)=d(w,z)=1$ and  $2=d(v,w)\leq d(u,v)=d(u,w)=d(u,z)-1,$ there
exists a common neighbor $x$ of $v$ and $w$ such that
$d(u,x)=d(u,v)-1.$
\end{enumerate}

Three vertices $v_1,v_2,v_3$ of a graph $G$ form a {\it metric triangle}
$v_1v_2v_3$ if the intervals $[v_1,v_2], [v_2,v_3],$ and
$[v_3,v_1]$ pairwise intersect only in the common end-vertices, i.e.,
$[v_i, v_j] \cap [v_i,v_k]= \{v_i\}$ for any $1 \leq i, j, k \leq 3$.
If $d(v_1,v_2)=d(v_2,v_3)=d(v_3,v_1)=k,$ then this metric triangle is
called {\it equilateral} of {\it size} $k.$ An equilateral metric triangle
$v_1v_2v_3$ of size $k$ is called \emph{strongly equilateral} if $d(v_1,v)=k$ for
all $v\in [v_2,v_3]$. Recall the following characterization of weakly modular graphs:

\begin{lemma} \label{strongly-equilateral}  \cite{Ch_metric} A graph $G$ is weakly modular
if and only if all metric triangles of $G$ are strongly equilateral.
\end{lemma}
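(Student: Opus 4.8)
The plan is to prove the two implications separately, beginning with the more elementary one: if every metric triangle of $G$ is strongly equilateral, then $G$ satisfies (TC) and (QC). The common device is, for a triple $u,v,w$, to look at a vertex $m\in[u,v]\cap[u,w]$ at maximal distance from $u$; such an $m$ exists because $[u,v]\cap[u,w]$ contains $u$ and $d(u,\cdot)$ is bounded on it, and from $m\in[u,v]\cap[u,w]$ one gets $d(m,v)=d(u,v)-d(u,m)$ and $d(m,w)=d(u,w)-d(u,m)$, while maximality of $d(u,m)$ forces $[m,v]\cap[m,w]=\{m\}$.

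For (TC), take $1=d(v,w)<d(u,v)=d(u,w)=:k$ and such an $m$; then $d(m,v)=d(m,w)=:p\ge 1$, and since $d(v,w)=1$ one checks $w\notin[v,m]$ and $v\notin[w,m]$, so $mvw$ is a metric triangle. By hypothesis it is equilateral, hence $p=d(v,w)=1$, so $m\sim v$, $m\sim w$ and $d(u,m)=k-1$ — exactly the vertex (TC) demands. For (QC), take $u,v,w,z$ as in the definition, set $k:=d(u,v)=d(u,w)$, so $d(u,z)=k+1$, and note $z$ is a common neighbour of $v$ and $w$, hence $z\in[v,w]$ since $d(v,w)=2$. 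Pick $m$ as above and $p:=d(m,v)=d(m,w)\ge 1$. If $p=1$ we are done, $m$ being a common neighbour of $v$ and $w$ with $d(u,m)=k-1$. If $p\ge 2$, then either $[v,m]\cap[v,w]\ne\{v\}$ or $[w,m]\cap[w,v]\ne\{w\}$, in which case that extra vertex turns out to be a common neighbour of $v$ and $w$ at distance $k-1$ from $u$, and we are done; or $mvw$ is a metric triangle, which is then equilateral with one side equal to $d(v,w)=2$, forcing $p=2$ and $d(u,m)=k-2$. Strong equilaterality of $mvw$ then yields $d(m,z)=2$, whence $d(u,z)\le d(u,m)+2=k<k+1$, a contradiction. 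Thus $p=1$ in every case and (QC) holds.

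For the converse I would first recall the classical fact from \cite{Ch_metric} that metric triangles of a weakly modular graph are equilateral, arguing by induction on the perimeter $d(v_1,v_2)+d(v_2,v_3)+d(v_3,v_1)$: if some side, say $v_1v_2$, is an edge, then $[v_3,v_1]\cap[v_3,v_2]=\{v_3\}$ forces $d(v_3,v_1)=d(v_3,v_2)$, and if this common value were $\ge 2$, applying (TC) with apex $v_3$ to the edge $v_1v_2$ would place a vertex in $[v_3,v_1]\cap[v_3,v_2]\setminus\{v_3\}$, impossible, so the triangle has size $1$; if all sides have length $\ge 2$, taking neighbours of the $v_i$ along the sides and invoking (QC) produces a non-equilateral metric triangle of strictly smaller perimeter, contradicting minimality. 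Granting equilaterality, I would upgrade it to strong equilaterality by a second induction: given an equilateral metric triangle $v_1v_2v_3$ of size $k$ and, for contradiction, an interior vertex $v\in[v_2,v_3]$ with $d(v_1,v)\ne k$ chosen so that $\min(d(v_2,v),d(v_3,v))$ is minimal (WLOG $d(v_2,v)\le d(v_3,v)$), take a neighbour $v^-$ of $v$ on $[v_2,v]$ and a neighbour $v^+$ of $v$ on $[v,v_3]$, so that $d(v^-,v^+)=2$ with $v$ a common neighbour; minimality gives $d(v_1,v^-)=k$, and then (TC)/(QC) applied with apex $v_1$ to this configuration forces $d(v_1,v)=k$, the desired contradiction.

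The elementary direction is routine. The genuine obstacle is the converse: the inductive step establishing equilaterality really needs the quadrangle condition together with a careful choice of the auxiliary neighbours (this is the content of \cite{Ch_metric}), and closing the second induction — ruling out both $d(v_1,v)=k-1$ and $d(v_1,v)=k+1$ from $d(v_1,v^-)=k$ — is the step I expect to cost the most work.
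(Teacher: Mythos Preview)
The paper does not actually prove this lemma; it is stated with a citation to \cite{Ch_metric} and no argument is supplied. So there is no in-paper proof to compare against, and any correct argument you give goes beyond what the paper offers.

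Your forward direction (all metric triangles strongly equilateral $\Rightarrow$ (TC) and (QC)) is correct and complete. Picking $m\in[u,v]\cap[u,w]$ at maximal distance from $u$ is the right device; your case split for (QC) is clean, and the use of strong equilaterality to force $d(m,z)=2$ and hence $d(u,z)\le k$ is exactly the point.

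Your backward direction is a plausible outline of the classical argument, and you have honestly flagged the two genuine difficulties. The step you describe as ``taking neighbours of the $v_i$ along the sides and invoking (QC) produces a non-equilateral metric triangle of strictly smaller perimeter'' is too compressed to stand on its own: one must specify which neighbours, verify that the new triple is again a metric triangle (the interval-disjointness conditions do not transfer automatically), and check that non-equilaterality persists. This is where the real content of \cite{Ch_metric} lies, as you say. For the upgrade to strong equilaterality, your plan of minimizing $\min(d(v_2,v),d(v_3,v))$ and examining $v^-,v^+$ is standard and does close, but note that your minimality criterion does not directly control $v^+$ when $d(v_2,v)<d(v_3,v)-1$ (since then $\min(d(v_2,v^+),d(v_3,v^+))=d(v_2,v)+1$), so you may need to strengthen the induction hypothesis slightly or argue separately that $d(v_1,v^+)=k$ before applying (QC).
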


A metric triangle
$v_1v_2v_3$ of $G$ is a {\it quasi-median} of the triplet $x,y,z$
if the following metric equalities are satisfied:
$$\begin{array}{l}
d(x,y)=d(x,v_1)+d(v_1,v_2)+d(v_2,y),\\
d(y,z)=d(y,v_2)+d(v_2,v_3)+d(v_3,z),\\
d(z,x)=d(z,v_3)+d(v_3,v_1)+d(v_1,x).\\
\end{array}$$
If $v_1$, $v_2$, and $v_3$ are the same vertex $v$,
or equivalently, if the size of $v_1v_2v_3$ is zero,
then this vertex $v$ is called a {\em median} of $x,y,z$.
While a median may not exist and may not be unique,  a quasi-median of
every triplet $x,y,z$ always exists.
%
We continue with the definition of meshed graphs. They have been introduced in
the unpublished paper \cite{BaMuSo} and further studied in the papers \cite{BaCh_median,BeChChVa,ChChChJa,CCHO,Ch_delta}.

\begin{definition} [Meshed graph] A graph $G=(V,E)$ is called {\it meshed}  if for any vertex $u$
its distance function $d$ satisfies the following \emph{Weak Quadrangle Condition}:

\begin{enumerate}
\item[(QC$^-$)]  for any  $u,v,w\in V$ with
$d(v,w)=2,$ there exists a common neighbor $x$ of $v$ and $w$ such that $2d(u,x)\le d(u,v)+d(u,w).$
\end{enumerate}
\end{definition}

(QC$^-$) seems to be a relaxation of (QC), but it implies the triangle condition (TC) (that is not implied
by (QC). Conversely, (TC) and (QC) imply
(QC$^-$) and thus weakly modular graphs are meshed. Furthermore, in meshed graphs,
any metric triangle $xyz$ is equilateral~\cite{BaCh_median}.

\subsection{Properties of meshed graphs} \label{classes}

For completeness, in this subsection we recall and prove some basic properties of meshed graphs.

Convexity of the distance function is an important property in metric geometry: the radius (distance) function of CAT(0) spaces is convex and the Busemann spaces
are defined by the convexity of the distance function between any two geodesics. In the discrete setting of graphs $G=(V,E)$,
recall that a function $f:V\rightarrow {\mathbb R}$ is called
\emph{weakly convex} if for any two vertices $u,v,$ and a real
number $\lambda$ between $0$ and $1$ such that $\lambda d(u,v)$ and
$(1-\lambda)d(u,v)$ are integers, there exists a vertex $x$ such that
$d(u,x)=\lambda d(u,v), d(v,x)=(1-\lambda)d(u,v),$ and
$f(x)\leq (1-\lambda )f(u)+\lambda f(v)$. Weakly convex functions can be
characterized in the following local-to-global way:

\begin{lemma} \label{wcf} \cite{BaCh_median} For a real-valued function $f$ defined on the vertex-set of a graph $G$ the
following conditions are equivalent:

\begin{enumerate}
\item[(i)]  $f$ is weakly convex;
\item[(ii)] for any two non--adjacent vertices $u$ and $v$ there exists
$w\in [u,v], w\ne u,v,$ such that
$d(u,v)\cdot f(w)\leq d(v,w)\cdot f(u)+d(u,w)\cdot f(v)$;
\item[(iii)] any two vertices $u$ and $v$ at distance 2 have a common
neighbour $w$ with $2f(w)\leq f(u)+f(v)$.
\end{enumerate}
\end{lemma}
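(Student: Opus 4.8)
\emph{Plan.} The three implications are most cleanly arranged cyclically, (i)$\Rightarrow$(ii)$\Rightarrow$(iii)$\Rightarrow$(i), and only the last is substantial. For (i)$\Rightarrow$(ii) I would take non-adjacent $u,v$, set $n=d(u,v)\ge 2$, and apply weak convexity with $\lambda=1/n$ (both $\lambda n$ and $(1-\lambda)n$ are then integers): this yields $w$ with $d(u,w)=1$, $d(w,v)=n-1$, so $w\in[u,v]\setminus\{u,v\}$, and $f(w)\le\tfrac{n-1}{n}f(u)+\tfrac1n f(v)$, which after multiplying by $d(u,v)=n$ becomes exactly $d(u,v)f(w)\le d(v,w)f(u)+d(u,w)f(v)$. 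For (ii)$\Rightarrow$(iii) I would note that vertices at distance $2$ are non-adjacent, so (ii) gives $w\in[u,v]\setminus\{u,v\}$, necessarily a common neighbour, with $2f(w)\le d(v,w)f(u)+d(u,w)f(v)=f(u)+f(v)$.

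For (iii)$\Rightarrow$(i) the plan is to prove, by strong induction on $n=d(u,v)$, the statement $Q(n)$: \emph{for all $u,v$ with $d(u,v)=n$ and all integers $k\in\{0,\dots,n\}$ there is a vertex $x$ with $d(u,x)=k$, $d(x,v)=n-k$ and $f(x)\le\tfrac{n-k}{n}f(u)+\tfrac kn f(v)$.} Since the constraint on $\lambda$ in the definition of weak convexity forces $\lambda d(u,v)$ to be an integer $k$, ``$Q(n)$ for all $n$'' is precisely (i). The cases $n\le 1$ are vacuous, and $n=2$ is immediate from (iii) (only $k=1$ is nontrivial). For the inductive step ($n\ge 3$) I would first dispatch the case $k=1$ via a \emph{neighbour lemma}: there is a neighbour $p$ of $u$ on a $u$--$v$ geodesic with $f(p)\le\tfrac{n-1}{n}f(u)+\tfrac1n f(v)$. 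To prove it, choose $p^{*}$ minimizing $f$ among all neighbours of $u$ at distance $n-1$ from $v$; applying $Q(n-1)$ to the pair $(p^{*},v)$ at level $1$ produces a neighbour $z$ of $p^{*}$ with $d(z,v)=n-2$, whence $d(u,z)=2$ and $f(z)\le\tfrac{n-2}{n-1}f(p^{*})+\tfrac1{n-1}f(v)$; then (iii) applied to the distance-$2$ pair $(u,z)$ gives a common neighbour $q$ with $2f(q)\le f(u)+f(z)$, and $q$ is again a neighbour of $u$ at distance $n-1$ from $v$, so $f(p^{*})\le f(q)\le\tfrac12\bigl(f(u)+f(z)\bigr)$. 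Substituting the bound on $f(z)$ and solving the resulting one-variable linear inequality for $f(p^{*})$ gives $f(p^{*})\le\tfrac{n-1}{n}f(u)+\tfrac1n f(v)$.

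With the neighbour lemma in hand, the general level $k$ (with $1\le k\le n-1$; the values $0,n$ being trivial) follows by peeling: pick $p$ as above, apply $Q(n-1)$ to $(p,v)$ at level $k-1$ to obtain $x$ with $d(p,x)=k-1$, $d(x,v)=n-k$, hence $d(u,x)=k$ by the triangle inequality, and $f(x)\le\tfrac{n-k}{n-1}f(p)+\tfrac{k-1}{n-1}f(v)$; inserting $f(p)\le\tfrac{n-1}{n}f(u)+\tfrac1n f(v)$ and simplifying (the coefficient of $f(v)$ collapses exactly to $k/n$) yields $f(x)\le\tfrac{n-k}{n}f(u)+\tfrac kn f(v)$. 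This completes $Q(n)$, hence (iii)$\Rightarrow$(i).

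\emph{Main obstacle.} The crux is the neighbour lemma: the case ``$Q(n)$ at level $1$'' is genuinely self-referential, because trying to obtain it by peeling a neighbour of the \emph{far} endpoint introduces an uncontrolled term $f(v')$, so it cannot be deduced from smaller instances of $Q$ by a straightforward descent; one is forced into the extremal choice of $p^{*}$ together with the single use of (iii). For graphs that are not locally finite the minimum over the neighbours of $u$ need not be attained, which would be handled by working with near-minimizers and passing to the limit (alternatively one restricts to the locally finite case, which suffices for the uses of the lemma in this paper). Beyond that point everything is bookkeeping with the triangle inequality and elementary linear estimates.
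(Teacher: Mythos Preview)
The paper does not prove this lemma; it is cited from \cite{BaCh_median} and used without proof (as input to Lemmas~\ref{balls-isometric}--\ref{equilateral_bis}), so there is nothing in the present paper to compare your argument against.

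Your proof is correct. The easy implications are handled in the obvious way, and for (iii)$\Rightarrow$(i) the extremal device behind your neighbour lemma --- choose $p^{*}$ minimizing $f$ among neighbours of $u$ at distance $n-1$ from $v$, push one step toward $v$ via $Q(n-1)$ to obtain $z$ at distance $2$ from $u$, then fold back with (iii) to produce a competitor $q$ for $p^{*}$ --- works exactly as you describe; the resulting inequality $(n/(n-1))f(p^{*})\le f(u)+\tfrac1{n-1}f(v)$ is right, and in the subsequent peeling the $f(v)$-coefficient indeed collapses to $k/n$. Your caveat about local finiteness is accurate, but note that the near-minimizer fix you sketch only bounds the infimum by $\tfrac{n-1}{n}f(u)+\tfrac1n f(v)$; it does not by itself exhibit a witnessing vertex when the infimum equals this value and is not attained. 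For the only use made of the lemma in this paper the function is a radius function $r_b=d(\cdot,b)$, whose values are nonnegative integers, so the minimum over any nonempty set of neighbours is attained regardless of local finiteness and the issue does not arise.
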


In view of this lemma, meshed graphs are characterized by weak convexity
property of the radius functions $r_u(v)=d(v,u)$ for all $u\in V$. This
condition ensures that all balls in  a meshed graph $G$ induce
isometric subgraphs of $G$:

\begin{lemma} \label{balls-isometric} If $G$ is a meshed graph,
then  for any vertex $u$ and any integer $k\ge 0$, the ball $B_k(u)$ induces an isometric subgraph of $G$.
\end{lemma}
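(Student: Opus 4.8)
The plan is to induct on $d := d_G(v,w)$ for vertices $v,w \in B_k(u)$, exploiting that meshedness of $G$ is exactly the assertion that every radius function $r_u(\cdot) := d(\cdot,u)$ is weakly convex (this is the remark following Lemma~\ref{wcf}, itself a consequence of (QC$^-$)). Since $B_k(u)$ is an induced subgraph of $G$ we always have $d_{B_k(u)}(v,w) \ge d_G(v,w)$, so it suffices to produce, for every pair $v,w \in B_k(u)$, a $(v,w)$-geodesic of $G$ all of whose vertices lie in $B_k(u)$.

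The cases $d = 0$ and $d = 1$ are immediate: for $d=0$ there is nothing to show, and for $d=1$ the edge $vw$ lies in $B_k(u)$ because both its ends do. For the inductive step assume $d \ge 2$ and that the claim holds for all pairs of vertices of $B_k(u)$ at distance less than $d$. Apply the weak convexity of $r_u$ (Lemma~\ref{wcf}(i), valid since $G$ is meshed) to the endpoints $v,w$ with $\lambda = 1/d \in (0,1)$; note that $\lambda\, d(v,w) = 1$ and $(1-\lambda)\, d(v,w) = d-1$ are integers, so the lemma applies and yields a vertex $x$ with $d_G(v,x) = 1$, $d_G(x,w) = d-1$ and
\[
  d(u,x) = r_u(x) \;\le\; \bigl(1-\tfrac1d\bigr) r_u(v) + \tfrac1d\, r_u(w) \;\le\; \bigl(1-\tfrac1d\bigr)k + \tfrac1d\, k \;=\; k ,
\]
so $x \in B_k(u)$. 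Since $x,w \in B_k(u)$ and $d_G(x,w) = d-1 < d$, the induction hypothesis gives an $(x,w)$-geodesic inside $B_k(u)$; prepending the edge $vx$ (legitimate, as $v \sim x$ and $v \in B_k(u)$) produces a $(v,w)$-walk of length $1 + (d-1) = d = d_G(v,w)$ in $B_k(u)$, which is therefore a geodesic of $G$ contained in $B_k(u)$. Hence $d_{B_k(u)}(v,w) \le d$, and together with the reverse inequality, $d_{B_k(u)}(v,w) = d$, completing the induction.

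I do not expect a genuine obstacle here; the only thing that has to be right is the choice $\lambda = 1/d$, which is forced by two requirements at once: $\lambda d$ and $(1-\lambda)d$ must be integers so that Lemma~\ref{wcf}(i) is applicable, and $x$ must be a \emph{neighbour} of $v$ lying on a geodesic to $w$, so that the recursion drops $d$ by exactly one while keeping the new endpoint in the ball. One could equivalently run the induction from the distance-$2$ instance of weak convexity, i.e.\ directly from (QC$^-$), after first arguing that some neighbour of $v$ toward $w$ remains in $B_k(u)$, but the formulation above is the most economical.
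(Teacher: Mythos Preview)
Your proof is correct and follows essentially the same approach as the paper: both exploit that meshedness means the radius function $r_u$ is weakly convex, then use Lemma~\ref{wcf} to produce an intermediate vertex of $[v,w]$ lying in $B_k(u)$ and recurse. The only cosmetic difference is that the paper invokes condition~(ii) of Lemma~\ref{wcf} (an arbitrary interior point, recursing on both halves) whereas you invoke condition~(i) with $\lambda=1/d$ to get a neighbour of $v$ and run a single induction on $d$.
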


\begin{proof} Pick any $x,y\in B_k(u)$. To prove that $x$ and $y$ can be connected
by a shortest path in $B_k(u)$ it suffices to prove that there exists  $z\in [x,y]\cap B_k(u)$ different from $x,y$. Indeed, then reiterating this to the pairs $x,z$ and $z,y$, we will
construct the required shortest path inside $B_k(u)$.   Since $G$ is meshed, the radius function  $r_u(v)=d(v,u)$ is weakly convex. By Lemma \ref{wcf}, there exists $z\in [x,y], z\ne x,y,$
such that $d(x,y)\cdot d(u,z)\leq d(y,z)\cdot d(u,x)+d(z,x)\cdot d(u,y)$. Since $d(u,x)\le k$ and $d(u,y)\le k$, we get $d(x,y)\cdot d(u,z)\leq k(d(y,z)+d(z,x))=kd(x,y)$ and thus $d(u,z)\le k$.
\end{proof}

\begin{lemma} \label{equilateral} \cite[Remark 2]{BaCh_median} All metric triangles of a meshed graph $G=(V,E)$ are equilateral.
In particular, meshed graphs satisfy the triangle condition (TC).
\end{lemma}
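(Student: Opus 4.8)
The plan is to lean on the characterization, recalled just above, that a graph is meshed if and only if every radius function $r_u(\cdot)=d(\cdot,u)$ is weakly convex (this is immediate from Lemma \ref{wcf}, since (QC$^-$) is exactly condition (iii) of that lemma applied to $f=r_u$), and then to run a short extremal argument on an arbitrary metric triangle $v_1v_2v_3$. First I would dispose of the degenerate case: if two of $v_1,v_2,v_3$ coincide, the interval conditions in the definition of a metric triangle force all three to coincide, so the ``triangle'' is a single vertex and is trivially equilateral. Hence I may assume $v_1,v_2,v_3$ are pairwise distinct, so all three sides have length at least $1$.

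The heart of the argument is the claim that the two sides incident to a fixed vertex, say $v_1$, are equal. Suppose not, say $d(v_1,v_3)<d(v_1,v_2)=:c$. If $d(v_2,v_3)=1$, then $d(v_1,v_2)-d(v_1,v_3)$ lies between $1$ and $d(v_2,v_3)=1$, hence equals $1$, so $v_3\in[v_1,v_2]$; but $v_3\in[v_2,v_3]\setminus\{v_2\}$, contradicting $[v_1,v_2]\cap[v_2,v_3]=\{v_2\}$. If $a:=d(v_2,v_3)\ge 2$, I apply weak convexity of $r_{v_1}$ to the pair $v_2,v_3$ with $\lambda=1/a$: this produces a vertex $x$ with $d(v_2,x)=1$, $d(x,v_3)=a-1$, and $d(v_1,x)\le\tfrac{a-1}{a}d(v_1,v_2)+\tfrac1a d(v_1,v_3)=c-\tfrac{c-d(v_1,v_3)}{a}<c$. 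Since $x\sim v_2$ forces $d(v_1,x)\ge c-1$, we get $d(v_1,x)=c-1$, hence $x\in[v_1,v_2]$; and $d(v_2,x)+d(x,v_3)=a$ gives $x\in[v_2,v_3]$ with $x\ne v_2$, again contradicting $[v_1,v_2]\cap[v_2,v_3]=\{v_2\}$. Therefore $d(v_1,v_3)\ge d(v_1,v_2)$; the symmetric argument (interchanging $v_2,v_3$) gives the reverse inequality, and running the same with $v_2$ as apex yields $d(v_2,v_1)=d(v_2,v_3)$, so all three sides are equal.

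For the ``in particular'' part, I would take a quasi-median metric triangle $x_1x_2x_3$ of the triple $u,v,w$ with $d(v,w)=1$ and $d(u,v)=d(u,w)=k>1$ (a quasi-median always exists, as recalled above). In the defining equality for the side $vw$, the nonnegative integers $d(v,x_2),d(x_2,x_3),d(x_3,w)$ sum to $1$. If $d(v,x_2)=1$ then $x_2=x_3=w$, so by equilaterality $x_1x_2x_3$ collapses to a single vertex and the side equalities force $d(u,v)=d(u,w)+1$, a contradiction; the case $d(x_3,w)=1$ is symmetric. Hence $d(x_2,x_3)=1$ with $x_2=v$, $x_3=w$, equilaterality gives $d(x_1,v)=d(x_1,w)=1$, so $x_1$ is a common neighbour of $v$ and $w$, and the side equality $d(u,v)=d(u,x_1)+1$ gives $d(u,x_1)=k-1$, which is exactly (TC).

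I expect the only real subtlety to be the bookkeeping in the weak-convexity step: one must confirm that the vertex $x$ it produces really lies on a $(v_2,v_3)$-geodesic (from $d(v_2,x)+d(x,v_3)=d(v_2,v_3)$) and really lies on $[v_1,v_2]$ (from the forced value $d(v_1,x)=c-1$), so that the metric-triangle disjointness $[v_1,v_2]\cap[v_2,v_3]=\{v_2\}$ is genuinely violated; once that is pinned down, the rest is routine triangle-inequality arithmetic and the small case check for $a=1$.
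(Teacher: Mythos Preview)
Your proof is correct and, for the equilaterality assertion, follows essentially the same route as the paper: apply weak convexity of a radius function $r_{v_1}$ along the opposite side $[v_2,v_3]$ to produce a neighbour $x$ of $v_2$ that lands simultaneously in $[v_2,v_3]$ and $[v_1,v_2]$, violating the metric-triangle condition. The paper's version is a one-line compression of this (with a couple of evident typos: their ``$\lambda=1-1/d(v,w)$'' should read $1-1/d(u,v)$, and ``$[u,w]\cap[v,w]$'' should be ``$[u,v]\cap[v,w]$''); your separate treatment of the case $d(v_2,v_3)=1$ is not strictly needed but harmless.

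For the ``in particular, (TC)'' clause you go further than the paper, which simply states it without argument. Your quasi-median derivation is correct. A slightly shorter alternative, in the spirit of the paper, is: given $u,v,w$ with $v\sim w$ and $d(u,v)=d(u,w)=k>1$, take $u'$ farthest from $u$ in $[u,v]\cap[u,w]$; then $u'vw$ is a metric triangle with one side equal to $1$, so equilaterality forces $d(u',v)=d(u',w)=1$ and hence $d(u,u')=k-1$. Either way, the deduction is routine once equilaterality is in hand.
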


\begin{proof}
To see this, suppose the contrary: let $uvw$ be a metric triangle in $G$ with
$d(u,w)<d(v,w).$ The definition of weak convexity applied to $f=d(\cdot,w),$
the pair $u,v,$ and $\lambda=1-1/d(v,w)$ then provides us with a neighbour
$x$ of $v$ which necessarily belongs to $[u,w]\cap [v,w],$ a contradiction.
\end{proof}

\begin{lemma} \label{equilateral_bis} If $uvw$ is a metric triangle of size $k$ of a meshed graph $G$, then there exists a shortest path $P(v,w)$ such that $d(u,x)=k$ for all $x\in P(v,w)$.
\end{lemma}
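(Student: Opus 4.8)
The plan is to first establish the a priori stronger statement that $d(u,x)\ge k$ holds for \emph{every} $x\in[v,w]$, and then to obtain the desired geodesic for free from the fact that balls in a meshed graph induce isometric subgraphs (Lemma~\ref{balls-isometric}).

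For the first step I would argue by contradiction. Assume some vertex of $[v,w]$ lies in $B_{k-1}(u)$, and among all $x\in[v,w]$ with $d(u,x)\le k-1$ choose one minimizing $d(v,x)$; since $d(u,v)=d(u,w)=d(v,w)=k$ we get $1\le d(v,x)\le k-1$. If $d(v,x)=1$, then $x\ne v$ is a neighbor of $v$ lying on a $(v,w)$-geodesic, so $x\notin[u,v]$ because $uvw$ is a metric triangle; hence $d(u,x)\ne d(u,v)-1=k-1$, which together with $d(u,x)\ge d(u,v)-1=k-1$ and $d(u,x)\le k-1$ is absurd. If $d(v,x)=j\ge 2$, I would use that the radius function $r_u=d(\cdot,u)$ is weakly convex — this is exactly the meshedness of $G$ (Lemma~\ref{wcf} together with the remark following it) — applied to the pair $v,x$ with $\lambda=\tfrac{j-1}{j}$: it produces a vertex $y$ with $d(v,y)=j-1$, $d(y,x)=1$, and
\[
d(u,y)\le \tfrac{1}{j}\,d(u,v)+\tfrac{j-1}{j}\,d(u,x)\le \tfrac{1}{j}\,k+\tfrac{j-1}{j}(k-1)<k ,
\]
so $d(u,y)\le k-1$. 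Since $d(v,y)=j-1<j=d(v,x)$ and $y\in[v,x]\subseteq[v,w]$, this contradicts the minimal choice of $x$. Thus $d(u,x)\ge k$ for all $x\in[v,w]$.

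For the second step, note that $v,w\in B_k(u)$ and that $B_k(u)$ induces an isometric subgraph of $G$ by Lemma~\ref{balls-isometric}, so there is a $(v,w)$-path $P(v,w)$ of length $d_G(v,w)=k$ contained in $B_k(u)$. Being of length exactly $d(v,w)$, $P(v,w)$ is a geodesic of $G$, hence all of its vertices lie in $[v,w]$ and therefore satisfy $d(u,x)\ge k$ by the first step; on the other hand $d(u,x)\le k$ since $P(v,w)\subseteq B_k(u)$. So $d(u,x)=k$ for every $x\in P(v,w)$, as required. The only point that needs care is the descent in the first step: one has to iterate weak convexity toward $v$ without leaving $[v,w]$ and invoke the metric-triangle condition precisely at the moment the moving vertex becomes a neighbor of $v$, and choosing $x$ with $d(v,x)$ minimal is exactly what hides this iteration.
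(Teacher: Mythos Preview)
Your proof is correct. The route differs from the paper's, though both ultimately rest on the weak convexity of $r_u$ and on the metric-triangle condition $[u,v]\cap[v,w]=\{v\}$ to pin down what happens at a neighbor of $v$.

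The paper argues on a single geodesic: invoking Lemma~\ref{wcf}, it picks a shortest $(v,w)$-path $P(v,w)=(v_0,\dots,v_k)$ along which $r_u$ is convex; convexity at the endpoints gives $r_u\le k$ on $P(v,w)$, the metric-triangle condition forces $r_u(v_1)=k$, and then convexity between $v_0$ and any $v_i$ with $r_u(v_i)<k$ would make $r_u(v_1)<k$, a contradiction. You instead split the task into two independent halves: a lower bound $d(u,x)\ge k$ valid on the \emph{whole} interval $[v,w]$ (obtained by a minimal-counterexample descent via weak convexity), and an upper bound coming for free from Lemma~\ref{balls-isometric}. Your Step~1 thus proves a bit more than the lemma asks---it shows that every vertex of $[v,w]$ lies outside $B_{k-1}(u)$, which is one half of ``strongly equilateral'' and may be of independent use. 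The paper's argument, in exchange, is self-contained in that it does not appeal to Lemma~\ref{balls-isometric}.
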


\begin{proof} Let $P(v,w)=(v=v_0,v_1,\ldots,v_k=w)$ be a shortest path between $v$ and $w$ along which the radius function $r_u$ is convex (such a shortest path exists by Lemma \ref{wcf}).
This implies that $d(u,v_i)\le k$ for any $v_i\in P(v,w)$. Since $[v,u]\cap [v,w]=\{ v\}$, we conclude that $d(u,v_1)=k$. Suppose that $P(v,w)$ contains a vertex $v_i$ such that $d(u,v_i)<k$.
Since the function $r_u$ is convex on $P(v,w)$ and $r_u(v_0)=r_u(v_1)=k$ and $r_u(v_i)<k$ we obtain a contradiction with the inequality $k=r_u(v_1)\leq (1-\lambda )r_u(v_1)+\lambda r_u(v_i)<k$.
\end{proof}

\begin{lemma} \cite{ChChChJa} A graph
  $G$ is  meshed  if and only if for any metric triangle
  $vxy$, if $d(x,y) = 2$, then $d(v,x) = d(v,y) =2$ and there exists
  $z \sim x,y$ such that $d(v,z) = 2$.
\end{lemma}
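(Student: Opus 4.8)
The plan is to prove the two implications separately, using the quasi-median machinery for the harder direction.

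\textbf{Forward direction.} Assume $G$ is meshed and let $vxy$ be a metric triangle with $d(x,y)=2$. I would first observe that the three vertices are distinct: if, say, $v=x$, then $[y,v]\cap[y,x]=[y,x]$ would contain three vertices, contradicting the metric triangle condition. By Lemma \ref{equilateral}, every metric triangle of a meshed graph is equilateral, so $d(v,x)=d(v,y)=d(x,y)=2$. Now apply (QC$^-$) with apex $v$ to the distance-$2$ pair $x,y$: this produces a common neighbor $z$ of $x$ and $y$ with $2d(v,z)\le d(v,x)+d(v,y)=4$, hence $d(v,z)\le 2$. Since $z\sim x$ and $d(v,x)=2$ we have $z\ne v$, so $d(v,z)\ge 1$; and $d(v,z)=1$ is impossible, because then $z$ would lie in $[v,x]\cap[v,y]=\{v\}$ while $z\ne v$. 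Therefore $d(v,z)=2$, as required.

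\textbf{Converse.} Assume the metric-triangle condition in the statement; I must verify (QC$^-$). Fix $u,a,b$ with $d(a,b)=2$ and look for a common neighbor $x$ of $a,b$ with $2d(u,x)\le d(u,a)+d(u,b)$. The device is to pass to a quasi-median $v_uv_av_b$ of the triple $(u,a,b)$ (such a quasi-median always exists, as recalled earlier), so that $d(u,a)=d(u,v_u)+d(v_u,v_a)+d(v_a,a)$, $d(a,b)=d(a,v_a)+d(v_a,v_b)+d(v_b,b)$, and $d(b,u)=d(b,v_b)+d(v_b,v_u)+d(v_u,u)$, with $v_uv_av_b$ a metric triangle. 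From $d(a,v_a)+d(v_a,v_b)+d(v_b,b)=2$ I split on the value of $d(v_a,v_b)\in\{0,1,2\}$.

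In all cases except "$d(v_a,v_b)=2$ with $v_u\notin\{a,b\}$", a short computation with the quasi-median equations shows that the configuration is essentially collinear and that a suitable common neighbor is already at hand. Concretely: if $d(v_a,v_b)=1$, then (up to symmetry) $v_a=a$ and $v_b\sim a,b$, so $v_b$ is a common neighbor of $a,b$ lying on a geodesic from $u$ to $b$, and $2d(u,v_b)=2d(u,b)-2\le d(u,a)+d(u,b)$ since $d(u,b)\le d(u,a)+2$; if $d(v_a,v_b)=0$ and $v_a=v_b$ is itself a common neighbor of $a,b$, then $d(u,a)=d(u,b)=d(u,v_a)+1$ and $x:=v_a$ works; and if $d(v_a,v_b)=0$ with $v_a=v_b\in\{a,b\}$, or $d(v_a,v_b)=2$ with $v_u\in\{a,b\}$, then one of $a,b$ lies on a geodesic from $u$ to the other, so any common neighbor $x$ of $a,b$ satisfies $2d(u,x)\le 2(\min(d(u,a),d(u,b))+1)\le d(u,a)+d(u,b)$. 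These are all routine checks. The remaining case is $d(v_a,v_b)=2$, which forces $v_a=a$, $v_b=b$, together with $v_u\notin\{a,b\}$; then $v_u,a,b$ are pairwise distinct and $v_uab$ is a genuine metric triangle with $d(a,b)=2$. Here the hypothesis applies: there is $z\sim a,b$ with $d(v_u,z)=2$ and $d(v_u,a)=d(v_u,b)=2$. Consequently $d(u,a)=d(u,v_u)+d(v_u,a)=d(u,v_u)+2=d(u,b)$ and $d(u,z)\le d(u,v_u)+d(v_u,z)=d(u,v_u)+2$, so $2d(u,z)\le d(u,a)+d(u,b)$ and $x:=z$ works. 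This establishes (QC$^-$).

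The only substantial step is the final case $d(v_a,v_b)=2$, where the hypothesis is actually used; everything else is bookkeeping on the quasi-median distances. The main point to be careful about is dispatching the degenerate configurations (when some $v_u,v_a,v_b$ coincides with $u,a,b$) separately, so that the metric triangle to which the hypothesis is applied is non-degenerate, i.e., has three distinct vertices with all three sides positive.
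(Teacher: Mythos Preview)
Your proof is correct. The paper itself does not prove this lemma; it merely cites \cite{ChChChJa}. Your argument is self-contained and uses only facts stated in the present paper: Lemma~\ref{equilateral} (equilaterality of metric triangles in meshed graphs) for the forward direction, and the existence of quasi-medians (stated in Subsection~5.1) for the converse.

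Two minor remarks on the case analysis in the converse. First, once $d(v_a,v_b)=0$ you actually have $v_u=v_a=v_b$, because a metric triangle with two coinciding vertices collapses entirely; you implicitly use this when writing $d(u,a)=d(u,v_a)+1$, so it is worth stating. Second, the case ``$d(v_a,v_b)=2$ with $v_u\in\{a,b\}$'' is vacuous: since $v_a=a$, $v_b=b$ and $d(v_a,v_b)=2>0$, the three vertices of the metric triangle are distinct, so $v_u\notin\{a,b\}$ automatically. Neither point affects correctness.
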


For a set $A\subset V$ and a vertex $v\notin A$, let $d(v,A)=\min \{ d(v,x): x\in A\}$. The set $\Proj_v(A)$ of all
vertices $x\in A$ such that $d(v,x)=d(v,A)$ is called the \emph{metric projection} of $v$ on $A$. Notice that
for any graph $G$ and any subset $A$ of $G$ and $v\notin A$,  $\Proj_v(A)$  is included in the imprint $\Imp_v(A)$.
The following result shows that for  graphs with equilateral metric triangles (a subclass of graphs
satisfying (TC) and a superclass of meshed graphs), the projections and imprints on convex sets coincide:

\begin{lemma} \label{metric-projection} \cite[Lemma 4]{BaCh_helly} Let $G$ be a graph in which all metric triangles
are equilateral. Then, for every convex set $A$ and any vertex $v$ outside $A,$
$\Proj_v(A)=\Imp_v(A)$. 
\end{lemma}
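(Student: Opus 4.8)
The plan is to establish the two inclusions separately. The inclusion $\Proj_v(A)\subseteq \Imp_v(A)$ holds for an arbitrary graph and an arbitrary set $A$ (and is exactly the observation recorded just before the statement): if $x\in \Proj_v(A)$ and $y\in [v,x]\cap A$ with $y\ne x$, then $d(v,y)<d(v,x)=d(v,A)$, contradicting $y\in A$. So the whole content lies in the reverse inclusion $\Imp_v(A)\subseteq \Proj_v(A)$, and this is where convexity of $A$ and equilaterality of metric triangles must enter.

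For the reverse inclusion I would take $z\in \Imp_v(A)$ and argue by contradiction, assuming $d(v,z)>d(v,A)$; fix $x\in \Proj_v(A)$, so that $d(v,x)=d(v,A)<d(v,z)$. Now pass to a quasi-median $t_1t_2t_3$ of the triplet $v,x,z$ (recall that every triplet has a quasi-median). Writing the quasi-median equalities with $t_1$ near $v$, $t_2$ near $x$, $t_3$ near $z$, we have in particular $t_1\in [v,x]\cap [v,z]$, $t_2\in [x,v]\cap [x,z]$, $t_3\in [z,x]\cap [z,v]$, together with $d(v,z)=d(v,t_1)+d(t_1,t_3)+d(t_3,z)$ and $d(v,x)=d(v,t_1)+d(t_1,t_2)+d(t_2,x)$.

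The key step is to notice that the $z$-corner of this quasi-median collapses onto $z$, i.e.\ $t_3=z$: indeed $t_3\in [x,z]\subseteq A$ by convexity of $A$, while simultaneously $t_3\in [v,z]$, and since $z\in \Imp_v(A)$ means $[v,z]\cap A=\{z\}$, this forces $t_3=z$. Consequently the metric triangle $t_1t_2t_3=t_1t_2z$ is equilateral by hypothesis, so $d(t_1,t_2)=d(t_1,z)$; substituting $t_3=z$ into the quasi-median equalities yields $d(v,z)=d(v,t_1)+d(t_1,z)$ and therefore
$$d(v,x)=d(v,t_1)+d(t_1,t_2)+d(t_2,x)=d(v,t_1)+d(t_1,z)+d(t_2,x)=d(v,z)+d(t_2,x)\ge d(v,z),$$
contradicting $d(v,x)<d(v,z)$. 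Hence $d(v,z)=d(v,A)$, i.e.\ $z\in \Proj_v(A)$, and the proof is complete.

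I expect the only genuine insight needed is to route everything through a quasi-median of $v,x,z$ and to observe that convexity of $A$ together with $z\in\Imp_v(A)$ drags $t_3$ down to $z$; after that, the conclusion is a one-line distance computation powered by the equilaterality assumption. The one point deserving a second glance is the degenerate case in which the quasi-median triangle is a single point (e.g.\ $t_1=t_2=z$), but the displayed inequality $d(v,x)=d(v,z)+d(t_2,x)\ge d(v,z)$ remains valid verbatim there as well.
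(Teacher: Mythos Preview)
Your proof is correct and follows essentially the same route as the paper. The paper picks two arbitrary vertices $x,y\in\Imp_v(A)$, takes $v'$ maximal in $[v,x]\cap[v,y]$, and checks directly (using convexity of $A$ and the imprint condition at both $x$ and $y$) that $v'xy$ is a metric triangle; equilaterality then gives $d(v,x)=d(v,y)$, so all imprint points are equidistant from $v$ and hence lie in $\Proj_v(A)$. Your version is the same computation packaged through the quasi-median machinery: you take one point $x\in\Proj_v(A)$ rather than two imprint points, and you let the quasi-median hand you the metric triangle for free, then collapse $t_3$ to $z$ using the imprint condition at $z$ only. The two arguments differ only cosmetically.
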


\begin{proof} As we noted above, $\Proj_v(A)\subseteq \Imp_v(A)$. To prove the converse inclusion, pick any two vertices $x,y\in \Imp_{v}(A)$.
Let $v'$ be a vertex of $G$ such that $v'\in [v,x]\cap [v,y]$ and $[v',x]\cap [v',y]=\{ v'\}.$ By the choice of $x$
and $y$ from $\Imp_v(A)$ and since $A$ is convex, we conclude that  the three vertices $v',x,y$ form a metric triangle $v'xy$.
This triangle $v'xy$ is equilateral by hypothesis, whence $d(v,x)=d(v,y).$
\end{proof}

\begin{remark} \label{local-to-global-impossible} Analogously to weakly modular graphs, the triangle-square
complexes of meshed graphs are simply connected \cite{CCHO}. Moreover, it is shown in \cite[Theorem 9.1]{CCHO} that meshed graphs satisfy
the quadratic isoperimetric inequality, an important feature of non-positive curvature. However, there is a significant difference between
weakly modular and meshed graphs, noticed in \cite{CCHO}. Namely, in \cite[Theorem 3.1]{CCHO} it is shown that a graph $G$ is weakly modular
if and only if its triangle-square complex is simply connected and $G$ satisfies (TC) and (QC) locally (i.e., with $d(u,v)=d(u,w)\in \{ 2,3\}$ for (TC) and
$d(u,z)=3$ for (QC)). Furthermore, similarly to classical Cartan-Hadamard theorem, if $G$ satisfies the triangle and the quadrangle conditions locally, then the graph of the
universal cover of the triangle-square complex of $G$ is weakly modular. This local-to-global characterization (which is a  feature typical for nonpositive curvature) significantly generalizes previous characterizations
of median graphs and bridged graphs from \cite{Ch_CAT} and of weakly bridged graphs from \cite{ChOs}. In contrast to weakly modular graphs, there is no  local-to-global characterization for meshed graphs
\cite[Subsection 3.4]{CCHO}.
\end{remark}

%
%

\subsection{Local convexity implies convexity}

A connected induced subgraph $H$ of a graph $G$ is called \emph{locally-convex} if $[x,y]\subseteq V(H)$ whenever
$x,y\in V(H)$ and $d(x,y)=2$.

\begin{theorem} \label{lc->c} (Local convexity implies convexity) A connected
induced subgraph $H$ of a meshed graph $G$ is convex if and only if  $H$ is locally-convex.
\end{theorem}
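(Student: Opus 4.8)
The forward implication is immediate and I would dispose of it in one line: if $H$ is convex then $[x,y]\subseteq V(H)$ for all $x,y\in V(H)$, in particular whenever $d(x,y)=2$. For the converse I would argue by contradiction. Assume $H$ is connected and locally-convex but not convex, and choose $x,y\in V(H)$ with $k:=d(x,y)$ \emph{minimum} subject to $[x,y]\nsubseteq V(H)$. Since intervals of equal or adjacent vertices are trivial and $k=2$ is forbidden by local convexity, we get $k\ge 3$; moreover, by minimality, $[a,b]\subseteq V(H)$ for every $a,b\in V(H)$ with $d(a,b)<k$.

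Next I would run two reductions, both purely metric and using only the minimality of $k$. First, since $[x,y]\setminus\{y\}=\bigcup\{[x,b]:b\in N(y)\cap[x,y]\}$ (the penultimate vertex of a geodesic from $x$ through any prescribed vertex to $y$ lies in $N(y)\cap[x,y]$), a bad vertex $z\in[x,y]\setminus V(H)$ lies in some $[x,b_{0}]$ with $b_{0}\in N(y)\cap[x,y]$; as $d(x,b_{0})=k-1<k$, minimality forces $b_{0}\notin V(H)$, so replacing $z$ by $b_{0}$ we may assume $z\sim y$, $z\notin V(H)$ and $d(x,z)=k-1$. Second, one checks that for every $v\in[x,z]$ one has $d(v,y)=k-d(x,v)$ and $z\in[v,y]$; hence a vertex $v\in[x,z]\cap V(H)$ with $v\ne x$ would satisfy $d(v,y)\le k-1<k$, so by minimality $z\in[v,y]\subseteq V(H)$, a contradiction. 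Therefore $[x,z]\cap V(H)=\{x\}$; in particular every neighbour of $x$ lying in $[x,z]$ (there is at least one, since $d(x,z)\ge2$) is outside $V(H)$.

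The main work — where the hypothesis that $G$ is meshed finally enters — is to contradict connectedness. We now have a geodesic $\gamma$ of $G$ from $x$ to $y$ through $z$ whose interior avoids $V(H)$, while connectedness supplies a path $P=(x=u_{0},u_{1},\dots,u_{m}=y)$ inside $V(H)$, which we may take shortest in $H$. I would compare $\gamma$ and $P$ in two ways. If some $G$-geodesic from $x$ to $y$ meets $V(H)$ in an interior vertex, then cutting it at its first and last excursions off $V(H)$ and applying the minimality of $k$ to the strictly shorter end-pieces already shortens the counterexample (using that a single omitted geodesic-vertex between two $H$-vertices at distance $2$ is forbidden by local convexity). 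Otherwise no $G$-geodesic from $x$ to $y$ touches $V(H)$ internally, and I would ``descend'' along $P$ towards $z$: track $d(\cdot,z)$ along $P$ (it runs from $k-1$ at $x$ down to $1$ at $y$), locate the first consecutive pair of $H$-vertices of $P$ realizing distances $k-1$ and $k-2$ to $z$, and apply the triangle condition (TC), available in meshed graphs by Lemma~\ref{equilateral}, together with Lemma~\ref{equilateral_bis}, Lemma~\ref{balls-isometric} and (QC$^{-}$), to that configuration, pushing a copy of the escaping $2$-interval into $V(H)$ and so producing two vertices of $V(H)$ at distance $2$ whose $2$-interval leaves $V(H)$ — contradicting local convexity.

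The main obstacle is precisely this last descent: each step must \emph{strictly decrease} a genuine monovariant rather than merely produce yet another vertex outside $V(H)$ adjacent to $V(H)$. Natural candidates are the distance from the current ``escaping'' vertex to $y$ (equivalently, the length of the shortened geodesic), or its distance to $V(H)$, or the pair $(k,d(x,z))$ ordered lexicographically. Making one of the meshed axioms deliver exactly such a decrease — while keeping the endpoints of the critical $2$-interval inside $V(H)$ so that local convexity can be invoked — is the delicate point, and I expect the metric-triangle lemmas (Lemmas~\ref{equilateral}, \ref{equilateral_bis}, \ref{metric-projection}) and the isometricity of balls (Lemma~\ref{balls-isometric}) to do the real work there.
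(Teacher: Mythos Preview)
Your proposal is not a proof but a strategy, and you say so yourself: the entire fourth paragraph is an admission that the ``descent'' step --- the only place the meshed hypothesis can enter --- is unresolved. Everything up to that point (the forward direction, the reduction to a bad neighbour $z\sim y$ with $[x,z]\cap V(H)=\{x\}$) is fine, but it uses no property of $G$ beyond the triangle inequality, and a connected locally-convex subgraph of an arbitrary graph need not be convex. So the real content of the theorem lies exactly where you stop.

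Two concrete issues. First, your minimality parameter is $d_G(x,y)$, whereas the paper inducts on $d_H(u,v)$ and, among pairs with that $d_H$ fixed, minimises $d_G$. This is not cosmetic: the paper's first move is to take the neighbour $z$ of $u$ on a shortest \emph{$H$-path} to $v$, invoke the induction hypothesis to get $[z,v]\subseteq V(H)$, and then work with the neighbour $t$ of $u$ on a $G$-geodesic through the bad vertex. With only a $d_G$-minimality assumption you cannot conclude anything about the first step along an $H$-path, which is why your third paragraph drifts into vague ``descent along $P$'' language. Second, the actual argument in the paper does not use the metric-triangle lemmas or ball-isometry that you cite; it is a hands-on case analysis applying (TC) and (QC$^{-}$) repeatedly to the configuration $\{u,z,t\}$ and the auxiliary vertices $s,p,r,s'$ they produce, checking adjacencies and distances at each branch and using local convexity of $H$ on explicit $2$-intervals like $[u,p]$, $[u,r']$, $[u,s]$. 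There is no single clean monovariant; the proof terminates because each branch reaches either a shorter counterexample (via the secondary $d_G$ minimisation) or a contradiction with local convexity. Your hoped-for ``strict decrease'' does not materialise as a one-line consequence of any of the lemmas you list.
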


\begin{proof} We will show that $[u,v]\subseteq V(H)$ for any $u,v\in V(H)$
by induction on the distance $k=d_H(u,v)$ in $H$ between $u$ and $v$,  the
case $d_H(u,v)=2$ being covered by local-convexity of $H$. Suppose by way of contradiction that
one can find  $u,v\in V(H)$ with $d_H(u,v)=k\ge 3$  and  a vertex
$w\in [u,v]\setminus V(H).$ Additionally assume that, if there are
several such pairs, then the selected one has the least distance $d_G(u,v)$
in $G.$ Let $P$ be a shortest $(u,v)$-path in $H$ and let $z$ be the neighbor of $u$ in $P$.
Then $d_H(z,v)=k-1$, thus $[z,v]\subseteq V(H)$ by the  induction hypothesis. In particular, this
implies that $d_H(z,v)=d_G(z,v)=k-1$.

If $d_G(u,v)=d_G(z,v),$ then by (TC), $u$ and $z$ have a common neighbor $v'$ in $[z,v]$  one step closer to $v$. Since $[v',v]\subset [z,v]\subseteq H$ and $d_G(v',v)=d_G(z,v)-1=k-2$,
the vertices $u$ and $v$ can be connected in $H$ by a path of length $k-1$,  contrary to the choice
of $P$ as a shortest $(u,v)$-path in $H$.  On the other hand, if $d_G(z,v)>d_G(u,v)$, then $w\in [u,v]\subset [z,v]\subset V(H)$,
which is impossible by the assumption that $w\in [u,v]\setminus V(H)$. Thus $d_G(u,v)=d_G(z,v)+1=d_H(u,v)=k.$

Let $t$ be a neighbor of $u$
on a shortest $(u,v)$-path of $G$ passing via $w$. If $t\sim z,$ by (TC) $t$ and $z$
have  a common neighbor $q\in [z,v]\subset V(H)$ one step closer to $v$. Since $u\nsim
q,$ the local-convexity of $H$ implies that $t\in [q,u]\subset H$. Furthermore, $d_H(t,v)=d_G(t,v)=k-1$, thus by the choice of the pair $u,v$
we conclude that $[t,v]\subset V(H)$, which is impossible since $w\in [t,v]\setminus V(H).$
Consequently, the vertices $t$ and $z$ are not adjacent. Since $d_G(z,v)=d_G(t,v)=k-1$ and $d_G(z,t)=2$, by (QC$^-$) there exists $s\sim t,z$
having distance $k-1$ or $k-2$ to $v$. If $d_G(s,v)=k-2=d_G(z,v)-1$, then $s\in [z,v]\subset V(H)$.
By local-convexity of $H$ we conclude that $t\in [s,u]\subset V(H)$. Since $d_H(t,v)=d_G(t,v)=d_G(u,v)-1=d_H(u,v)-1=k-1$
by the induction hypothesis, $[t,v]\subset V(H)$, which is impossible because $w\notin V(H)$. This shows that $d_G(s,v)=d_G(z,v)=d_G(t,v)=k-1$.
By (TC), there exists a vertex $p\sim z,s$ having distance $k-2$ from $v$. Since $p\in [z,v]$, the vertex $p$ belongs to $H$. If $p\sim t$, then
$t\in [u,p]\subset V(H)$ by local-convexity of $H$. Since $d_H(t,v)=d_G(t,v)=k-1$, we conclude that $[t,v]\subset V(H)$. Since $w\in [t,v]\setminus V(H)$,
we obtain a contradiction. Thus $t\nsim p$.  Applying (TC) once again to $s,t$ and $v$,  there exists a vertex $r\sim s,t$ having distance $k-2$ from $v$.

First, suppose that $u\sim s$.
Then $s\in [u,p]\subset V(H)$ by local-convexity of $H$. Since $s$ belongs to $[u,v]$ in $G$ and in $H$, by the minimality choice of the pair
$u,v$ we conclude that $[s,v]\subset V(H)$. Since $r\in [s,v]$, $r$ belongs to $H$.  Since $u\nsim r,p$, by local-convexity of $H$, $t\in [u,r]\subset V(H)$. Since
$d_H(t,v)=d_G(t,v)=k-1$, the pair $t,v$ satisfies the induction hypothesis. Hence $[t,v]\subset V(H)$, which is impossible because $w\in [t,v]$ and $w\notin V(H)$.
Consequently, $u\nsim s$.

By local-convexity of $H$ applied to the inclusions $t\in [u,s]$ and $s\in [t,z]$, we conclude that
$t$ belongs to $H$ if and only if $s$ belongs to $H$.  First suppose that the vertices $s$ and $t$ belong to $H$.
Then $d_H(s,v)=d_H(z,v)<k$, thus the pair $s,v$ satisfies the induction hypothesis, yielding $[s,v]\subset V(H)$.
This implies that $d_H(t,v)\le d_H(u,v)$. Since $d_G(t,v)=d_G(u,v)-1$, from the choice of the pair $u,v$ minimizing $d_H(u,v)$ and $d_G(u,v)$, we conclude that the pair $t,v$ satisfies the induction hypothesis and thus
$[t,v]\subset V(H)$.  Since $w\in [t,v]\setminus V(H)$, this is impossible.

Finally, suppose that $s$ and $t$ do not belong to $H$.
Since 
$t$ is not adjacent to $p$ and $s$ is adjacent to $t,p,z$, we have  $d_G(p,t)=d_G(p,u)=2$. By (TC),
there exists a vertex $s'$ adjacent to  $t,u,$ and $p$. By local-convexity of $H$, $s'\in [u,p]\subset V(H)$. Since $s'$ is adjacent to $p$ and $[p,v]\subset [z,v]\subset V(H)$,
$d_H(s',v)=k-1$. By induction hypothesis, $[s',v]\subset V(H)$ and $d_G(s',v)=k-1$.  Since $d_G(s',v)=d_G(t,v)$, by (TC) there exists a common neighbor $r'$ of $t$ and $s'$
belonging to $[s',v]\subset V(H)$ and one step closer to $v$. Since $d_G(r',v)=k-2$, $u$ is not adjacent to $r'$. Consequently, $t\in [u,r']$ and therefore $t$ belongs to $V(H)$ by the local convexity of $H$.
This contradicts our assumption that $t\notin V(H)$. This final contradiction shows that the locally-convex subgraph $H$ must be convex.
\end{proof}

In the important case of median graphs (graphs in which triplet of vertices has a unique median), the proof of the Theorem \ref{lc->c} is much shorter:

\begin{proposition} \label{lc->c-median}  A connected induced subgraph $H$ of a median graph $G$ is convex if and only if  $H$ is locally-convex.
Equivalently, $H$ is convex if and only if the intersection of $H$ with any square of $G$ is  empty, a vertex, an edge, or the square itself.
\end{proposition}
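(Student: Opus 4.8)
The plan is to exploit that median graphs are bipartite weakly modular, hence meshed, so that the equivalence ``$H$ convex $\iff$ $H$ locally-convex'' is already contained in Theorem~\ref{lc->c}; the proposition asks for a much shorter direct argument and for the reformulation in terms of squares. I would first dispose of the easy implications: a convex set is locally-convex, and if $H$ is convex (a fortiori if $H$ is locally-convex) then $H$ cannot meet a square $Q$ in exactly two opposite vertices nor in exactly three vertices --- in either case $H$ would contain two opposite vertices $x,y$ of $Q$, these satisfy $d(x,y)=2$, and the remaining vertices of $Q$, being common neighbours of $x$ and $y$, lie in $[x,y]\subseteq V(H)$; so $H\cap Q\in\{\varnothing,\text{a vertex},\text{an edge},Q\}$. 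The substance is the converse: the square-intersection condition implies convexity.

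Here I would use two facts about a median graph $G$. First, any two vertices at distance $2$ have at most two common neighbours: three of them would be pairwise non-adjacent by bipartiteness, so mutually at distance $2$, and then both $x$ and $y$ would be medians of that triple, contradicting uniqueness of medians; hence for $d(x,y)=2$ the interval $[x,y]$ consists of $x$, $y$, and at most two common neighbours, i.e.\ is a path of length $2$ or a square. Second, it is well known that any two $G$-geodesics with the same endpoints are linked by a sequence of \emph{square flips}, replacing a subpath $\cdots a\,b\,c\cdots$ by $\cdots a\,b'\,c\cdots$ with $\{a,b,c,b'\}$ a square. Under the square-intersection condition these give the key reduction: \emph{if some $G$-geodesic between $x,y\in H$ lies in $H$, then $[x,y]\subseteq V(H)$}; indeed every vertex of $[x,y]$ lies on an $x$--$y$ geodesic, and each flip along the way replaces a path contained in $H$ by one whose only new vertex is the fourth vertex of a square three of whose vertices are already in $H$, hence lies in $H$ by the square-intersection condition.

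By this reduction it suffices to show $H$ is isometric in $G$, which I would prove by induction on $d_H(u,v)$. Suppose not, and choose $u,v\in H$ with $d_H(u,v)>d_G(u,v)$ and $s:=d_H(u,v)$ minimal; let $P=(u=q_0,\dots,q_s=v)$ be a shortest $u$--$v$ path in $H$, necessarily not a $G$-geodesic, and let $m$ be the smallest value of $j-i$ over pairs $i<j$ with $d_G(q_i,q_j)<j-i$, realised by some $(i,j)$. Bipartiteness forces $m\ge 3$ and $d_G(q_i,q_j)=m-2$. If $m=3$, then $q_i\sim q_j$ in $G$, hence in $H$ since $H$ is induced, shortcutting $P$ --- impossible. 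If $4\le m<s$, then $(q_i,q_j)$ is non-isometric with $d_H(q_i,q_j)=m<s$, against minimality. In the remaining case $m=s$ (so $(i,j)=(0,s)$ and every proper subpath of $P$ is geodesic), one has $d_G(q_0,q_2)=2$, $d_G(q_0,v)=d_G(q_2,v)=s-2$ and $d_G(q_1,v)=s-1$, so the quadrangle condition (QC), valid since $G$ is weakly modular, together with uniqueness of medians yields a vertex $b\sim q_0,q_2$ with $d_G(b,v)=s-3$; then $\{q_0,q_1,q_2,b\}$ is a square with $q_0,q_1,q_2\in V(H)$, so $b\in V(H)$, and since $b\in[u,v]$ the path $(q_0,b,q_2,\dots,q_s)$ lies in $H$ and gives $d_H(b,v)\le s-1$. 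Thus either $(b,v)$ is non-isometric with $d_H<s$, or $d_H(u,v)\le 1+d_H(b,v)\le s-2$ --- a contradiction either way. Hence $H$ is isometric, so convex; together with the easy directions this proves all three conditions equivalent, and the proposition's phrasing ``$H$ meets every square of $G$ in $\varnothing$, a vertex, an edge, or the square'' is exactly the square-intersection condition. I expect the isometry step to be the main obstacle --- specifically the boundary case $m=s$, where the point is to turn ``three consecutive vertices of $P$ lie in $H$'' into an honest square flip via (QC) and the uniqueness of medians and then verify the new configuration is strictly closer; the other ingredients (no $K_{2,3}$, flip-connectivity of geodesics) are routine for median graphs.
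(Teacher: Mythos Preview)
Your argument is correct and it handles both equivalences in the statement, but it takes a genuinely different route from the paper. You split convexity into two pieces: (a) isometry of $H$ in $G$, proved by your minimal-$m$ induction on a shortest $H$-path, and (b) the implication ``one $G$-geodesic from $x$ to $y$ lies in $H$ $\Rightarrow$ $[x,y]\subseteq V(H)$'', which you dispatch by invoking flip-connectivity of geodesics in median graphs as a black box. The paper instead runs a single induction on $k=d_H(u,v)$ that proves $[u,v]\subseteq V(H)$ directly, with no appeal to flip-connectivity: take the neighbour $z$ of $u$ on a shortest $H$-path, apply the induction hypothesis to get $[z,v]\subseteq V(H)$ (this already yields $d_G(z,v)=k-1$ and hence $d_G(u,v)=k$ by bipartiteness), then for a neighbour $t$ of $u$ on a $G$-geodesic through the offending vertex $w$ use (QC) to produce $s\sim t,z$ with $d_G(s,v)=k-2$; since $s\in[z,v]\subseteq V(H)$ and $t\in[u,s]$ with $d_G(u,s)=2$, local-convexity gives $t\in V(H)$, and then $d_H(t,v)=k-1$ lets the induction hypothesis capture $w\in[t,v]\subseteq V(H)$.

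Your (QC) step in the boundary case $m=s$ is essentially the same manoeuvre as the paper's, applied at $q_0,q_1,q_2$ instead of at $u,t,z$; the mention of uniqueness of medians there is harmless but unnecessary, since (QC) alone furnishes $b$. What your approach buys is a clean separation of concerns and an explicit treatment of the square-intersection reformulation (proving the cycle convex $\Rightarrow$ locally-convex $\Rightarrow$ square-condition $\Rightarrow$ convex), which the paper leaves implicit. What the paper's approach buys is brevity and self-containment: it needs no external fact about flip-connectivity, and it avoids your $m$-case analysis by leveraging the induction hypothesis on $(z,v)$ to flood $H$ with vertices of $[z,v]$ before locating the square.
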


\begin{proof} We will show that $[u,v]\subseteq V(H)$ for any $u,v\in V(H)$
by induction on the distance $k=d_H(u,v)$ between $u$ and $v$ in $H$,  the
case $d_H(u,v)=2$ being covered by local-convexity of $H$. Suppose by way of contradiction that
one can find  $u,v\in V(H)$ with $d_H(u,v)=k\ge 3$  and  a vertex
$w\in [u,v]\setminus V(H).$  Let $P$ be a shortest $(u,v)$-path in $H$ and let $z$ be the neighbor of $u$ in $P$.
Then $d_H(z,v)=k-1$, thus $[z,v]\subseteq V(H)$ by the  induction hypothesis. In particular, this
implies that $d_H(z,v)=d_G(z,v)=k-1$ and thus $d_G(u,v)\le k$. Since $G$ is bipartite, either $d_G(u,v)=k$ or $d_G(u,v)=k-2$.
But if $d_G(u,v)=k-2$, then $w\in [u,v]\subset [z,v]\subset V(H)$,
which is impossible. Thus $d_G(u,v)=k.$ Let $t$ be a neighbor of $u$
on a shortest $(u,v)$-path of $G$ passing via the vertex $w$. By (QC), there exists $s$ adjacent to $t$ and $z$
at distance $k-2$ from $v$. Since $s\in [z,v]\subseteq H$ and $t\in [u,v]$, by local-convexity of $H$ we conclude that $t\in H$. Since $d_H(t,v)=d_H(s,v)+1=k-1$,
by induction hypothesis $[t,v]\subseteq H$. Since $w\in [t,v]$, this leads to a contradiction.
\end{proof}

\begin{remark}
Theorem \ref{lc->c} is a discrete analog of classical theorem in Euclidean convexity of Tietze \cite{Ti} and Nakajima \cite{Na} from 1928:
if a subset $A$ of ${\mathbb R}^n$  is closed, connected, and locally convex (i.e., each point $x\in A$ has a open neighborhood $N(x)$ such that $N(x)\cap A$ is convex), then $A$ is convex.
Theorem \ref{lc->c} is a generalization of an analogous result for weakly modular graphs established in \cite[Theorem 7(a)]{Ch_metric}.
The proof  for weakly modular graphs is simpler but runs along the same principles. Both results generalize Proposition \ref{lc->c-median}. That local-convexity implies convexity in basis graphs of matroids and of even $\Delta$-matroids
was established in \cite{Ch_delta}. A similar local-to-global characterization of convexity in ample partial cubes (generalizing median graphs) was established in \cite[Lemma 10]{CCMW}.
Finally, the paper \cite{StHoWr} consider the learning problem  of locally convex sets in metric spaces.
\end{remark}

\begin{remark}
Recently, Sakai and Sakuma \cite{SaSa} presented a proof of a local-to-global characterization of convex subcomplexes in CAT(0) cube complexes. They cast this result as well-known among experts but whose full
proof is difficult to find. 
Namely, they call a subcomplex $W$ of a CAT(0) cube complex $X$ \emph{combinatorially locally convex} if for any vertex $w$ of $W$ its link $\Lk(w,W)$ in $W$ is a full subcomplex of its link $\Lk(w,X)$ in $X$
(recall that a subcomplex $K$ of a simplicial complex $L$ is \emph{full} if any simplex of
$L$ whose vertices are in $K$ is in fact entirely contained in $K$).   Sakai and Sakuma \cite{SaSa} proved that a connected subcomplex $W$ of a  finite dimensional CAT(0) cube complex $X$ is convex (with respect to
its intrinsic $\ell_2$-metric)  if and only if $W$ is combinatorially locally convex in $X$. Proposition \ref{lc->c-median} paves an alternative proof of this result. Namely, the fact that $W$ is connected and combinatorially locally convex implies that
the 1-skeleton of $W$ is a connected locally-convex subgraph of the 1-skeleton $G(X)$ of $X$. By \cite[Theorem 6.1]{Ch_CAT}, $G(X)$ is a median graph, thus by Proposition \ref{lc->c-median} the 1-skeleton  $G(W)$ of $W$ is a convex subgraph of $G(X)$.
By a result of Sageev \cite[Theorem 4.13]{Sa}, each hyperplane of $X$ is convex (with respect to the geodesic $\ell_2$-metric). Since each convex subgraph of a median graph is an intersection of halfspaces (because median graphs are $S_3$-graphs, see Lemma \ref{copoints-bipartite}) and each halfspace is bounded by a convex hyperplane, the halfspaces are convex, and thus $W$ is a convex subcomplex of $X$ as the intersection of convex halfspaces. Finally, a similar in spirit local-to-global characterization of convex subgraphs of hypercellular graphs
(which generalize median graphs) was given in  \cite[Proposition 9]{ChKnMa}.
\end{remark}

\subsection{$\Delta$-closedness implies gatedness}
A connected induced subgraph $H$ of a graph $G$ is called \emph{$\Delta$-closed} if $z$ belongs to $H$ whenever $z$ is adjacent to two distinct vertices $x,y$ of $H$. Clearly,
each $\Delta$-closed subgraph is locally-convex. The following result generalizes a similar result \cite[Theorem 7(b)]{Ch_metric} for weakly modular graphs.

\begin{proposition} \label{deltacl->gated} ($\Delta$-closedness implies gatedness) A connected
induced subgraph $H$ of a meshed graph $G$ is gated if and only if  $H$ is $\Delta$-closed.
\end{proposition}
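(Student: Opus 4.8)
The plan is to treat the two implications separately; the forward direction is immediate and the substance lies in the converse. For \emph{gated $\Rightarrow$ $\Delta$-closed}, I would take a vertex $z$ adjacent to two distinct vertices $x,y\in V(H)$ and look at its gate $z'$ in $H$: since $z'\in[z,x]=\{z,x\}$ and $z'\in[z,y]=\{z,y\}$, if $z\notin V(H)$ then $z'=x=y$, a contradiction, so $z\in V(H)$.

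For \emph{$\Delta$-closed $\Rightarrow$ gated}, the first step is to observe that a $\Delta$-closed subgraph is locally convex, hence $H$ is convex by Theorem~\ref{lc->c}. Now fix $v\notin V(H)$; I must exhibit a gate of $v$ in $H$. Since $G$ is meshed, all its metric triangles are equilateral (Lemma~\ref{equilateral}), so Lemma~\ref{metric-projection} identifies the metric projection $\Proj_v(H)$ with the imprint $\Imp_v(H)$. The crux is the claim that $\Imp_v(H)$ consists of a single vertex $v'$. Granting this, gatedness comes cheaply: for any $y\in V(H)$, a vertex $c$ of $[v,y]\cap V(H)$ nearest to $v$ satisfies $[v,c]\cap V(H)=\{c\}$ (because $[v,c]\subseteq[v,y]$ and $c$ is nearest), so $c\in\Imp_v(H)=\{v'\}$ and hence $v'=c\in[v,y]$; as $y$ was arbitrary, $v'$ is the gate.

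To establish that $\Imp_v(H)$ is a singleton (it is nonempty, since any nearest vertex of $H$ to $v$ lies in it), I would argue by contradiction. Among all pairs of distinct vertices of $\Imp_v(H)$ choose $x,y$ with $d(x,y)$ minimal, and let $v'$ be the vertex of $[v,x]\cap[v,y]$ farthest from $v$; then $[v',x]\cap[v',y]=\{v'\}$, and using $x,y\in\Imp_v(H)$ together with $[x,y]\subseteq V(H)$ (convexity) one checks that $v'xy$ is a metric triangle. Meshedness forces it to be equilateral of some size $d\ge1$, and equilaterality gives $d(x,y)=d$. If $d=1$, then $v'$ is adjacent to the two distinct vertices $x,y\in V(H)$, so $v'\in V(H)$ by $\Delta$-closedness — impossible, since $v'\in[v,x]$ and $d(v,x)=d(v,H)$ force $d(v,v')=d(v,H)-1$. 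If $d\ge2$, I would apply Lemma~\ref{equilateral_bis} to the triangle $v'xy$ to get a shortest $(x,y)$-path inside $[x,y]\subseteq V(H)$ along which $v'$ stays at distance $d$; its vertex $x_1$ adjacent to $x$ then lies in $V(H)$, has $d(v,x_1)\le d(v,v')+d(v',x_1)=d(v,H)$, hence $x_1\in\Proj_v(H)=\Imp_v(H)$, yet $d(x_1,y)=d-1<d(x,y)$, contradicting minimality.

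The step I expect to be the main obstacle is precisely this singleton claim — specifically, carefully verifying that $v'xy$ is a metric triangle (the three interval-intersection conditions) and organizing the descent on $d(x,y)$. The remaining parts are bookkeeping, and the equilateral-metric-triangle results of Section~\ref{s:meshed} carry most of the weight.
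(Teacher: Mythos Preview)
Your proposal is correct and follows essentially the same route as the paper: both deduce convexity of $H$ from Theorem~\ref{lc->c}, build an equilateral metric triangle involving a vertex outside $H$ and two vertices of $H$, and invoke Lemma~\ref{equilateral_bis} to locate a neighbor in $H$ that forces the $\Delta$-closedness contradiction. The only cosmetic difference is that the paper picks a single closest vertex and applies (TC) once to produce the offending triangle, whereas you organize the same idea as a descent on $d(x,y)$ inside $\Imp_v(H)$ until the size-$1$ case is reached.
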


\begin{proof} Clearly, each gated subgraph is $\Delta$-closed. Conversely, let $H$ be a connected $\Delta$-closed subgraph of $G$. Then $H$ is locally-convex. By Theorem \ref{lc->c}, $H$ is a convex subgraph of $G$.
Let $u$ be an arbitrary vertex of $G$ not belonging to $H$. Let $x$ be a closest to $u$ vertex of $H$. Suppose by way of contradiction that $x$ is not the gate of $u$ in $H$. Then $H$ contains a vertex $v$ such that $x\notin [u,v]$.
Let $u'v'x'$ be a quasi-median of $u,v,w$. From the choice of $x$ we conclude that $x'=x$. Since $H$ is convex, $v'\in [x,v]\subseteq H$. By Lemma \ref{equilateral}, $u'v'x$ is an equilateral metric triangle. Since $x\notin [u,v]$, $u'v'x$ is a metric triangle of size $k>0$.  From the choice of $x$ we conclude that $u'$ does not belong to $H$. By Lemma \ref{equilateral_bis}, there exists a neighbor $y\in [x,v']\subseteq H$ of $x$ at distance $k$ from $u'$. By (TC), there exists a common neighbor $z$ of $x$ and $y$ at distance $k-1$ from $u'$. Since $d(u,z)\le d(u,u')+d(u,z)=d(u,x)-1$, the choice of $x$ implies that $z$ does not belong to $H$. Since $x,y\in H$, this contradicts the fact that $H$ is $\Delta$-closed.
\end{proof}


\subsection{Fiber-complemented meshed graphs}
A classical result about median graphs (which is now a kind of folklore) is a result of Isbell \cite{Is} that any finite median graph can be obtained
from hypercubes ((Cartesian products of edged)) by a sequence of gated amalgamations. Analogously,
any quasi-median graph can be obtained by gated amalgamations from Hamming graphs \cite{BaMuWi}.  A similar decomposition result was obtained in \cite{BaCh_wmg} for weakly median graphs.
Generalizing this approach, Chastand \cite{Cha1,Cha2} presented a general framework
for which this kind of decomposition theorems hold.
A gated subset $H$ of a graph $G$ gives rise to a partition $F_{a}$
$(a\in V(H))$ of the vertex-set of $G;$ viz., the {\em fiber} $F_{a}$ of
$a$ relative to $H$ consists of all vertices $x$ (including $a$
itself) having $a$ as their gate in $S$.  According to Chastand
\cite{Cha1,Cha2}, a graph $G$ is called {\it fiber-complemented} if
for any gated set $H$ all fibers $F_{a}$ $(a\in H)$ are gated sets of
$G$. A graph $G$ is said to be {\it elementary} if the only proper gated
subgraphs of $G$ are the singletons.  A graph $G$ with at least two vertices
is said to be \emph{prime} if it is neither a Cartesian product nor a
gated amalgamation of smaller graphs. The prime gated subgraphs of a graph
$G$ are called the {\it primes} of $G$.  We continue with the main result
of Chastand about fiber-complemented graphs:

\begin{theorem} \cite{Cha1,Cha2} \label{fiber-compemented-Chastand} A graph $G$ is a
fiber-complemented graph if and only if $G$ can be obtained from Cartesian products of elementary graphs by a sequence
of gated amalgamations. Any fiber-complemented graph $G$ embeds isometrically into the Cartesian
product of its elementary graphs. 
\end{theorem}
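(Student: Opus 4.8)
The plan is to prove the two assertions together, since the isometric embedding will drop out of the decomposition. The overall strategy rests on three ingredients: (a) fiber-complementedness is preserved under Cartesian products and under gated amalgamations; (b) a \emph{prime} fiber-complemented graph is elementary; and (c) every connected graph admits a (possibly transfinite) factorization into its primes by iterated Cartesian products and gated amalgamations. Granting (a)--(c), the ``if'' direction is immediate from (a), while for the ``only if'' direction one takes the prime factorization of a fiber-complemented $G$ supplied by (c), notes via (a) that each prime occurring in it is again fiber-complemented, hence elementary by (b), so that $G$ is assembled from elementary graphs exactly as claimed.

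For ingredient (a), the Cartesian-product case uses the classical fact that a gated subgraph $H$ of $\prod_{i\in\Lambda} G_i$ is a subproduct $H=\prod_{i\in\Lambda}H_i$ with each $H_i$ gated in $G_i$; if all $G_i$ are elementary this forces each $H_i$ to be a singleton or all of $G_i$, and a direct computation of the gate map shows that the fiber of $b\in H$ is $\{x : x_i=b_i \text{ for every } i \text{ with } H_i=G_i\}$, again a gated subproduct. For a gated amalgamation $G=G_1\cup G_2$ with $G_0=G_1\cap G_2$ gated, one uses that a gated subgraph $H$ of $G$ meets $G_1,G_2$ in gated subgraphs $H\cap G_1,H\cap G_2$, that the gate maps onto $G_0$, $H\cap G_1$, $H\cap G_2$ are mutually compatible, and then checks by a short case analysis (according to whether a vertex of $G$ lies in $G_1$ or $G_2$ and where its gate in $H$ sits relative to $G_0$) that the fibers of $H$ in $G$ restrict to fibers of $H\cap G_1$ in $G_1$ and of $H\cap G_2$ in $G_2$; fiber-complementedness of $G_1,G_2$ then gives gatedness of those pieces, and a subgraph meeting each $G_i$ in a gated subgraph coherently across $G_0$ is gated in the amalgam.

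Ingredient (b) is where I expect the real work to be. Let $P$ be prime and fiber-complemented, and suppose for contradiction that $P$ has a proper gated subgraph $H$ that is not a single vertex; choose such an $H$ suitably (e.g.\ maximal proper, or minimal non-trivial). The fiber partition $\{F_a\}_{a\in V(H)}$ consists of gated subgraphs of $P$ by hypothesis, the gate retraction $g_H:V(P)\to V(H)$ has these as its point-preimages, and each $F_a$ is a gated ``copy'' of the part of $P$ lying above $a$. One then shows that the way the $F_a$ are glued along edges of $P$ forces $P$ to be either a gated amalgam of two proper gated subgraphs (when the fibers are not mutually parallel) or a Cartesian product $H\product F_{a_0}$ for a base vertex $a_0$ (when they are): the would-be factors are produced as suitable unions of fibers, their gatedness is verified using convexity and the gate maps, and the product/amalgam axioms are checked. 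Either outcome contradicts primality, so $P$ is elementary. The main obstacle is precisely this dichotomy — extracting a global product or amalgam structure from the local behaviour of the gated fibers over $H$.

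Ingredient (c) is the general prime factorization with respect to Cartesian product and gated amalgamation: a connected graph is prime or properly decomposes, and iterating — transfinitely, with weak Cartesian products when infinitely many factors arise — stabilizes on the primes, which coincide with the prime gated subgraphs. For the isometric embedding, for each prime gated subgraph $P$ of $G$ the gate retraction $g_P:V(G)\to V(P)$ is non-expansive and fixes $P$ pointwise; the product map $x\mapsto (g_P(x))_P$ into $\prod_P P$ is injective, because two adjacent vertices are separated by the prime carrying their edge, and isometric, because along any geodesic of $G$ each edge projects to an edge in exactly one prime and to a trivial step in all others, so $d_G(x,y)=\sum_P d_P(g_P(x),g_P(y))$; this additivity is exactly what the product/amalgam assembly from (a) and (c) guarantees. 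Combining (a)--(c) with this last observation yields both statements of the theorem.
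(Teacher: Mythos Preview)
The paper does not prove this theorem: it is quoted from Chastand \cite{Cha1,Cha2} as a background result, with no proof given. There is therefore no in-paper argument to compare your proposal against.

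That said, your outline tracks the shape of Chastand's original proof. Two places deserve care. First, your ingredient (b) --- that a prime fiber-complemented graph is elementary --- is the crux, and the ``dichotomy'' you describe (fibers either force a Cartesian product or a gated amalgam) is correct in spirit but is the non-trivial part of Chastand's work; in particular, one needs that the gate maps between fibers $F_a\to F_b$ for adjacent $a,b\in H$ are isomorphisms and compose coherently, which is where fiber-complementedness is actually used. Your sketch acknowledges this obstacle but does not resolve it. Second, your additivity formula $d_G(x,y)=\sum_P d_P(g_P(x),g_P(y))$ for the isometric embedding is not literally correct: distinct primes of $G$ need not be transversal in that sense (two primes can overlap in more than a vertex when they sit inside a common gated amalgam piece). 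The isometry into the weak Cartesian product is instead obtained inductively along the amalgam/product decomposition, using that gated amalgams and Cartesian products each embed isometrically into the product of their pieces.
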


A graph $G$ is called {\it pre-median} \cite{Cha1,Cha2} ({\it pm-graph}, for short)
if $G$ is a weakly modular graph without induced $K_{2,3}$ and $W^-_4$ (the first two graphs from Fig.~\ref{non-S3}).
Here are the main properties of
pre-median graphs:

\begin{theorem} \label{pre-median Chastand} \cite{Cha1,Cha2} For a pre-median graph $G$, the following properties hold:
\begin{itemize}
\item[(i)] $G$ is elementary if and only if $G$ is prime;
\item[(ii)] $G$ is fiber-complemented;
\item[(iii)] $G$ is isometrically embeddable in a weak Cartesian product of its primes;
\item[(iv)] if $G$ is finite, then $G$ can be obtained by gated amalgamations from Cartesian products of
its prime subgraphs.
\end{itemize}
\end{theorem}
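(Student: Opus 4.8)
The plan is to prove (ii) first — that every pre-median graph is fiber-complemented — and then to read (i), (iii), and (iv) off Chastand's Theorem~\ref{fiber-compemented-Chastand}; the core of the argument is (ii). Since a pre-median graph is weakly modular, hence meshed, Proposition~\ref{deltacl->gated} tells us that a connected induced subgraph is gated if and only if it is $\Delta$-closed. So, given a gated subgraph $H$ of $G$ with induced fiber partition $\{F_a : a\in V(H)\}$, it suffices to show that every fiber $F_a$ is (a) connected and (b) $\Delta$-closed.

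For (a), one inducts on $d(u,v)$ for $u,v\in F_a$: taking a neighbour $w$ of $u$ on a shortest $(u,v)$-path and using the triangle and quadrangle conditions together with the gate relations $d(u,h)=d(u,a)+d(a,h)$ and $d(v,h)=d(v,a)+d(a,h)$ valid for all $h\in V(H)$, one checks that $w$ also has gate $a$, so a shortest $(u,v)$-path can be kept inside $F_a$; this needs only weak modularity. For (b), suppose a vertex $z$ is adjacent to two distinct vertices $x,y\in F_a$ while $z\in F_b$ with $b\ne a$. The gate relations give $d(z,a)=d(z,b)+d(b,a)$ together with $d(x,b)=d(x,a)+d(a,b)$ and $d(y,b)=d(y,a)+d(a,b)$; combining these with $z\sim x$, $z\sim y$ and $x,y\in F_a$ forces $a\sim b$ and confines the whole configuration to a small ball around $a$. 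A short case analysis — according to whether $x\sim y$ or $d(x,y)=2$, and applying (TC) or (QC) to manufacture common neighbours — then exhibits, in each branch, an induced $K_{2,3}$ or $W^-_4$ (or an induced $C_4$ whose two possible completions are both forbidden), contradicting pre-medianity. This case analysis, i.e.\ pinning down the forbidden subgraph in every branch, is the main obstacle, and it is precisely the place where the hypothesis that $G$ has no induced $K_{2,3}$ and no induced $W^-_4$ is used.

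Granting (ii), the remaining parts are essentially formal. For (i): if $G$ is a nontrivial Cartesian product of two graphs, one factor (with the coordinate in the other factor held fixed) sits in $G$ as a proper gated subgraph that is not a singleton, and the same holds for the two pieces of a gated amalgamation of smaller graphs, so an elementary graph is prime. Conversely, if $G$ is not elementary, choose a proper gated subgraph $H$ with $|V(H)|\ge 2$; using fiber-complementedness one shows, following Chastand, that either the fibers of $H$ match up so that $G$ becomes the Cartesian product of $H$ with a complementary gated factor, or else some pair of fibers fails to match and $G$ splits as a gated amalgamation along the union of two fibers — in either case $G$ is not prime. Hence ``prime'' and ``elementary'' are equivalent; and since gated subgraphs of pre-median graphs are again pre-median, the prime subgraphs and the elementary subgraphs of $G$ coincide. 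Part (iii) is then the second sentence of Theorem~\ref{fiber-compemented-Chastand} with ``elementary'' rewritten as ``prime'', the product becoming a \emph{weak} Cartesian product in the infinite case because an infinite Cartesian product of nontrivial graphs is disconnected; part (iv) is the first sentence of the same theorem restricted to finite $G$, again with ``elementary'' replaced by ``prime''. I would finally note that this whole scheme survives relaxing ``weakly modular'' to ``meshed'' in the definition of pre-median: the two tools used above, Proposition~\ref{deltacl->gated} and Theorem~\ref{lc->c}, already hold for meshed graphs, and (QC$^-$) can be substituted for (QC) throughout the fiber arguments.
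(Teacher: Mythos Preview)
The paper does not prove Theorem~\ref{pre-median Chastand} at all: it is quoted from Chastand \cite{Cha1,Cha2} and stated without proof. What the paper does prove is the meshed generalization, Theorem~\ref{meshed-pre-median}, and your proposal is essentially that proof specialized back to the weakly modular setting, so your overall strategy is correct and matches the paper's own route to the generalization.

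Two remarks on the details. First, your connectedness argument for fibers is more elaborate than necessary: the paper simply observes that if $x\in F_a$ then $[a,x]\subseteq F_a$ (an immediate consequence of the gate equation $d(z,h)=d(z,a)+d(a,h)$ for every $z\in[a,x]$ and $h\in H$), so every fiber is star-shaped about $a$ and hence connected; there is no need to prove that shortest $(u,v)$-paths stay in $F_a$. Second, for $\Delta$-closedness the paper's argument is organized around a minimality assumption on $k=d(a,x)=d(a,y)=d(b,z)$ (after first recording, as Claim~\ref{fibers-claim}, that adjacency across fibers forces the gates to be adjacent and the depths equal); this minimality is what makes the ``short case analysis'' genuinely short, since a branch that does not immediately produce a forbidden $K_{2,3}$ or $W_4^-$ instead yields a smaller counterexample. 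Your sketch does not mention this reduction, and without it the case analysis you announce is not obviously finite. Finally, for (i) the paper does not carry out the argument you outline; it simply defers to the amalgamation theorem of \cite{BaCh_wmg} (or to Chastand's own proof).
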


A {\it prime pre-median graph} (a {\it ppm-graph} for short) is a pre-median graph which is a prime graph.
The unique prime median graph is $K_2$, the prime quasi-median graphs are the $K_n, n\ge 2$, and the prime weakly median graphs are the 5-wheel $W_5$, the
octahedra $O_d$ and their 2-connected subgraphs, and 2-connected $K_4$-free plane bridged triangulations \cite{BaCh_wmg}. The (weakly) bridged graphs are precisely the
primes of bucolic graphs \cite{BrChChGoOs}, a subclass of weakly modular graphs which is a common generalisation of median graphs and bridged graphs.  Notice that the prime
pre-median graphs are \emph{irreducible graphs} sensu \cite{GrWi},  i.e., in any isometric embedding into the Cartesian product of graphs, they appear as
isometric subgraphs of a factor.  In our $S_3$-setting, since the class of $S_3$-graphs is closed by taking Cartesian products and gated amalgamation \cite{BaChvdV}, Theorem \ref{pre-median Chastand}
implies that in order to characterize weakly modular $S_3$-graphs, it is sufficient to characterize prime $S_3$-graphs (i.e., in case of bucolic $S_3$-graphs, it is sufficient to characterize
weakly bridged $S_3$-graphs).

Chastand \cite{Cha1,Cha2} asked which pre-median graphs are prime and the answer was provided by the following general topological
characterization:

\begin{theorem} \cite{CCHO} \label{prime-pre-median} A pre-median graph is prime if and only if its clique complex is simply connected.
\end{theorem}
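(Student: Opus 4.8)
The plan is to reduce the statement to an assertion about $4$-cycles and then analyse those cycles using the two forbidden subgraphs of pre-median graphs. A pre-median graph $G$ is weakly modular, so its triangle-square complex is simply connected \cite{CCHO}. The clique complex of $G$ (the simplicial complex whose simplices are the cliques of $G$), call it $X(G)$, is a subcomplex of the triangle-square complex, and the latter is recovered from $X(G)$ by attaching a $2$-cell along every induced $4$-cycle of $G$; a chorded $4$-cycle is already null-homotopic in $X(G)$, being the sum of the two triangles cut off by a chord. By the elementary fact that attaching $2$-cells along loops to a complex so as to obtain a simply connected complex makes the fundamental group of the original complex normally generated by those loops, $X(G)$ is simply connected if and only if every induced $4$-cycle of $G$ bounds a disk triangulated by faces of $X(G)$. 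So the theorem becomes: a pre-median graph is prime if and only if each of its induced $4$-cycles bounds a triangulated disk.

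For ``prime $\Rightarrow$ simply connected'' I would prove that in a prime pre-median graph every induced $4$-cycle $C=(a,x,b,y)$ (with $a\sim x\sim b\sim y\sim a$, $a\nsim b$, $x\nsim y$) is fillable by triangles. If the diagonal pair $\{a,b\}$, which is at distance $2$, has a common neighbour $z\notin\{x,y\}$, then the absence of an induced $K_{2,3}$ forces $z\sim x$ or $z\sim y$; if $z$ is adjacent to both, then $C$ is the sum of the two chorded squares $(a,x,b,z)$ and $(a,z,b,y)$, each a union of triangles, so $C$ is fillable; if $z$ is adjacent to exactly one of $x,y$, say to $x$ and not $y$, then $\{a,x,b,y,z\}$ induces $W^-_4$, which is forbidden. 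The symmetric argument applies to $\{x,y\}$. So the only remaining case is that $C$ is a convex isometric $4$-cycle with no vertex adjacent to three of its vertices; here I would show, via fiber-complementedness of pre-median graphs and their isometric embedding into a weak Cartesian product of their prime subgraphs (Theorem~\ref{pre-median Chastand}, Theorem~\ref{fiber-compemented-Chastand}), that such a $4$-cycle spreads, by repeated application of the quadrangle condition, to a globally consistent edge-parallelism whose classes form one $K_2$-factor, so that $G$ is a nontrivial Cartesian product, contradicting primality.

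For the converse I argue by contraposition. If $G$ is not prime then by Theorem~\ref{pre-median Chastand} it is a gated amalgam or a nontrivial Cartesian product of smaller pre-median graphs, and the product case is the crux. If $G=G_1\square G_2$ with $|V(G_1)|,|V(G_2)|\ge 2$, pick edges $e_1\in E(G_1)$, $e_2\in E(G_2)$; then $e_1\square e_2$ is an induced $4$-cycle $Q$ of $G$, and one checks directly that neither diagonal pair of $Q$ has a common neighbour outside $Q$. By the case analysis above $Q$ has no triangular filling supported near it, and a minimal disk-diagram argument inside the simply connected triangle-square complex of $G$ then shows that $Q$ is not null-homotopic in $X(G)$; equivalently, one may compute by Mayer--Vietoris that $X(G_1\square K_2)$ — two copies of $X(G_1)$ glued along their common vertex set — has nonzero first homology as soon as $G_1$ has an edge, and transport this class into $X(G)$. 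The gated-amalgam case is then handled by iterating the decomposition of Theorem~\ref{pre-median Chastand} down to the product (and prime) level.

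The hard part is the ``apex-free convex $4$-cycle'' step of the second paragraph: promoting a single unfillable $4$-cycle to a global Cartesian factorisation. This is where the full strength of weak modularity (both the triangle and the quadrangle conditions) must be combined with the exclusion of induced $K_{2,3}$ and $W^-_4$, in order to (i) show that the two pairs of opposite edges of $C$ extend to a well-defined ``wall'' partitioning $V(G)$ into two convex, and in fact gated, halves (using Theorem~\ref{lc->c} and Proposition~\ref{deltacl->gated}), and (ii) rule out spurious triangles crossing that wall, so that fiber-complementedness and Chastand's decomposition (Theorem~\ref{pre-median Chastand}, Theorem~\ref{fiber-compemented-Chastand}) identify one side of the wall as a genuine $K_2$-factor. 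Keeping the disk diagrams bounding $C$ reduced, so that the forbidden subgraphs can actually be invoked on their cells, is the technical heart of the argument.
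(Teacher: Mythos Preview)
The paper does not prove this theorem; it is quoted from \cite{CCHO} without argument, so there is no proof in the present paper to compare yours against.

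On its own merits, your attempt has two genuine gaps. For the forward direction you correctly reduce to induced $4$-cycles and correctly observe that any extra common neighbour of a diagonal must be a full apex (otherwise $K_{2,3}$ or $W_4^-$ appears), but the step you yourself label ``the hard part'' --- that an apex-free convex $4$-cycle forces a global $K_2$-factor --- is a programme, not a proof. Everything in your final paragraph (``promote to a wall'', ``rule out spurious triangles crossing that wall'', ``keep disk diagrams reduced'') is a list of desiderata; none of it is carried out, and this is precisely where the substance of the \cite{CCHO} argument lives.

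For the converse, the gated-amalgam case is not merely sketchy but incorrect as written. ``Iterating the decomposition down to the product level'' fails already on the bowtie (two copies of $K_3$ sharing a single vertex): it is pre-median, it is a nontrivial gated amalgam and hence not prime, it has no Cartesian factor to iterate toward and no induced $4$-cycle for your square-filling argument to bite on, yet its clique complex is two $2$-simplices wedged at a point, which is contractible. So either the statement as recorded here is missing a hypothesis --- and indeed the paper's own appeal to Proposition~2.18 of \cite{CCHO} in the proof of Proposition~\ref{partial_Johnson} explicitly carries a $2$-connectedness assumption --- or your converse argument must engage with the structure of the amalgamating gated set in a way that separates singleton intersections from larger ones, which it currently does not.
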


Now, we extend the definition of pre-median graphs from weakly modular to meshed graphs. With some abuse of terminology, we say
that a meshed graph $G$ is \emph{pre-median} if $G$ does not contain $K_{2,3}$ and $W^-_4$ as induced subgraphs.  We have
the following generalization of Theorem \ref{pre-median Chastand}(i)$\&$(ii):

\begin{theorem} \label{meshed-pre-median} Any meshed pre-median graph $G$ is fiber-complemented. Consequently, $G$ can be obtained from Cartesian products of elementary graphs by a sequence
of gated amalgamations. Furthermore, $G$ is elementary if and only if $G$ is prime.
\end{theorem}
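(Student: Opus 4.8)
The plan is to prove the fiber-completedness directly and then read off the remaining assertions from Chastand's machinery. So let $H$ be a gated subgraph of $G$ and $F_{a}$ ($a\in V(H)$) one of its fibers; since $G$ is meshed, by Proposition~\ref{deltacl->gated} it is enough to show that $F_{a}$ is connected and $\Delta$-closed. Granting this, $G$ is fiber-complemented, and the asserted decomposition into Cartesian products of elementary graphs glued by gated amalgamations is precisely Chastand's Theorem~\ref{fiber-compemented-Chastand}. The equivalence ``elementary $\Leftrightarrow$ prime'' then comes for free: a nontrivial Cartesian product has gated slices with at least two vertices and a gated amalgam of strictly smaller graphs contains each of its two halves as a proper gated subgraph with at least two vertices, so elementary $\Rightarrow$ prime; conversely a fiber-complemented graph which is not elementary is, reading Theorem~\ref{fiber-compemented-Chastand} at the last step of the construction of $G$, a nontrivial Cartesian product or a gated amalgam, hence not prime.

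For the connectedness of fibers I would argue without using pre-medianness. Write $p\colon V\to V(H)$ for the gate map, so $F_{a}=p^{-1}(a)$ and $p(v)\in[v,h]$ for every $v\in V$ and $h\in H$. If $x\in F_{a}$ with $x\ne a$ and $x'$ is a neighbour of $x$ on a shortest $(x,a)$-path, then for every $h\in H$ we have $d(x',h)\ge d(x,h)-1=d(x,a)+d(a,h)-1=d(x',a)+d(a,h)\ge d(x',h)$, so $a\in[x',h]$, i.e. $x'\in F_{a}$; iterating joins $x$ to $a$ inside $F_{a}$.

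The heart of the matter, and where I expect the real difficulty, is $\Delta$-closedness. Suppose it fails and, among all ``bad triples'' $(a;z;x,y)$ with $a\in V(H)$, $z\notin F_{a}$, $x\ne y$ in $F_{a}$, $z\sim x$, $z\sim y$, pick one minimising $k:=d(z,a)$; set $b:=p(z)\ne a$. Combining $d(x,b)=d(x,a)+d(a,b)$ (gate property of $a=p(x)$), $d(x,b)\le 1+d(z,b)$, and $d(z,a)=d(z,b)+d(a,b)$ gives $d(x,a)\le 1+k-2d(a,b)$, which together with $d(x,a)\ge k-1$ forces $d(a,b)=1$ and then $d(x,a)=d(y,a)=k-1$, $d(z,b)=k-1$, $d(x,b)=d(y,b)=k$; thus $x,y,b\in[z,a]$, $z\in[x,b]\cap[y,b]$, and $k\ge 2$. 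If $k=2$, then $\{a,b,x,y,z\}$ are five distinct vertices inducing $K_{2,3}$ with parts $\{a,z\}$ and $\{b,x,y\}$ when $x\nsim y$, and inducing $W^-_4$ (with $b$ the vertex of degree two) when $x\sim y$; both contradict pre-medianness, so this settles the base case. If $k\ge 3$, then $x\nsim a$, $y\nsim a$, and I would take the neighbour $x'$ of $x$ on a shortest $(x,a)$-path — by the connectedness argument $x'\in F_{a}$, and $d(x',a)=k-2$, $d(x',b)=k-1$, $d(z,x')=2$ — and apply the weak quadrangle condition (QC$^-$) to $z,x'$ with apex $b$ to get a common neighbour $w$ of $z$ and $x'$ with $d(b,w)\le k-1$; the edge constraints then force $d(a,w)=k-1$ and $d(b,w)\in\{k-2,k-1\}$. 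The plan is to run a case analysis on whether $w\in F_{a}$ and on the adjacencies of $w$ to $x,y,x'$: in each branch either $w$ together with two of these vertices is a bad triple at distance $k-1$ from $a$, contradicting minimality, or $\{a,b,x,x',z,w\}$ carries an induced $K_{2,3}$ or $W^-_4$, again contradicting pre-medianness. Organising this case analysis so that meshedness and pre-medianness — together with Theorem~\ref{lc->c} and Proposition~\ref{deltacl->gated} for the convexity/gatedness bookkeeping — close every branch is the delicate step; it parallels Chastand's treatment of the weakly modular case, with (QC) systematically replaced by (QC$^-$).
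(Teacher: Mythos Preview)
Your overall plan---prove that every fibre $F_a$ is connected and $\Delta$-closed, invoke Proposition~\ref{deltacl->gated}, then read off the remaining statements from Theorem~\ref{fiber-compemented-Chastand}---is correct and is exactly the paper's route; the connectedness argument and the base case $k=2$ are also fine. The gap is in the inductive step. Your move is to pick $x'\in[x,a]$ adjacent to $x$ and apply (QC$^-$) to the pair $z,x'$ with apex $b$; you then claim that either the resulting $w$ yields a smaller bad triple or some five of $\{a,b,x,x',z,w\}$ induce $K_{2,3}$ or $W_4^-$. The second alternative fails in the branch $w\nsim x$, $w\nsim y$: for $k\ge 4$ the vertices $a,b$ are at distance $\ge 2$ from each of $x,x',z,w$ and contribute no useful edges, while the remaining four vertices form only an induced $4$-cycle; for $k=3$ one checks directly that no $5$-subset works either. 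To close this branch you must bring $y$ back (e.g.\ via (QC$^-$) on $y,w$), and throughout you need the general fibre-edge lemma that the paper isolates as a separate Claim---an edge between $F_a$ and $F_c$ with $a\ne c$ forces $a\sim c$ and equal distances to the respective gates---to determine which fibre $w$ and the subsequent auxiliary vertices lie in; you derive this only once, for the starting edge $xz$.

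The paper sidesteps the detour by applying (QC$^-$) (or (TC), when $x\sim y$) at the \emph{first} step to the pair $x,y$ with apex $a$, obtaining a common neighbour $u$ of \emph{both} $x$ and $y$. When $d(a,u)$ drops, a second (QC$^-$) on $z,u$ with apex $b$ produces $v$; the fibre-edge Claim pins $v\in F_b$ with $v\nsim x,y$, and the five vertices $x,y,z,u,v$ themselves already induce the forbidden $K_{2,3}$ or $W_4^-$. Only the residual subcase where $d(a,u)$ does not drop (possible only when $x\nsim y$) needs further (TC)/(QC$^-$) steps and the minimality descent. The missing idea in your outline is thus to keep $x$ and $y$ symmetric by taking their common neighbour first, rather than descending along $x$ alone.
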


\begin{proof} Let $H$ be a gated subgraph of $G$. We will use the following simple property of fibers of $H$:

\begin{claim} \label{fibers-claim}  If $x\in F_a, y\in F_b$ and $x\sim y, a\ne b$, then $a\sim b$ and $d(a,x)=d(b,y)$.
\end{claim}

\begin{proof} Since $a\in [b,x]$ and $b\in [a,y]$, we have $d(a,y)=d(a,b)+d(b,y)$ and $d(b,x)=d(b,a)+d(a,x)$. Since $x\sim y$, $d(b,x)\le d(b,y)+1$ and $d(a,y)\le d(a,x)+1$. From these expressions we get $d(a,b)=1$ and then $d(a,x)=d(b,y)$.
\end{proof}

Let $F_a$ be the fiber of a vertex $a$ of $H$. First notice that if $x\in F_a$, then $[a,x]\subseteq F_a$, thus $F_a$ induces a connected subgraph. Therefore, by Proposition \ref{deltacl->gated} to prove that  $F_a$ is gated it suffices to establish
that $F_a$ is $\Delta$-closed. Suppose by way of contradiction that $F_a$ contains two vertices $x,y$ both adjacent to a vertex $z\in F_b$ with $b\ne a$. By Claim \ref{fibers-claim}, $a\sim b$ and $d(a,x)=d(a,y)=d(b,z)=:k$.
Suppose that the vertices $a,b,x,y,z$ violating the $\Delta$-closedness of $F_a$  are selected to minimize $k$.

If $x\nsim y$, by (QC$^-$) there exists a common neighbor $u$ of $x$ and $y$ at distance at most $k$ from $a$. If $x\sim y$, by ($\TC$) there exists a common neighbor $u$ of $x$ and $y$ at distance $k-1$ from $a$. In both cases,  if $d(u,a)=k-1$, then $u\in [x,a]\subseteq F_a$, thus $d(u,b)=k$. This implies  $z\nsim u$. Again by (QC$^-$), there exists a common neighbor
$v$ of $z$ and $u$ at distance at most $k$ from $b$. If $v\notin F_b$, say $v\in F_c$ for $c\in H$, then $d(v,c)\le k-1$, which contradicts Claim \ref{fibers-claim}. Consequently, $v\in F_b$. Since $v\sim u$, $u\in F_a$ and $d(u,a)=k-1$, from
Claim \ref{fibers-claim} we conclude that $d(b,v)=k-1$. Since $d(b,x)=d(b,y)=k+1$, $v\nsim x,y$. Consequently,  $x,y,z,u,v$ induce a forbidden $K_{2,3}$ if $x\nsim y$ and a forbidden $W_4^-$ if $x\sim y$.

Now suppose that $d(u,a)=k$ (this implies that $x\nsim y$). By (TC) there exists a common neighbor $s$ of $x$ and $u$ at distance $k-1$ to $a$. Then $s\in [x,u]\subseteq F_a$.  Claim \ref{fibers-claim} implies that $z\nsim s$.
Since $d(b,z)=d(b,s)=k$, by (QC$^-$)  there exists a common neighbor $t$ of $z$ and $s$ at distance at most $k$ from $b$. As before, if $t\in F_c$ with $c\ne b$, then $d(t,c)\le k-1$ and we obtain a contradiction
with Claim \ref{fibers-claim} because $t\sim z\in F_b$ and $d(b,z)=k$. If $t\in F_b$, since $t\sim s\in F_a$ and $d(a,s)=k-1$, we conclude that $d(b,t)=k-1$. Therefore $t\nsim y$ and since $d(a,t)=d(a,y)=k$,  applying (QC$^-$)  we will find a common
neighbor $p\sim t,y$ at distance at most $k$ from $a$. Since $p$ is adjacent to $y\in F_a$ and $d(a,y)=k$, applying the same argument as before, we conclude that $p\in F_a$ and that $d(a,p)=k-1$. Since $t\sim s,p$ and $t\in F_b, s,p\in F_a$ and $d(t,b)=d(s,a)=d(p,a)=k-1$, we obtain a contradiction with the minimality choice of the vertices $a,b,x,y,z$. This contradiction establishes that $F_a$ is $\Delta$-closed and thus gated. This establishes the first assertion.
The second assertion follows from Theorem \ref{fiber-compemented-Chastand}. The proof of the last assertion is the same as the proof of the amalgamation theorem of \cite{BaCh_wmg} or the proof of Theorem \ref{pre-median Chastand}(i).
\end{proof}

\begin{question} It will be interesting to establish the assertions (iii) and (iv) of Theorem \ref{pre-median Chastand} and the characterization of Theorem \ref{prime-pre-median} for meshed pre-median graphs.
\end{question}

\subsection{Helly, Radon, and Carath\'eodory numbers of meshed graphs: complexity}
In this subsection we show that computing Helly, Radon, and Carath\'eodory numbers  of geodesic convexity in meshed graphs  is NP-hard.  That the computation of Radon and Carath\'eodory numbers of  geodesic convexity of general
graphs is NP-hard was already known \cite{Anetal,CoDoSa,Douetal,Douetalbis}, however our reduction is  simple and applies to the three numbers.
A set $A$ of vertices of a graph $G$ is \emph{ $h$-independent} if $\bigcap_{a\in A} \conv(A\setminus \{ a\})=\varnothing$ and the \emph{Helly number}  $h(G)$ is the size of a largest $h$-independent set.
 A \emph{Radon partition}  of a set $A$ is the partition $A_1,A_2$ of $A$
such that $\conv(A_1)\cap \conv(A_2)\ne \varnothing$. A set $A$  is \emph{ $r$-independent} if it does not admit a Radon partition and the \emph{Radon number}  $r(G)$ is the size of a largest $r$-independent set.
Finally, a set $A$ is called  \emph{ $c$-independent} if $\conv(A)\setminus (\bigcup_{a\in A} \conv(A\setminus \{ a\})\ne\varnothing$ and the \emph{Carath\'eodory  number}  $c(G)$ is the size of a largest $c$-independent set.

By the classical Helly, Radon, and Carath\'eodory theorems for Euclidean convexity in $\mathbb{R}^d$, the Helly and Carath\'eodory numbers are equal to $d$ and the Radon number is  equal to $d+1$. For general
convexity spaces, it is known that the Helly number is at most the Radon number \cite{Le}. For other results about these numbers in convexity spaces, see the books \cite{So,VdV}. By \cite{Du_retracts}, for any convexity space $(X,\C)$
 one can construct a graph whose geodesic convexity has the same Helly, Radon, and Carath\'eodory numbers as $(X,\C)$. On the other hand, Duchet and Meyniel \cite{DuMe} proved that for any convexity
 in a graph $G$ in which the convex sets induce connected subgraphs (in particular, for geodesic convexity), the Helly and the Radon numbers are bounded by $\eta(G)$ and $2\eta(G)$,
 respectively, where $\eta(G)$ is the \emph{Hadwiger number} of $G$  and is
 the size of the largest complete graph which is a minor of $G$.
It was shown in \cite{BaCh_helly}  that the Helly number $h(G)$ of a weakly modular graph $G$ coincides with the \emph{clique number} $\omega(G)$ of $G$, which is the size of
the largest clique of $G$ (for any graph $G$, $h(G)\ge \omega(G)$ holds).   In \cite{BaPe_radon} was shown that the Radon numbers of Helly graphs
is $\omega(G)$ and in \cite{Ch_triangulated} it was shown that the Radon number of chordal graphs
is $\omega(G)$ except when $\omega(G)=3$, in which case $r(G)$ is 3 or 4. The Radon number of median graphs was expressed in \cite{vdV_matching} as the size of
the superextension, which has a complex combinatorial structure.

For a graph $H$ with at least one edge, let $G:=G(H)$ be obtained from $H$ by adding two new vertices $x',x''$, which are adjacent to all vertices of $H$. One can easily check that $G$ is a Helly graph, thus it is meshed.
Obviously, the size of a largest clique of $G$ is $\omega(H)+1\ge 3$.

\begin{proposition} \label{complexity}  $h(G)=r(G)=c(G)=\omega(G)=\omega(H)+1$.  Consequently, deciding if the Helly number, the Radon number, or the Carath\'eodory number of geodesic convexity of a Helly graph
(or of a weakly modular or meshed graph) is at most $k$ is NP-complete.
\end{proposition}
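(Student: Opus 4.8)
The plan is to pin down each of the four quantities $h(G)$, $r(G)$, $c(G)$, $\omega(G)$ for $G:=G(H)$ separately, mostly by invoking the structural results quoted above, and then to read off the complexity claim from the NP-hardness of deciding the clique number.

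\textbf{Step 1: $G$ is Helly, and $\omega(G)=\omega(H)+1$.} The new vertices $x'$ and $x''$ are adjacent to every vertex of $H$ but, since the only edges incident with them join them to $V(H)$, we have $x'\not\sim x''$. Hence $d(u,v)\le 2$ for all $u,v\in V(G)$, every ball of radius $\ge 2$ is $V(G)$, every ball $B_1(v)$ with $v\in V(H)$ contains both $x'$ and $x''$, and $B_1(x')$, $B_1(x'')$ each contain all of $V(H)$. Given a pairwise intersecting family of balls, either one of them has radius $\ge 2$ and so contains the intersection of the rest, or all of them have radius $1$; in the second case $x'$ lies in every ball centred in $V(H)$, and adjoining $B_1(x')$ or $B_1(x'')$ keeps $x'$ in the common part. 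So the balls of $G$ satisfy the Helly property, i.e.\ $G$ is Helly, hence weakly modular and meshed. For the clique number: because $x'\not\sim x''$, every clique of $G$ either lies in $V(H)$ or equals $K\cup\{x'\}$ or $K\cup\{x''\}$ for a clique $K$ of $H$, so $\omega(G)=\omega(H)+1\ge 3$.

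\textbf{Step 2: $h(G)=r(G)=\omega(G)$.} Since $G$ is weakly modular, $h(G)=\omega(G)$ by \cite{BaCh_helly}; since $G$ is Helly, $r(G)=\omega(G)$ by \cite{BaPe_radon}. These are compatible with the general facts $h(G)\le r(G)$ \cite{Le} and $h(G)\ge\omega(G)$ (a maximum clique $A$ of $G$ has $\bigcap_{a\in A}\conv(A\setminus\{a\})=\bigcap_{a\in A}(A\setminus\{a\})=\varnothing$), which could also be used to shorten the argument.

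\textbf{Step 3: $c(G)=\omega(G)$ — the main obstacle.} First I would describe all convex sets of $G$: a convex set that is not a clique contains a non-adjacent pair, which (as $x',x''$ are universal to $H$) is either a pair of non-adjacent vertices of $V(H)$ or the pair $\{x',x''\}$; in the first case the interval of that pair contains $x'$ and $x''$, and since $[x',x'']=V(G)$ the convex set is all of $V(G)$, and the same conclusion holds in the second case. Hence $\conv(S)$ equals $S$ when $S$ is a clique and $V(G)$ otherwise, which already gives an upper bound on the Carath\'eodory number. The delicate point is the \emph{lower} bound, namely producing a $c$-independent set of size $\omega(H)+1$, i.e.\ a set $A$ together with a point of $\conv(A)$ lying in no $\conv(A\setminus\{a\})$. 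The obvious candidates — a maximum clique of $G$, or a maximum clique of $H$ together with both $x'$ and $x''$ — are easily seen not to be $c$-independent, so exhibiting the witnessing set and verifying $c$-independence is where the real work lies, and I expect this step to be the bottleneck of the proof.

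\textbf{Step 4: complexity.} $G(H)$ is computable from $H$ in polynomial time and is a Helly graph (hence weakly modular, hence meshed), and $\rho(G(H))=\omega(H)+1$ for $\rho\in\{h,r,c\}$. Thus the decision problem for $\rho$ on Helly (equivalently weakly modular, or meshed) graphs is at least as hard as deciding the clique number of an arbitrary graph $H$, which is NP-hard; membership in NP is clear because geodesic convex hulls are computable in polynomial time and $h$-, $r$-, or $c$-independent sets serve as polynomial-size certificates. Hence each of the three decision problems is NP-complete.
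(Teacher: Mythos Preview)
Your Steps 1, 2 and 4 are fine and agree with the paper's line (which, in addition to quoting \cite{BaCh_helly}, gives the direct argument that any $h$- or $r$-independent set of size $\ge 3$ in $G$ must be a clique, via $\conv(u,v)=V(G)$ for non-adjacent $u,v$).

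Your hesitation in Step 3 is not a bottleneck but a genuine error you have uncovered. With the paper's own definition of $c$-independence, a clique $A$ with $|A|\ge 2$ is \emph{not} $c$-independent: each $A\setminus\{a\}$ is again a clique, so $\conv(A\setminus\{a\})=A\setminus\{a\}$ and $\bigcup_{a\in A}(A\setminus\{a\})=A=\conv(A)$. The paper's proof nevertheless asserts that ``each set $A$ defining a clique of $G$ is $c$-independent'' and uses this for the lower bound $c(G)\ge\omega(G)$; that assertion is false, exactly as you observed. Worse, no set of size $\ge 3$ is $c$-independent at all: if $A$ is not a clique it contains non-adjacent $u,v$ and a third vertex $w$, and then $\conv(A\setminus\{w\})\supseteq\conv(u,v)=V(G)$, so $\conv(A)\setminus\bigcup_a\conv(A\setminus\{a\})=\varnothing$. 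Since any non-adjacent pair \emph{is} $c$-independent, one gets $c(G)=2$, not $\omega(H)+1$. So the witnessing set you are looking for does not exist; the claimed equality $c(G)=\omega(H)+1$ (and hence the NP-hardness reduction for the Carath\'eodory number via this particular $G$) does not go through as stated. The conclusions for $h$ and $r$ are unaffected.
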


\begin{proof}  First notice that each convex set of $G$ is either a clique or coincides with the vertex-set $V(G)$  of $G$. This is because if a set $A$ contains two non-adjacent vertices $u,v$, then either $\{u,v \}=\{x',x'' \}$ or $u,v$ are two non-adjacent vertices of $H$ and their interval $[u,v]$ in $G$ contains both $x',x''$. Since the interval $[x',x'']$ in $G$ contains all vertices of $H$, we deduce that $\conv(u,v)=V(G)$.
Note also that each set $A$ defining a clique of $G$ is $h$-independent, $r$-independent, and $c$-independent.

Since $G$ is Helly, $G$ is weakly moduler, thus  we have  $h(G)=\omega(G)=\omega(H)+1$ by  \cite{BaCh_helly}.  For our special graph $G$, this can be checked directly by noticing that each $h$-independent  set  $A$ of size at least 3
is a clique of $G$. Indeed, if $A$ contains two non-adjacent vertices $u,v$, then $\conv(u,v)=V(G)$, thus any vertex $w\in A\setminus \{ u,v\}$  will belong to the intersection $\bigcap_{a\in A} \conv(A\setminus \{ a\})$.
Now, we show that any $r$-independent set $A$ of size at least 3 is also a clique. Suppose not and suppose that it contains two non-adjacent vertices $u,v$ and a third vertex $w$. Then $w\in V(G)=\conv(u,v)$, thus the sets
$\{ u,v\}$ and $A\setminus \{ u,v\}$ define a Radon partition.  Finally, we show that any $c$-independent set $A$ with at least three vertices is a clique of $G$. Again suppose that $A$ contains two non-adjacent vertices $u,v$
and the third vertex $w$. But then $\conv(A)=V(G)=\conv(u,v)=\conv(A\setminus  \{ w\})$, contrary to the
assumption that $\conv(A)\setminus (\bigcup_{a\in A} \conv(A\setminus \{ a\})\ne\varnothing$.
\end{proof}

We conclude with two open questions:

\begin{question} \label{helly-radon-bridged} Is it true that for any meshed graph $G$, $h(G)=\omega (G)$ holds? Is it true that the Radon numbers of bridged graphs and of basis graphs of matroids and even $\Delta$-matroids
are upper bounded by a linear function of their clique number?
\end{question}

There exists already a significant difference between the expressions for the Radon numbers for known classes of graphs (Helly and chordal graphs from the one hand and median graphs from the other hand), so we do not expect
that there exists a unifying result about the Radon number of all
weakly modular or  meshed graphs. On the other hand, it is not clear at all how to investigate the Carath\'eodory number in such classes of graphs since there exist chordal graphs with clique-number 3 and arbitrarily
large Carath\'eodory numbers \cite{Ch_thesis}.

\section{Meshed $S_3$-graphs}\label{s:mesheds3graphs}

While Theorem \ref{semispaces-trianglecodition} provides us with an efficient characterization of semispaces of $S_3$-graphs satisfying (TC) and, as we  believe, Theorem \ref{S3graphs-trianglecondition} is the best one can get as a
characterization of such graphs, 
this theorem does not involve a condition on a fixed number of vertices and does not lead to a polynomial time algorithm
for recognizing $S_3$-graphs satisfying (TC). The goal of this section is to prove a bounded compactness characterization of meshed $S_3$-graphs,
which can be used to recognize them in polynomial time:

\begin{theorem} \label{S3meshed} A meshed graph $G$ is $S_3$
if and only if $G$ does not contains the graphs from Figure~\ref{non-S3} as induced  subgraphs. Furthermore,
any meshed $S_3$-graph is pre-median, and thus is fiber-complemented.
\end{theorem}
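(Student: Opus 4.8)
The plan is to prove the two directions of the equivalence separately and then deduce the ``furthermore'' clause as a corollary. Throughout, I would use that a meshed graph satisfies the triangle condition (TC) and has equilateral metric triangles (Lemma \ref{equilateral}), and I would work from the characterization of Theorem \ref{S3graphs-trianglecondition}: for a graph satisfying (TC), being $S_3$ is equivalent to requiring, for every pointed maximal clique $(x_0,K)$, that the shadow $K/x_0$ and the extended shadow $x_0\SK K$ both be convex. Since by Lemma \ref{extended} these two sets partition $V$, this says precisely that $K/x_0$ is a halfspace. Thus the whole statement reduces to: a meshed graph $G$ has the property that $K/x_0$ is a halfspace for every pointed maximal clique $(x_0,K)$ if and only if $G$ contains none of the five graphs of Figure~\ref{non-S3} as an induced subgraph.

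For the necessity direction I would argue by contraposition. Assume $G$ is meshed and contains one of the five graphs $H$ of Figure~\ref{non-S3} as an induced subgraph, and produce a pointed maximal clique $(x_0,K)$ of $G$ for which $K/x_0$ fails to be a halfspace. For each $H$ one first exhibits inside $H$ a clique $Q$, a vertex $x_0\notin Q$ with $Q\cup\{x_0\}$ a maximal clique of $H$, and vertices $u,v,w$ with $w$ on a $uv$-geodesic of $H$ witnessing either non-convexity of $Q/x_0$ or a nonempty intersection $Q/x_0\cap x_0/Q$. One then checks robustness under meshed extensions: using that metric triangles of $G$ are equilateral (Lemma \ref{equilateral}) and that balls of $G$ induce isometric subgraphs (Lemma \ref{balls-isometric}), the relevant distances and convex hulls taken in $G$ agree with those taken in $H$, so that completing $Q\cup\{x_0\}$ to a maximal clique $Q^+$ of $G$ and setting $K=Q^+\setminus\{x_0\}$, the pointed maximal clique $(x_0,K)$ of $G$ inherits the obstruction. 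By Theorem \ref{S3graphs-trianglecondition} this shows $G$ is not $S_3$. This part is a finite case check, and it is exactly where the particular list of five graphs gets pinned down.

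The substantial direction is sufficiency. Assume $G$ is meshed with no induced subgraph from Figure~\ref{non-S3}; in particular $G$ has no induced $K_{2,3}$ and no induced $W_4^-$, so $G$ is a meshed pre-median graph and hence fiber-complemented by Theorem \ref{meshed-pre-median}. Fix a pointed maximal clique $(x_0,K)$; I must show that $K/x_0$ and its complement $x_0\SK K$ are both convex. By Theorem \ref{lc->c} it suffices to show each of them is connected and locally convex. Connectivity is easy: if $z\in K/x_0$ then some $y\in K$ lies on an $x_0 z$-geodesic, every vertex of $[z,y]$ again lies in $K/x_0$, and $K$ is a clique, so $K/x_0$ is connected; dually, every vertex of $x_0\SK K$ is joined inside $x_0\SK K$ to $x_0$ by moving along a geodesic towards $x_0$, each intermediate vertex staying in $x_0|K\cup W_{=}(K\cup\{x_0\})$ by Lemma \ref{extended}. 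For local convexity of $K/x_0$ (and symmetrically of its complement), take $u,v\in K/x_0$ with $d(u,v)=2$ and a common neighbour $w\in[u,v]$, suppose $w\notin K/x_0$, write $u\in a/x_0$ and $v\in b/x_0$ with $a,b\in K$, and analyse the configuration using (TC) and (QC$^-$) to generate common neighbours that are simultaneously closer to $x_0$ and controlled with respect to $\{a,b\}$; combined with the equilateral-metric-triangle property and Lemma \ref{balls-isometric}, every branch of this analysis forces an induced $K_{2,3}$, $W_4^-$, $W_5^-$, or one of the two remaining graphs of Figure~\ref{non-S3} among $u,v,w,a,b,x_0$ and the auxiliary vertices produced, a contradiction. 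Theorem \ref{lc->c} then gives convexity of $K/x_0$ and $x_0\SK K$, and Theorem \ref{S3graphs-trianglecondition} yields that $G$ is $S_3$.

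The main obstacle is precisely this local-convexity case analysis: it must be organised so that every way a $2$-interval could leave $K/x_0$ (or leave $x_0\SK K$) in a meshed graph is closed off by one of exactly the five forbidden graphs, while keeping careful track of mutual distances to $x_0$, to $K$, and among the vertices produced by (TC)/(QC$^-$). The fiber-complemented structure may help streamline this by first reducing to prime meshed graphs, since $S_3$, (TC), and absence of the forbidden subgraphs all behave well under Cartesian products and gated amalgamations. Finally, the ``furthermore'' clause is immediate: a meshed $S_3$-graph contains none of the five graphs, in particular neither $K_{2,3}$ nor $W_4^-$, hence is pre-median, and then fiber-complemented by Theorem \ref{meshed-pre-median}.
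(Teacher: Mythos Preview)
Your overall architecture is right and matches the paper: reduce via Theorem~\ref{S3graphs-trianglecondition} to showing that $K/x_0$ and $x_0\SK K$ are convex for every pointed maximal clique $(x_0,K)$, and then use Theorem~\ref{lc->c} to reduce convexity to connectedness plus local convexity. Your connectedness arguments are fine, and your derivation of the ``furthermore'' clause is exactly the paper's.

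For necessity you are working harder than necessary. The paper observes that each of the five forbidden graphs has diameter~$2$, hence occurs as an \emph{isometric} subgraph whenever it occurs as an induced subgraph. In each forbidden graph one checks that the encircled vertex and the encircled set (a singleton in the first three, a triangle in the last two, hence convex in $G$) are not separable. Since restriction of a geodesic halfspace of $G$ to an isometric subgraph is a geodesic halfspace of that subgraph, non-separability in the isometric copy transfers to $G$. There is no need to produce a pointed maximal clique of $G$ or to worry about how $Q\cup\{x_0\}$ extends; your ``robustness'' discussion can be dropped.

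The real gap is in sufficiency. You propose to attack local convexity of $K/x_0$ (and of $x_0\SK K$) directly: take $u,v\in K/x_0$ at distance~$2$ with a common neighbour $w\notin K/x_0$, and run a case analysis using (TC)/(QC$^-$) on the configuration $\{u,v,w,a,b,x_0\}$ until a forbidden subgraph appears. This does not close as stated, because $u,v,w$ can be arbitrarily far from $x_0$ and $K$; the auxiliary vertices that (TC)/(QC$^-$) produce need not be adjacent to $x_0$ or to $a,b$, so no induced copy of a forbidden graph materialises near them. The paper handles this by first proving three intermediate structural results for meshed graphs without the five forbidden subgraphs, each by its own inductive case analysis: the positioning condition (Proposition~\ref{poscond}), convexity of all intervals (Proposition~\ref{meshed-interval}), and convexity of all point-shadows $y/x$ (Proposition~\ref{meshed-shadow}). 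These are then used repeatedly as \emph{tools} inside the proofs of Propositions~\ref{meshed-shadow-clique1} and~\ref{meshed-shadow-clique2}: many branches terminate not by exhibiting a forbidden subgraph, but by finding a square violating (PC), or a vertex outside a convex interval, or a violation of convexity of some $y/x_0$ --- contradictions that are only available once those preliminary propositions are in hand. Moreover, the case analyses in those two propositions are organised around a minimal violating sextet $(x_0,u,v,z_1,z_2,z)$ (lexicographically minimising a vector of distances), which is what allows replacing the original pointed clique by a closer one and eventually forcing the configuration to be local enough to contradict one of the earlier results or produce a forbidden subgraph. Without this scaffolding your proposed direct analysis will not terminate. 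Finally, the fiber-complemented reduction to primes does not help here: the paper leaves the structure of prime meshed $S_3$-graphs open (Question~\ref{prime-meshed}), so there is nothing to reduce to.
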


\begin{figure}
  \begin{center}
    \begin{tikzpicture}[x=0.8cm,y=0.8cm]
\filldraw[black] (0,0.5) circle (2pt);
\filldraw[black] (-1,-1) circle (2pt);
\filldraw[black] (1,-1) circle (2pt);
\filldraw[black] (-1,-3) circle (2pt);
\filldraw[black] (1,-3) circle (2pt);
\draw (0,0.5) -- (1,-1);
\draw (0,0.5) -- (-1,-1);
\draw (-1,-1) -- (-1,-3);
\draw (1,-3) -- (1,-1);
\draw (1,-3) -- (-1,-1);
\draw (-1,-3) -- (1,-1);
\draw[color=red!80, very thick](-1,-1) circle (4pt);
\draw[color=red!80, very thick](1,-1) circle (4pt);

     \begin{scope}[xshift=3.5cm]
     \filldraw[black] (0,0.5) circle (2pt);
\filldraw[black] (-1,-1) circle (2pt);
\filldraw[black] (1,-1) circle (2pt);
\filldraw[black] (-1,-3) circle (2pt);
\filldraw[black] (1,-3) circle (2pt);
\draw (0,0.5) -- (1,-1);
\draw (0,0.5) -- (-1,-1);
\draw (-1,-1) -- (-1,-3);
\draw (1,-3) -- (1,-1);
\draw (1,-3) -- (-1,-1);
\draw (-1,-3) -- (1,-1);
\draw (-1,-3) -- (1,-3);
\draw[color=red!80, very thick](-1,-1) circle (4pt);
\draw[color=red!80, very thick](1,-1) circle (4pt);
     \end{scope}

     \begin{scope}[xshift=7cm]
     \filldraw[black] (0,0.5) circle (2pt);
\filldraw[black] (-1,-1) circle (2pt);
\filldraw[black] (1,-1) circle (2pt);
\filldraw[black] (-1,-3) circle (2pt);
\filldraw[black] (1,-3) circle (2pt);
\draw (0,0.5) -- (1,-1);
\draw (0,0.5) -- (-1,-1);
\draw (-1,-1) -- (-1,-3);
\draw (1,-3) -- (1,-1);
\draw (1,-3) -- (-1,-1);
\draw (-1,-3) -- (1,-1);
\draw (-1,-1) -- (1,-1);
\draw[color=red!80, very thick](-1,-1) circle (4pt);
\draw[color=red!80, very thick](1,-1) circle (4pt);
     \end{scope}

     \begin{scope}[xshift=10.5cm]
     \filldraw[black] (0,0.5) circle (2pt);
\filldraw[black] (-1,-1) circle (2pt);
\filldraw[black] (1,-1) circle (2pt);
\filldraw[black] (-1,-3) circle (2pt);
\filldraw[black] (1,-3) circle (2pt);
\draw (0,0.5) -- (1,-1);
\draw (0,0.5) -- (-1,-1);
\draw (-1,-1) -- (-1,-3);
\draw (1,-3) -- (1,-1);
\draw (1,-3) -- (-1,-1);
\draw (-1,-3) -- (1,-1);
\draw (-1,-1) -- (1,-1);
\draw (-1,-3) -- (1,-3);
\draw[color=red!80, very thick](1,-3) circle (4pt);
\filldraw[black] (0,-4.5) circle (2pt);
\draw (-1,-3) -- (0,-4.5);
\draw (0,-4.5) -- (1,-3);
\draw (-1,-1) -- (0,-4.5);
\draw [color=red!100, very thick] plot [smooth cycle] coordinates {(-1.3,-0.7) (-0.7,-0.7)  (0.3,-4.7)  (-0.3,-4.7) (-1.3,-2.7)};
     \end{scope}

     \begin{scope}[xshift=14cm]
     \filldraw[black] (0,0.5) circle (2pt);
\filldraw[black] (-1,-1) circle (2pt);
\filldraw[black] (1,-1) circle (2pt);
\filldraw[black] (-1,-3) circle (2pt);
\filldraw[black] (1,-3) circle (2pt);
\draw (0,0.5) -- (1,-1);
\draw (0,0.5) -- (-1,-1);
\draw (-1,-1) -- (-1,-3);
\draw (1,-3) -- (1,-1);
\draw (1,-3) -- (-1,-1);
\draw (-1,-3) -- (1,-1);
\draw (-1,-1) -- (1,-1);
\draw (-1,-3) -- (1,-3);
\filldraw[black] (0,-4.5) circle (2pt);
\draw[color=red!80, very thick](-1,-1) circle (4pt);
\draw (-1,-3) -- (0,-4.5);
\draw (0,-4.5) -- (1,-3);
\draw (-1,-1) -- (0,-4.5);
\draw (-1,-3) -- (0,0.5);
\draw [color=red!100, very thick] plot [smooth cycle] coordinates {(-1.2,-3.2) (-1.2,-2.65)  (0.7,-0.75)  (1.2,-0.75) (1.2,-3.2)};
     \end{scope}
\end{tikzpicture}
\end{center}
  \caption{Meshed non-$S_3$ graphs.}\label{non-S3}
  \label{fig:def-cR-qcR}
\end{figure}
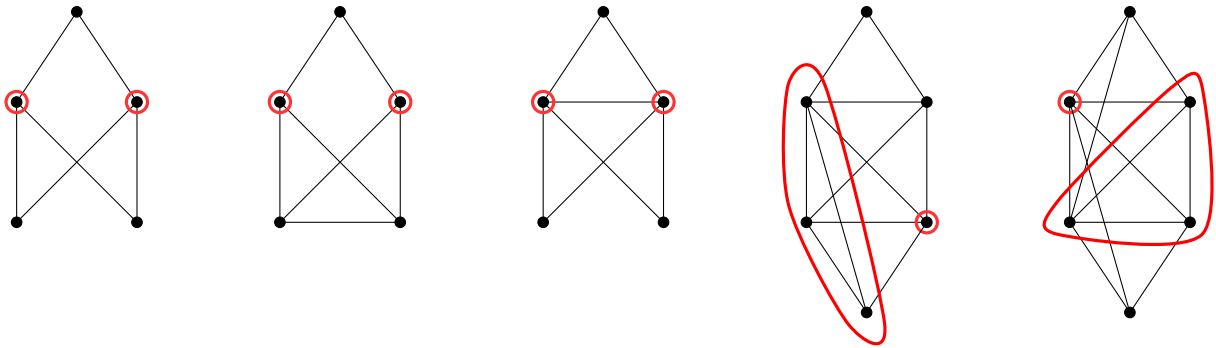

%

The main part of the proof is showing sufficiency. For this, in the subsequent subsections we present properties of meshed graphs not containing the graphs from Figure~\ref{non-S3} as induced  subgraphs.
In all subsequent results, we suppose that $G=(V,E)$ is a meshed graph not containing the graphs from Figure~\ref{non-S3} as induced  subgraphs.


\subsection{Positioning condition}

A graph $G=(V,E)$ satisfies the \emph{positioning condition (PC)} if for every vertex $b$ and every square $v_1v_2v_3v_4$
of $G,$ the equality $d(b,v_1)+d(b,v_3)=d(b,v_2)+d(b,v_4)$ holds \cite{Mau}. The positioning condition was used by Maurer \cite{Mau} to characterize the basis graphs of matroids and by the
author \cite{Ch_delta,Ch_johnson} to characterize the basis graphs of even $\Delta$-matroids and the isometric subgraphs of Johnson graphs.
It was also used in the local-to-global topological characterization
of basis graphs of matroids and even $\Delta$-matroids given in \cite{ChChOs_bgm}; namely, in this characterization, the positioning condition is replaced by a local positioning condition.
The positioning condition is equivalent to asserting that in the partition of $V$ into levels $L_i=\{ v\in V: d(b,v)=i\}$ with respect to $b$ (the \emph{leveling of $G$}), the vertices of
each square $v_1v_2v_3v_4$ lies in
one of the three positions: 1) all in one level; (2) in two levels, two adjacent vertices in each or (3) in three levels, two non-adjacent vertices in the middle level.

\begin{proposition} [Positioning condition] \label{poscond} 
$G$ satisfies the positioning condition (PC).
\end{proposition}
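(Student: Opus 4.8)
The plan is to show that in a meshed graph $G$ not containing any of the five graphs of Figure~\ref{non-S3} as an induced subgraph, every square satisfies (PC). Fix a vertex $b$ and a square $v_1v_2v_3v_4$ (so $v_1\sim v_2\sim v_3\sim v_4\sim v_1$ and $v_1\nsim v_3$, $v_2\nsim v_4$). Set $d_i=d(b,v_i)$. Since adjacent vertices differ in distance by at most $1$, and the four distances come in the cyclic pattern $d_1,d_2,d_3,d_4$ with $|d_i-d_{i+1}|\le 1$, the only possibilities (up to rotation/reflection of the square) are exactly the three ``positions'' described before the proposition, \emph{together with} the potentially bad configurations where the two values in the middle appear on \emph{adjacent} vertices of the square rather than on the diagonal. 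Concretely, after normalizing so that $k=\min_i d_i$, the candidates that would violate (PC) are: (a) $(d_1,d_2,d_3,d_4)=(k,k{+}1,k,k{+}1)$ is fine; the genuine obstructions are of the form $(k,k{+}1,k{+}2,k{+}1)$ with the two level-$(k{+}1)$ vertices adjacent (a ``staircase'' around the square), and $(k,k{+}1,k{+}1,k)$ with a level-$k$ vertex adjacent to a level-$(k{+}1)$ vertex on each side but the diagonal pairs split as $\{k,k{+}1\}$, $\{k,k{+}1\}$ — this last one actually satisfies (PC), so the real enemy is the staircase pattern. So the first step is this short case analysis reducing (PC) to ruling out the staircase: a square with vertices, in cyclic order, at levels $k, k{+}1, k{+}2, k{+}1$.

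The second and main step is to derive a forbidden induced subgraph from a staircase square. Suppose $v_1,v_2,v_3,v_4$ lie at levels $k,k{+}1,k{+}2,k{+}1$ respectively, so $v_1\in L_k$, $v_2,v_4\in L_{k+1}$, $v_3\in L_{k+2}$, with $v_1\sim v_2,v_4$ and $v_3\sim v_2,v_4$ and $v_1\nsim v_3$, $v_2\nsim v_4$. Since $d(b,v_2)=d(b,v_4)=k{+}1$ and $d(v_2,v_4)=2$, apply (QC$^-$) to the triple $b,v_2,v_4$: there is a common neighbor $x$ of $v_2$ and $v_4$ with $2d(b,x)\le d(b,v_2)+d(b,v_4)=2(k{+}1)$, i.e. $d(b,x)\le k{+}1$; but $x\sim v_2$ forces $d(b,x)\ge k$, and in fact since $x$ is a common neighbor of $v_2,v_4$ and $v_1$ is \emph{also} a common neighbor of $v_2,v_4$ with $v_1\nsim v_3$, we will want to locate $x$ relative to $v_1,v_3$. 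The idea is to look at the induced subgraph on $\{v_1,v_2,v_3,v_4,x\}$ (and, if needed, a vertex on a geodesic from $b$), distinguishing the cases $x=v_1$, $x=v_3$, $x\sim v_1$ and $x\sim v_3$ in various combinations, and matching the resulting configuration against the five graphs of Figure~\ref{non-S3}: the $K_4^-$-with-a-pendant and the two four-vertex graphs ($K_{2,3}$, $W_4^-$, and the third graph) are precisely designed to catch exactly these situations. If $x\nsim v_1$ and $x\nsim v_3$ then $\{v_1,v_2,v_3,v_4,x\}$ induces a $K_{2,3}$ with parts $\{v_2,v_4\}$ and $\{v_1,v_3,x\}$ — forbidden; if $x\sim v_1$ but $x\nsim v_3$ (or symmetrically) we get one of the other listed graphs; and the remaining sub-cases are handled by one further application of (TC) or (QC$^-$) to push $x$ toward $b$ and exhibit the fifth forbidden graph.

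The step I expect to be the main obstacle is the bookkeeping in this second part: once $x$ is adjacent to $v_1$ or to $v_3$, the subgraph on the five vertices $\{v_1,v_2,v_3,v_4,x\}$ need not itself be one of the five forbidden graphs, and one must iterate — produce a further common neighbor (of $v_1,x$ by (TC) since they may be adjacent, or of a distance-$2$ pair by (QC$^-$)) that lies strictly closer to $b$, and argue by minimality of $k$ (choosing, among all staircase squares, one with $\min_i d(b,v_i)$ minimum). So the clean way to organize the proof is: assume a staircase square with $k$ minimal; run the (QC$^-$)/(TC) arguments above; in every branch either directly exhibit an induced subgraph from Figure~\ref{non-S3}, or produce a new staircase square at level $k{-}1$, contradicting minimality. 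The base of the induction, $k=0$, is immediate since then $v_1=b$ and $v_3\in L_2$ is at distance $2$ from $b$ with two common neighbors $v_2,v_4$ of $b$ and $v_3$, and (QC$^-$) applied to $b,v_3$ produces a common neighbor forming either a forbidden configuration or contradicting $v_2\nsim v_4$. This yields (PC) for all squares of $G$.
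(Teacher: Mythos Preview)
Your initial case analysis is wrong, and this derails the whole plan. The ``staircase'' pattern $(k,k{+}1,k{+}2,k{+}1)$ that you identify as ``the real enemy'' actually \emph{satisfies} (PC): $d_1+d_3=k+(k{+}2)=2k{+}2=(k{+}1)+(k{+}1)=d_2+d_4$. Conversely, the pattern $(k,k{+}1,k,k{+}1)$ that you dismiss as ``fine'' is a genuine violation: $d_1+d_3=2k\ne 2k{+}2=d_2+d_4$. So you are trying to rule out a configuration that is already allowed, while ignoring one that must be excluded.

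Up to symmetry, the configurations that violate (PC) are exactly three: (i) $(k{+}1,k,k{+}1,k)$, i.e.\ the two non-adjacent vertices at the higher level; (ii) $(k,k,k{+}1,k)$, three vertices at one level and one at the next; (iii) $(k,k{+}1,k{+}1,k{+}1)$, one vertex at the lower level. The paper treats these as three separate cases, arguing by induction on the distance sum $\sigma=d_1+d_2+d_3+d_4$ rather than on $\min_i d_i$. Each case requires its own chain of applications of (TC) and (QC$^-$), producing auxiliary vertices whose adjacencies are then forced by comparing against all five forbidden subgraphs (not just $K_{2,3}$ and $W_4^-$; the fourth and fifth graphs are essential in Cases~2 and~3). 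When a branch does not directly yield a forbidden subgraph, it yields a new square with strictly smaller $\sigma$ still violating (PC), contradicting the induction hypothesis.

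Your detailed argument in the ``second step'' --- applying (QC$^-$) to $b,v_2,v_4$ to get a common neighbor $x$ and reading off $K_{2,3}$ when $x\nsim v_1,v_3$ --- is in fact the opening move of the paper's Case~1, but applied to the correct violating pattern $(k{+}1,k,k{+}1,k)$ rather than to the staircase. Even there, the sub-case $x\sim v_1,v_3$ requires a further (TC) step to reach the third forbidden graph. Cases~2 and~3 are substantially longer and are not addressed at all in your sketch.
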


\begin{proof} Let $b$ be any vertex and $v_1v_2v_3v_4$ be any square of $G$. We proceed by induction on the distance sum $\sigma(v_1,v_2,v_3,v_4)=d(b,v_1)+d(b,v_2)+d(b,v_3)+d(b,v_4)$. Suppose by way of contradiction that
$d(b,v_1)+d(b,v_3)\ne d(b,v_2)+d(b,v_4)$. Then we have to consider the following three cases.

\begin{case-pc} $d(b,v_1)=d(v,v_3)=k+1$ and $d(b,v_2)=d(b,v_4)=k$.
\end{case-pc}

\begin{proof} By (QC$^-$) there exists a vertex $u\sim v_2,v_4$ at distance at most $k$ from $b$. If $d(b,u)=k-1$, then $u\nsim v_1,v_3$ and  $u,v_1,v_2,v_3,v_4$ induce the first forbidden subgraph. Thus $d(b,u)=k$.
If $u\nsim v_1$ or $u\nsim v_3$, say the first, then the square $v_1v_2uv_4$ also violates (PC). Since $d(b,v_1)+d(b,u)\ne d(b,v_2)+d(b,v_4)$, we get  $\sigma(v_1,v_2,u,v_4)<\sigma(v_1,v_2,v_3,v_4)$, contrary to
induction hypothesis. Thus $u\sim v_1,v_3$.
Since $d(b,v_2)=d(b,u)$, we can apply (TC) and derive a vertex $w\sim v_2,u$ at distance $k-1$ from $b$. But then $w\nsim v_1,v_3$ and  the vertices $w,u,v_1,v_2,v_3$ induce the third forbidden graph.
\end{proof}

\begin{case-pc} $d(b,v_1)=d(b,v_2)=d(b,v_4)=k$ and $d(b,v_3)=k+1$.
\end{case-pc}

\begin{proof}
By (TC) there exists  $u\sim v_1,v_2$ and having distance $k-1$ to $b$.  Then $u\nsim v_3$. If $u\sim v_4$, then $u,v_1,v_2,v_3,v_4$ induce the second forbidden graph. Thus $d(u,v_4)=d(u,v_3)=2$
and by (TC) there exists $w\sim u,v_3,v_4$. Since $d(b,v_3)=k+1$ and $d(b,u)=k-1$, we have $d(b,w)=k$. If $w\nsim v_1$, then the square $uv_1v_4w$ also violates (PC). Since $d(b,u)+d(b,v_4)=2k-1\ne 2k=d(b,v_1)+d(b,w)$, we obtain that
$\sigma(u,v_1,v_4,w)<\sigma(v_1,v_2,v_3,v_4)$, contrary to induction hypothesis. Thus $w\sim v_1$. If $w\nsim v_2$, then $v_1,w,v_2,v_3,v_4$ induce the second forbidden graph. Thus $w\sim v_2$. Since $d(b,v_4)=d(b,w)=k$ and $u\nsim v_4$, by (TC) there exists $t\sim w,v_4$ at distance $k-1$ from $b$ and different from $u$. Then $t\nsim v_3$.
If $t\nsim v_1$,  then $v_1,v_3,v_4,w,t$ induce the third forbidden graph. Thus $t\sim v_1$. Then $t\nsim v_2$, else $t$ can play the role of $u$ and we know that $u\nsim v_4$ and $t\sim v_4$. Then $t\sim u$, otherwise $v_1,w,u,v_2,t$ induce the third forbidden graph.  But then $v_1,u,t,w,v_2,v_4$ induce the fifth forbidden graph.
\end{proof}

\begin{case-pc} $d(b,v_1)=k$ and $d(b,v_2)=d(b,v_3)=d(b,v_4)=k+1$.
\end{case-pc}

\begin{proof}
By (TC) there exists $u\sim v_2,v_3$ at distance $k$ from $b$. If $u\sim v_1$, then to avoid that $u,v_1,v_2,v_3,v_4$ induce the second forbidden graph we must have $u\sim v_4$.
Analogously, if $u\sim v_4$, if $u\nsim v_1$ then the square $v_2v_1v_4u$ violates (PC) and $\sigma(v_2,v_1,v_4,u)<\sigma(v_1,v_2,v_3,v_4)$, contrary to the induction hypothesis.
Therefore the vertex $u$ is either adjacent to both $v_1$ and $v_4$ or to neither of these vertices.  First suppose that  $u\sim v_1,v_4$. Since $d(b,u)=d(b,v_1)=k$, by (TC) there exists
$t\sim u,v_1$ at distance $k-1$ from $b$. Then $t,u,v_1,v_2,v_4$ induce the third forbidden graph.

Now, suppose that $u\nsim v_1,v_4$. Since $d(u,v_1)=d(u,v_4)=2$, by (TC) there exists  $w\sim u,v_1,v_4$. Note that $d(b,w)\in \{ k,k+1\}$. If $w\nsim v_2$, then $d(b,u)+d(b,v_1)=2k$ and $d(b,w)+d(b,v_2)\ge 2k+1$. Since
$\sigma(u,v_2,v_1,w)<\sigma(v_1,v_2,v_3,v_4)$, the square $uv_2v_1w$ violates the induction hypothesis. Hence $w\sim v_2$. If $w\nsim v_3$, then  $u,w,v_2,v_3,v_4$ induce the second forbidden graph.
Thus $w\sim v_3$.

If $d(b,w)=k=d(b,v_1)$, by (TC) there exists a vertex $t\sim w,v_1$ at distance $k-1$ from $b$. Then the vertices $t,w,v_1,v_2,v_4$ induce the third forbidden graph.
Thus $d(b,w)=k+1$. By (QC$^-$) there exists a vertex $s\sim u,v_1$ at distance at most $k$ from $b$. Necessarily, $s\ne w,v_2$. If $d(b,s)=k-1$, then $s\nsim v_2,w$ and the vertices $s,u,v_1,v_2,w$ induce the second forbidden graph.
Thus $d(b,s)=k$. If $s$ is not adjacent to one of the vertices $v_2$ or $w$, say $s\nsim w$, then $uwv_1s$ is a square violating (PC) with $\sigma(u,w,v_1,s)<\sigma(v_1,v_2,v_3,v_4)$, a contradiction with induction hypothesis.
Hence $s\sim v_2,w$. Since $d(b,s)=d(b,v_1)=k$, by (TC) there exists a vertex $t\sim s,v_1$ at distance $k-1$ from $b$. If $t\nsim u$, then  $s,t,u,w,v_1,v_2$ induce the fourth forbidden graph. But if $t\sim u$, then $t,u,w,v_1,v_2$ induce the second forbidden graph. This contradiction concludes the analysis of the last case.
\end{proof}
This concludes the proof that $G$ satisfies (PC).
\end{proof}

\subsection{Convexity of intervals}

In this subsection, we prove the  following useful result.

\begin{proposition} [Intervals are convex] \label{meshed-interval} 
The intervals of $G$ are convex.
\end{proposition}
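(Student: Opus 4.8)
The goal is to show that in a meshed graph $G$ avoiding the five forbidden subgraphs of Figure~\ref{non-S3}, every interval $[u,v]$ is convex. By Theorem~\ref{lc->c} (local convexity implies convexity for meshed graphs), it suffices to prove that $[u,v]$ is \emph{locally convex}: whenever $x,y\in [u,v]$ with $d(x,y)=2$, we have $[x,y]\subseteq [u,v]$. So first I would reduce to this local statement, and fix $x,y\in [u,v]$ at distance $2$, together with an arbitrary common neighbor $z$ of $x$ and $y$; the task is to show $z\in [u,v]$.

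\textbf{Main step: a single square.} Write $i=d(u,x)$, $j=d(v,x)$, so $d(u,v)=i+j$; similarly $d(u,y)=i'$, $d(v,y)=j'$ with $i'+j'=i+j$, and since $x,y$ lie on $(u,v)$-geodesics and $d(x,y)=2$, we have $|i-i'|\le 2$ and in fact $\{i',j'\}$ differs from $\{i,j\}$ by at most $2$ in each coordinate. I would first dispose of the case where $z$ already lies on a geodesic — e.g.\ if $d(u,z)<\min(d(u,x),d(u,y))$ one checks directly that $z\in[u,v]$ — so the interesting case is $d(u,z)=i$ (or symmetrically $d(u,z)=i'$), forcing (up to relabeling) $d(u,x)=d(u,y)=i$, $d(v,x)=d(v,y)=j$, and $z$ at distance either $i$ or $i+1$ from $u$. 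Now $xzy$ together with the geodesics is a square-like configuration, and here is where the positioning condition (Proposition~\ref{poscond}) enters: applied to the vertex $u$ and the square spanned by $x,z,y$ and a common neighbor (or, when $x\sim y$ is excluded since $d(x,y)=2$, applied after producing a fourth vertex via (TC) or (QC$^-$)), (PC) pins down $d(u,z)$. Concretely: if $x,y$ have a common neighbor $w\neq z$, then $xwyz$ is a square and (PC) with base $u$ gives $d(u,z)+d(u,w)=d(u,x)+d(u,y)=2i$, and with base $v$ gives $d(v,z)+d(v,w)=2j$; combined with $d(u,z)\le i+1$ and $d(v,z)\le j+1$ this forces $d(u,z)+d(v,z)=i+j=d(u,v)$, i.e.\ $z\in[u,v]$. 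The remaining subcase is that $z$ is the \emph{unique} common neighbor of $x$ and $y$; then I would use (TC) and the forbidden-subgraph hypothesis to either produce a second common neighbor (contradicting uniqueness, or landing us in the previous case) or directly exhibit one of the five forbidden induced subgraphs among $u$- and $v$-side geodesic vertices together with $x,y,z$.

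\textbf{Where the work is.} The delicate part is the bookkeeping in the last subcase, mirroring the case analysis already carried out in the proof of Proposition~\ref{poscond}: starting from $d(u,x)=d(u,y)=i$ and a neighbor $t\sim x$ on a $(u,x)$-geodesic, one applies (TC)/(QC$^-$) to $t$ and $y$, then propagates down toward $u$ and toward $v$, at each branching either merging two vertices (reducing to an earlier case by a minimality/induction argument on $\sigma=d(b,\cdot)$ sums) or closing off a forbidden configuration — typically the first forbidden graph (a $K_4^-$ with the two apex-like vertices circled) when a $K_{2,3}$-type obstruction appears, or the third/fourth/fifth forbidden graphs when an extra chord forces the configuration. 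I expect this to be the main obstacle: verifying that every branch of the resulting tree of cases terminates in one of the five listed graphs, with no sixth obstruction slipping through. I would organize it exactly as in Proposition~\ref{poscond}, doing induction on the distance sum $d(u,z)+d(u,x)+d(u,y)$ (or the analogous sum from $v$), so that whenever a newly produced vertex can replace $z$ in a configuration of the same type but with smaller distance sum, the induction hypothesis applies. Once local convexity of $[u,v]$ is established, Theorem~\ref{lc->c} immediately upgrades it to convexity, completing the proof.
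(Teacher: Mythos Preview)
Your reduction to local convexity via Theorem~\ref{lc->c} is correct and is exactly what the paper does. Your observation that if $x,y$ have a second common neighbor $w$ with $w\nsim z$, then applying (PC) to the square $xzyw$ from both $u$ and $v$ and adding gives $(d(u,z)+d(v,z))+(d(u,w)+d(v,w))=2d(u,v)$, forcing $z,w\in[u,v]$, is clean and correct; the paper uses this same trick, but only locally inside specific sub-cases rather than as an organizing principle.

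There is, however, a genuine gap in the reduction you sketch before invoking (PC). The claim that one can ``force (up to relabeling) $d(u,x)=d(u,y)=i$'' is not justified: when $d(u,x)=i$ and $d(u,y)=i+1$ one can have $d(u,z)=i+1$ and $d(v,z)=j$, so $z\notin[u,v]$, and nothing you have written disposes of this. In fact, in this asymmetric case your (PC) argument shows that \emph{no} square $xzyw$ can exist (it would give $d(u,w)+d(v,w)<d(u,v)$), so you land immediately in the ``remaining subcase'' with no shortcut available. The paper treats exactly this situation as its Case~3, and handles it by passing to a quasi-median $ux'y'$ of $u,x,y$ (forcing the configuration down to $d(u,v)=3$) and then running a forbidden-subgraph analysis; this is real work that your plan does not replace.

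More broadly, your ``remaining subcase'' is not a residual cleanup but the bulk of the proof. The paper does not organize it by induction on a distance sum as you propose; instead it first replaces $u$ (resp.\ $v$) by a closest vertex in $[u,x]\cap[u,y]$ (resp.\ $[v,x]\cap[v,y]$), reducing to $[u,x]\cap[u,y]=\{u\}$ and $[v,x]\cap[v,y]=\{v\}$, and then splits into three cases: $u\sim x,y$; $d(u,x)=d(u,y)\ge 2$ (where the metric-triangle machinery forces $d(u,v)=4$ and one argues inside that small window); and $d(u,x)<d(u,y)$. Each case is a page of (TC)/(QC$^-$)/(PC) moves terminating in one of the five forbidden graphs. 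Your plan is compatible with this, but as written it neither reduces the case analysis nor supplies a mechanism (like the quasi-median reduction) that pins the configuration down to bounded size --- and that pinning is what makes the forbidden-subgraph check finite.
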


\begin{proof} Since the intervals induce connected subgraphs, by Theorem \ref{lc->c}
it suffices to prove that the intervals of $G$ are locally-convex.
Suppose by way of contradiction that there exist $u,v\in V,$ $x,y\in [u,v]$
with $d(x,y)=2,$ and  $z\in [x,y]\setminus [u,v].$ If $d(u,v)=2$, then  $u,v,x,y,z$ induce one of the
first two forbidden graphs (depending of whether $z$ is adjacent or not to one of  $u$ or $v$).
Thus $d(u,v)=k\ge 3.$  Further,  $[u,x]\cap [u,y]=\{ u\}$,
otherwise $u$ can be replaced  by a  closest to $x$ and $y$ vertex from the
intersection.  Analogously, $[v,x]\cap [v,y]=\{ v\}.$ We distinguish the following cases:

\begin{case-int} $u$ is adjacent to $x$ and $y$.
\end{case-int}

\begin{proof}
Then $d(x,v)=d(y,v)=k-1$ and $k-1\le d(z,v)\le k$. If $u\nsim z$, then   $u,x,z,v$ induce a square which violates (PC) with respect to $v$, contrary to Proposition \ref{poscond}.
Thus $u\sim z$. This implies that $d(z,v)=k$, otherwise  $z\in [u,v]$, contrary to our choice of $z$.   By (QC$^-$) there exists  $w\sim x,y$ at distance $k-1$ or $k-2$ from $v$. Since $x,y\in [u,v]\cap [z,v]$, necessarily
$w\ne u,z$.
If $d(w,v)=k-2$, then $w\nsim u,z$ and $x,y,z,u,w$ induce the second forbidden graph. Thus $d(w,v)=k-1$. If $w$ is not adjacent to $u$ or to $z$, then one of the quadruplets
$x,w,y,u$ or $x,w,y,z$ induce a square violating (PC). Thus $w\sim u,z$. Since $d(w,v)=d(x,v)=k-1$, by (TC) there exists  $t\sim x,w$ at distance $k-2$ from $v$. If $t\nsim y$, then
$t,x,y,z,u,w$ induce the fourth (if $t\nsim z$) or the fifth (if $t\sim z$) forbidden graph. If $t\sim y$, then  $t,x,y,z,u$ induce the second forbidden graph.
\end{proof}

The same proof shows that $v$ cannot be adjacent to both $x$ and $y$. So, further we can assume that $d(u,v)=k\ge 3$ and the vertices $x$ and $y$ are not both adjacent to $u$ or to $v$.

\begin{case-int} $d(u,x)=d(u,y)=k'\ge 2$.
\end{case-int}

\begin{proof}
Then $d(v,x)=d(v,y)=k-k'=:k''\ge 2$. Since $[u,x]\cap [u,y]=\{ u\}$, this also implies that $[x,u]\cap [x,y]=\{ x\}$ and $[y,u]\cap [y,x]=\{ y\}$. Indeed, if there exists $s\in [x,u]\cap [x,y]=\{ x\}$ different from $x$, then $s$ is a common neighbor of $x$ and $y$ at distance $k'-1$ from $u$, which is impossible because $[u,x]\cap [u,y]=\{ u\}$. Thus $uxy$ is a metric triangle. Since  metric triangles in meshed graphs are equilateral, we conclude that $k'=d(u,x)=d(u,y)=d(x,y)=2$. Analogously,
we conclude that $k''=2$ and $vxy$ is a metric triangle of size 2. Consequently, $d(u,v)=4$. Since $z\notin [u,v]$, either $d(u,z)=3$ or $d(v,z)=3$, say the first.

Since $x,y\in [z,u]$, by (QC$^-$) there exists  $w\sim x,y$ different from $z$ at distance at most $2$ from $u$. Since $[u,x]\cap [u,y]=\{ u\}$, necessarily $d(u,w)=2$. If $w\nsim z$, since $d(u,z)+d(u,w)=5$ and $d(u,x)+d(u,y)=4$,  $xzyw$ is a square violating (PC), contrary to Proposition \ref{poscond}. Thus $w\sim z$. Since $d(u,w)=d(u,x)=d(u,y)=2$, by (TC) there exist vertices $x'\sim u,x,w$ and $y'\sim u,y,w$. Since $[u,x]\cap [u,y]=\{ u\}$, $x'\ne y'$ and $x'\nsim y, y'\nsim x$. Analogously, if $d(w,v)=2$, then by (TC) there exist $x''\sim x,w,v$ and $y''\sim y,w,v$. Since $[v,x]\cap [v,y]=\{ v\}$, we conclude that $x''\ne y''$ and $x''\nsim y, y''\nsim x$.
Notice also that $z\ne x'',y''$. To avoid the third forbidden graph induced either by  $x',x,w,z,x''$ or by  $y',w,y,z,y''$,  $z$ must be adjacent to $x''$ and $y''$ (notice that $z\nsim x',y'$ because $d(u,z)=3$).
Then $x''$ and $y''$ must be adjacent, otherwise $z\in [x'',y'']$ and we are in the conditions of the first case. But then $x,x'',w,z,y,y''$ induce the fifth forbidden graph. This implies that $d(w,v)=3$ and thus $w\in [x,y]\setminus [u,v]$.
If $x'\nsim y'$, then $w\in [x',y']\setminus [u,v]$ and we are in the conditions of the first case. Thus $x'\sim y'$.

Since $d(w,v)=3$, we have $x,y\in [w,v]$ and $z\in [x,y]$. Consequently, either $z\notin [w,v]$ and we are in the conditions of the first case and we get a contradiction or $z\in [w,v]$ holds. Thus we can suppose that $d(z,v)=2$. By (TC), there exist vertices $x'''\sim x,z,v$ and $y'''\sim y,z,v$. Since $[v,x]\cap [v,y]=\{ v\}$, necessarily $x'''\ne y'''$ and $x'''\nsim y, y'''\nsim x$. If $x'''\nsim y'''$, then $x''',y'''\in [u,v]$ and $z\in [x''',y''']\setminus [u,v]$ and we are in the conditions of the first case. Thus $x'''\sim y'''$.  Since $d(x',v)=d(y',v)=3$ and $x'\sim y'$, by (TC) there exists  $s\sim x',y'$ at distance 2 from $v$. Since  $d(w,v)=3$ and $x\nsim y', y\nsim x'$, $s$ is
different from $x,y,w$. Then $s\sim w$, otherwise  $u,x',y',s,w$ induce the third forbidden graph. If $s\nsim x$, then $s,x\in [x',v]$ and $w\in [s,x]\setminus [x',v]$ because $[x',v]\subset [u,v]$.
Consequently, $[x',v]$ is a non-convex interval and $d(x',v)=3<d(u,v)$, contrary to the choice of the interval $[u,v]$. Thus $s\sim x$. Similarly we conclude that $s\sim y$. Then $u,x',y',s,w,x$ induce the fourth forbidden graph and $x,x',s,w,y',y$ induce the fifth forbidden graph.
\end{proof}

\begin{case-int}  $d(u,x)<d(u,y)$ and $d(v,x)>d(v,y).$
\end{case-int}

\begin{proof}
Then $d(u,y)=d(u,x)+1, d(v,x)=d(v,y)+1,$ otherwise $z\in [u,v].$ Consider a quasi-median $u'x'y'$ of the triplet $u,x,y$.
Since $[u,x]\cap [u,y]=\{ u\}$, necessarily $u'=u$. Since $ux'y'$ is an equilateral metric triangle and $d(u,y)=d(u,x)+1,d(x,y)=2$, from all this we conclude that
$ux'y'$ has size 1, i.e., $x'=x, y'\sim x,u,y,$ and $u\sim x$. Analogously, any quasi-median of the triplet $x,y,v$ is a metric triangle of size 1 of the form $vx''y$, where $x''$ is adjacent to $x$.
Consequently, $d(u,v)=3$. Since $z\notin [u,v]$ we have $d(z,u)=d(z,v)=2$.

First suppose that $x''\sim y'$. If $z$ is not adjacent to one of the vertices $x'',y'$, say $z\nsim y$, then we get a square $xzyy'$ violating (PC): $d(v,x)+d(v,y)=3$ and $d(v,z)+d(v,y')=4$.
Hence $z\sim x'',y'$. But then each of the sets $u,x,y',z,x'',y$ and $v,x'',y,z, x,y'$ induces the fourth forbidden graph. Thus $x''\nsim y'$. To avoid the first or the second forbidden graph
induced by $x,y,z,x'',y'$, $z$ must be adjacent to $x''$ and $y'$. Since $d(x,v)=d(y',v)=2$, by (TC) there exists  $s\sim x,y',v$. Since $d(z,v)=2$ and $x''\nsim y'$, $s$ is different from $x'',y,z$.
Then $s\sim z$, otherwise $u,x,y',z,s$ induce the third forbidden graph.
To avoid the second forbidden graph induced by $z,x'',s,y,v$, $s$ must be adjacent to  $x''$ or to $y$. If $x''\sim s$, then $u,x,y',z,x'',s$ induce the fourth forbidden graphs and if $s\sim y$, then
$u,x,y',z,s,y$ also induce the fourth forbidden graph.
\end{proof}

This  concludes the proof of the convexity of intervals $[u,v]$ of $G$.
\end{proof}

\subsection{Convexity of the shadows $y/x$}

We continue with the first result about convexity of shadows.

\begin{proposition} [Shadows $y/x$ are convex] \label{meshed-shadow} 
All shadows $y/x$ of $G$ are convex.
\end{proposition}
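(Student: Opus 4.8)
The plan is to derive convexity of $y/x$ from Theorem~\ref{lc->c}: since $G$ is meshed, it suffices to show that $y/x$ induces a connected subgraph and is locally-convex. We may assume $x\ne y$ (otherwise $y/x=V$). Because intervals are convex (Proposition~\ref{meshed-interval}), $y/x=\{z\in V: y\in[x,z]\}=\{z\in V: d(x,z)=d(x,y)+d(y,z)\}$. Connectedness is immediate: if $z\in y/x$ and $z'$ is a neighbour of $z$ on a shortest $(z,y)$-path, then $d(x,z')\le d(x,y)+d(y,z')=d(x,z)-1\le d(x,z')$, so equality holds throughout and $z'\in y/x$; iterating joins $z$ to $y$ inside $y/x$. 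The same inequality yields the key closure property that does most of the work: \emph{if $z\in y/x$ and $w\in[y,z]$, then $w\in y/x$}, since $d(x,w)\ge d(x,z)-d(w,z)=d(x,y)+d(y,w)\ge d(x,w)$.

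Now let $z_1,z_2\in y/x$ with $d(z_1,z_2)=2$, let $w$ be a common neighbour of $z_1,z_2$, put $k=d(x,y)$, and assume $d(y,z_1)\le d(y,z_2)$. If $d(y,z_2)=d(y,z_1)+2$, then $z_1\in[y,z_2]$, so $w\in[z_1,z_2]\subseteq[y,z_2]$ by convexity of $[y,z_2]$, hence $w\in y/x$ by the closure property. Likewise, if $d(y,w)=d(y,z_i)-1$ for some $i$, then $w\in[y,z_i]\subseteq$-argument gives $w\in y/x$. Thus it remains to treat the cases where $d(y,z_2)\in\{d(y,z_1),d(y,z_1)+1\}$ and $d(y,w)\ge d(y,z_2)$; since $d(y,w)\le d(y,z_1)+1$, only two configurations survive, namely (a) $d(y,z_1)=d(y,z_2)=a$ with $d(y,w)\in\{a,a+1\}$, and (b) $d(y,z_1)=a$, $d(y,z_2)=a+1=d(y,w)$. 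In each of these, the analogous discussion of the admissible values of $d(x,w)$ (which lie in $\{k+a-1,k+a,k+a+1\}$) leaves one or two ``bad'' values that must be excluded.

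To finish I would argue by contradiction, fixing a counterexample $(x,y,z_1,z_2,w)$ that minimises a suitable quantity (e.g.\ $d(x,z_1)+d(x,z_2)$, or first $d(x,y)$ and then $d(y,z_1)+d(y,z_2)$), and then — exactly as in the proof of Proposition~\ref{meshed-interval} — invoke the triangle condition (Lemma~\ref{equilateral}), the weak quadrangle condition $(\mathrm{QC}^-)$, and the positioning condition (Proposition~\ref{poscond}) to generate auxiliary vertices. In every surviving configuration one shows that the relevant five or six vertices induce one of the graphs of Figure~\ref{non-S3}, or else that some auxiliary vertex lies in an interval $[y,z_i]$ (so the closure property applies), or that the convexity of intervals already forces $w\in y/x$, or that the minimising quantity can be strictly decreased. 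The main obstacle is precisely this last case analysis: with two reference points $x$ and $y$ in play and $y/x$ not being an interval, the configurations proliferate, and one must always be able to land on one of the five forbidden subgraphs. That these exclusions are genuinely needed is seen already from $K_{2,3}$ itself, whose shadow $l/t$ (with $t$ the apex and $l$ a degree-$3$ vertex) is $\{l\}\cup\{$bottom vertices$\}$ and is not convex. I therefore expect the bookkeeping, rather than any single conceptual step, to be the crux — comparable to, but somewhat more intricate than, the proof of convexity of intervals.
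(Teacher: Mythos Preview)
Your overall plan---reduce to local convexity via Theorem~\ref{lc->c}, set up a minimal counterexample, and then run a case analysis using (TC), (QC$^-$), (PC) and the five forbidden subgraphs---is exactly the paper's strategy, and your preliminary observations (connectedness, the closure property $w\in[y,z]\Rightarrow w\in y/x$, the easy interval-convexity cases) are all correct and used.

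What you are missing is a structural simplification that the paper exploits and that cuts the bookkeeping you worry about roughly in half. The paper \emph{first} proves the result only for adjacent $x,y$. Once that is done, the general case follows by a short induction on $d(x,y)$: pick a neighbour $x'$ of $x$ in $[x,y]$; by induction $y/x'$ is convex, and by the adjacent case $x'/x$ is convex. If $z_1,z_2\in y/x$ then $x'\in[x,y]\subseteq[x,z_i]$ and $y\in[x',z_i]$, so $z_1,z_2\in(x'/x)\cap(y/x')$; hence any $z\in[z_1,z_2]$ lies in this intersection, giving $x'\in[x,z]$ and $y\in[x',z]$, whence $y\in[x,z]$. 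Thus the only genuine work is the adjacent case, where the crucial gain is that $|d(x,\cdot)-d(y,\cdot)|\le 1$ everywhere: this collapses many of the distance configurations you list. In the paper's execution the minimal counterexample is taken with respect to the lexicographic vector $(d(y,z_1)+d(y,z_2),\,d(y,z),\,d(x,z))$, and the analysis splits cleanly into the base case $d(y,z_1)+d(y,z_2)=2$ and two subcases ($d(x,z_1)=d(x,z_2)$ versus $|d(x,z_1)-d(x,z_2)|=1$) for the inductive step.

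So your sketch is not wrong, but as written it would force you to carry an arbitrary $d(x,y)$ through the entire case analysis, which is both unnecessary and the source of the proliferation you anticipate. Insert the reduction to $x\sim y$ and the argument becomes comparable in length and difficulty to the proof of Proposition~\ref{meshed-interval}.
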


\begin{proof} First suppose that $x$ and $y$ are adjacent and we will prove that the shadow $y/x=\{ z\in V: y\in [z,x]\}$ is convex. Note that if $v\in y/x$, then $[y,v]\subseteq y/x$, thus  $y/x$ induces a connected subgraph of $G$. By Theorem \ref{lc->c}
it suffices to prove that $y/x$ is locally-convex. Let $z_1,z_2\in y/x, d(z_1,z_2)=2$, and suppose that there exists $z\in [z_1,z_2]$ not belonging to the shadow $y/x$. Among all triplets violating the local-convexity of $y/x$, suppose that the triplet $(z_1,z_2,z)$
lexicographically minimizes the vector $p(z_1,z_2,z)=(p_1,p_2,p_3)=(d(y,z_1)+d(y,z_2),d(y,z),d(x,z))$.

First, let $p_1=2$. If $z_1$ or $z_2$ coincides with $y$, say $z_1=y$, then $z_1=y\in [x,z_2]$ and since $z\in [z_1,z_2]$ we conclude that $y=z_1\in [y,z]$, yielding $z\in y/x$.
So, suppose that $z_1$ and $z_2$ are different from $y$. Since $p_1=2$, this implies that $y\sim z_1,z_2$. Then $d(x,z_1)=d(x,z_2)=2$. Since $z\notin y/x$, we get
$d(x,z)\le 2$. If $d(x,z)=1$, then $x,y,z_1,z_2,z$ induce the first forbidden graph if $z\nsim y$ and the second forbidden graph if $z\sim y$. If $d(x,z)=2$, then we can suppose that $d(y,z)=2$, otherwise $z\in y/x$. By (TC)
there exists $w$ adjacent to $x,y,z$. Then $w$ must be adjacent to both $z_1$ and $z_2$, otherwise  $y,z,w,z_1,z_2$ induce  either the first or the second forbidden graph. But then $x,w,y,z_1,z_2$ induce the
third forbidden graph. This concludes the analysis of the case $p_1=2$.

Now, suppose that $d(z_1,y)+d(z_2,y)=p_1>2$. Suppose without loss of generality that $d(x,z_2)\ge d(x,z_1)=:k$. Since $y\in [x,z_1]\cap [x,z_2]$, we have $d(y,z_2)\ge d(y,z_1)=k-1$ and $d(x,z_2)-d(x,z_1)=d(y,z_2)-d(y,z_1)$.
Therefore we can assume that $d(x,z_2)-d(x,z_1)\le 1$, otherwise $[z_1,z_2]\subset [y,z_2]$ and thus $z\in x/y$. Furthermore, we can suppose that $d(x,z)\le k$, otherwise, if $d(x,z)>d(x,z_1)$, then $z_1\in [z,x]$ and since $y\in [z_1,x]$,
we conclude that $y\in [x,z]$ and again $z\in y/x$. Furthermore, if $d(y,z)<d(y,z_1)$, then $z\in [z_1,y]$, and since $y\in [z_1,x]$, we would conclude again that $y\in [x,z]$ and $z\in y/x$. For the same reason, $d(y,z)\ge d(y,z_2)$.
Summarizing, we deduce  that  $d(z,y)\ge d(z_2,y)\ge d(z_1,y)=k-1$ and $d(x,z)\le d(x,z_1)=k\le d(x,z_2)$. We distinguish two cases.

\begin{case-x|y} $d(x,z_1)=d(x,z_2)=k$.
\end{case-x|y}

\begin{proof}
Then $d(y,z_1)=d(y,z_2)=k-1$. First suppose that $d(z,y)=k-1$. Then applying (TC), we will find  $z'\sim z_1,z$ at distance $k-2$ from $y$. Then $z'\in [y,z_1]\subseteq y/x$. Since $d(y,z')+d(y,z_2)<p_1$, $z'$ must be adjacent to
$z_2$, otherwise $z\in [z',z_2]$ and we obtain a contradiction with the minimality choice of the triplet $(z_1,z_2,z)$. Since $z'\in [z_1,y]$ and $y\in [z_1,x]$, we conclude that $y\in [x,z']$, i.e., $d(x,z')=k-1$. On the other hand,
we know that $d(x,z)\le k$. If $d(x,z)=k$, since $d(z,y)\le d(z',y)+1=k-1$, then we get  $y\in [x,z]$ and whence $z\in y/x$. Thus  $d(x,z)=k-1=d(x,z')$. By (TC) there exists  $w\sim z,z'$ at distance $k-2$ from $x$.
Since $d(x,z_1)=d(x,z_2)=k$, $w$ is not adjacent to $z_1,z_2$, thus $z_1,z_2,z,z',w$ induce the third forbidden graph. This contradiction shows that $d(y,z)=k$. Consequently, $d(x,z)=\{ k-1,k\}$.

Since $z_1,z_2\in [z,y]$, by (QC$^-$) there exists  $s\sim z_1,z_2$ different from $z$ at distance $k-1$ or $k-2$ from $y$. If $d(y,s)=k-2$, then $s\in [y,z_1]\subseteq x/y$, thus $d(x,s)=k-1$. Since $d(y,z)=k$,
$s$ and $z$ are not adjacent. But then the square $sz_1zz_2$ violates the positioning condition (PC) because $d(x,z_1)+d(x,z_2)=2k$ and $d(x,s)+d(x,z)\le k-1+k=2k-1$. Consequently,   $d(y,s)=k-1$.
Since $d(y,s)+d(y,z)=2k-1$ and $d(y,z_1)+d(y,z_2)=2k-2$, (PC) implies that $sz_1zz_2$ cannot be a square, i.e., $s\sim z$. Since $d(y,s)<d(y,z)$ and $s\in [z_1,z_2]$, we get that $p(z_1,z_2,s)<p(z_1,z_2,z)$.  From the
minimality choice of the triplet $(z_1,z_2,z)$ it follows that $s\in y/x$. This implies that $d(x,s)=d(y,s)+1=k$. Since $d(y,z_1)=d(y,s)=k-1$, by (TC) there exists $w\sim z_1,s$ at distance $k-2$ from $y$.
Since $w\in [y,z_1]$, we have $w\in y/x$, and thus $d(x,w)=k-1$.

If $d(x,z)=k=d(x,s)$, by (TC) there exists $t\sim s,z$ at distance $k-1$ from $x$. To avoid the third forbidden graph induced by $t,s,z,z_1,z_2$, the vertex $t$ must be adjacent to at least one of the vertices $z_1,z_2$, say $t\sim z_1$.
If $t$ is also adjacent to $z_2$, then $t\in [z_1,z_2]$ and $d(y,t)\le k=d(y,z)$ and $d(x,t)=k-1<d(x,z)$ and we conclude that $p(z_1,z_2,t)<p(z_1,z_2,z)$. By the minimality choice of  $(z_1,z_2,z)$
we conclude that $t\in y/x$, which would yield $d(y,t)=k-2$. But this is impossible because $z\sim t$ and $d(y,z)=k$. This proves that $t\nsim z_2$. Since $z\in [t,z_2]$ and $p(t,z_2,z)<p(z_1,z_2,z)$, from
the minimality choice of  $(z_1,z_2,z)$ we conclude that $t\notin y/x$, thus $d(y,t)\ge k-1$.
Furthermore, $w\ne t$ (since $d(y,t)=k-1$) and $w\nsim z,z_2$ ($w\nsim z$ since $d(y,z)=k$ and $d(y,w)=k-2$ and $w\nsim z_2$ because, by what has been shown above, $z_1$ and $z_2$
do not have a common neighbor at distance $k-2$ from $y$). If $w\sim t$, then  $s,t,w,z,z_1,z_2$ induce the fourth forbidden graph.
If $w\nsim t$, then $s\in [t,w]$. Since $d(x,s)=k=d(x,z_1)$ and $t,w\in [x,z_1]$, we obtain a contradiction with the convexity of the interval $[x,z_1]$ (Proposition \ref{meshed-interval}).
Finally, suppose that $d(x,z)=k-1=d(x,w)$.  Since $d(x,z_1)=k$, we have $z,w\in [z_1,x]$. Since $w\nsim z$, we have $s\in [w,z]$ and since $d(x,s)=k$, we obtain a
contradiction with the convexity of the interval $[x,z_1]$ (Proposition \ref{meshed-interval}). This completes the analysis of the case $d(x,z_1)=d(x,z_2)=k$.
\end{proof}

\begin{case-x|y} $d(x,z_1)=k$ and $d(x,z_2)=k+1$.
\end{case-x|y}

\begin{proof}
Then $d(y,z_1)=k-1$ and $d(y,z_2)=k$. Also recall that $d(y,z)\ge d(y,z_2)=k$ and $d(x,z)\le d(x,z_1)=k$. Since $z$ is adjacent to $z_1$ and $z_2$,  from the equalities $d(y,z_1)=k-1$ and $d(x,z_2)=k+1$ we deduce that $d(y,z)=d(y,z_2)=k$ and $d(x,z)=d(x,z_1)=k$.
By (TC) there exist vertices $t\sim z,z_1$ and $s\sim z,z_2$ such that $d(x,t)=k-1$ and $d(y,s)=k-1$. Note that $s\in [y,z_2]\subset y/x$, thus $d(x,s)=k$. Since $d(x,z_2)=k+1$, necessarily $t\nsim z_2$ and thus $t\ne s$. If $s\nsim z_1$, then
$z\in [z_1,s]$. Since $p(z_1,s,z)<p(z_1,z_2,z)$, we obtain a contradiction with the minimality choice of the triplet $(z_1,z_2,z)$. Hence $s\sim z_1$. Since $z\in [t,z_2]$ and $d(x,t)=k-1, d(x,z_2)=k+1$, the vertex $t$ cannot belong to  $y/x$.
We assert that $t$ can be chosen to be adjacent to $s$. Indeed, since $d(x,s)=d(x,z)=k$, by (TC) there exists $t'\sim s,z$ at distance $k-1$ from $x$. Since $d(x,z_2)=k+1$, $z_2\nsim t'$. To avoid the third forbidden graph
induced by $s,t',z,z_1,z_2$, the vertices $t'$ and $z_1$ must be adjacent. Thus $t'$ can play the role of $t$, i.e., further we can suppose that $s\sim t$.

Since $d(y,z_1)=d(y,s)=k-1$, by (TC) there exists  $w\sim z_1,s$ at distance $k-2$ from $y$. Since $w\in [y,z_1]\subset y/x$ and $t\notin x/y$,  $t$ and $w$ are different. If $t\nsim w$, then $t,w\in [x,z_1]$ and $s\in [t,w]$.
Since $d(x,s)=d(x,z_1)=k$, necessarily $s\notin [x,z_1]$, and we get a contradiction with the convexity of  $[x,z_1]$ (Proposition \ref{meshed-interval}). Consequently, $t\sim w$. But then $s,t,w,z,z_1,z_2$ induce the fourth forbidden graph.
This complete the analysis of the case $d(x,z_1)=k$ and $d(x,z_2)=k+1$ and the proof of the local-convexity of $y/x$. Thus $y/x$ is convex if $x$ and $y$ are adjacent.
\end{proof}

Now suppose that $x$ and $y$ are not adjacent and we proceed by induction on $d(x,y)$. The first part of the proof covered the case $d(x,y)=1$. Now suppose that $d(x,y)=k>1$. Let $x'$ be any neighbor of $x$ in  $[x,y]$.
Since $d(x',y)=k-1$, the shadow $y/x'$ is convex. By the first part of the proof, the shadow $x'/x$ is also convex. Now, pick any  $z_1,z_2\in y/x$ and any $z\in [z_1,z_2]$. Since $x'\in [x,y]\subset [x,z_1]\cap [x,z_2]$, we
conclude that $x'\in [x,z_1]\cap [x,z_2]$ and that $y\in [x',z_1]\cap [x',z_2]$. Consequently, $z_1,z_2\in x'/x$ and $z_1,z_2\in y/x'$. Since $x'/x$ and $y/x'$ are convex, we conclude that $z\in x'/x\cap y/x'$. Consequently, $y\in [x',z]$ and $x'\in [x,z]$, yielding $y\in [x,z]$.  Thus  $z\in y/x$.
\end{proof}

\subsection{Convexity of the shadows $K/x_0$} 

In this subsection, we prove the following result: 
%
%

\begin{proposition} [Shadows of cliques are convex] \label{meshed-shadow-clique1} 
For any pointed maximal clique $(x_0,K)$, the shadow $K/x_0$ is convex.
\end{proposition}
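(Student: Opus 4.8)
The plan is to follow the same local-to-global strategy used for intervals (Proposition~\ref{meshed-interval}) and for the shadows $y/x$ (Proposition~\ref{meshed-shadow}): since $K/x_0$ induces a connected subgraph (if $v\in K/x_0$, say $k\in[x_0,v]$ for some $k\in K$, then $[k,v]\subseteq K/x_0$ and $K\subseteq K/x_0$ is a clique, so the whole set is connected), by Theorem~\ref{lc->c} it suffices to prove that $K/x_0$ is locally-convex. So I would take $z_1,z_2\in K/x_0$ with $d(z_1,z_2)=2$ and a vertex $z\in[z_1,z_2]$, and derive a contradiction from the assumption $z\notin K/x_0$. Among all such bad triples I would choose one minimizing a suitable lexicographic potential, e.g.\ $\big(d(x_0,z_1)+d(x_0,z_2),\,d(x_0,z)\big)$ (here, as in Lemma~\ref{extended}, all vertices are within distance $1$ of their distance to $x_0$ because $x_0$ is adjacent to every vertex of $K$, so $z\notin K/x_0$ forces $d(x_0,z)\le\min(d(x_0,z_1),d(x_0,z_2))$ after the usual reductions). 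Write $k_i=d(x_0,z_i)$ and let $a_i\in K$ be a neighbour of $x_0$ on a geodesic $[x_0,z_i]$, so $a_i\in[z_i,x_0]$ and $d(a_i,z_i)=k_i-1$.

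The base case is $d(x_0,z_1)+d(x_0,z_2)=2$, i.e.\ $z_1,z_2\in N(x_0)$, hence $z_1,z_2\in K\cup\{\text{vertices with }x_0\text{ as gate on an edge}\}$; since $z_i\in K/x_0$ and $d(x_0,z_i)=1$, in fact $z_i\in K$. Then $z\in[z_1,z_2]$ has $d(x_0,z)\le 2$; if $z\sim x_0$ then $z$ is adjacent to all of $K$ (else a geodesic through $z$ between $z_1,z_2$ would avoid $K$... more carefully: $z\in[z_1,z_2]$, $z_1,z_2\in K$, $z_1\sim z_2$ is false so actually $z_1\nsim z_2$ — but $z_1,z_2\in K$ contradicts $d(z_1,z_2)=2$). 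So the base case is vacuous unless the relevant $z_i$ lie outside $K$; I would handle it by the forbidden-subgraph argument exactly as in the $p_1=2$ case of Proposition~\ref{meshed-shadow}, using (TC) to produce a common neighbour of $x_0$ and $z$ and deriving one of the first three graphs of Figure~\ref{non-S3}.

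For the inductive step I would split into the two cases $d(x_0,z_1)=d(x_0,z_2)=k$ and $d(x_0,z_1)=k,\ d(x_0,z_2)=k+1$, mirroring Cases~\ref{case-x|y} of the proof of Proposition~\ref{meshed-shadow}. In each case: apply (QC$^-$) or (TC) to $z_1,z_2$ to get a common neighbour $s$ closer to $x_0$; if $s\in K/x_0$ use the positioning condition (Proposition~\ref{poscond}) on the square $s z_1 z z_2$ to force $s\sim z$, then $p(z_1,z_2,s)<p(z_1,z_2,z)$ gives a contradiction with minimality; if instead $s$ witnesses membership one finds via (TC) a further neighbour $w$ of $z_1$ (resp.\ $z_1,s$) on a geodesic to $x_0$, note $w\in K/x_0$ by the reduction, and then $z_1,z_2,z,s,w$ (or a six-vertex configuration with an additional (TC)-vertex $t$) induces the third, fourth, or fifth forbidden graph of Figure~\ref{non-S3}; convexity of intervals (Proposition~\ref{meshed-interval}) is used to rule out the remaining ``$w\nsim t$'' subcases by placing a vertex strictly inside an interval $[x_0,z_1]$. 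The key extra input beyond the $y/x$ argument is that $K$ is a \emph{maximal} clique together with $x_0$: whenever a (TC)-vertex $y'$ is produced that is adjacent to $x_0$ and to all of $K$, maximality of $K\cup\{x_0\}$ forces $y'\in K$, which either collapses distances or reuses $a_i\in K$; this is the analogue of the maximality use in Proposition~\ref{TC+proximal} and Lemma~\ref{convex-vertex}.

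The main obstacle I anticipate is bookkeeping the case analysis so that every auxiliary vertex produced by (TC)/(QC$^-$) is correctly located relative to $x_0$ (distance $k-1$, $k$, or $k+1$) and relative to $K$, and then matching each resulting $5$- or $6$-vertex induced configuration to one of the five graphs of Figure~\ref{non-S3} — exactly as in the long proofs of Propositions~\ref{poscond} and~\ref{meshed-shadow}, but now with the clique $K$ (rather than a single vertex) playing the role of the ``attaching'' object, so the adjacency pattern to $K$ has to be tracked uniformly. A secondary subtlety is the reduction at the start of the inductive step (showing we may assume $d(x_0,z)\le\min(k_1,k_2)$, $d(x_0,z_i)$ differ by at most $1$, and $K\cap[x_0,z_i]$ is a single vertex $a_i$ shared when possible); I would record these reductions as a short lemma-free paragraph before the case split, reusing verbatim the corresponding paragraph from the proof of Proposition~\ref{meshed-shadow}.
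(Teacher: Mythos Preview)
Your overall architecture matches the paper: connectedness of $K/x_0$ followed by local-convexity via Theorem~\ref{lc->c}, a minimal bad triple $(z_1,z_2,z)$, and a case split on the relative distances, with (TC)/(QC$^-$)/(PC), interval convexity, and the five forbidden subgraphs as the tools. So the plan is sound at that level.

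There is, however, a genuine structural difference between your setup and the paper's that will bite you in the hardest subcase. You minimize over triples $(z_1,z_2,z)$ for a \emph{fixed} pointed maximal clique $(x_0,K)$, with potential $\big(d(x_0,z_1)+d(x_0,z_2),\,d(x_0,z)\big)$. The paper instead minimizes over \emph{sextets} $(x_0,u,v,z_1,z_2,z)$, where $u,v$ are the specific witnesses in $K$ for $z_1,z_2$, with potential $\big(d(u,z_1)+d(v,z_2),\,d(x_0,z)\big)$ --- and crucially the pointed maximal clique itself is allowed to change. In the subcase $d(u,z_1)=d(v,z_2)=k$, $d(u,z)=d(v,z)=d(x_0,z)=k+1$, the paper applies (TC) to $z,x_0,u$ to get $s\sim x_0,u$ with $d(s,z)=k$, and then (in several branches) \emph{switches} to a new pointed maximal clique $(s,K')$ containing $u,v$ (or $u,p,q$ obtained by further (TC) steps): the same $z_1,z_2,z$ form a violating triple for $K'/s$, but now $d(s,z)<d(x_0,z)$ (or $d(q,z_1)+d(p,z_2)<d(u,z_1)+d(v,z_2)$), contradicting minimality of the sextet. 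Your potential, being anchored at the fixed $x_0$, cannot detect this decrease; you would be stuck precisely here. The ``maximality forces $y'\in K$'' move you describe is used, but only to locate a non-neighbour in $K$ and produce a forbidden subgraph --- it does not replace the clique-switching.

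Two smaller points. First, your base-case paragraph is muddled: you correctly observe that $k_1=k_2=1$ forces $z_1,z_2\in K$, hence $d(z_1,z_2)=1$, so that case is genuinely vacuous; the actual base cases (paper's $p_1=1$ and $p_1=2$) correspond to your potential values $3$ and $4$, and do require the forbidden-subgraph arguments you allude to. Second, in the equal-distance case the paper first shows $d(u,z)=d(v,z)=k+1$ (ruling out $d(u,z)\le k$ by a short (TC) step producing a smaller violating sextet) before splitting on $d(x_0,z)\in\{k,k+1\}$; you will need an analogous preliminary reduction, and again it uses the freedom to replace $z_1$ by a closer vertex $z_1'$ while keeping the witness structure.
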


\begin{proof} Suppose by way of contradiction that the shadow $K/x_0$ is not convex. Since $K/x_0=\bigcup_{y\in K} y/x_0$, $K/x_0$ induces a connected subgraph of $G$. By Theorem \ref{lc->c} there exist
$z_1,z_2\in K/x_0$ with $d(z_1,z_2)=2$ and a common neighbor $z$ of $z_1,z_2$ such that $z\notin K/x_0$. Then the there exist  $u,v\in K$ such that $z_1\in x_0/u$ and $z_2\in x_0/v$. We can suppose that $u\ne v$, otherwise
$z_1,z_2\in u/x_0$ and we obtain a contradiction with the convexity of $u/x_0$ (Proposition \ref{meshed-shadow}). Furthermore, we can suppose that $z_1\notin v/x_0$ and $z_2\notin u/x_0$, yielding
$d(x_0,z_1)=d(v,z_1)=d(u,z_1)+1, d(x_0,z_2)=d(u,z_2)=d(v,z_2)+1$. This implies that $|d(u,z_1)-d(v,z_2)|\le 1$. Assume without loss of generality that  $k:=d(u,z_1)\le d(v,z_2)$. Suppose that among all choices of the counterexample,
we selected  the sextet $(x_0,u,v,z_1,z_2,z)$, which lexicographically minimizes the vector $p(x_0,u,v,z_1,z_2,z)=(p_1,p_2)$, where $p_1=d(u,z_1)+d(v,z_2)$ and $p_2=d(x_0,z)$. In the remaining part of the proof, first we consider two basic cases.
Then we show that in all remaining cases we always can find a violating sextet  with a lexicographically smaller vector.

\begin{case-x|k} $p_1=1$.
\end{case-x|k}

\begin{proof}
Since $d(u,z_1)\le d(v,z_2)$, we have  $z_1=u$ and $z_2$ is adjacent to $v$. Since $z\notin u/x_0$, necessarily $z\sim x_0$. Since $z_2\in v/x_0$, $x_0\nsim z_2$. To avoid the second forbidden subgraph induced by $x_0,u,v,z,z_2$, the vertices $z$ and $v$ must be adjacent.
Since $z\notin x_0/K$, the vertex $z$ does not belong to $K$. Since $(x_0,K)$ is a pointed maximal clique and $z\sim x_0,u,v$, there exists $y\in K$ not adjacent to $z$. If $y\nsim z_2$, then $x_0,u,v,y,z,z_2$ induce the fourth forbidden graph.
If $y\sim z_2$, then  $[u,z_2]$ is not convex because $y,z\in [u,z_2]$ and $x_0\in [y,z]\setminus [u,z_2]$, a contradiction with Proposition \ref{meshed-interval}.
\end{proof}

\begin{case-x|k} $p_1=2$ and $p_2=1$.
\end{case-x|k}

\begin{proof}
Since $|d(u,z_1)-d(v,z_2)|\le 1$, $p_1=2$ implies $d(u,z_1)=d(v,z_2)=1$. Consequently, $u\sim z_1, v\sim z_2, x_0\sim z$. Furthermore, $x_0\nsim z_1,z_2$ and $u\nsim z_2, v\nsim z_1$. If $u\nsim z$, the square $uz_1zx_0$ and  $z_2$ violate (PC): $z_2$ is adjacent to $z$ and has distance 2 to $x_0,u,z_1$. Thus $u\sim z$. Analogously, we conclude that $v\sim z$. Since $z\notin K/x_0$, $z$ cannot belong to the clique $K$. Since $(x_0,K)$ is a pointed maximal clique
containing $x_0,u,v$ and $z\sim x_0,u,v$, there exists $y\in K$ not adjacent to $z$. If $y$ is not adjacent to $z_1$ or $z_2$, say $y\nsim z_1$, then  $x_0,y,u,v,z,z_1$ induce the fourth forbidden graph. Thus $y\sim z_1,z_2$. Then
$z_1,z_2\in y/x_0$. Since $z\in [z_1,z_2]\setminus y/x_0$ we obtain a contradiction with the convexity of  $y/x_0$ (Proposition \ref{meshed-shadow}).
\end{proof}

Now, we will consider the remaining general cases. Then either $p_1>2$ or $p_1=2$ and $p_2\ge 2$. We group these cases in two cases: $d(v,z_2)=d(u,z_1)+1=k+1$ and $d(v,z_2)=k=d(u,z_1)$. 

%

\begin{case-x|k} $d(v,z_2)=d(u,z_1)+1=k+1$.
\end{case-x|k}

\begin{proof} Since $z_1\in u/x_0$ and $z_2\in v/x_0$,  we get $d(x_0,z_1)=k+1, d(x_0,z_2)=k+2$. Since $z_1\notin v/x_0$ and $z_2\notin u/x_0$, we also have $d(u,z_2)=k+2$ and $d(v,z_1)=k+1$. This implies that $d(u,z)=k+1$, and thus $z\in [u,z_2]$.
If $d(x_0,z)=k+2$, then $z\in u/x_0$, a contradiction with the choice of $z$. Thus $d(x_0,z)\le k+1$. Since $d(x_0,z_2)=k+2$ and $z\sim z_2$, we conclude that $d(x_0,z)\ge k+1$, thus $d(x_0,z)=k+1=d(u,z)$.

Since $d(z,x_0)=d(z,u)=k+1$, by (TC) there exists  $s\sim x_0,u$ at distance $k$ from $z$. Since $d(u,z_2)=k+2$, we have $s,v\in [u,z_2]$. Therefore, if $s\nsim v$, then $x_0\in [s,v]\setminus [u,z_2]$ (because $d(x_0,z_2)=k+2$) and
we obtain a contradiction with the convexity of $[u,z_2]$ (Proposition \ref{meshed-interval}). Consequently, $s\sim v$.
If $s\in K$, then $s\in [x_0,z]$, yielding $z\in s/x_0$, contrary with the assumption that $z\notin K/x_0$. Therefore $s\notin K$. Since $(x_0,K)$ is a pointed maximal clique of $G$, there exists $y\in K$ such that $y\nsim s$.
Since $d(x_0,z_2)=k+2$, we conclude that $s\in [x_0,z_2]$. Consequently, $d(z_2,v)=d(z_2,s)=k+1$ and by (TC) there exists  $t\sim s,v$ at distance $k$ from $z_2$. Since $d(u,z_2)=d(x_0,z_2)=k+2$, $t$ is not adjacent to $u$ and $x_0$.
If $t\nsim y$, then $u,v,x_0,y,s,t$ induce the fourth forbidden graph. On the other hand, if $y\sim t$, then  $u,x_0,y,s,t$ induce the second forbidden graph. This concludes the analysis of the case $d(v,z_2)=d(u,z_1)+1=k+1$.
\end{proof}

\begin{case-x|k} $d(u,z_1)=k=d(v,z_2)$.
\end{case-x|k}

\begin{proof}  First we prove that $d(u,z)=d(v,z)=k+1$.
Suppose by way of contradiction that $d(u,z)\le k$. Then $d(u,z)=k$, otherwise $z\in [u,z_1]\subset K/x_0$. By (TC) there exists $z'_1\sim z_1,z$ at
distance $k-1$ from $u$.  If $z'_1$ is not adjacent to $z_2$, then $z'_1\in [u,z_2]\subset u/x_0$ and $z\in [z'_1,z_2]$, thus $(x_0,u,v,z'_1,z_2,z)$ is a violating sextet with a smaller
distance sum $d(u,z'_1)+d(v,z_2)=2k-1$, a contradiction. Thus $z'_1\sim z_2$. But this is impossible because $d(u,z_2)=k+1$. Consequently, $d(u,z)=d(v,z)=k+1$. 
%
%
Since $x_0,u,v$ are pairwise adjacent and $z\notin u/x_0\cup v/x_0$, the equality $d(u,z)=d(v,z)=k+1$ implies that $d(x_0,z)=k$ or $d(x_0,z)=k+1$.

\begin{subcase-x|k} $d(x_0,z)=k$.
\end{subcase-x|k}

\begin{proof} Since $d(v,z_1)=k+1$ and $d(v,z_2)=k$, by (QC$^-$) there exists  $s\sim z_1,z_2$ at distance $k$ from $v$. Since $d(v,z)=k+1$, $s\ne z$. By (TC), there exists  $t\sim s,z_2$ at distance $k-1$ from $v$.
Since $d(v,z_1)=k+1=d(v,z)$, $t$ cannot be adjacent to $z_1$ and $z$.  Since $d(x_0,s)\le k+1$ and $d(x_0,z_1)=d(x_0,z_2)=k+1$, if $s\nsim z$, then the square $sz_1zz_2$ and the vertex $x_0$ violate (PC). Thus $s\sim z$. If
$d(x_0,s)=k+1$, then $z,t\in [x_0,s]$. Since $z_2\in [t,z]$ (because $t\nsim z$ and $z_2\sim t,z$)  and $d(x_0,z_2)=k+1$, we obtain a contradiction with the convexity of $[x_0,s]$ (Proposition \ref{meshed-interval}). Thus $d(x_0,s)=k=d(x_0,z)$.
By (TC) there exists $p\sim s,z$ at distance $k-1$ from $x_0$. Since $d(x_0,z_1)=d(x_0,z_2)=k+1$, $p\nsim z_1,z_2$ and  $p,s,z,z_1,z_2$ induce the third forbidden graph. This concludes the analysis of the subcase $d(x_0,z)=k$.
%
%
%
%
\end{proof}

\begin{subcase-x|k} $d(x_0,z)=k+1$.
\end{subcase-x|k}

\begin{proof} Since $d(z,u)=d(z,x_0)=k+1$, by (TC) there exists  $s\sim x_0,u$ at distance $k$ from $z$. First suppose that $s\sim v$. If $s\in K$, then $s\in [x_0,z]$, yielding $z\in s/x_0$, contrary to the assumption that $z\notin K/x_0$.
Therefore $s\notin K$. Since $(x_0,K)$ is a pointed maximal clique of $G$, there exists  $y\in K$ such that $y\nsim s$. If $d(s,z_1)=k+1$ and $d(s,z_2)=k+1$, then $z_1\in u/s$ and $z_2\in v/s$. Since $d(u,z)=d(v,z)=k+1$ and $d(s,z)=k$,
we also conclude that $z\in s/u\cap s/v$. This implies that for any pointed maximal clique $(s,K')$ containing the vertices $u,v,s$ we have $z_1,z_2\in K'/s$ and $z\notin K'/s$, thus $(s,u,v,z_1,z_2,z)$ is a violating sextet for $(s,K')$. Since
$d(s,z)=k<d(x_0,z)$ we obtain a contradiction with the minimality choice of the violating sextet $(x_0,u,v,z_1,z_2,z)$. Consequently, at least one of the distances $d(s,z_1)$ or $d(s,z_2)$ is equal to $k$, say $d(s,z_2)=k$. By (TC) there
exists $t\sim s,v$ at distance $k-1$ from $z_2$. If $t\nsim y$, then $x_0,y,u,v,s,t$ induce the fourth forbidden graph. On the other hand, if $y\sim t$, then $x_0,y,u,s,t$ induce the second forbidden graph.

Therefore  $s$ and $v$ are not adjacent. If $d(s,z_2)=k$, then $s,v\in [u,z_2]$ and $x_0\in [s,v]\setminus [u,z_2]$ (because $d(x_0,z_2)=k+1$), a contradiction with the convexity of $[u,z_2]$ (Proposition \ref{meshed-interval}).
Consequently, $d(z_2,s)=d(z_2,u)=k+1$ and by (TC) there exists  $p\sim u,s$ at distance $k$ from $z_2$. This implies that $z_2\in p/s$. If $d(s,z_1)=k+1$, then $z_1\in u/s$. Since $s\in [u,z]$ we also have $z\in s/u$. Consequently,
for any pointed maximal clique $(s,K'')$ of $G$ containing the vertices $u,s,p$ we will have $z_1,z_2\in K''/s$ and $z\notin  K''/s$. Thus $(s,u,p,z_1,z_2,z)$ is a violating sextet. Since $d(s,z)=k<d(x_0,z)$,
we obtain a contradiction with the minimality choice of the violating sextet $(x_0,u,v,z_1,z_2,z)$. Consequently,  $d(z_1,s)=k$. Since $d(z_1,u)=k$, by  (TC) there exists  $q\sim u,s$ at distance $k-1$ from $z_1$.
In order to avoid the third forbidden graph induced by $x_0,u,s,p,q$, there exists at least one edge between the vertices $x_0,p,q$. Since $d(x_0,z_1)=k+1$, $x_0$ and $q$ cannot be adjacent. If $x_0\sim p$, since $v,p\in [u,z_2]$ and
$x_0\in [v,p]\setminus [u,z_2]$, we obtain a contradiction with the convexity of  $[u,z_2]$ (Proposition \ref{meshed-interval}). Consequently, $x_0\nsim p,q$ and thus $p\sim q$. Let $(s,K''')$ be any pointed maximal clique of $G$ containing the pairwise adjacent vertices $u,s,p,q$.
Since $q\in [s,z_1], p\in [s,z_2]$, and $s\in [u,z]$, we conclude that $z_1,z_2\in K'''/s$ and $z\notin K'''/s$. Hence $(s,q,p,z_1,z_2,z)$ is a violating sextet for $(s,K''')$.
Since $d(q,z_1)+d(p,z_2)<d(u,z_1)+d(v,z_2)$ and $d(s,z)<d(x,z)$, we obtain a contradiction with the minimality choice of  $(x_0,u,v,z_1,z_2,z)$. This concludes the analysis of the subcase  $d(x_0,z)=k+1$.
\end{proof}
This finishes the proof of the case  $d(u,z_1)=d(v,z_2)=k$.
\end{proof}

This concludes the proof of the convexity of the shadow $K/x_0$. 
\end{proof}

\subsection{Convexity of the extended shadows  $x_0\SK K$}

Here we prove the following result: 
%
%

\begin{proposition}[Extended shadows are convex] \label{meshed-shadow-clique2} 
For any pointed maximal clique $(x_0,K)$ of $G$ the extended shadow  $x_0\SK K$ is convex.
\end{proposition}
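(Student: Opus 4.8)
Recall from Lemma~\ref{extended} that $V$ is the disjoint union of the shadow $K/x_0$ and the extended shadow $x_0\SK K$, so proving that $x_0\SK K$ is convex is equivalent to proving that $V\setminus (K/x_0)$ is convex. Since $K/x_0$ is already known to be convex (Proposition~\ref{meshed-shadow-clique1}), the natural route is to show directly that the complement $V\setminus(K/x_0)$ is convex; combined with the convexity of $K/x_0$ this will make the two sets complementary halfspaces, which is exactly condition (iv) of Theorem~\ref{S3graphs-trianglecondition} and thus yields $S_3$. First I would observe that $x_0\SK K$ induces a connected subgraph: indeed $x_0\in x_0\SK K$, and for any $v\in x_0|K$, say $v\in x_0/y$ with $y\in K$, a shortest $(v,x_0)$-path stays inside $x_0/y\subseteq x_0|K$; for $v\in W_=(K\cup\{x_0\})$ one shows, using (TC) and the defining equalities $d(v,x_0)=d(v,y)$ for all $y\in K$, that $v$ has a neighbor in $x_0\SK K$ one step closer to $x_0$ (the common neighbor $x$ produced by (TC) applied to $v$ and an edge $x_0y$ either drops to $x_0|K$ or stays in $W_=$). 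Hence, by Theorem~\ref{lc->c} (local convexity implies convexity in meshed graphs), it suffices to prove that $x_0\SK K$ is locally-convex.

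So the core of the argument is: take $z_1,z_2\in x_0\SK K$ with $d(z_1,z_2)=2$ and a common neighbor $z\in[z_1,z_2]$, and show $z\in x_0\SK K$, i.e. $z\notin K/x_0$. Suppose for contradiction $z\in K/x_0$, say $z\in w/x_0$ for some $w\in K$, so $x_0\in[z,w]$ and $d(w,z)=d(x_0,z)-1$. The plan is to run an induction/minimal-counterexample argument exactly in the style of the proofs of Propositions~\ref{meshed-interval}, \ref{meshed-shadow}, and \ref{meshed-shadow-clique1}: among all sextets $(x_0,K,w,z_1,z_2,z)$ violating the claim, pick one lexicographically minimizing a suitable distance vector such as $\bigl(d(x_0,z_1)+d(x_0,z_2),\,d(x_0,z)\bigr)$ (or with $d(w,z)$ as a secondary/tertiary coordinate). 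Since $z_1,z_2\in x_0\SK K = V\setminus(K/x_0)$, each $z_i$ is either in $W_=(K\cup\{x_0\})$ or in $x_0/y_i$ for some $y_i\in K$; one dichotomizes on these cases. In every configuration one uses (TC), (QC$^-$), the positioning condition (PC) from Proposition~\ref{poscond}, and the convexity of intervals and of the shadows $y/x$ and $K/x_0$ to either (a) directly exhibit one of the five forbidden induced subgraphs of Figure~\ref{non-S3}, or (b) construct a smaller violating sextet — typically by replacing $z$ with a common neighbor $z'$ of $z_1,z_2$ strictly closer to $x_0$ (produced by (QC$^-$) together with (PC) to force $z'\sim z$), or by sliding $w$ along a shortest path to obtain a pointed clique nearer to $z$. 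The role of the vertex $w$ with $x_0\in[z,w]$ is crucial: the edge $x_0 w$ together with a triangle-condition vertex will repeatedly produce the "third forbidden graph" configuration (the wheel-like pattern), since such a vertex is simultaneously adjacent to $x_0$ and to $w\in K$ but, by maximality of the clique $K\cup\{x_0\}$, must miss some vertex of $K$.

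\textbf{The main obstacle.} I expect the hardest part to be the case analysis when $z_1$ and $z_2$ lie in \emph{different} pieces of $x_0\SK K$ — e.g. $z_1\in x_0/y_1$ while $z_2\in W_=(K\cup\{x_0\})$, or $z_1\in x_0/y_1$ and $z_2\in x_0/y_2$ with $y_1\ne y_2$ — because then one cannot immediately invoke the convexity of a single shadow $y/x_0$, and the interaction of the two different "directions" away from $K$ with the vertex $w\in K$ witnessing $z\in K/x_0$ produces many subcases, several of which will need a further (TC)/(QC$^-$) step before a forbidden subgraph emerges. A secondary delicate point is handling $W_=(K\cup\{x_0\})$ itself: vertices equidistant from all of $K\cup\{x_0\}$ behave unlike shadow vertices, and one must be careful that the common neighbors supplied by (TC)/(QC$^-$) do not accidentally land back in $K/x_0$; here the equality $d(v,x_0)=d(v,y)$ for all $y\in K$ together with (PC) applied to squares through $x_0$ and two vertices of $K$ should pin down the distances tightly enough. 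Once local convexity is established, convexity follows from Theorem~\ref{lc->c}, completing the proof that $x_0\SK K$ is convex, and hence — via Theorem~\ref{S3graphs-trianglecondition}(iv) — that $G$ is an $S_3$-graph.
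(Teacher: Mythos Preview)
Your plan is correct and follows essentially the same route as the paper: establish connectedness of $x_0\SK K$, reduce to local convexity via Theorem~\ref{lc->c}, take a minimal violating configuration $(z_1,z_2,z)$ with $z\in y/x_0$ for some $y\in K$, and run a case analysis on whether each $z_i$ lies in $W_=(K\cup\{x_0\})$ or in $x_0|K$, using (TC), (QC$^-$), (PC), the already-proved convexity of intervals, of $y/x$, and of $K/x_0$, together with the five forbidden subgraphs. The paper's minimization parameter $d(z_1,K')+d(z_2,K')+d(z,x_0)$ coincides with your $d(x_0,z_1)+d(x_0,z_2)+d(x_0,z)$ since $d(z_i,K')=d(z_i,x_0)$ whenever $z_i\in x_0\SK K$, and your anticipated ``hardest case'' (the mixed and the two-different-$y_i$ subcases) is exactly where the paper spends most of its effort, in particular invoking the maximality of $K\cup\{x_0\}$ to produce a non-neighbor in $K$ and then exhibiting the fourth or fifth forbidden subgraph.
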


\begin{proof} Let $K'=K\cup \{ x_0\}$. First notice that the extended shadow  $x_0\SK K$ induces a connected subgraph. Indeed, by definition, $x_0\SK K$ is the union of the sets $x_0|K=\bigcup_{y\in K}  x_0/y$ and $W_=(K')$.
Since $[z,x_0]\subseteq x_0/y$ for any $z\in x_0/y$, the union shadow  $K|x_0$  induces a connected subgraph. Now, pick any  $z\in W_=(K')$. We assert that the interval $[x_0,z]$ is included in $x_0\SK K$.
We proceed by induction on $k=d(x_0,z)$. Notice that $[x_0,z]\setminus \{ z\}$ is the union of the intervals $[x_0,z']$ taken over all neighbors $z'$ of $z$ in $[x_0,z]$. So, pick any neighbor $z'$ of $z$ in $[x_0,z]$. If $z'$ belongs to $\bigcup_{y\in K} x_0/y$,
say $z'\in x_0/y$ for $y\in K$, then, as we noticed above, $[x_0,z']\subseteq x_0/y\subset x_0\SK K$. If $z'\in W_=(K')$, then $d(x_0,z')=k-1$ and $[x_0,z']\subset x_0\SK K$ by induction hypothesis. Finally suppose that
there exists $z'\in [x_0,z]$ adjacent to $z$ and belonging to $y/x_0$ for some $y\in K$. Since $z'\in [x_0,z]$ and $y\in [x_0,z']\subset [x_0,z]$, we conclude that $z\in y/x_0$, contrary to our choice of $z$ from $W_=(K')$.
Consequently, $[x_0,z']\subset x_0\SK K$ for any neighbor $z'\in [x_0,z]$ of $z$, establishing that $[x_0,z]\subset x_0\SK K$. This proves that $x_0\SK K$ induces a connected subgraph of $G$.

Suppose by way of contradiction that the extended shadow  $x_0\SK K$ is not convex. Since  $x_0\SK K$ is connected, by Theorem \ref{lc->c} there exist
$z_1,z_2\in  x_0\SK K$ with $d(z_1,z_2)=2$ and a common neighbor $z$ of $z_1,z_2$ such that $z\notin  x_0\SK K$. Since $V\setminus (x_0\SK K)=K/x_0$, there exists  $y\in K$
such that $z\in y/x_0$. Notice that the vertices $z_1,z_2$ either both belong to $\bigcup_{s\in K} x_0/s$, or both belong to $W_=(K')$, or one belongs to $\bigcup_{s\in K} x_0/s$ and the second belongs to $W_=(K')$.
We suppose that the pointed maximal clique $(x_0,K)$ and the violating quadruplet $(x_0,z_1,z_2,z)$ are selected to minimize the distance sum $p(x_0,z_1,z_2,z)=k_1+k_2+k$, where $k_1=d(z_1,K'), k_2=d(z_2,K'),$ and $k=d(z,x_0)$ (recall that $d(a,K')$ is the minimum of all distances
from $a$ to a vertex of $K'$). Since $z\in [z_1,z_2]$, $d(z_1,K')$ and $d(z_2,K')$ cannot be simultaneously 0; thus $d(z_1,K')+d(z_2,K')\ge 1$. We distinguish three cases. In each of these cases we distinguish the base cases, which together
will cover the cases $p(x_0,z_1,z_2,z)=1$ and $p(x_0,z_1,z_2,z)=2$.

\begin{case-ext} $z_1,z_2\in W_=(K')$.
\end{case-ext}

\begin{proof} Then $z_1,z_2\notin K$. Furthermore, since $K'$ is a maximal clique of $G$, $d(z_1,K')>1$ and $d(z_2,K')>1$. Since $z$ is a common neighbor of $z_1,z_2$, this implies that $z\notin K'$. Consequently, we get $p(x_0,z_1,z_2,z)\ge 3$.
Recall that $d(z_1,K')=k_1$ and $d(z_2,K')=k_2$. Since $d(z_1,x_0)=d(z_1,y)=k_1$, by  (TC) there exists  $u\sim x_0,y$ at distance $k_1-1$ from $z_1$. Analogously, since $d(z_2,x_0)=d(z_2,y)=k_2$, by (TC) there exists
$v\sim x_0,y$ at distance $k_2-1$ from $z_2$. If $u=v$, then $z_1,z_2\in u/y$. Since $z\in y/x_0$ and $u\sim x_0,y$, we conclude that $z\notin u/y$, a contradiction with Proposition \ref{meshed-shadow}. Hence $u\ne v$. If $u\sim v$, then $z_1\in u/y$ and
$z_2\in v/y$, thus $z_1,z_2\in K^+/y$ for any pointed maximal clique $(y,K^+)$ containing  $u,v,x_0,y$. Since $z\in y/x_0$, again we conclude that $z\notin K^+/y$, a contradiction with Proposition \ref{meshed-shadow-clique1}. Hence $u\nsim v$.
Since $z_1,z_2\in W_=(K')$, the vertices $u$ and $v$ do not belong to $K$ (and $K'$). Therefore there exist vertices $s,t\in K$ such that $s\nsim u$ and $t\nsim v$. If $s=t$, then  $x_0,y,u,v,s$ induce the third forbidden graph. Hence $s\ne t$
and $u\sim t, v\sim s$. Then  $x_0,y,u,v,s,t$ induce the fifth forbidden graph.
\end{proof}

\begin{case-ext} $z_1\in x_0|K$ and $z_2\in W_=(K')$.
\end{case-ext}

\begin{proof} Then there exists $s_1\in K$ such that $z_1\in x_0/s_1$. Since  $d(z_2,x_0)=d(z_2,y)=k_2$, by (TC) there exists $v\sim x_0,y$ at distance $k_2-1$ from $z_2$. Since $z_2\in W_=(K')$, $v$ does not belong to $K'$. Therefore there exists $s\in K$, $s\nsim v$ (if $v\nsim s_1$, then  $s=s_1$). First suppose that $d(y,z_1)=k_1+1$, i.e., $x_0\in [y,z_1]$. Let $(y,K'')$ be any pointed maximal clique of $G$ containing the vertices $x_0,y,v$. Since $x_0\in [z_1,y]$ and $v\in [z_2,y]$, we conclude that
$z_1,z_2\in /x_0/y\cup v/y\subset K''/y$. On the other hand, since $y\in [x_0,z]$, we conclude that $z\in y/x_0$, i.e., $z\notin K''/y$. Since $z\in [z_1,z_2]$, this contradicts the convexity of the
shadow $K''/y$ (Proposition \ref{meshed-shadow-clique1}). This contradiction establishes that $d(y,z_1)\le k_1$.

Since $x,y,s_1\in K'$ and $z_1\in x_0/s_1$, we conclude that $d(y,z_1)\ge d(x_0,z_1)=k_1$, thus $d(y,z_1)=k_1$. By (TC) there exists  $u\sim x_0,y$ at distance $k_1-1$ from $z_1$.
Then $u$ must be adjacent to at last one of the vertices $v,s$, otherwise the vertices $u,x_0,y,s,v$ induce the third forbidden graph. First suppose that $u\sim v$. Let $(y,K''')$ be any pointed maximal clique of $G$
containing the pairwise adjacent vertices $x_0,y,u,v$. Then $u\in [y,z_1]$ and $v\in [y,z_2]$, yielding $z_1,z_2\in u/y\cup v/y\subset K'''/y$. On the other hand, $y\in [x_0,z]$, thus $z\in x_0/y$, i.e.,
$z\notin K'''/y$. Since $z\in [z_1,z_2]$, this contradicts the convexity of the shadow $K'''/y$ (Proposition \ref{meshed-shadow-clique1}). Consequently,  $u$ is not adjacent to $v$, thus $u\sim s$.
Since $x_0\in [s_1,z_1]$, this implies that $s\ne s_1$, and thus $v\sim s_1$ and $s\sim s_1$. Since $u\nsim v$ and $v\sim s_1, v\nsim s$, we conclude that
$u,v,x_0,y,s,s_1$ induce the fifth forbidden graph. This contradiction concludes the analysis of the case  $z_1\in \bigcup_{s\in K} x_0/s$ and $z_2\in W_=(K')$.
\end{proof}

\begin{case-ext} $z_1, z_2\in x_0|K$. 
\end{case-ext}

\begin{proof} Then there exist  $s_1,s_2\in K$ such that $z_1\in x_0/s_1$ and $z_2\in x_0/s_2$. If $s_1=s_2$, then we obtain a contradiction with the convexity of the shadow $x_0/s_1$ (Proposition \ref{meshed-shadow}) since $z_1,z_2\in x_0/s_1$ and $z\in [z_1,z_2]\setminus (x_0/s_1)$. Thus $s_1\ne s_2$ and we can suppose that
$z_1\notin s_2/x_0$ and $z_2\notin s_1/x_0$. This implies that $d(z_1,s_2)=k_1=d(z_1,s_1)-1=d(z_1,x_0)$ and $d(z_2,s_1)=k_2=d(z_2,s_2)-1=d(z_2,x_0)$. By (TC) there exist  $t_1\sim x_0,s_1$ at distance $k_2-1$ from $z_2$ and $t_2\sim x_0,s_2$ at distance $k_1-1$ from $z_1$.
Now, we compare the distance $k=d(z,x_0)$ to $k_1$ and $k_2$. First suppose that $k<\max\{ k_1,k_2\}$, say $k<k_1$. Then $y,t_2\in [x_0,z_1]$ and thus $d(z_1,y)=d(z_1,t_2)<d(z_1,x_0)$. But this is impossible because $s_1\sim y$ and we supposed that $x_0\in [s_1,z_1]$. Consequently, $k\ge \max\{ k_1,k_2\}$.

%

\begin{subcase-ext} $k=k_1=k_2$.
\end{subcase-ext}
\begin{proof} First suppose that one of the vertices $s_1$ and $s_2$, say $s_2$, can be chosen to coincide with $y$. This implies that $s_2\in [x_0,z]$, yielding $d(s_2,z)=k-1$. Since  $x_0\in [s_2,z_2]$, we also have $d(s_2,z_2)=1+d(x_0,z_2)=k+1$.
Since $z\sim z_2$ and  $d(s_2,z)=k-1$, this is impossible. Therefore $y$ is different from $s_1$ and $s_2$, yielding   $s_1,s_2\notin [x_0,z]$. Since $s_1,s_2\sim y$, we get $d(s_1,z)=d(s_2,z)=k$. From the definition of $t_1$ and $t_2$
also follows that they are distinct and are  different from $y$. 

First suppose that $y\sim t_1,t_2$. If $t_1\nsim t_2$, then  $s_1,t_1,x_0,y,s_2,t_2$ induce the fifth forbidden graph. So, let $t_1\sim t_2$. If $d(z,t_1)=k-1$ or $d(z,t_2)=k-1$, say $d(z,t_1)=k-1$, then by (TC) there exists
$u\sim y,t_1$ at distance $k-2$ from $z$. Since $d(s_1,z)=d(s_2,z)=k$, we have $u\nsim s_1,s_2$. If $u\nsim t_2$, then $u,y,s_1,t_1,t_2$ induce the third forbidden graph. If $u\sim t_2$, then  $u,x_0,y,t_1,t_2,s_2$ induce the fifth forbidden graph.
This implies that $d(z,t_1)=d(z,t_2)=k$. Consequently, $z_1\in [z,t_2]$. Since $d(z_2,s_2)=k+1,$ $s_2\sim t_2,y$ and $y,t_2\sim t_1$, and since $d(y,z)=d(t_1,z_2)=k-1$, we conclude that $t_2,z\in [s_2,z_2]$.
Since $d(s_2,z_1)+d(z_1,z_2)=k+2$, we conclude that $z_1\notin [s_2,z_2]$. Since $z_1\in [z,t_2]$ we obtain a contradiction with the convexity of  $[s_2,z_2]$ (Proposition \ref{meshed-interval}). This shows that the case when $y$ is adjacent to $t_1$ and $t_2$ is impossible.

Now suppose that $y$ is not adjacent to one of the vertices $t_1,t_2$, say $y\nsim t_1$. If $d(z,t_1)=k-1$, then $y,t_1\in [z,x_0]$, $s_1\in [y,t_1]$, but $s_1\notin [x_0,z_1]$ because $d(s_1,z)=k$; this contradicts the convexity of the interval $[x_0,z]$
(Proposition \ref{meshed-interval}). Thus $d(z,t_1)=k$. If $d(z_1,t_1)=k$, then $t_1,z\in [z_1,s_1]$. Since the interval $[z_1,s_1]$ is convex and $z_2\in [z,t_1]$, we must have $z_2\in [z_1,s_1]$. But this is impossible because $d(z_1,s_1)=k+1$ while $d(s_1,z_2)=k$ and $d(z_2,z_1)=2$.
Hence $d(z_1,t_1)=k+1$. Consequently, $x_0,z\in [z_1,t_1]$. Since $y\in [z,x]$, the convexity of the interval $[z_1,t_1]$ implies that $y\in [z_1,t_1]$. Since $y\nsim t_1$, this implies that $d(z_1,y)=k-1$. Since $y\sim s_1$ and $d(s_1,z_1)=k+1$,
this is impossible. This concludes the analysis of the case when $k=k_1=k_2$.
\end{proof}

\begin{subcase-ext} $k>\max\{ k_1,k_2\}$.
\end{subcase-ext}
\begin{proof}
Since $z\sim z_1,z_2$ and $d(x_0,z_1)=k_1$ and $d(x_0,z_2)=k_2$, this implies that $k_1=k_2=k-1$. Then $t_1,t_2,y\in [x_0,z]$. First suppose that $y$ is adjacent to one of the vertices $t_1,t_2$, say $t_2\sim y$.
By (TC) there exists $v\sim t_2,y$ at distance $k-2$ from $z$. Since $d(x_0,z)=k$,  $v\nsim x_0$.
If $v$ is not adjacent to both vertices $s_1,s_2$, then $v,t_2,y,x_0,s_1,s_2$ induce the fourth forbidden graph. If $v\sim s_2$ and $v\nsim s_1$, then  $v,t_2,y,x_0,s_1,s_2$ induce the fifth forbidden graph.
Thus we can suppose that $v\sim s_1$. If $v\nsim s_2$, then $t_2,s_1\in [s_2,v]$ and $x_0\in [s_1,t_2]\setminus [s_2,v]$, a contradiction with the convexity of  $[s_2,v]$ (in fact $t_2,v,x_0,s_1,s_2$
induce the second forbidden graph). Hence, $v$ is adjacent to both  $s_1,s_2$, yielding $s_1,s_2\in [x_0,z]$. Consequently, both  $s_1,s_2$ may play the role of $y$. But if $s_1$ plays the role of $y$, then
$s_1$ must be adjacent to $t_2$ (because we supposed that $y$ is adjacent to $t_2$). But this is impossible because $d(z_1,s_1)=k_1+1$ and $d(z_1,t_2)=k_1-1$. Therefore $y$ is not adjacent to  $t_1$ and $t_2$.

Then $s_1\in [y,t_1]$ and $s_2\in [y,t_2]$. Since $y,t_1,t_2\in [x_0,z]$, the convexity of  $[x_0,z]$ implies that $s_1,s_2\in [x_0,z]$, whence $d(s_1,z)=d(s_2,z)=k-1$. 
Thus both vertices $s_1$ and $s_2$ may play the role of $y$ and we can remove $y$ from our further analysis. Summarizing, we have the following equalities:
$d(z_1,t_2)=k-2=d(z_2,t_1), d(s_1,z_1)=k=d(s_2,z_2),$ and $d(z,s_1)=d(s,z_2)=d(x_0,z_1)=d(x_0,z_2)=k-1$.  By (TC) there exists $w\sim s_1,s_2$ at distance $k-2$ from $z$. Necessarily, $w\ne x_0,y$. 

If $d(z_1,t_1)=k$, then $z,x_0\in [z_1,t_1]$. Since $w\in [z,s_1]\cap [z,s_2]\subset [z,x_0]$, the convexity of  $[z_1,t_1]$ implies that $w\in [z_1,t_1]$. If $t_1\nsim w$ this implies that $d(z_1,w)=k-1$.
But this is impossible since $d(s_1,z_1)=k$ and $w\sim s_1$.
Thus $w\sim t_1$ and $d(z_1,w)=k-1$. Since $s_2\nsim t_1$ and $w\nsim x_0$, the vertices $w,t_1,x_0,s_2$ define a square of $G$. Since $d(z_1,w)=k-1=d(z_1,x_0)=d(z_1,s_2)$ and  $d(z_1,t_1)=k,$, we obtain a contradiction with
(PC) applied to the square $wt_1x_0s_2$ and the vertex $z_1$.

Finally, if $d(z_1,t_1)=k-1$, then $t_1,z\in [s_1,z_1]$. If $z_2\in [z,t_1]$, the convexity of the interval $[z_1,s_1]$ implies that $z_2\in [z_1,s_1]$. But this is impossible  because $d(s_1,z_2)=k-1$ and $d(z_2,z_1)=2$.
Hence $z_2\notin [z,t_1]$, i.e.,  $d(z,t_1)=k-2$. But this is impossible because $d(x_0,z)=k$ and $x_0\sim t_1$. This contradiction concludes the analysis of the subcase $k>\max\{ k_1,k_2\}$.
\end{proof}

This concludes the proof of the case $z_1, z_2\in x_0|K=\bigcup_{s\in K} x_0/s$.
\end{proof}

Consequently, for any pointed maximal clique $(x_0,K)$,
the extended shadow  $x_0\SK K$ is convex.
\end{proof}

\subsection{Proof of Theorem \ref{S3meshed}} First notice that the five forbidden graphs have diameter 2, thus they occur in $G$ as induced subgraphs if and only if they occur as isometric subgraphs. If a meshed graph
$G$ contains one of the graphs of Figure~\ref{non-S3} as an isometric subgraph, then one can directly check that the encircled vertex and the encircled convex set (of size 1 in the first three graphs and of size 3 in the
last two forbidden graphs) cannot be separated by complementary halfspaces in the forbidden graph, and thus in the whole graph $G$ as well. Conversely, let $G$ be a meshed graph not containing the graphs  from
Figure~\ref{non-S3} as induced subgraphs.
Since meshed graphs satisfy the triangle condition, by Theorem \ref{S3graphs-trianglecondition} $G$  is an $S_3$-graph if for
any pointed maximal clique $(x_0,K)$ of $G$,  the shadow $K/x_0$ and the extended shadow  $x_0\SK K$ are convex. This is proved in Propositions \ref{meshed-shadow-clique1} and \ref{meshed-shadow-clique2}.
The second assertion of Theorem \ref{S3meshed} follows from Theorem \ref{meshed-pre-median}.

\section{Examples of $S_3$-graphs}\label{s:examples}
The classes of $S_3$-graphs, meshed graphs, weakly modular graphs, and graphs satisfying (TC) are closed by taking Cartesian products and  gated amalgamations (for $S_3$ property, see \cite{BaChvdV} and \cite{VdV}).
Meshed $S_3$-graphs (and, more generally, meshed graphs with convex clique-shadows $K/x_0$) do not contain the first two graphs from Figure~\ref{non-S3}, and therefore they are pre-median graphs. By Theorem
\ref{meshed-pre-median} any such graph $G$  is fiber-complemented and thus $G$ can be obtained from Cartesian products of prime graphs by a sequence
of gated amalgamations. Therefore, in order to establish the global structure of meshed $S_3$-graphs it suffices to characterize their primes. 
In general, this is an interesting and non-trivial
problem. For example, $K_2$ is the only prime median graph \cite{Is}, which is equivalent to saying that any finite median graph can be obtained by gated amalgamations from cubes (Cartesian products of edges).
The prime quasi-median graphs are the complete graphs $K_n, n\ge 2$.
The weakly modular $S_4$-graphs have been characterized in \cite{Ch_local,Ch_separa} using 4 forbidden subgraphs. They are exactly the weakly median graphs and it was shown in \cite{BaCh_wmg} that the prime
weakly median  graphs are
the hyperoctahedra and their subgraphs, the 5-wheel $W_5$, and the bridged plane triangulations. 
We can formulate a similar question for meshed $S_3$-graphs:

\begin{question} \label{prime-meshed} Characterize prime meshed $S_3$-graphs.
\end{question}

In this section we provide several examples of classes of prime $S_3$-meshed graphs, giving a partial answer to Question \ref{prime-meshed}. We also present
several classes of $S_3$-graphs and $S_3$-graphs satisfying (TC), some of them generalizing partial cubes. Most of those classes consist of graphs that can be  isometrically  embedded in Johnson graphs,
Hamming graphs, or half-cubes.



\subsection{Partial Johnson graphs satisfying (TC)}
Recall that a graph $G=(V,E)$ is called a partial cube if $G$ can be isometrically embedded into a hypercube. Partial cubes have been nicely characterized by Djokovi\'{c} \cite{Dj} as the
graphs $G$ which are bipartite and in which for any edge $uv$ the sets $W(u,v)=\{ x\in V: d(x,u)<d(x,v)\}=u/v$ and $W(v,u)=\{ x\in V: d(x,v)<d(x,u)\}=v/u$ are convex (and thus are complementary halfspaces).
A \emph{partial Johnson graph} (respectively, a \emph{partial Hamming graph} or a \emph{partial half-cube}) is a graph which can be isometrically embedded in a Johnson graph (into a Hamming graph or into a half-cube, respectively).
Each partial cube is a partial Hamming graph satisfying (TC), each partial Hamming graph is a partial Johnson graph, and each partial Johnson graph is a partial half-cube. Partial Johnson graphs have been
characterized in Djokovi\'{c}'s style by the author of this paper
in \cite{Ch_johnson} and partial Hamming graphs have been characterized in \cite{Ch_Hamming,Wil} in a similar way (the question of characterizing partial half-cubes was raised in
\cite{DeLa} and is open). Namely, it was shown in \cite{Ch_johnson} that a graph $G$ is a partial Johnson graph if and only if it satisfies a link condition (which we will not specify here) and  for any edge $uv$ of $G$,
the set $W_=(u,v)=\{ x\in V: d(x,u)=d(x,v)\}$ induces at most two connected components
$W'_=(u,v)$ and $W''_=(u,v)$ of $G$ (which are allowed to be empty) and each of the pairs $\{ W(u,v)\cup W'_=(u,v),W(v,u)\cup W''_=(u,v)\}$ and  $\{ W(u,v)\cup W''_=(u,v),W(v,u)\cup W'_=(u,v)\}$ consists of
complementary halfspaces of $G$. We use the second condition to establish the following result:

\begin{proposition} \label{partial_Johnson} If $G=(V,E)$ is a partial Johnson graph satisfying the triangle condition (TC), then $G$ is an $S_3$-graph. Any 2-connected meshed partial Johnson graph $G$ whose triangle complex is simply connected
is elementary/prime.
\end{proposition}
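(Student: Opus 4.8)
The proof splits naturally into two independent claims, so the plan is to prove them one after the other. For the first claim, I would argue directly that every pointed maximal clique $(x_0,K)$ of $G$ yields convex mutual shadows, and then invoke Theorem~\ref{S3graphs-trianglecondition}. Since $G$ is a partial Johnson graph satisfying (TC) and partial Johnson graphs have convex intervals, we already know $G$ has convex intervals, so the only work is to control the shadows $K/x_0$ and $x_0\SK K$. First I would reduce to the clique-edge case: in a partial Johnson graph satisfying (TC), a pointed maximal clique $(x_0,K)$ need not be an edge (partial Johnson graphs can contain triangles), so one cannot apply the edge-characterization of \cite{Ch_johnson} verbatim to $K\cup\{x_0\}$. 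Instead I would pick a vertex $y\in K$ and use the edge $x_0y$. The key structural fact from \cite{Ch_johnson} is that for the edge $x_0y$ the set $W_=(x_0,y)$ induces at most two connected components $W'_=,W''_=$, and both $\{W(x_0,y)\cup W'_=, W(y,x_0)\cup W''_=\}$ and the swapped version are complementary halfspace pairs. So one shows $K/x_0$ is the intersection (over $y\in K$) of appropriate halfspaces of the form $W(y,x_0)\cup W''_=(x_0,y)$, and dually $x_0\SK K$ is an intersection of the complementary halfspaces; intersections of halfspaces are convex, giving the result. The delicate point is matching up the two components $W'_=,W''_=$ consistently across the different edges $x_0y$, $y\in K$, i.e.\ verifying that the vertices of $K$ "pull" the same component $W''_=(x_0,y)$ onto the $K$-side; here I would use that $K\cup\{x_0\}$ is a clique (so all the $y\in K$ are mutually adjacent and adjacent to $x_0$) together with (TC) and Lemma~\ref{convex-vertex} to locate the imprint. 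This coordination across edges is the main obstacle.

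Alternatively — and this may be cleaner — I would observe that a partial Johnson graph satisfying (TC) is in particular meshed (the link condition plus (TC) forces (QC$^-$): two vertices at distance $2$ lie in a common isometric Johnson subgraph, where the quadrangle-type condition is immediate), and then simply check that a partial Johnson graph contains \emph{none} of the five forbidden induced subgraphs of Figure~\ref{non-S3}, so Theorem~\ref{S3meshed} applies directly. Each of the five forbidden graphs is a small graph of diameter~$2$; one verifies that none of them isometrically embeds into any Johnson graph $J(X,k)$ — e.g.\ the first two contain an induced $K_{2,3}$ or $W_4^-$, which do not occur in Johnson graphs, and the last three can be ruled out by a short case analysis on the coordinates of a hypothetical embedding. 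Whichever route I take, the conclusion is the same: $G$ is an $S_3$-graph. I would present the forbidden-subgraph route as the primary argument since it is shorter and reuses Theorem~\ref{S3meshed}.

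For the second claim, let $G$ be a $2$-connected meshed partial Johnson graph whose triangle complex is simply connected; I must show $G$ is elementary (equivalently prime, since $G$ is meshed and, being a partial Johnson graph, contains no induced $K_{2,3}$ or $W_4^-$, hence is pre-median, so Theorem~\ref{meshed-pre-median} gives elementary $\iff$ prime). The plan is to suppose $G$ has a proper gated subgraph $H$ with $|V(H)|\ge 2$ and derive a contradiction. A proper nontrivial gated subgraph gives a nontrivial partition of $V$ into fibers, and in a fiber-complemented graph each fiber is gated; pushing this further, the existence of such an $H$ means $G$ splits as a gated amalgam or a Cartesian product of smaller graphs. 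In either case I would produce either a cut vertex (contradicting $2$-connectedness) — this handles the gated-amalgam case, since the intersection of the two gated halves is gated and, if it is a single vertex, it is a cut vertex, while if it has $\ge2$ vertices it is again a proper gated subgraph on which we can recurse — or a "square product" structure $H\,\square\,K_2$ or similar, which would introduce a $4$-cycle that is not filled by triangles, contradicting simple connectedness of the triangle complex. Concretely, in the Cartesian-product case there are two edges $uv$ and $u'v'$ forming a square $uvv'u'$ with $u,u'$ in one factor-fiber and $v,v'$ in another; such a square is an isometric $C_4$, and since partial Johnson graphs of this square-type are not $3$-simplicial, simple connectedness fails. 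The main obstacle here is making the "product forces an empty square" step rigorous: I would lean on Theorem~\ref{fiber-compemented-Chastand} (the amalgam/product decomposition of fiber-complemented graphs, applicable since $G$ is meshed pre-median hence fiber-complemented by Theorem~\ref{meshed-pre-median}) together with the simple connectedness of the triangle complex of meshed graphs and the characterization of primes via simple connectedness of the relevant complex in the spirit of Theorem~\ref{prime-pre-median}, adapted to the meshed setting as in the last question of Section~\ref{s:meshed}. If that adaptation is not available off the shelf, the fallback is a direct argument: a $2$-connected graph is not a gated amalgam over a single vertex, and a Cartesian product $G_1\,\square\,G_2$ with both factors nontrivial always contains an isometric $C_4$ whose convex hull is the whole square, which cannot happen in a simply connected triangle complex unless the square is filled — but a filled square in a triangle-only complex forces a vertex adjacent to three corners, i.e.\ a $K_4$ minus an edge sitting inside the product, which contradicts the product structure.
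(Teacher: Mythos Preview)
Your preferred Route~2 has a real gap: you assert that a partial Johnson graph satisfying (TC) is automatically meshed, but this is neither proved in the paper nor obvious, and your justification (``two vertices at distance~$2$ lie in a common isometric Johnson subgraph'') is not an argument. Theorem~\ref{S3meshed} genuinely needs meshedness, so without this step the route collapses. Moreover, your forbidden-subgraph check is hand-wavy; even the claim that $K_{2,3}$ and $W_4^-$ do not isometrically embed in Johnson graphs would need a real argument, and for the last three graphs you offer nothing. The paper does \emph{not} take this detour: its proof of the first claim uses only (TC) and the halfspace structure of partial Johnson graphs from \cite{Ch_johnson}, never invoking meshedness or Theorem~\ref{S3meshed}.

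Your Route~1 is closer to the paper's actual argument, but the framing is off. You propose to express $K/x_0$ as an \emph{intersection over $y\in K$} of halfspaces; this is confused, since $K/x_0=\bigcup_{y\in K} y/x_0$ is a union, and an intersection representation would require the extra work you flag (``coordination across edges''). The paper's insight is sharper: fix \emph{one} $y\in K$, observe that $K\setminus\{y\}$ lies in a single connected component $W'_=(x_0,y)$ of $W_=(x_0,y)$, and prove directly that $K/x_0$ \emph{equals} the single halfspace $W(y,x_0)\cup W'_=(x_0,y)$. The forward inclusion is immediate; the reverse uses (TC) to find, for any $u\in W'_=(x_0,y)$, a common neighbour $z$ of $x_0,y$ with $z\in[u,y]$, and then the maximality of the clique $K\cup\{x_0\}$ together with convexity of $W(y,x_0)\cup W'_=(x_0,y)$ forces $z\in K$, so $u\in K/x_0$. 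No coordination across different $y$'s is needed. The complement $x_0\SK K$ is then the complementary halfspace $W(x_0,y)\cup W''_=(x_0,y)$, and Theorem~\ref{S3graphs-trianglecondition} finishes.

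For the second claim your argument is much heavier than necessary and has its own gaps (the ``product forces an empty square'' step is not made rigorous, and a gated amalgam over a gated set of size $\ge 2$ need not produce a cut vertex). The paper dispatches this in one line by citing Proposition~2.18 of \cite{CCHO}: in a $2$-connected graph with simply connected triangle complex, every proper gated set is a singleton, so $G$ is elementary and hence prime.
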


\begin{proof} By Theorem \ref{S3graphs-trianglecondition} we have to prove that for any pointed maximal clique $(x_0,K)$ the shadow $K/x_0$ and the extended shadow $x_0//K=x_0|K\cup W_=(K\cup \{ x_0\})$ are convex.
Pick any vertex $y\in K$ and consider the edge $x_0y$. Let $K'=K\setminus \{ y\}$. Note that all vertices of $K'$ belongs to the same connected component of $W_=(x_0,y)$, say $K'\subseteq W'_=(x_0,y)$.
We assert that $K/x_0$ coincides with $W(y,x_0)\cup W'_=(x_0,y)$. First, we establish the inclusion $K/x_0\subseteq W(y,x_0)\cup W'_=(x_0,y)$. Pick any $u\in K/x_0=\bigcup_{z\in K} z/x_0$. If $u\in y/x_0$, then $u\in W(y,x_0)$ and we are done.
Now suppose that $u\in (z/x_0)\setminus (y/x_0)$ for $z\in K'$.  Since $x_0,y,z$ are pairwise adjacent and $u\notin y/x_0$, we conclude that $d(u,z)<d(u,x_0)=d(u,y)$. This implies that $u$ belongs to $W_=(x_0,y)$. Since
$u\in (z/x_0)\cap (z/y)$, $u$ belongs to the same connected component of $W_=(x_0,y)$ as $z$, i.e., to $W'_=(x_0,y)$. This shows that $K/x_0\subseteq W(y,x_0)\cup W'_=(x_0,y)$.

Now we prove the converse inclusion $W(y,x_0)\cup W'_=(x_0,y)\subseteq K/x_0$. Since $W(y,x_0)=y/x_0$, we have the inclusion  $W(y,x_0)\subseteq K/x_0$. Now, pick any $u\in  W'_=(x_0,y)$. 
Then $d(u,x_0)=d(u,y)=k$. By (TC), there exists a common neighbor $z$ of $x_0$ and $y$ at distance $k-1$ from $u$. If $z\in K$, then $u\in z/x_0\subseteq K/x_0$ and we are done. Therefore, $z\notin K$.
Since $u,y\in W(y,x_0)\cup W'_=(x_0,y)$, $z\in [u,y]$,  and $W(y,x_0)\cup W'_=(x_0,y)$ is convex, we conclude that $z\in W(y,x_0)\cup W'_=(x_0,y)$.
Since $K\cup \{ x_0\}$ is a maximal clique of $G$ and $z$ is adjacent to $x_0$ and $y$, there exists a vertex $z'\in K$ not adjacent to $z$. But then $z'\in K/x_0\subseteq W(y,x_0)\cup W'_=(x_0,y)$.  Since $x_0\in [z,z']$ and  $x_0\in W(x_0,y)$,
we obtain a contradiction with the convexity of the set $W(y,x_0)\cup W'_=(x_0,y)$. Consequently, $K/x_0=W(y,x_0)\cup W'_=(x_0,y)$ and therefore the shadow $K/x_0$ is convex. By Lemma \ref{extended}, the extended
shadow $x_0//K=(x_0|K)\cup W_=(K\cup \{ x_0\})$ is the complement of $K/x_0$.
Since $K/x_0=W(y,x_0)\cup W'_=(x_0,y)$ and by the result of \cite{Ch_johnson} mentioned above, the complement of $W(y,x_0)\cup W'_=(x_0,y)$ is the halfspace $W(x_0,y)\cup W''_=(x_0,y)$), we conclude that $x_0//K$ is convex.

Now, suppose that $G$ is any 2-connected meshed partial Johnson graph  whose triangle complex is simply connected. Then by Proposition 2.18 of  \cite{CCHO} it follows that any proper gated set of $G$ is a single vertex, thus $G$
is elementary, hence $G$ is prime.  This finishes the proof.
\end{proof}

We believe that Proposition \ref{partial_Johnson} holds for all partial Johnson graphs:

\begin{question} \label{allpartialJohnson}  Show that all partial Johnson graphs are $S_3$.
\end{question}

\begin{remark}
The Petersen graph $P_{10}$ is a partial Johnson graph \cite{Ch_johnson} and is an $S_3$-graph (Example \ref{petersen}), which does not satisfies (TC).
On the other hand, the icosahedron is a meshed $S_3$-graph (Example \ref{petersen}), which is neither a partial Johnson graph nor a partial half-cube.
\end{remark}

Basis graphs of matroids are the most important examples of partial Johnson graphs. A \emph{matroid} $M$ on a finite set $E$ is a collection $\mathcal B$ of subsets of $E$,
called \emph{bases}, satisfying the following exchange property: for all $A,B\in {\mathcal B}$  and $e\in A\setminus B$,  there
exists $f\in B\setminus A$  such that $A\setminus \{ e\} \cup \{ f\}\in {\mathcal B}$. All the bases of a matroid $M=(E, {\mathcal B})$ have the same
cardinality. The \emph{basis graph of a matroid} $M$ is the graph $G(M)$ whose vertices are the bases of $\mathcal B$
and edges are the pairs $A, B$ of bases such that $|A\Delta B| = 2$. From the exchange axiom and since all bases have the same cardinality,
it follows that basis graphs of matroids are partial Johnson graphs. The bases of the \emph{graphic matroid} $M(H)$ of a graph $H=(V,E)$ has $E$ as the ground set and
the spanning trees $T$  of $H$ as bases;  two spanning trees $T,T'$ are  adjacent in the basis graph $M(H)$ of if $T'$ can be obtained from $T$ by removing an edge
$e\in T$ and adding an edge $e'\in T'\setminus T$.
Maurer \cite{Mau} characterized the
basis graphs of matroids as the graphs satisfying the following three conditions: the interval
condition, the positioning condition, and the link condition (which state that the open neighborhood of each vertex is the line graph of a bipartite graph).
Using this characterization it was shown in \cite{Ch_delta} that basis graphs of matroids are meshed. From this and Proposition \ref{partial_Johnson}, we obtain the following corollary:

\begin{corollary} \label{basis-graph} The basis graph $G(M)$ of any matroid $M=(E,{\mathcal B})$ is an $S_3$-meshed graph.
\end{corollary}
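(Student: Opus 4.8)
The plan is to deduce Corollary~\ref{basis-graph} directly from the results already established in this section, essentially as a one-line specialization. First I would recall that by the theorem of Maurer \cite{Mau}, the basis graph $G(M)$ of a matroid $M=(E,\mathcal B)$ is characterized by the interval condition, the positioning condition, and the link condition; in particular, $G(M)$ is a partial Johnson graph, since all bases of $M$ have the same cardinality and the exchange axiom forces $|A\triangle B|=2$ for adjacent bases, so $A\mapsto A$ gives an isometric embedding of $G(M)$ into the Johnson graph $J(E,r)$ with $r=|A|$. Then I would invoke the result of \cite{Ch_delta} (cited just above the statement) that basis graphs of matroids are meshed; in particular they satisfy the triangle condition (TC) by Lemma~\ref{equilateral}.

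Having both facts, the proof is immediate: $G(M)$ is a partial Johnson graph satisfying (TC), so Proposition~\ref{partial_Johnson} applies and yields that $G(M)$ is an $S_3$-graph. Combined with meshedness from \cite{Ch_delta}, we conclude that $G(M)$ is an $S_3$-meshed graph, which is exactly the claim of the corollary.

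There is essentially no obstacle here — the corollary is a direct corollary of Proposition~\ref{partial_Johnson} together with the cited facts that basis graphs of matroids are meshed \cite{Ch_delta} and (via Maurer's theorem \cite{Mau}) are partial Johnson graphs. The only point requiring a sentence of care is to make explicit why basis graphs of matroids are partial Johnson graphs: one should note that since every base has cardinality $r$, the map sending a base to the corresponding $r$-subset of $E$ embeds $G(M)$ into $J(E,r)$, and this embedding is isometric because a shortest path between bases $A$ and $B$ in $G(M)$ has length $|A\triangle B|/2 = d_{J(E,r)}(A,B)$ — the inequality $\le$ being repeated application of the exchange axiom and $\ge$ being clear since each edge of $G(M)$ changes the symmetric difference by exactly $2$. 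Thus the proof would read: \emph{By Maurer's characterization \cite{Mau}, $G(M)$ embeds isometrically into a Johnson graph, so it is a partial Johnson graph; by \cite{Ch_delta}, $G(M)$ is meshed and hence satisfies (TC). Proposition~\ref{partial_Johnson} now implies that $G(M)$ is an $S_3$-graph, and therefore $G(M)$ is an $S_3$-meshed graph.}
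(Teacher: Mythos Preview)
Your proof is correct and follows exactly the paper's approach: the paper derives the corollary from the fact that basis graphs of matroids are partial Johnson graphs (via the exchange axiom and equal cardinality of bases) and are meshed by \cite{Ch_delta}, hence Proposition~\ref{partial_Johnson} applies. The paper also notes, as an alternative, that the corollary can be obtained from Theorem~\ref{S3meshed} by checking that the five forbidden subgraphs of Figure~\ref{non-S3} cannot occur in basis graphs of matroids (the first two violate the positioning condition, the last three violate Maurer's link condition).
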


It is also possible to derive Corollary \ref{partial_Johnson} from Theorem \ref{S3meshed} and Maurer's result. Indeed, as noted above, basis graphs of matroids are
meshed. On the other hand, the 5 graphs from Figure~\ref{non-S3} cannot occur in basis graphs of matroids: the first two graphs violate the positioning condition and the last three graphs
contain vertices whose neighborhoods are not line graphs of bipartite graphs.

Maurer \cite[Theorem 6.1]{Mau2} proved that the convex subgraphs of the basis graph of a matroid $M$ are in bijection with the minors of $M$. Therefore, the
semispaces and their complements have an interpretation as minors. We illustrate this interpretation in case of graphic matroids of 3-edge connected graphs.

\begin{example} Let $H=(V,E)$ be a  3-edge connected graph and let $M(H)$ be the graphic matroid of $H$. Denote by $G=G(M(H))$ the basis graph of $M(H)$.
For any edge $e_0$ of $H$, the spanning trees of $H$ can be partitioned into two sets: the set $S$ of all spanning trees not containing $e_0$ and the set $\overline{S}$ of all spanning trees containing $e_0$.
With the edge $e_0$ we have two operations on $M(H)$: the \emph{deletion} of $e_0$ is the  matroid $M\setminus e_0=M(H')$ of the graph $H'=(V,E\setminus \{ e_0\})$ obtained from $H$ by removing  $e_0$ (but keeping its ends)
and the \emph{contraction} of $e_0$ is the  matroid $M/e_0=M(H'')$ of the graph $H''$ obtained from $H$ by contracting the edge $e_0$. Then the subgraph $G(S)$ of the basis graph $G$ induced by $S$ is
the basis graph of the deletion $M\setminus e_0$  and the subgraph $G(\overline{S})$ of $G$ induced by $\overline{S}$ is the basis graph of the contraction $M/e_0$.
By \cite[Theorem 6.1]{Mau2}, $S$ and $\overline{S}$ are convex, thus they are complementary halfspaces of the basis graph $G$.

Now, consider any spanning tree $T_0$ of $G$, pick any edge $e_0$ of $T_0$, and define the partition $(S,\overline{S})$ of all spanning trees with respect to $e_0$ as before.
Removing the edge $e_0$ from $T_0$, the vertex-set $V$ of $H$ is partitioned into two connected components $V'$ and $V''$. Let $E(V',V'')$
denote the set of all edges of $H$ with one end in $V'$ and another end in $V''$. Clearly, $E(V',V'')$ is a cut of $H$ containing
the edge $e_0$. Since $G$ is 3-edge connected, $|E(V',V'')|\ge 3$. For each edge $f\in E(V',V'')\setminus \{ e_0\}$,  $T=T_0\setminus \{e_0\}\cup \{ f\}$ is a spanning tree of $H$; denote this set of spanning trees by $K$.
Since any two trees $T,T'\in K$ share the edges of $T_0\setminus \{ e_0\}$ and differ only on the two edges of $E(V',V'')\setminus \{ e_0\}$, $T$ and $T'$ are adjacent in the basis graph $G$. For the same reason,
 each $T\in K$ is adjacent to $T_0$. Consequently, $K':=K\cup \{ T_0\}$ is a clique of $G$. We assert that $K'$ is a maximal clique of $G$, unless $|E(V',V'')|\ge 2$. If not, there exists a spanning tree $T^*$ adjacent to all spanning trees of $K'$.
 If $e_0\notin T^*$, since $T_0$ and $T^*$ are adjacent, then $T_0\setminus \{ e_0\}\subset T^*$ and this implies that $T^*$ is a tree from $K$. Thus $e_0\in T^*$. This implies that  there exists
 $e\in (T_0\setminus \{ e_0\})\in T^*$, $f\in T^*\setminus T_0$, and $T^*=T_0\setminus \{ e\}\cup \{ f\}$. Pick any tree $T\in K$. Then $e_0\notin T$ and $e\in T$. Since $T$ is adjacent to $T^*$,
 necessarily $T=T^*\setminus \{ e_0\} \cup \{ e\}$. This is possible only if $|K|=1$, i.e., the cut $E(V',V'')$ has size 2.  Thus, $K'=K\cup \{ T_0\}$ is a maximal clique of $G$.

By previous results, the shadow $K/T_0$ is a semispace of $G$ and its complement is a halfspace of $G$. Since $E(V',V'')$ is a cut of $H$, any spanning tree $T$ of $H$ contains at least one edge $g$ of $E(V',V'')$.
If $T$ does not contains the edge $e_0$, then in the basis graph $G$, $T$ is closer to the spanning tree $T'=T_0\setminus \{ e_0\}\cup \{ g\}$ than to $T_0$, therefore $T$ belongs to the shadow $T/T_0\subset K/T_0$.
Vice-versa, any spanning tree $T$ belonging to the shadow $T'/T_0$ for $T'=T_0\setminus \{ e_0\}\cup \{ g\}$ and $g\in E(V',V'')\setminus \{ e_0\}$ must contain the edge $g$ and to not contain the edge $e_0$
(because $T'$ and $T_0$ coincide elsewhere except $e_0$ and $g$). Consequently, the semispace $K/T_0$ at $T_0$ and adjacent to $T_0$ coincides with the set $S$ of all spanning trees of $H$ not containing the edge $e_0$. Furthermore,
the complement of the shadow $K/T_0$ in $G$ coincides with the set $\overline{S}$ of all spanning trees passing via $e_0$. As we noticed above, they correspond to the deletion and the contraction of the edge $e_0$ in $H$ (and in $M(H)$).
\end{example}

Basis graphs of even $\Delta$-matroids are meshed and are partial half-cubes. However they are not $S_3$-graphs because the last two graphs from Figure~\ref{non-S3} can be
embedded as  subgraphs of  half-cubes. Furthermore, they do not have convex clique-shadows. Basis graphs of matroids are the 1-skeleta of the matroid polytopes, i.e.,
of the (Euclidean) convex hulls of $(0,1)$-vectors corresponding to the bases of matroids \cite{BoGeWh}. A similar result, was proved in \cite{Ch_delta} for basis graphs of
even $\Delta$-matroids. In particular, the half-cube itself is the 1-skeleton of an Euclidean polytope. Since the half-cubes $\frac{1}{2}H_d, d\ge 4$, are not $S_3$, there exists examples
of 1-skeleta of Euclidean polytopes which are not $S_3$ (and thus are not $S_4$). This answer the remark on page 89 of  \cite{VdV} that there are no examples of Euclidean polytopes, the
graph of which is not a Pach-Peano graph. On the other hand, all five forbidden graphs from Figure~\ref{non-S3} are planar and the fifth graph is also 3-connected. Therefore,
 the graphs of 3-dimensional polytopes (i.e., 3-connected planar graphs) are not in general $S_3$-graphs and thus are not  Pasch-Peano graphs.

%
%
%

\subsection{Partial Hamming graphs}
Since partial Hamming graphs are partial Johnson graphs, the following result
can be viewed as a partial solution to Question \ref{allpartialJohnson}:

\begin{proposition} \label{partial_Hamming} Any partial Hamming graph $G$ is an $S_3$-graph.
\end{proposition}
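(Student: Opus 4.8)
The plan is to recognize that partial Hamming graphs form a subclass of partial Johnson graphs satisfying the triangle condition, and then invoke Proposition~\ref{partial_Johnson}. Recall that a Hamming graph is a (weak) Cartesian product of complete graphs, and every Hamming graph satisfies (TC): in a product $\prod_i K_{n_i}$, if $1=d(v,w)<d(u,v)=d(u,w)$, then $v$ and $w$ differ in exactly one coordinate $j$, and since $u$ is equidistant from $v$ and $w$ we must have $u_j\ne v_j$ and $u_j\ne w_j$; then the vertex $x$ obtained from $v$ by setting coordinate $j$ equal to $u_j$ is a common neighbour of $v$ and $w$ (as $K_{n_j}$ is complete) with $d(u,x)=d(u,v)-1$. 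Hence Hamming graphs satisfy (TC). However, (TC) is a metric condition that is \emph{not} inherited by isometric subgraphs in general, so a partial Hamming graph need not obviously satisfy (TC), and this is the point that needs care.

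The key observation is that (TC) \emph{is} inherited by isometric subgraphs of Hamming graphs when the subgraph is itself a partial Hamming graph, because of the structure of the characterization in \cite{Ch_Hamming,Wil}. The cleanest route: first I would show directly that any partial Hamming graph $G$ satisfies (TC). Let $G$ be isometrically embedded in a Hamming graph $H=\prod_i K_{n_i}$, and take $u,v,w\in V(G)$ with $1=d_G(v,w)<d_G(u,v)=d_G(u,w)=k$. Since the embedding is isometric, the same distance relations hold in $H$; $v$ and $w$ differ in a single coordinate $j$, and as above $u_j\notin\{v_j,w_j\}$. The candidate common neighbour in $H$ is the vertex $x$ with $x_j=u_j$ and $x_i=v_i$ for $i\ne j$; it satisfies $d_H(x,v)=d_H(x,w)=1$ and $d_H(u,x)=k-1$. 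The work is to show $x\in V(G)$. For this I would appeal to the characterization of partial Hamming graphs via the convexity of the sets $W(a,b)$ (the ``isometric subgraph of Hamming graph = bipartite-like'' $\Theta$-classes of \cite{Ch_Hamming,Wil}): $x$ lies in $[v,w]$... no, $d(v,w)=1$, so instead $v\in[x,w]$ and one uses that $x$ must lie in an appropriate interval forced by convex halfspaces. Concretely, $x$ is the unique common neighbour of $v$ and $w$ in $H$ lying one step closer to $u$; and in a partial Hamming graph, whenever $d(u,v)=d(u,w)$ with $vw$ an edge, the triangle condition vertex is supplied by the $\Delta$-closure / gated-hull structure that the Djoković-style characterization guarantees.

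Alternatively — and this is probably the approach I would actually write — I would prove (TC) for partial Hamming graphs using the fact (from \cite{Ch_Hamming,Wil}) that in a partial Hamming graph every ``$\Theta$-class'' (equivalence class of edges under the Djoković-Winkler-type relation adapted to Hamming graphs) has the property that removing it disconnects $G$ into convex pieces which are in bijection with a clique, and that $G$ is the ``amalgam'' of these pieces. Given $u,v,w$ as above with $vw$ an edge in some $\Theta$-class $F_j$, the classes containing the edges on a $uv$-geodesic and a $uw$-geodesic differ, from $v$ versus $w$, only in $F_j$; this lets one transport a geodesic and produce the desired vertex $x\sim v,w$ with $d(u,x)=k-1$ inside $G$. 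Once (TC) is established, Proposition~\ref{partial_Johnson} applies verbatim: every partial Hamming graph is a partial Johnson graph (a Hamming graph isometrically embeds in a Johnson graph by replacing each factor $K_{n_i}$ with the Johnson graph $J(n_i,1)=K_{n_i}$, and products of Johnson graphs embed isometrically in a single Johnson graph), it satisfies (TC), hence it is an $S_3$-graph. The main obstacle is the membership argument $x\in V(G)$, i.e.\ verifying (TC) genuinely holds in partial Hamming graphs rather than merely in Hamming graphs; everything after that is a one-line deduction from Proposition~\ref{partial_Johnson}.
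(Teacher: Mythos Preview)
Your approach has a genuine gap: the claim that every partial Hamming graph satisfies (TC) is false. Consider the graph obtained from $K_3\times K_2$ by deleting a single vertex, say $(0,1)$. The remaining five vertices $a=(0,0)$, $b=(1,0)$, $c=(2,0)$, $d=(1,1)$, $e=(2,1)$ with edges $ab,ac,bc,bd,ce,de$ form an isometric subgraph of $K_3\times K_2$ (all distances are preserved, as one checks directly), hence a partial Hamming graph. But with $u=a$, $v=d$, $w=e$ we have $d(v,w)=1$ and $d(u,v)=d(u,w)=2$, while $d$ and $e$ have \emph{no} common neighbour at all in the subgraph --- the only common neighbour $(0,1)$ in the ambient Hamming graph was deleted. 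So (TC) fails, and Proposition~\ref{partial_Johnson} does not apply. Your sketch of ``showing $x\in V(G)$'' via $\Theta$-classes or the Djokovi\'c--Winkler structure cannot succeed, because the conclusion is simply not true.

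The paper's proof bypasses (TC) entirely. It uses the characterization of partial Hamming graphs from \cite{Ch_Hamming}: for every clique $K$, each vertex $v$ has either $|\Proj_v(K)|=1$ or $\Proj_v(K)=K$, and each set $W_x(K)=\{v:\Proj_v(K)=\{x\}\}$ is a halfspace. Given a semispace $S$ with attaching vertex $x_0\sim S$, one takes the clique $K=(N(x_0)\cap S)\cup\{x_0\}$, observes that $S\subseteq V\setminus W_{x_0}(K)$, and since $V\setminus W_{x_0}(K)$ is a halfspace not containing $x_0$ and $S$ is maximal, they coincide. This argument works precisely because the halfspace structure of partial Hamming graphs is controlled by cliques directly, without any need for (TC).
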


\begin{proof} We use the following property of cliques  of partial Hamming graphs $G$, which  characterizes partial Hamming graphs (see Theorem 1 of \cite{Ch_Hamming}): any clique $K$  satisfies the
following conditions:
\begin{enumerate}
\item[(1)] for each vertex $v$ of $G$ either $|\Proj_v(K)|=1$  or $\Proj_v(K)=K$;
\item[(2)] each of the sets $W_x(K)=\{ v\in V: \Proj_v(K)=\{x\}\}, x\in K$ and $W_=(K)=\{ v\in V: \Proj_v(K)=K\}$ are halfspaces of $G$.
\end{enumerate}

Let $S$ be a semispace of a partial Hamming graph $G$ and suppose that $x_0\sim S$ is an attaching vertex of $S$. Since $S$ is convex,  $K:=(N(x_0)\cap S)\cup \{ x_0\}$ is a clique of $G$.
By the second condition of the previous characterization of partial Hamming graphs, the set $W_x(K)$ and its complement $V\setminus W_x(K)$ are halfspaces of $G$. By the first condition, we conclude that
$S\subseteq (K\setminus \{ x_0\})/x_0\subseteq V\setminus W_x(K)$. Since $S$ is a semispace at $x_0$, $S= V\setminus W_x(K)$, thus $S$ is a halfspace.
\end{proof}

\subsection{(3,6)-,(4,4)-, and (6,3)-planar graphs}
We believe that large classes of partial half-cubes and of planar graphs are $S_3$. (This is due to the fact that half-cubes and, more generally, graphs isometrically embeddable into $\ell_1$-spaces
have a large amount of halfspaces because the ends of each edge are separated by at least one or two pairs of complementary halfspaces).
For example, planar bridged triangulations are $S_3$ and are partial half-cubes \cite{BaCh_wmg} (they are not partial Johnson graphs because they do not satisfy the link condition).
Now, we present three classes of planar partial half-cubes, which generalize bridged planar triangulations and which are $S_3$.

A plane graph $G=(V,E)$ (i.e., a planar graph with a planar embedding in the plane) is an \emph{$(p,q)$-graph} if each inner face of $G$ has at least $p$ edges and each inner vertex
of $G$ has degree at least $q$. To ensure non-positive curvature, Lyndon \cite{Ly0,Ly,LySc} considered and investigated three subclasses of $(p,q)$-graphs: the (3,6)-graphs, the (4,4)-graphs, and the (6,3)-graphs.
(Planar bridged triangulations  are the (3,6)-graphs in which all faces are triangles.) The $(p,q)$-graphs have their origins in small cancellation theory of groups. Beyond geometric group theory,
the (3,6)-, (4,4)-, and (6,3)-graphs have been investigated in geometry, combinatorics, and algorithms in the papers \cite{BaPe1,ChDrVa,PrSoCh,Zu} and the references cited therein.

It was shown in \cite{ChDrVa} that (3,6)-, (4,4)-, and (6,3)-graphs are partial half-cubes (for (4,4)-graphs this was established before in \cite{PrSoCh} and for bridged triangulations in \cite{BaCh_wmg}). In all cases,  the isometric embedding
is done using alternating cuts (see also \cite{ChDeGr} and \cite{GlMe} for alternating cuts in general planar graphs). 
A {\it cut} $\{ A,B\}$ of $G$ is a partition of the
vertex-set $V$ into two parts, and a {\it convex cut} is a cut in
which the halves  $A$ and $B$ are complementary halfspaces.  Denote by $E(A,B)$ the
set of all edges of $G$ having one end in $A$ and another one in
$B,$ and say that those edges are crossed (or cut)  by $\{ A,B\}.$
The {\it zone} $Z(A,B)$ of the cut $\{ A,B\}$ is the subgraph
induced by the union of all inner faces of $G$ sharing edges with
$E(A,B)$ and call the subgraphs induced by $\partial A= Z(A,B)\cap
A$ and $\partial B=Z(A,B)\cap B$ the {\it borders} of the cut $\{
A,B\}.$ A zone $Z(A,B)$ is called a {\it strip} if  $Z(A,B)$
induces a path in the dual graph of $G$.
Two edges $e'=(u',v')$ and
$e''=(u'',v'')$ on a common inner face $F$ of $G$ are called {\it
opposite} in $F$ if $d_F(u',u'')=d_F(v',v'')$ and equals the
diameter of the cycle $F.$ If $F$ is an even face, then any its
edge has an unique opposite edge, otherwise, if $F$ is an odd
face, then every edge $e\in F$ has two opposite edges $e^+$ and
$e^-$ sharing a common vertex. In the latter case, if $F$ is
oriented clockwise, for $e$ we distinguish the {\it left opposite
edge} $e^+$ and the {\it right opposite edge} $e^-$. If every face
of $Z(A,B)$ is crossed by a cut  $\{ A,B\}$  in two opposite
edges, then we say that $\{ A,B\}$ is an {\it opposite cut} of
$G.$ We say that an opposite cut $\{ A,B\}$ is {\it straight} on
an even face $F\in Z(A,B)$ and that it {\it makes a turn} on an
odd face $F\in Z(A,B).$ The turn is {\it left} or {\it right}
depending which of the pairs $\{ e,e^+\}$ or $\{ e,e^-\}$ it
crosses. An opposite cut $\{ A,B\}$ of a plane graph $G$ is {\it
alternating}  if the turns on it alternate.  
%
The following result summarizes the properties of alternating cuts:
\begin{proposition} \label{properties364463} \cite{ChDrVa} Let $G=(V,E)$ be a (3,6)-, (4,4)-, or (6,3)-graph. 
Then the following holds:
\begin{itemize}
\item[(i)] the border lines $\partial A, \partial B$ of an alternating cut $(A,B)$ are convex paths of $G$.
\item[(ii)] the alternating cuts $(A,B)$ of $G$, their zones  $Z(A,B)$, and the inner faces of $G$  are convex;
\item[(iii)] each zone $Z(A,B)$ is a strip;
\item[(iv)] each edge  of $G$ is cut by two alternating cuts (where each alternating cut whose zone consists of even
faces only is counted twice). Consequently, for any two vertices $u,v$, the number of alternating cuts separating $u$ and $v$ is
equal to $2d(u,v)$.
\end{itemize}
\end{proposition}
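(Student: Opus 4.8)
The plan is to derive all four assertions from a single engine: the combinatorial Gauss--Bonnet formula (a discrete Cartan--Hadamard principle) for $(p,q)$-graphs. First I would fix an angle assignment adapted to each regime --- assign $\pi/3$ to every corner of every inner face in the $(3,6)$-case, $\pi/2$ in the $(4,4)$-case, $2\pi/3$ in the $(6,3)$-case --- chosen so that every inner face receives curvature $(2-n_F)\pi+\sum(\text{corner angles})\le 0$ and every inner vertex receives curvature $2\pi-\sum(\text{angles at }v)\le 0$ (this is exactly where the face-size and degree bounds enter). Since the cell complex of the planar embedding is simply connected (a disc in the finite case, the plane in the infinite one), for any disc diagram $D$ --- a subcomplex homeomorphic to a disc --- Gauss--Bonnet gives $\sum_{v\in\partial D}\kappa(v)\ge 2\pi$, where $\kappa(v)=\pi-\sum(\text{angles at }v\text{ inside }D)$. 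I would state this as a lemma (or cite it) and use it throughout.

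Next I would establish (iii), the strip property, which is really the statement that the sequence of faces $F_1,F_2,\dots$ traversed by an alternating cut has no repetitions and that consecutive faces share exactly the crossed edge, so the zone $Z(A,B)$ is a path in the dual graph; along the way one also shows the sequence reaches the outer face at both ends (finite case) or is bi-infinite. The argument is by contradiction: if a face repeats, or two non-consecutive faces of the zone meet in a vertex or an edge, one cuts out a disc diagram $D$ whose boundary runs along one border of the zone and closes up through the offending cells. The crucial input is that along a border of an \emph{alternating} cut every boundary corner curvature is $\le 0$: where the cut goes straight through an even face the border vertex has a full interior angle on the far side, and where it turns at an odd face the alternation prevents two consecutive turns toward the same side, so no positive curvature can accumulate. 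Summing would force $\sum_{v\in\partial D}\kappa(v)<2\pi$, a contradiction. The same disc-diagram technique, applied to a putative locally geodesic path versus a true geodesic with the same endpoints, yields the planar combinatorial Cartan--Hadamard lemma --- locally geodesic paths in these graphs are geodesics --- which I would use to conclude that each border line $\partial A$, $\partial B$ is a geodesic (its interior vertices subtend angle $\ge\pi$ on the far side, by the $(p,q)$-condition together with the alternating turns).

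Assertions (ii) and (i) then follow by crossing arguments. To see that $A$ is convex, take $x,y\in A$ and a geodesic $P$ between them; if $P$ entered $B$ it would cross the strip $Z(A,B)$ and return, hence cross the geodesic border path $\partial A$ at two distinct vertices, and replacing the detour by the sub-path of $\partial A$ between them would shorten $P$ --- contradiction. Hence $A$, and symmetrically $B$, is a halfspace and $\{A,B\}$ is a convex cut; the same argument applied to the geodesic borders shows $Z(A,B)$ is convex, and likewise each inner face is convex (a geodesic leaving a face and returning would cross one of its sides, which is a geodesic path, twice). Then (i) is immediate: $\partial A=A\cap Z(A,B)$ and $\partial B=B\cap Z(A,B)$ are intersections of convex sets, hence convex, and they are paths by (iii). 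For (iv): every inner edge $e$ lies on exactly two inner faces, and launching the opposite-edge/alternating-turn construction into each of them produces an alternating cut crossing $e$ --- these are the (at most two) alternating cuts through $e$, and in the degenerate case where the whole zone consists of even faces the two launches yield the same cut, which is then counted with multiplicity two so that every edge is uniformly charged by two. Finally, for the distance formula, pick a geodesic $P$ of length $k=d(u,v)$; since each alternating cut is convex, a cut separates $u$ from $v$ iff it crosses $P$, and then in exactly one edge; as each of the $k$ edges of $P$ is crossed by exactly two counted cuts, the number of alternating cuts separating $u$ and $v$ equals $2k$.

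\textbf{Main obstacle.} The heart of the work is the Gauss--Bonnet bookkeeping in step (iii): organising the disc diagram in each of the several ways a zone can fail to be a strip (an immediate back-turn, a later self-crossing through a shared vertex versus a shared edge, a wrap-around), and then verifying \emph{in each of the three curvature regimes} that the alternating condition does force the border corner curvatures to be nonpositive --- this is where the three cases genuinely differ and where the angle-counting must be done carefully. Once (iii) and the ``locally geodesic $\Rightarrow$ geodesic'' lemma are in hand, the remaining assertions (i), (ii), (iv) are comparatively routine crossing arguments.
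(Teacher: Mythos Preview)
The paper does not prove this proposition; it is quoted from \cite{ChDrVa} as a summary of known properties of alternating cuts, and the present paper only \emph{uses} the statement (together with Lemma~\ref{zone}) to establish the subsequent Lemma~\ref{W'W''} and the $S_3$ property of $(3,6)$-, $(4,4)$-, and $(6,3)$-graphs. So there is no in-paper proof to compare against; your plan is a reconstruction of the argument of \cite{ChDrVa}, and the combinatorial Gauss--Bonnet / disc-diagram engine you propose is indeed the standard (and the original) tool for these graph classes.

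One point to tighten: your derivation of (ii) from (iii) is not quite ``routine'' as written, and as stated it is circular. You argue that if a geodesic $P$ between $x,y\in A$ entered $B$, then replacing the detour by the sub-path of the \emph{geodesic} border $\partial A$ would \emph{shorten} $P$. But knowing only that $\partial A$ is a geodesic path gives $d(u,v)=d_{\partial A}(u,v)$ for the two crossing vertices $u,v\in\partial A$; the detour through $B$ has length $d(u,v)$ as well (since $P$ is a geodesic), so nothing is shortened and the existence of $z\in [x,y]\cap B$ is not excluded. What you actually need is that $\partial A$ is \emph{convex} (so that $[u,v]\subseteq\partial A\subseteq A$), yet you later deduce convexity of $\partial A$ from the identity $\partial A=A\cap Z(A,B)$, which presupposes (ii). The fix is to run one more disc-diagram argument directly: if a geodesic crosses an alternating cut twice, the sub-geodesic in $B$ together with the corresponding sub-path of $\partial B$ bounds a disc whose boundary vertices all have nonpositive corner curvature (the geodesic side because geodesics have interior angle $\ge\pi$ on both sides in each of the three regimes; the border side by the alternating condition, exactly as in your step (iii)); the two ``corner'' vertices where the geodesic meets the border contribute strictly less than $\pi$ each, so the total boundary curvature is $<2\pi$, contradicting Gauss--Bonnet. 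This yields ``no geodesic crosses an alternating cut twice'' and hence convexity of $A$, $B$, and then of $\partial A$, $\partial B$, $Z(A,B)$ in the order you intended. With that correction your plan is sound and matches the approach of \cite{ChDrVa}.
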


Proposition \ref{properties364463}(iv) implies that (3,6)-, (4,4)-, or (6,3)-graphs are partial half-cubes.
The two alternating cuts $\{ A',B'\}$ and $\{ A'',B''\}$ (which are not necessarily distinct) crossing an edge $xy$ of $G$ have the same convexity
properties with respect to the sets $W(x,y), W(y,x)$, and $W_=(x,y)$ as the partial Jonhson graphs (but (3,6)-, (4,4)-, or (6,3)-graphs are not
partial Johnson graphs because they do not satisfy the link condition).
Namely, by removing the edges of $E(A',B')\cup E(A'',B'')$ from $G$ but leaving their end
vertices, we get a graph $G^+$ whose connected components are induced by
the pairwise intersections $A'\cap A'', B'\cap B'', A'\cap B'',$
and $ A''\cap B'.$  It was shown in \cite{ChDrVa} that these
convex sets coincide with $W(x,y), W(y,x)$ and the connected
components of $W_=(x,y)$ (for an illustration, see \cite[Figure 3]{ChDrVa}:

\begin{lemma} \label{zone}  \cite{ChDrVa} $W(x,y)=A'\cap A'', W(y,x)=B'\cap
B'',$ while $W'_=(x,y):=B'\cap A''$ and $W''_=(x,y):=A'\cap B''$
constitute  a partition of $W_=(xy)$ into two (maybe empty) convex subsets.
\end{lemma}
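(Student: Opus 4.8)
The plan is to deduce the whole statement from the two facts about alternating cuts recorded in Proposition~\ref{properties364463}: that the two halves of any alternating cut are complementary halfspaces (part~(ii)), and that for any two vertices $u,v$ the number of alternating cuts separating $u$ from $v$ --- each even-zone cut being counted twice --- equals $2d(u,v)$ (part~(iv)). First I would normalise the notation so that $x\in A'\cap A''$ and $y\in B'\cap B''$; this is legitimate because $xy\in E(A',B')\cap E(A'',B'')$, so each of the two cuts puts $x$ and $y$ on opposite sides. The four sets $A'\cap A''$, $A'\cap B''$, $B'\cap A''$, $B'\cap B''$ then partition $V$ (if the two cuts coincide, i.e.\ the common zone consists of even faces only, the two mixed sets are empty and the partition degenerates to $A'\cup B'$).

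The key observation is that an alternating cut not crossing the edge $xy$ keeps $x$ and $y$ on the same side, for otherwise $xy$ would lie in that cut; consequently such a cut separates a vertex $z$ from $x$ if and only if it separates $z$ from $y$. By Proposition~\ref{properties364463}(iv) the only alternating cuts crossing $xy$ are $\{A',B'\}$ and $\{A'',B''\}$ (the single even-zone cut counted with multiplicity two when they coincide). Subtracting the two instances of the identity $2d(z,\cdot)=(\text{number of cuts separating }z\text{ from }\cdot)$, I would obtain that $2\bigl(d(z,x)-d(z,y)\bigr)$ is exactly the signed contribution of these (at most) two cuts. Running through the four regions of the partition: for $z\in A'\cap A''$ each crossing cut fails to separate $z$ from $x$ but does separate it from $y$, giving $d(z,x)=d(z,y)-1$ and hence $A'\cap A''\subseteq W(x,y)$; symmetrically $B'\cap B''\subseteq W(y,x)$; and for $z$ in either mixed set exactly one crossing cut separates $z$ from $x$ and the other from $y$, so $d(z,x)=d(z,y)$ and $(A'\cap B'')\cup(B'\cap A'')\subseteq W_=(xy)$. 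Since $\{W(x,y),W_=(xy),W(y,x)\}$ and $\{A'\cap A'',\ (A'\cap B'')\cup(B'\cap A''),\ B'\cap B''\}$ are both partitions of $V$, all three inclusions are forced to be equalities, which is the first part of the lemma and shows that $W'_=(x,y)=B'\cap A''$ and $W''_=(x,y)=A'\cap B''$ are disjoint and cover $W_=(xy)$.

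Convexity is then immediate: by Proposition~\ref{properties364463}(ii) each of $A',A'',B',B''$ is a halfspace and hence convex, and an intersection of convex sets is convex, so $W(x,y)=A'\cap A''$, $W(y,x)=B'\cap B''$, $W'_=(x,y)=B'\cap A''$ and $W''_=(x,y)=A'\cap B''$ are all convex. The only point requiring care is the multiplicity bookkeeping in the degenerate even-zone case, where the unique cut through $xy$ must be counted twice in the identity of Proposition~\ref{properties364463}(iv) and therefore contributes $\pm 2$ rather than $\pm 1$; this keeps the case analysis uniform and forces $W_=(xy)=\varnothing$, consistently with the lemma's ``maybe empty''. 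I do not anticipate a genuine obstacle --- the argument is a routine exploitation of the half-cube embedding structure --- but getting this convention right, and verifying that ``being separated from $x$'' versus ``from $y$'' really depends only on the cuts through $xy$, is the place to be most careful.
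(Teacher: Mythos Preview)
The paper does not supply its own proof of this lemma: it is stated with the citation \cite{ChDrVa} and used as a black box. So there is nothing to compare against, and your proposal in fact fills a gap rather than duplicating or diverging from an existing argument.

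That said, your derivation is correct and is exactly the natural one given the infrastructure already quoted in Proposition~\ref{properties364463}. The counting argument via part~(iv) is sound: cuts not through $xy$ contribute equally to the separation counts for $x$ and for $y$, so the difference $2d(z,x)-2d(z,y)$ is governed entirely by the (at most) two cuts through $xy$, and the four-way case split on $z\in A'\cap A''$, $B'\cap B''$, $A'\cap B''$, $B'\cap A''$ yields precisely $W(x,y)$, $W(y,x)$, and the two pieces of $W_=(xy)$. Convexity of the four pieces follows from part~(ii), read as asserting that alternating cuts are convex cuts (both halves convex). Your handling of the degenerate even-zone case is also correct: the single cut counted with multiplicity two forces $W_=(xy)=\varnothing$. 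One cosmetic point: when you invoke part~(iv) to say the only alternating cuts crossing $xy$ are $\{A',B'\}$ and $\{A'',B''\}$, this is really just the definition of $\{A',B'\}$ and $\{A'',B''\}$ in the text preceding the lemma together with the ``each edge is cut by two'' clause of~(iv); you might phrase it that way for clarity.
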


If both connected components $W'_=(x,y)$ and $W''_=(x,y)$ are nonempty, then as was shown in \cite{ChDrVa},
$Z=Z(A',B')\cap Z(A'',B'')$  consists of one or several faces forming a strip, which at the two ends
has two odd faces $F$ and $D$ and all other faces of the strip are even. Denote by $p$ the furthest from the edge $xy$ vertex of $F$ and by $q$ the
furthest from $xy$ vertex of $D$. Denote by $p',p''$ the neighbors of $p$ in $F$ and by $q',q''$ the neighbors of $q$ in $D$ such that $pp',qq' \in E(A',B'), pp'',qq''\in E(A'',B'')$. 

\begin{lemma} \label{W'W''} If $z'\in W'_=(x,y)$ and $z''\in W''_=(x,y)$, then there exists a shortest $(z',z'')$-path of $G$ passing via the vertices $p$ and $q$.
\end{lemma}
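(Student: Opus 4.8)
The plan is to exploit the structure of the strip $Z = Z(A',B') \cap Z(A'',B'')$ separating $W'_=(x,y)$ from $W''_=(x,y)$, together with the convexity of the borders, zones, and faces established in Proposition~\ref{properties364463}, and the explicit description of $W'_=(x,y), W''_=(x,y)$ from Lemma~\ref{zone}. First I would recall that $z' \in W'_=(x,y) = B' \cap A''$ and $z'' \in W''_=(x,y) = A' \cap B''$, so any $(z',z'')$-path must cross both alternating cuts $\{A',B'\}$ and $\{A'',B''\}$; hence every $(z',z'')$-path meets the zone $Z$. Since $z'$ and $z''$ lie on opposite sides of $Z$, a shortest path $P$ from $z'$ to $z''$ enters $Z$, traverses it, and exits; the idea is to show $P$ can be chosen to cross $Z$ precisely ``at the ends'', i.e.\ through the odd faces $F$ and $D$, which forces it through $p$ and $q$.

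The key steps, in order, would be: (1) Observe that the border path $\partial A' = Z(A',B') \cap A'$ (and similarly the other three border pieces $\partial B', \partial A'', \partial B''$) are convex by Proposition~\ref{properties364463}(i); consequently $W'_=(x,y)$ and $W''_=(x,y)$ are convex subsets (Lemma~\ref{zone}), and the strip $Z$ is a convex subgraph inducing a path in the dual. (2) Show that the only vertices of $Z$ that lie in \emph{both} $B' \cap A''$-side-accessible and $A' \cap B''$-side-accessible position are $p$ and $q$: along the strip, the even faces contribute edges of $E(A',B')$ and of $E(A'',B'')$ that are ``parallel'', so that passing from the $W'_=$ side to the $W''_=$ side inside the interior of the strip is impossible without a turn, and the turns occur only at the two odd terminal faces $F$ and $D$, whose tip vertices are $p$ and $q$. (3) Take a shortest $(z',z'')$-path $P$; using the convexity of $W'_=(x,y)$ (so $P$ stays in $W'_=$ until it first leaves it) and of $W''_=(x,y)$, together with the convexity of the faces $F$ and $D$ and of the zones, deduce that $P$ must pass through one of $p$ or $q$ when crossing from $W'_=$ to the interior of $Z$ and through the other when crossing out; then re-route if necessary, using that replacing a subpath by one through $p$ or $q$ does not increase length because $p \in [p',p'']$ and $q \in [q',q'']$ are on geodesics between the two sides (the odd faces being convex and $p,q$ their tips, $d(p',p'') = 2$ with $p$ the common neighbor, and analogously for $q$). (4) Conclude by stitching: a shortest $z'$-to-$p$ path inside $W'_=(x,y) \cup \{p\}$, the $p$-to-$q$ subpath inside $Z$, and a shortest $q$-to-$z''$ path inside $W''_=(x,y) \cup \{q\}$, verifying the lengths add up using Proposition~\ref{properties364463}(iv) (the number of alternating cuts separating two vertices equals twice their distance), so that no shortcut is possible.

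The main obstacle I expect is step~(2)--(3): rigorously arguing that a \emph{shortest} path cannot ``sneak across'' the strip $Z$ somewhere in its interior (through an even face) rather than at the odd terminal faces. This requires a careful local analysis of how the two alternating cuts $\{A',B'\}$ and $\{A'',B''\}$ interleave along the strip --- in the interior they cut each even face in two disjoint opposite edge-pairs, so the four regions $A'\cap A'', A'\cap B'', B'\cap A'', B'\cap B''$ remain ``well-separated'' there, and the only place where the two regions $B'\cap A''$ and $A'\cap B''$ become adjacent (share a vertex) is at $p$ and at $q$. Making this precise likely uses Proposition~\ref{properties364463}(iii) (each zone is a strip, hence has the simple path-like dual structure) and the turn-alternation property of alternating cuts, plus the fact from \cite{ChDrVa} describing exactly the shape of $Z$ when both $W'_=$ and $W''_=$ are nonempty. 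Once the ``crossing only at $p$ and $q$'' claim is established, the convexity facts already available in the excerpt make the path-surgery routine.
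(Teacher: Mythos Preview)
Your plan has a genuine gap in steps (2)--(3). You correctly observe that $z'\in B'\cap A''$ and $z''\in A'\cap B''$, so any $(z',z'')$-path crosses both alternating cuts. But the two crossings can happen at edges of $E(A',B')$ and $E(A'',B'')$ that are \emph{far from each other and far from the strip} $Z=Z(A',B')\cap Z(A'',B'')$. Concretely, a shortest path $P$ may leave $W'_=(x,y)$ through an edge of $E(A',B')$ into $W(x,y)=A'\cap A''$, travel an unbounded distance inside $W(x,y)$, and only then cross an edge of $E(A'',B'')$ into $W''_=(x,y)$. Neither crossing edge needs to belong to the intersection zone $Z$, so your claim that ``$P$ must pass through one of $p$ or $q$ when crossing from $W'_=$ to the interior of $Z$'' is unfounded: $P$ may never meet $Z$ at all. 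Your local re-routing idea (``replacing a subpath by one through $p$ does not increase length because $p\in[p',p'']$'') only helps if $P$ is already near $p',p''$, which is exactly what you have not established.

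The paper's proof proceeds quite differently. It first explicitly uses the fact that $P$ must traverse $W(x,y)$ or $W(y,x)$, and reduces (by trimming $P$) to the case $z'\in\partial B'\cap A''$ and $z''\in\partial B''\cap A'$, i.e.\ both endpoints lie on border lines. Then it runs an induction on $k=d(z',p)+d(p,q)+d(q,z'')$, comparing $P$ with the concatenated path $P_0$ that goes $z'\to p\to q\to z''$ along the (convex) border lines and through the strip. The key step is the cut-counting from Proposition~\ref{properties364463}(iv): if every alternating cut crossing an edge of $P_0$ also crosses $P$, then $|P|\ge |P_0|$ and we are done. Otherwise some alternating cut $(A,B)$ crosses $P_0$ twice (once on each border segment), and applying the induction hypothesis to the inner pair $v',v''$ forces $p,q$ onto the unique shortest $(v',v'')$-path on the convex border $\partial B$---contradicting the fact that $p$ and $q$ are joined by \emph{two} disjoint shortest paths. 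So the cut-counting is not a final length-check but the engine of the argument; you should reorganize around it.
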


\begin{proof}   Let $P$ be any shortest $(z',z'')$-path of $G$. Since  $z'$ and $z''$ belong to
different connected components of the graph $G^+$ and to different components of $W_=(x,y)$, and these  components are convex,
the path $P$ will leave $W'_=(x,y)$, enter one of the sets $W(x,y)$ or $W(y,x)$, say $W(x,y)$, and then leave $W(x,y)$ to enter $W''_=(x,y)$ and finally  reach $z''$.
Denote by $a'$ the first vertex of $P$ in $W(x,y)$ and by $a''$ the last vertex of $P$ in $W(x,y)$. Then necessarily, $a'$ is incident to a vertex $b'$ such that the edge $a'b'$ belongs to
$E(A',B')$ and $a''$ is incident to a vertex $b''$ such that the edge $a''b''$ belongs to $E(A'',B'')$. Since $b',b''$ belong to a shortest $(z',z'')$-path $P$,
if $p$ and $q$ will belong to a shortest $(b',b'')$-path, then $p$ and $q$
will also belong to a shortest $(z',z'')$-path. Therefore it suffices the assertion for $b'$ and $b''$, i.e., we can suppose that $z'=b'$ and $z''=b''$. Consequently, $z'\in \partial B'\cap A''$ and
$z''\in \partial B''\cap A'$.

For each pair of vertices $z'\in \partial B'\cap A''$ and
$z''\in \partial B''\cap A'$, we prove the assertion by induction on $k=d(z',p)+d(p,q)+d(q,z'')$. Let  $P_0$ be the $(z',z'')$-path of length $k$ consisting of the unique shortest path between $z'$ and $p$ (on the respective border line), the shortest path between $p$ and $q$ going via the vertices $p'$ and $q''$, and
the unique shortest path between $q$ and $z''$ (on the respective border). We assert that $P$ and $P_0$ have the same length. Clearly $|P_0|\ge |P|$. Consider any alternating cut $(A,B)$ crossing an edge of $P_0$. If each such cut also crosses
the path $P$, since each edge of $G$ is crossed by two alternating cuts and no alternating cut crosses any shortest path twice, we will deduce that $|P|\ge |P_0|$. Therefore, there exists an alternating cut
$(A,B)$ which crosses $P_0$ but does not crosses $P$. Then clearly $(A,B)$ crosses $P_0$ in two distinct edges $u'v'$ and $u''v''$ with $u',u''\in A$ and $v',v''\in B$. Since the boundary lines of alternating curs are convex, necessarily
$u'v'$ is an edge of the shortest path between $z'$ and $p$ such that $v'$ is closer to $p$ than $u'$. For the same reason, $u''v''$ is an edge of the shortest path between $q$ and $z''$ such that $v''$ is closer to $q$ than $u''$.

If $z'\ne u'$ or $z''\ne u''$, then $d(u',p)+d(p,q)+d(q,z'')<k$ and by induction assumption $p$ and $q$ belong to a shortest $(u',u'')$-path. Thus $v',v''\in [u',u'']$. This is impossible since
the border line $\partial A$ is convex, $u',u''\in \partial A$ and $v',v''\in \partial B$. Consequently, $z'=u'$ and $z''=u''$. 
$Z(A'',B'')\cap Z(A,B)$ containing the edges $z''a''$ and $z''v''$. 
Since $d(v',p)+d(p,q)+d(q,v'')<d(z',p)+d(p,q)+d(q,z'')=k$, by induction assumption,  $p$ and $q$ lie on a shortest $(v',v'')$-path. On the other hand, $v',v''\in \partial B$ and $\partial B$ is a convex path of $G$.
Consequently, the unique shortest path  between $v'$ and $v''$ must pass via $p$ and $q$. But this is impossible since $p$ and $q$ are connected by two disjoint shortest paths, one passing via $p'$ and $q'$ and another via $p''$ and $q''$.
This concludes the proof.
\end{proof}

From previous lemmas, we can easily deduce the following result:

\begin{proposition} (3,6)-, (4,4)-, or (6,3)-graphs are $S_3$.
\end{proposition}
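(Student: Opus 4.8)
The plan is to apply the criterion of Theorem \ref{S3graphs-trianglecondition}. A $(3,6)$-, $(4,4)$-, or $(6,3)$-graph $G$ satisfies (TC): it is a partial half-cube, hence bipartite-like in the sense that... wait — actually these graphs are not bipartite in general (they contain triangles in the $(3,6)$-case), so I must first check that (TC) holds. In the $(4,4)$-case there are no triangles and edges are the maximal cliques, so (TC) holds vacuously on triangle-free graphs (no $u,v,w$ with $d(v,w)=1=d(u,v)=d(u,w)$ ever forces a new vertex that is not already there — one checks directly using Proposition \ref{properties364463}). In the $(3,6)$- and $(6,3)$-cases, a triangle is an inner face, and given $u$ equidistant from the two ends $v,w$ of an edge lying on such a triangular face, the third vertex $x$ of that face satisfies $d(u,x)=d(u,v)-1$ by convexity of faces and of zones (Proposition \ref{properties364463}(ii)); I would spell this out as a short lemma. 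Thus in all three cases $G$ satisfies (TC), and I can invoke Theorem \ref{S3graphs-trianglecondition}: it suffices to show that for every pointed maximal clique $(x_0,K)$ the shadow $K/x_0$ and the extended shadow $x_0\SK K$ are convex.

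The next step is to identify $K/x_0$ with a halfspace coming from an alternating cut, exactly as in the proof of Proposition \ref{partial_Johnson}. Pick $y\in K$, consider the edge $x_0y$, and let $\{A',B'\}$, $\{A'',B''\}$ be the two alternating cuts crossing $x_0y$, with $x_0\in W(x_0,y)=A'\cap A''$ and $y\in W(y,x_0)=B'\cap B''$. All vertices of $K\setminus\{y\}$ lie in a single connected component of $W_=(x_0,y)$ — say $W'_=(x_0,y)=B'\cap A''$ — because $K\setminus\{y\}$ is a clique and, were two of its vertices in the two different components, Lemma \ref{W'W''} would force a shortest path between them of length $\ge 4$ through the apex vertices $p,q$, contradicting adjacency. (One has to handle the degenerate cases where one component of $W_=$ is empty, or where the two cuts coincide, but these only make the claim easier.) I would then show $K/x_0 = W(y,x_0)\cup W'_=(x_0,y)$ by the same argument as in Proposition \ref{partial_Johnson}: the inclusion $\subseteq$ is immediate from the distance inequalities among the pairwise-adjacent $x_0,y,z$; for $\supseteq$, take $u\in W'_=(x_0,y)$, use (TC) to get a common neighbor $z$ of $x_0,y$ one step closer to $u$, and if $z\notin K$ use maximality of $K\cup\{x_0\}$ to get $z'\in K$ non-adjacent to $z$, whence $x_0\in[z,z']$ contradicts convexity of the halfspace $W(y,x_0)\cup W'_=(x_0,y)$ (this set is convex by Lemma \ref{zone}, since it equals $B'\cap A''\cup(B'\cap B'') = B'\cap(A''\cup B'') = B'$, an alternating-cut halfspace). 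Hence $K/x_0$ is convex. By Lemma \ref{extended} the extended shadow $x_0\SK K$ is precisely $V\setminus(K/x_0)=A'$, which is again an alternating-cut halfspace, hence convex.

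With both $K/x_0$ and $x_0\SK K$ convex for every pointed maximal clique, Theorem \ref{S3graphs-trianglecondition} gives that $G$ is an $S_3$-graph, completing the proof. The main obstacle I anticipate is the bookkeeping around the two alternating cuts crossing an edge: in $(3,6)$- and $(6,3)$-graphs a single alternating cut whose zone has only even faces is "counted twice", so the two cuts $\{A',B'\},\{A'',B''\}$ may coincide, and one must check that $W'_=(x_0,y)$ and $W''_=(x_0,y)$ (one of which is then empty) behave correctly; also verifying (TC) cleanly in the presence of triangular faces requires a careful but routine use of the convexity of faces and zones from Proposition \ref{properties364463}. Everything else is a transcription of the Proposition \ref{partial_Johnson} argument with "alternating cut" in place of the Djoković-type cut for partial Johnson graphs, relying on Lemmas \ref{zone} and \ref{W'W''}.
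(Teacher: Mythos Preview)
Your approach has a genuine gap: these graph classes do \emph{not} satisfy (TC) in general, so Theorem \ref{S3graphs-trianglecondition} cannot be invoked. The odd cycle $C_{2n+1}$ with $n\ge 2$ is a $(3,6)$-graph (and a $(4,4)$-graph): its unique inner face has $2n+1\ge 5$ edges and it has no inner vertices. Labeling the vertices $0,1,\dots,2n$ and taking $u=0$, $v=n$, $w=n+1$, one has $d(u,v)=d(u,w)=n$ and $v\sim w$, but $v$ and $w$ have no common neighbor at all, so (TC) fails. Your claimed verification of (TC) in the $(3,6)$-case (``a triangle is an inner face, and given $u$ equidistant \dots'') assumes every edge lies on a triangular face, which is false; in the $(4,4)$-case the claim that these graphs are triangle-free, or at least that the equidistance situation never arises, is also unjustified (again $C_5$). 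Consequently the whole reduction to pointed maximal cliques collapses, and with it the step where you apply (TC) to find a common neighbor of $x_0,y$ one step closer to $u$.

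The paper's proof sidesteps (TC) entirely and is much shorter. It works directly with an arbitrary semispace $S$ at an adjacent attaching vertex $x_0$: pick a neighbor $y\in S$ of $x_0$ and the two alternating cuts through $x_0y$. Since $S$ is convex and avoids $x_0$, one has $S\subseteq W(y,x_0)\cup W_=(x_0,y)$. Lemma \ref{W'W''} then forces $S$ to meet at most one of the two components $W'_=(x_0,y),W''_=(x_0,y)$: if it met both in $z',z''$, a shortest $(z',z'')$-path would pass through $p,q$ and hence through $W(x_0,y)$, contradicting convexity of $S$. Thus $S$ is contained in one of the alternating-cut halfspaces (which by Lemma \ref{zone} is convex and avoids $x_0$), and by maximality $S$ equals it. No clique structure, no (TC), no mirroring of Proposition \ref{partial_Johnson} is needed.
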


\begin{proof} Let $S$ be a semispace of $G$ having $x_0\sim S$ as an attaching vertex and let $y$ be a neighbor of $x_0$ in $S$. Consider the two alternating
cuts $\{ A',B'\}$ and $\{ A'',B''\}$ crossing the edge $x_0y$ and suppose that $x_0\in A'\cap A''$ and $y\in B'\cap B''$. Pick any vertex $z$ of $S$. Then either $z\in W(y,x_0)\subseteq B'\cap B''$
or $z\in W_=(x_0,y)$. We assert that $S$ cannot contain two vertices $z',z''\in W_=(x_0,y)$ such that $z'\in W'_=(u,v)$ and $z''\in W''_=(u,v)$. Suppose by way of contradiction that such $z'$ and $z''$ exist.
By Lemma \ref{W'W''}, there is a shortest $(z',z'')$-path passing via the vertices $p$ and $p$. But $p$ and $q$ are connected by two shortest paths, one belonging to $A'\cap A''$ and the second belonging to
$B'\cap B''=W(x_0,y)$ (in fact the second path contains the vertex $x_0$). Consequently, $[z',z'']$ intersects $W(x_0,y)$. Since $z',z''\in S$ and $S$ is convex, this contradicts the fact
that  $S\subseteq W(x_0,y)\cup W_=(x_0,y)$.
\end{proof}

\begin{question} Are (3,6)-, (4,4)-, or (6,3)-graphs $S_4$ (i.e., Pasch)?
\end{question}

\subsection{Summary of examples} We conclude by summarizing the known examples of $S_3$-graphs and meshed $S_3$-graphs.
The following graphs are $S_3$:
\begin{itemize}
\item partial cubes,
\item partial Hamming graphs,
\item partial Johnson graphs satisfying (TC),
\item (3,6)-,(4,4)-, and (6,3)-graphs,
\item the Petersen graph and the dodecahedron.
\end{itemize}
Additionally, the following graphs are meshed $S_3$-graphs:
\begin{itemize}
\item hyperoctahedra, complete graphs, the icosahedron, and  the graph $\Gamma$ from Figure \ref{non-convexshadows},
\item basis graphs of matroids,
\item median, quasi-median, and weakly median graphs,
\item the 2-dimensional $\ell_\infty$-grid and any its subgraph contained in the region of ${\mathbb R}^2$ bounded by a simple closed spath of the grid (they are Helly and do not contain the graphs from Figure~\ref{non-S3}).
\end{itemize}
%
%
%
%

\section{Halfspace separation problem} \label{s:halfspacesep}
In this section we consider the halfspace separation problem. This problem is well-known in machine learning for  sets in ${\mathbb R}^d$
\cite{BoGuVa,MiPa} and was recently introduced and studied for general convexity spaces by Seiffart, Horv\'{a}th, and Wrobel \cite{SeHoWr}.

\begin{definition} [Halfspace separation problem] \cite{SeHoWr} Given a pair $(A,B)$ of sets of a convexity space $(X,\C)$, the \emph{halfspace separation problem} asks if $A$ and $B$ are separable by complementary halfspaces $H',H''$ and to find such separating halfspaces, if they exist.
\end{definition}

We refer to  its version when $B$ is a single point $x_0$, the
\emph{halfspace $S_3$-separation problem}. The halfspace separation problem is NP-complete  \cite{SeHoWr} because deciding if a graph has a pair of complementary halfspaces is NP-complete \cite{ArDaDoSz}.
We consider two methods for solving this problem, by enumeration of its halfspaces and by reducing the problem to the separation of adjacent disjoint convex sets $A,B$ which coincide with  their mutual shadows. We apply
these methods to geodesic, gated, and induced path convexities in graphs.

\subsection{Halfspace separation problem in $S_4$- and $S_3$-convexity spaces}
Here we show how to solve the halfspace separation problem in finite $S_4$-convexity spaces, in which the convex hull of sets can be computed efficiently. We check if $\conv(A)$ and $\conv(B)$ are disjoint and return
answer ``yes'' if they are disjoint and answer ``not'' otherwise. To compute the complementary halfspaces $H'$ and $H''$ separating $A$ and $B$ we proceed as follows. Set $A:=\conv(A)$ and $B:=\conv(B)$ and while $A\cup B\ne X$,
for each point $x\notin  A\cup B$ we check if $\conv(A\cup \{ x\})\cap B=\varnothing$ or $\conv(B\cup \{ x\})\cap A=\varnothing$. Since $A,B$ are disjoint and  $\C$ is $S_4$, at least one of the intersections if empty (otherwise the
shadows $A/B$ and $B/A$ intersect and the convex sets $A$ and $B$ cannot be separated, contrary to $S_4$). If $\conv(A\cup \{ x\})\cap B=\varnothing$,
then we set $A:=\conv(A\cup \{ x\})$, otherwise we set $B:=\conv(B\cup \{ x\})$. In both cases, we continue. When $A\cup B=X$, we return $H'=A$ and $H''=B$.
In fact, this is Algorithm 1 of  \cite{SeHoWr} and the proof of Theorem  \ref{S4arity}(1) of \cite{Ch_separa,Ch_S3} in the finite case (where the maximality choice using Zorn's lemma is replaced by the algorithmic loop).
This algorithm can be also adapted to solve the halfspace $S_3$-separation problem in $S_3$-spaces $(X,\C)$. Given a set $A$ and a point $x_0\notin A$, first we check if $x_0\in \conv(A)$. If $x_0\in \conv(A)$, then we return the answer ``not'', otherwise
we return ``yes'' (because $(X,\C)$ is $S_3$). To find the separating halfspaces, we set $A:=\conv(A)$ and while there exists a point $y\in X\setminus (A\cup \{ x_0\})$ such that $x_0\notin \conv(A\cup \{ y\})$, we set
$A=\conv(A\cup \{ y\})$ and continue. Then the final set $A$ is a semispace at $x_0$ containing the initial set $A$, thus $A$ will be also a halfspace by $S_3$.

\subsection{Halfspace separation using the halfspace enumeration} Let $A$ and $B$ be two disjoint sets of a convexity space $(X,\C)$ on $n$ points. 
To solve the halfspace separation problem for $(A,B)$, at preprocessing stage we enumerate the set $\mathcal H$ of all pairs of disjoint halfspaces of $(X,\C)$ and then we test if $\mathcal H$ contains a pair $(H',H'')$ of complementary halfspaces such that
$A\subseteq H'$ and $B\subseteq H''$. Clearly, this method is polynomial if and only if $\mathcal H$ has polynomial size.

Surprisingly, the problem of enumeration of complementary halfspaces of geodesic convexity in graphs was considered
by Glanz and Meyerhenke \cite{GlMe} (with the motivation of minimizing communication costs in parallel computations). They considered convex cuts in graphs (which define complementary halfspaces),
proved that there exists a polynomial number of convex cuts in bipartite graphs and in planar graphs, and show how to compute them in those two cases.
They also provide a (non-algorithmic) characterization of convex cuts in general graphs. The fact that any bipartite graph $G=(V,E)$ may contain at most $|E|$ pairs
of complementary spaces and the way to enumerate the pairs of complementary halfspaces,
follows from the following simple lemma (which can be compared with Proposition 2.1 of \cite{GlMe}):

\begin{lemma} If $G=(V,E)$ is a bipartite graph, $uv$ and edge of $G$, and $H',H''$ are complementary halfspaces with $u\in H'$ and $v\in H''$, then $H'=W(u,v)$ and $H''=W(v,u)$.
\end{lemma}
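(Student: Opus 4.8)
The plan is to prove this lemma by using bipartiteness together with the definition of halfspaces to pin down $H'$ and $H''$ exactly. Recall that in a bipartite graph, for any edge $uv$ and any vertex $x$, the two values $d(x,u)$ and $d(x,v)$ differ by exactly $1$ (they cannot be equal, since a closed walk through $x$, $u$, $v$ and back would have odd length). Hence $V$ is the disjoint union of $W(u,v)=\{x: d(x,u)<d(x,v)\}$ and $W(v,u)=\{x: d(x,v)<d(x,u)\}$, with $u\in W(u,v)$ and $v\in W(v,u)$; there is no ``equidistant'' part. This dichotomy is the structural backbone of the argument.

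First I would show $H'\subseteq W(u,v)$. Suppose not: there is a vertex $x\in H'$ with $x\in W(v,u)$, i.e.\ $d(x,v)<d(x,u)$, so $v\in [x,u]$. Since $x,u\in H'$ and $H'$ is convex, $v\in H'$, contradicting $v\in H''$ and $H'\cap H''=\varnothing$. Symmetrically, $H''\subseteq W(v,u)$: any $x\in H''$ with $x\in W(u,v)$ gives $u\in [x,v]\subseteq H''$, again a contradiction. Now combine these with the fact that $H'\cup H''=V$ (complementary halfspaces partition $V$) and that $W(u,v)\cup W(v,u)=V$ is a disjoint union: from $H'\subseteq W(u,v)$ and $H''\subseteq W(v,u)$ and $H'\cup H''=V$ we get $H'=W(u,v)$ and $H''=W(v,u)$.

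The argument is entirely routine; there is essentially no obstacle, since the only ingredients are the bipartiteness dichotomy on distances to the two ends of an edge and the elementary fact that a convex set containing $x$ and one endpoint of an interval $[x,w]$ contains every vertex of that interval, in particular $w$. The one point to state carefully is why $W(u,v)$ and $W(v,u)$ cover all of $V$ (bipartiteness) and are disjoint (trivially, from the strict inequalities); given that, the containments forced by convexity immediately upgrade to equalities because $H'$ and $H''$ are themselves complementary. I would present the proof in roughly four sentences as above, with the bipartite distance parity remark made explicit first.
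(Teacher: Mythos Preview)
Your proof is correct and essentially the same as the paper's. Both arguments hinge on the bipartite dichotomy $V=W(u,v)\cup W(v,u)$ and the observation that any $x\in H'\cap W(v,u)$ would force $v\in[x,u]\subseteq H'$, a contradiction; the paper additionally cites Lemma~\ref{shadow-separation} to note $W(u,v)=u/v\subseteq H'$ at the outset, but your version reaches the equalities directly from the two containments $H'\subseteq W(u,v)$, $H''\subseteq W(v,u)$ together with $H'\cup H''=V$, which is equivalent.
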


\begin{proof} By Lemma \ref{shadow-separation}, $W(u,v)=u/v\subseteq H'$ and $W(v,u)=v/u\subseteq H''$. Since $G$ is bipartite, $W(u,v)\cup W(v,u)=V$. Therefore, if $H'$ is not contained in $W(u,v)$, then there exists $x\in H'\cap W(v,u)$.
Since $v\in [x,u]$ and $x,u\in H', v\in H''$, in contradiction with the convexity of $H'$.
\end{proof}

Glanz and Meyerhenke \cite{GlMe} proved that each pair $u,v$ of adjacent vertices of a planar graph $G=(V,E)$ with $n$ vertices are separated by at most $O(n^4)$ complementary halfspaces, thus $G$ contains at most $O(n^5)$ halfspaces.
Furthermore, they showed that
all pairs of complementary halfspaces of $G$ can be enumerated in $O(n^7)$. 
Next, we present a generalization of the first result of \cite{GlMe}, using a different approach and meeting their upper bounds in case of planar graphs.

In the proof of the following result we use the following classical notions. Let $(X,{\mathcal F})$ be a family of sets. A subset $Y$ of $X$ is \emph{shattered}
by $\mathcal F$ if for any $Y'\subseteq Y$ there exists $F\in \mathcal F$ such that $Y\cap C=Y'$ (i.e., the trace of $\C$ on $Y$ is $2^Y$). The
\emph{Vapnik-Chervonenkis dimension} (VC-dimension)~ \cite{VaCh} $\vcd(\mathcal F)$ of $\mathcal F$ is the cardinality of the
largest subset of $X$ shattered by $\mathcal F$.  The VC-dimension   was introduced by~\cite{VaCh}
as a complexity measure of set systems. VC-dimension  is  central  in PAC-learning and  plays an important role in combinatorics,
algorithmics, and discrete geometry. One of the basic facts in VC-theory is the \emph{Sauer lemma} \cite{Sa}, which asserts that a set family $\mathcal F$ of VC-dimension $d$
on a set of $n$ elements contains at most $O(n^d)$ sets. For a graph $G$, the \emph{Hadwiger number} $\eta(G)$ is the largest integer $r$ such that $G$ can be contracted (by successive contraction
of some of its edges) to the complete graph $K_r$ on $r$ vertices.


\begin{theorem}\label{numberofhalfspaces} If $(X,\C)$ is a finite convexity space with $n$ points and Radon number $r$, then $(X,\C)$ contains at most $O(n^r)$ halfspaces. If $\C$ is a convexity
with connected convex sets on a graph $G=(V,E)$ with $n$ vertices and not containing $K_{k+1}$ as a minor, then $\C$ contains at most $O(n^{2k})$ halfspaces. If $G$ is planar, then $\C$ contains
at most $O(n^5)$ halfspaces. Finally, if $G$ is any graph endowed with monophonic convexity, then $G$ contains at most $O(n^{\omega(G)})$ halfspaces.
\end{theorem}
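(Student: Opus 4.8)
The plan is to prove each of the four bounds by exhibiting the set family of halfspaces as a set system of bounded VC-dimension and then invoking the Sauer lemma. The unifying observation is that a pair of complementary halfspaces $(H',H'')$ is an ordered partition of the ground set, and the obstruction to shattering a large set is precisely a Radon partition: if $Y$ is shattered by the family $\mathcal{H}$ of all halfspaces, then every bipartition $(Y_1,Y_2)$ of $Y$ is realized by some halfspace, i.e.\ there is a halfspace $H$ with $H\cap Y=Y_1$ and hence $Y_1\subseteq H$, $Y_2\subseteq X\setminus H$; since $H$ and $X\setminus H$ are convex, $\conv(Y_1)\subseteq H$ and $\conv(Y_2)\subseteq X\setminus H$, so $\conv(Y_1)\cap\conv(Y_2)=\varnothing$. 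Thus a shattered set admits no Radon partition, i.e.\ it is $r$-independent, so $|Y|\le r$ where $r=r(X,\C)$ is the Radon number. Therefore $\vcd(\mathcal{H})\le r$, and by the Sauer lemma $|\mathcal{H}|=O(n^r)$. This is the first assertion, and it is really the only substantive step; the rest are specializations obtained by plugging in known bounds on the Radon number.

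For the second assertion, let $\C$ be a convexity on a graph $G$ whose convex sets induce connected subgraphs, with $G$ having no $K_{k+1}$-minor, i.e.\ $\eta(G)\le k$. By the Duchet--Meyniel bound cited in the excerpt (the paragraph preceding Proposition~\ref{complexity}), the Radon number of any such convexity is at most $2\eta(G)\le 2k$. Feeding $r\le 2k$ into the first assertion gives $|\mathcal{H}|=O(n^{2k})$. For the third assertion, a planar graph has no $K_5$-minor by Wagner's theorem (equivalently Kuratowski), so $\eta(G)\le 4$ and $k=4$ applies, yielding $O(n^8)$; to sharpen this to $O(n^5)$ one uses that planar graphs have Hadwiger number exactly $\le 4$ but that the Radon number of a connected-convexity on a planar graph is bounded by $5$ rather than $8$ — here I would instead invoke the fact that planar graphs are $K_5$-minor-free and more precisely have a linearly bounded Radon number; alternatively one can cite Glanz--Meyerhenke directly for the $O(n^5)$ count of complementary halfspaces. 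The honest route, matching the statement, is: planar graphs have $\eta(G)\le 4$, hence by Duchet--Meyniel $r\le 2\eta(G)$ would give $8$, so to obtain $5$ I would either quote the improved Radon bound for planar connected convexities or appeal to the direct combinatorial argument of \cite{GlMe} that each adjacent pair is separated by $O(n^4)$ convex cuts. For the fourth assertion, about monophonic (induced-path) convexity on an arbitrary graph $G$: here one uses that monophonically convex sets are connected, and that the Radon number of the monophonic convexity equals the clique number $\omega(G)$ — this is a known result (the monophonic Radon number of any graph is $\omega(G)$, essentially because any $\omega(G)+1$ vertices admit a monophonic Radon partition while a clique does not). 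Plugging $r=\omega(G)$ into the first assertion gives $|\mathcal{H}|=O(n^{\omega(G)})$.

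The key steps in order are: (1) show a shattered set is $r$-independent, hence $\vcd(\mathcal{H})\le r$; (2) apply the Sauer lemma to get $O(n^r)$; (3) for graphs with excluded $K_{k+1}$-minor, bound $r\le 2k$ via Duchet--Meyniel; (4) specialize to planar graphs and to monophonic convexity using the respective Radon-number bounds ($\eta\le 4$, resp.\ $r=\omega(G)$). Step (1) is clean and is really the heart; the only mild subtlety is that shattering requires realizing \emph{every} subset of $Y$, in particular $Y$ itself and $\varnothing$, but we only need the nontrivial bipartitions to derive the Radon obstruction, so there is no edge-case issue.

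The main obstacle I anticipate is the passage from the generic $O(n^{2k})$ bound to the sharp $O(n^5)$ for planar graphs: the Duchet--Meyniel inequality $r\le 2\eta(G)$ together with $\eta(G)\le 4$ only yields $r\le 8$ and thus $O(n^8)$, which is weaker than claimed. Closing this gap requires either a better bound on the Radon number of connected convexities on $K_5$-minor-free graphs (bringing it down to $5$), or a direct geometric/combinatorial count in the spirit of Glanz--Meyerhenke showing that each of the $O(n^2)$ adjacent vertex pairs is separated by $O(n^3)$ complementary halfspaces. I would present the planar case by citing \cite{GlMe} for the refined count and noting that the VC-dimension argument recovers the $O(n^r)$-type bound uniformly; the remaining three assertions follow from the VC-dimension argument without difficulty once the relevant Radon-number inputs are quoted.
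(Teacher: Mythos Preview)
Your approach is essentially the same as the paper's: bound the VC-dimension of the family of halfspaces by the Radon number (a shattered set has no Radon partition), apply the Sauer lemma, and then plug in known Radon-number bounds for the three special cases. The paper resolves your uncertainty in the planar case by citing Duchet--Meyniel \cite[Proposition 3.4]{DuMe}, which gives $r(\C)\le 5$ directly for any convexity with connected convex sets on a planar graph; this is exactly the ``improved Radon bound for planar connected convexities'' you suspect exists. Note that your alternative of invoking \cite{GlMe} would not suffice here, since their $O(n^5)$ count is specific to geodesic convexity, whereas the theorem as stated covers any convexity with connected convex sets---only the Radon-number route works uniformly.
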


\begin{proof} Let $\cH$ denote the set of all halfspaces of  $(X,\C)$. We assert that $\vcd(\cH)\le r$ by adapting an argument from \cite{Du} for balls in
Euclidean spaces (the same argument works for halfspaces). Suppose by way of contradiction that $\vcd(\cH)>r$. Then $X$ contains a set $Y$ with $r+1$ points that can be shattered by $\cH$. Since $r(\C)=r$, the set $Y$ admits a Radon partition,
i.e., a partition of $Y$ in two sets $Y'$ and $Y''$ such that $\conv(Y')\cap \conv(Y'')\ne \varnothing$. Since $\cH$ shatters $Y$, there exists a halfspace $H'$ such that $H'\cap Y=Y'$. This implies that
$Y''$ belongs to $H''=X\setminus H'$. Since $H''$ is convex, $\conv(Y'')\subseteq H''$, contrary to the assumption that $\conv(Y'')$ contains a point of $\conv(Y')\subseteq H'$.
This shows that  $\vcd(\cH)\le r$ . By Sauer's lemma, $\cH$ contains at most  $O(n^r)$ halfspaces.

Now, let $\C$ be a convexity on a graph $G=(V,E)$ such that each convex set induces a connected subgraph of $G$. Duchet and Meyniel \cite[Th\'eor\`eme 2.6]{DuMe}  proved that
the Radon number $r(\C)$ of $\C$ is at most $2\eta(G)$. Since $G$ cannot be contracted to $K_{n+1}$, $\eta(G)=k$, thus $r(\C)\le 2k$. In case of planar graphs,
Duchet and Meyniel \cite[Proposition 3.4]{DuMe}
proved that $r(\C)\le 5$. For monophonic convexity, Duchet \cite{Du} proved that the Radon number of a graph $G$. is $\omega(G)$. Now, all three assertions follow from the first assertion of the theorem.
\end{proof}

Since the Radon number of Helly graphs is equal to the clique number $\omega(G)$ \cite{BaPe_radon} and  the
Radon number of chordal graphs is $\omega(G)$ except when $\omega(G)=3$ \cite{Ch_triangulated}, in which
case $r(G)$ is 3 or 4, we obtain the following corollaries of Theorem \ref{numberofhalfspaces}:

\begin{corollary}\label{chordal-helly} A Helly graph $G=(V,E)$ with $n$ vertices and clique number $\omega(G)$ contains at most $O(n^{\omega(G)})$ halfspaces.
Any chordal graph $G=(V,E)$ with $n$ vertices and clique number $\omega(G)\ne 3$ (respectively, $\omega(G)=3$) contains at most $O(n^{\omega(G)})$
halfspaces (respectively, $O(n^{4})$ halfspaces).
\end{corollary}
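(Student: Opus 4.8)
The plan is to read off this corollary directly from Theorem~\ref{numberofhalfspaces} together with the two cited results on Radon numbers. The only work is to verify that the hypotheses of Theorem~\ref{numberofhalfspaces} apply and to match up the parameters. First I would recall that geodesic convexity in any graph $G=(V,E)$ is a convexity with connected convex sets, since every geodesically convex set induces a connected subgraph (it contains a shortest path between any two of its vertices). Hence the first assertion of Theorem~\ref{numberofhalfspaces} applies to the geodesic convexity of $G$, and the number of halfspaces is at most $O(n^{r(G)})$, where $r(G)$ denotes the Radon number of the geodesic convexity of $G$ and $n=|V|$.

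Next I would substitute the known values of $r(G)$. For a Helly graph $G$, Bandelt and Pesch~\cite{BaPe_radon} proved $r(G)=\omega(G)$; plugging this into $O(n^{r(G)})$ gives the bound $O(n^{\omega(G)})$ for the number of halfspaces of a Helly graph, which is the first statement of the corollary. For a chordal graph $G$, the result of \cite{Ch_triangulated} cited just above the corollary says that $r(G)=\omega(G)$ whenever $\omega(G)\neq 3$, and $r(G)\in\{3,4\}$ when $\omega(G)=3$. In the first case the bound $O(n^{r(G)})=O(n^{\omega(G)})$ follows immediately. In the second case $r(G)\le 4$, so $O(n^{r(G)})=O(n^{4})$, which is the claimed bound for $\omega(G)=3$. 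This finishes the proof.

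There is no real obstacle here: the corollary is a pure instantiation of an already-proved theorem, and the only thing to be careful about is that one is using the Radon number of the \emph{geodesic} convexity (not some other graph-convexity) and that the $O(\cdot)$ constants absorb the Sauer-lemma overhead exactly as in the statement of Theorem~\ref{numberofhalfspaces}. If one wanted to be fully self-contained one could also note that $\omega(G)\le n$ so these polynomial bounds are meaningful, but that is not needed for the statement. I would present the argument in three short sentences: (1) geodesic convexity has connected convex sets, so Theorem~\ref{numberofhalfspaces} gives $O(n^{r(G)})$ halfspaces; (2) for Helly graphs $r(G)=\omega(G)$ by \cite{BaPe_radon}; (3) for chordal graphs $r(G)=\omega(G)$ if $\omega(G)\neq3$ and $r(G)\le4$ if $\omega(G)=3$ by \cite{Ch_triangulated}; combine.
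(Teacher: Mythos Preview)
Your proposal is correct and matches the paper's own derivation: the corollary is stated immediately after the sentence ``Since the Radon number of Helly graphs is equal to the clique number $\omega(G)$ \cite{BaPe_radon} and the Radon number of chordal graphs is $\omega(G)$ except when $\omega(G)=3$ \cite{Ch_triangulated}, in which case $r(G)$ is 3 or 4, we obtain the following corollaries of Theorem~\ref{numberofhalfspaces},'' which is exactly your argument. One minor point: the remark that geodesic convex sets are connected is not needed here, since you are invoking the \emph{first} assertion of Theorem~\ref{numberofhalfspaces} (valid for any finite convexity space with Radon number $r$), not the second one about $K_{k+1}$-minor-free graphs.
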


\begin{remark} We believe that the bouds given by Corollary \ref{chordal-helly} are not sharp and that Helly and chordal graphs with  $\omega(G)\ne 3$  contain at most $O(2^{\omega(G)}\poly(n))$ halfspaces
\end{remark}

An algorithm with complexity $O(2^{\omega(G)}\poly(n))$ for enumerating monophonic halfspaces of graphs has been recently proposed by
Bressan, Esposito, and Thiessen \cite{BrEsTh}. Now we present an algorithm for  enumeration of halfspaces of Helly and chordal graphs. This method  works for more general classes of meshed
and weakly modular graphs, in particular for bridged graphs, but in this case its complexity
depends of the Hadwiger number $\eta(G)$. The algorithm is based on the notion of dismantling.
A vertex $u$ \emph{dominates} a vertex $v$ in a graph $G$ if $u\sim v$ and any neighbor $w\ne u$ of $v$  is adjacent to $u$. A \emph{dismantling ordering}
of a finite graph $G=(V,E)$ is a total order $v_1,\ldots,v_n$ of  $V$  such that each $v_i$ is dominated in $G_i=G(\{v_1,\ldots,v_i\}$ by a vertex $v_j$
with $j<i$.  A class of graphs $\mathcal G$ is  \emph{hereditary dismantlable} if any graph $G\in {\mathcal G}$ admits a dismantling order $v_1,\ldots,v_n$ such that
all level graphs $G_i$ belong to $\mathcal G$. It was shown in \cite{BaPe_helly} that Helly graphs are hereditary dismantlable. It was shown in \cite{AnFa} that bridged graphs are
dismantlable (in \cite{Ch_bridged} it was shown that bridged graphs are dismantlable by BFS). Since bridged graphs are hereditary by taking isometric subgraphs, they are
hereditary dismantlable. Finally, weakly bridged graphs are dismantlable by LexBFS \cite{ChOs}. From this and \cite[Theorem A(e)]{ChOs} it follows
that weakly bridged graphs are hereditary dismantlable. The following lemma is the basis of our enumeration algorithm:

\begin{lemma} \label{dominating-halfspce} Let $G=(V,E)$ be a meshed graph and $v$ be a dominated vertex of $G$ such that the subgraph $G'=G(V\setminus \{ v\})$ is also meshed. Then each halfspace of
$G$ containing $v$ either coincides with $\{ v\}$ or has the form $H'\cup \{ v\}$, where $H'$ is a halfspace of $G'$.  Each halfspace of $G$ not containing $v$
either coincides with $V\setminus \{ v\}$ or coincides with $H''$, where $H''$ is a halfspace of $G'$.
\end{lemma}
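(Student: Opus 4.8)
The plan is to reduce the whole statement to one structural fact: if $v$ is dominated by a vertex $u$ of $G$ (that is, $u\sim v$ and every neighbour $w\neq u$ of $v$ satisfies $w\sim u$), then $G'=G(V\setminus\{v\})$ is an \emph{isometric} subgraph of $G$. So the first step is to prove this. Let $x,y\in V\setminus\{v\}$ and let $P$ be a shortest $(x,y)$-path of $G$. If $v\notin P$ then $P$ already lies in $G'$. Otherwise $v$ is an interior vertex of $P$, with two distinct neighbours $a,b$ on $P$. Neither of them equals $u$: if, say, $a=u$, then $b\neq u$, so $b\sim u$ by domination, and then the edge $ub=ab$ would let us shorten $P$ by bypassing $v$, a contradiction. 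Hence $a\neq u$ and $b\neq u$, so $a\sim u$ and $b\sim u$, and replacing the subpath $a,v,b$ of $P$ by $a,u,b$ produces a walk from $x$ to $y$ of the same length that avoids $v$; therefore $d_{G'}(x,y)\le d_{G}(x,y)$, and the converse inequality is obvious. It follows that $d_{G'}$ and $d_{G}$ agree on $V(G')$, and consequently $[x,y]_{G'}=[x,y]_{G}\setminus\{v\}$ for all $x,y\in V(G')$.

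The second step records the resulting convexity transfer. If $C$ is convex in $G$ and $v\notin C$, then for $x,y\in C$ we have $[x,y]_{G'}=[x,y]_{G}\setminus\{v\}\subseteq[x,y]_{G}\subseteq C$, so $C$ is convex in $G'$; the same computation shows that if $C$ is convex in $G$ and $v\in C$, then $C\setminus\{v\}$ is convex in $G'$.

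The third step concludes. Let $H$ be a halfspace of $G$ with $v\in H$ and $H\neq\{v\}$. Then $H^{c}:=V\setminus H$ is convex in $G$ and misses $v$, hence is convex in $G'$, while $H':=H\setminus\{v\}$ is nonempty and convex in $G'$ by the transfer statement above; since $H'$ and $H^{c}$ partition $V\setminus\{v\}=V(G')$ and are both convex in $G'$, they are complementary halfspaces of $G'$, so $H=H'\cup\{v\}$ with $H'$ a halfspace of $G'$. For a halfspace $H$ of $G$ with $v\notin H$ and $H\neq V\setminus\{v\}$, apply this to $H^{c}$, which is a halfspace of $G$ containing $v$ and is $\neq\{v\}$: it has the form $H''\cup\{v\}$ for a halfspace $H''$ of $G'$, whence $H=(V\setminus\{v\})\setminus H''$ is exactly the halfspace of $G'$ complementary to $H''$. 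The trivial halfspaces $\varnothing$ and $V$ are covered by the same description ($V=(V\setminus\{v\})\cup\{v\}$ with the full halfspace of $G'$, and $\varnothing$ coincides with the empty halfspace of $G'$).

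The argument is short, and the only genuinely graph-theoretic input is the isometry of $G'$ in Step 1; I expect the one point needing care there to be the claim that the two neighbours of $v$ on a shortest path through $v$ are both distinct from $u$. Note that meshedness of $G$ and of $G'$ is not actually used in the proof of the lemma itself — it is part of the hypotheses because the lemma is meant to be applied recursively along a dismantling order, and keeping $G'$ inside the ambient class (meshed, Helly, bridged, or weakly bridged) is precisely what hereditary dismantlability provides.
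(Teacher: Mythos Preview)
Your proof is correct and is in fact cleaner than the paper's. Both arguments begin by showing that $G'$ is an isometric subgraph of $G$ (the paper states this in one line; your detailed verification is fine). Where they diverge is in the convexity transfer. The paper, to show that $H'=H\setminus\{v\}$ is convex in $G'$, invokes the local-to-global theorem (Theorem~\ref{lc->c}: in a meshed graph, a connected locally-convex subgraph is convex), and must therefore argue separately that $H'$ is connected---splitting into the cases $u\in H$ and $u\notin H$, the second of which uses that the neighbours of $v$ in $H'$ form a clique. You instead observe directly that isometry gives $[x,y]_{G'}=[x,y]_G\setminus\{v\}$ for all $x,y\in V'$, so any $G$-convex set restricted to $V'$ is automatically $G'$-convex; no connectedness argument and no appeal to meshedness of $G'$ is needed. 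Your closing remark is exactly right: the meshedness hypotheses are there only for the recursive application of the lemma along a hereditary dismantling order, not for the lemma itself.
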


\begin{proof} Let $u$ be a vertex of $G$ dominating $v$. Pick any halfspace $H$ of $G$ containing $v$. If $H=\{ v\}$ (i.e., all neighbors of $v$ are pairwise adjacent), then
we are in the first case. In this case $V\setminus \{ v\}$ is a halfspace of $G$ not containing $v$. Now suppose that $H=H'\cup \{ v\}$, where  $H'$ is a proper nonempty subset of
$V'=V\setminus \{ v\}$. Let $H''=V'\setminus H'$. We assert that $H'$ and $H''$ are halfspaces of $G'$. Since $u$ dominates $v$, $G'$ is an isometric subgraph of $G$.
Since $H''=V\setminus H$, $H''$ is a halfspace of $G$. Therefore $H''$ is a convex set of $G'$. Now we show that $H'$ is a convex set of $G'$.
If $H'$ induces a connected subgraph of $G'$, since $G'$ is meshed, by Theorem \ref{lc->c} it suffices to prove that $H'$ is locally-convex.
This is the case because $H=H'\cup \{v\}$ is convex, and thus locally-convex.
It remains to show that $H'$ is connected in $G'$. Since $H$ is convex and thus connected in $G$, this is obviously true if $u$ belongs to $H$ and thus to $H'$. Now, suppose that $u$ belongs to $H''$.
Since $H$ is convex in $G$ and $u$ dominates $v$, all neighbors of $v$ in $H'$ are pairwise adjacent. Hence $H'$ is connected, concluding the proof.
\end{proof}

By Lemma \ref{dominating-halfspce}, to enumerate the halfspaces of $G$ it suffices to enumerate the halfspaces of $G'$. For each pair of complementary halfspaces $(H',H'')$ of $G'$, we check
if $(H'\cup \{ v\},H'')$ or $(H',H''\cup \{ v\}$ are complementary halfspaces of $G$ and return the respective pair(s) if this is the case. Additionally, we check if  $(\{ v\}, V\setminus \{v\})$
are complementary halfspaces and return this pair if this is the case. This can be done in time polynomial in the number of halfspaces of $G'$ and the number of vertices of $G$.
Now, let $G$ be a meshed graph admitting a hereditary dismantling order $v_1,\ldots,v_n$. Then we perform the previous algorithm for each pair $G_i$ and $G_{i-1}$ (with $G_i$ as $G$ and $G_{i-1}$ as $G$).
Since $\eta(G_i)\le \eta(G)$ and $\omega(G_i)\le \omega(G)$, we obtain an algorithm with complexity $\poly(n)n^{2\eta(G)}$ for weakly bridged graphs and with complexity $\poly(n)n^{2\omega(G)}$
for chordal graphs and Helly graphs (if Question \ref{helly-radon-bridged} has positive answer, this will lead also to an algorithm with complexity $\poly(n)n^{O(\omega(G))}$ for bridged graphs).
This algorithm is not output polynomial since the intermediate graphs $G_i$ may have more halfspaces that the final graph $G$.

Summarizing the results of this subsection and using the result of \cite{GlMe} for planar graphs, we obtain the following conclusion for halfspace separation problem via halfspace enumeration:

\begin{proposition} \label{enum-halfspaces} The halfspace enumeration and the halfspace separation problems can be solved for geodesic convexity in a graph $G$ with $n$ vertices in the
following classes of graphs:
\begin{enumerate}
\item[(1)] $O(\poly(n))$ time if $G$ is bipartite;
\item[(2)] \cite{GlMe} $O(\poly(n))$ if $G$ is planar;
\item[(3)] $O(\poly(n)n^{\omega(G)})$ if $G$ is chordal or Helly;
\item[(4)] $O(\poly(n)n^{2\eta(G)})$ if $G$ is meshed and admits a hereditary dismantling order.
\end{enumerate}
\end{proposition}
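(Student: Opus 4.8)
The plan is to handle the four cases separately, in each case first solving the \emph{halfspace enumeration} problem --- producing the list $\mathcal H$ of all unordered pairs $\{H',H''\}$ of complementary halfspaces of the geodesic convexity of $G$ --- and then deriving the separation algorithm for free: given disjoint $A,B$, one scans $\mathcal H$ and returns a pair with $A\subseteq H'$ and $B\subseteq H''$ (or with the sides exchanged), which costs an extra $O(|\mathcal H|\cdot n)$ time. Thus the whole proposition reduces to the four enumeration bounds, with an additive $O(|\mathcal H|\cdot n)$ for the separation step that is absorbed into the stated running times.

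Case (1). If $\{H',H''\}$ is a pair of complementary halfspaces with both sides nonempty, then, since $G$ is connected, some edge $uv$ has $u\in H'$ and $v\in H''$; by the lemma stated just before the proposition, $H'=W(u,v)$ and $H''=W(v,u)$. Hence $\mathcal H$ consists of the trivial pair $\{\varnothing,V\}$ together with those pairs $\{W(u,v),W(v,u)\}$, $uv\in E$, both of whose sides are convex. There are at most $|E|+1=O(n^2)$ candidates, each obtained by two BFS runs and tested for convexity in $O(\poly(n))$ time, so $|\mathcal H|=O(n^2)$ and the enumeration is polynomial. Case (2) is precisely the result of Glanz and Meyerhenke \cite{GlMe}, who enumerate all convex cuts of a planar graph in polynomial time.

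Cases (3) and (4). Here I would run the dismantling-based enumeration built on Lemma \ref{dominating-halfspce}. Chordal graphs are bridged, hence weakly modular and meshed, and a simplicial elimination ordering is a hereditary dismantling ordering: a simplicial vertex is dominated by each of its neighbours, and deleting it leaves a chordal (hence meshed) isometric subgraph. Helly graphs are weakly modular, hence meshed, and admit a hereditary dismantling ordering by \cite{BaPe_helly}. Fix such an ordering $v_1,\dots,v_n$ with level graphs $G_1\subseteq\dots\subseteq G_n=G$. Suppose all complementary pairs of $G_{i-1}$ are known. By Lemma \ref{dominating-halfspce} (whose correctness uses that $G_{i-1}$ and $G_i$ are meshed, via Theorem \ref{lc->c}), every complementary pair of $G_i$ is either $\{\{v_i\},\,V(G_i)\setminus\{v_i\}\}$ or is obtained from a complementary pair $\{H',H''\}$ of $G_{i-1}$ by adjoining $v_i$ to one of the two sides; testing which of these $O(|\mathcal H(G_{i-1})|)$ candidates are complementary halfspaces of $G_i$ takes $O(\poly(n))$ per candidate. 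Iterating from $G_1$ to $G_n$ thus costs $O\bigl(\poly(n)\cdot\max_i|\mathcal H(G_i)|\bigr)$. For (3), each $G_i$ is chordal or Helly with $\omega(G_i)\le\omega(G)$, so $|\mathcal H(G_i)|=O(n^{\omega(G)})$ by Corollary \ref{chordal-helly} (when $\omega(G)=3$ the $O(n^4)$ bound there is absorbed by the polynomial factor), which yields $O(\poly(n)\,n^{\omega(G)})$. For (4), each $G_i$ is meshed with $\eta(G_i)\le\eta(G)$, so $r(G_i)\le 2\eta(G_i)\le 2\eta(G)$ and $|\mathcal H(G_i)|=O(n^{2\eta(G)})$ by Theorem \ref{numberofhalfspaces}, which yields $O(\poly(n)\,n^{2\eta(G)})$.

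The main subtlety --- rather than a genuine obstacle --- is purely structural bookkeeping: one must check that each level graph stays inside the relevant class so that Theorem \ref{numberofhalfspaces}, Corollary \ref{chordal-helly}, and the meshedness hypothesis needed for Lemma \ref{dominating-halfspce} remain valid all along the dismantling, and one must observe that the resulting algorithm is \emph{not} output-sensitive --- some intermediate $G_i$ may have strictly more halfspaces than $G$ --- which is exactly why the bounds in (3) and (4) are phrased through the global invariants $\omega(G)$ and $\eta(G)$ rather than through $|\mathcal H(G)|$.
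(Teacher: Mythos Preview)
Your proposal is correct and follows essentially the same approach as the paper: the proposition is stated there as a summary of the preceding discussion, and you have accurately reconstructed that discussion --- the edge-based enumeration for bipartite graphs, the citation to \cite{GlMe} for planar graphs, and the dismantling recursion via Lemma~\ref{dominating-halfspce} combined with the Radon-number bounds (Theorem~\ref{numberofhalfspaces} and Corollary~\ref{chordal-helly}) for cases (3) and (4). Your explicit justification that chordal graphs are bridged (hence meshed) with simplicial orderings providing hereditary dismantling, and your handling of the $\omega(G)=3$ corner case, are details the paper leaves implicit but which are indeed needed.
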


\begin{question} Can the halfspaces of a graph $G$ not containing $K_{k+1}$ as a minor be enumerated in $O(\poly(n)n^{2k})$ time?
Can the halfspaces of a chordal, Helly graph, or basis graph of a matroid  be enumerated in
ourtput polynomial time?
\end{question}

\subsection{Halfspace separation using the three steps method} We present the \emph{three steps method} for a convexity $\C$ on a graph $G=(V,E)$ with $n$ vertices, such that the convex sets of $\C$ are also geodesically convex (this is the case of gated  and monophonic convexities). The first step  applies to all convexity spaces. Let $A$ and $B$ be two disjoint sets and suppose that their convex hulls $A:=\conv(A)$ and $B:=\conv(B)$ are disjoint.
To solve the halfspace separation problem for $(A,B)$, from Lemma \ref{shadow-separation} we know that for any pair of complementary halfspaces $H',H''$ with $A\subseteq H'$ and $B\subseteq H''$, $H'$ contains the convex hull $\conv(A/B)$ of the shadow $A/B$
and $H''$ contains the  convex hull $\conv(B/A)$ of the shadow $B/A$. Then $A,B$ are separable if and only if $\conv(A/B)$ and $\conv(B/A)$ are separable. Consequently, if $\conv(A/B)$ and $\conv(B/A)$ are not disjoint, then $A$ and $B$ are not  separable and the answer is ``not''.
Otherwise, we replace $A$ and $B$ by $\conv(A/B)$ and $\conv(B/A)$, construct their mutual shadows and the convex hulls of those shadows, test if they are disjoint, and reiterate while is possible. If the answer ``not'' was not provided, then we will end up with a pair $(A^*,B^*)$ of disjoint convex sets  such that $A\subseteq A^*, B\subseteq B^*$ and $A^*=A^*/B^*, B^*=B^*/A^*$. Furthermore $A,B$ are separable if and only if $A^*,B^*$ are separable.
We call the pair $(A^*,B^*)$  \emph{shadow-closed}. The shadows and their convex hulls can be constructed using a linear number of calls
to an algorithm for convex hulls. Therefore, the shadow-closed pair  $(A^*,B^*)$ for the input pair $(A,B)$ can be constructed in polynomial time. This step is called the \emph{shadow-closure step}.
If $A^*\cup B^*=V$, then clearly $A^*,B^*$ are complementary halfspaces separating $A$ and $B$ and we return the answer ``yes''.

Now, let $(A^*,B^*)$ be a shadow-closed pair such that the set $V\setminus (A^*\cup B^*)$ is nonempty. A pair  $(A^*,B^*)$ is said \emph{osculating} if $d(A^*,B^*)=1$, i.e., the sets $A^*$ and $B^*$ are disjoint and adjacent.
If $A^*$ and $B^*$ are osculating, then we do nothing. Now, let $d(A^*,B^*)>1$.  Let $u^*\in A^*,v^*\in B^*$ be a closest pair of vertices, i.e., $d(u^*,v^*)=d(A^*,B^*)$. Let $P=(u^*=u_1,u_2,\ldots,u_{k-1},u_k=y^*)$
be any shortest $(u^*,v^*)$-path. If there exist  complementary halfspaces $H',H''$ with $A^*\subseteq H'$ and $B^*\subseteq H''$, then $P$ contains an edge $u_iu_{i+1}$ with $u_i\in H'$ and $u_{i+1}\in H''$
(i.e., the convex cut defined by $H',H''$ cuts the path $P$). Since we are searching for the pair  $(H',H'')$ and we do not know the respective edge of $P$, we simply perform the following procedure
with each edge $u_iu_{i+1}$ of $P$. We set $A^+_i=\conv(A^*\cup \{ u_i\})$ and $B^+_i=\conv(B^*\cup \{ u_{i+1}\})$, $i=1,\ldots,k-1$. To each of the pairs $(A^+_i,B^+_i)$ such that $A^+_i$ and $B^+_i$ are disjoint, we perform
the shadow-closure step. If this step does not return the answer ``not'' for the pair  $(A^+_i,B^+_i)$, then we denote by $(A^{**}_i,B^{**}_i)$ the resulting shadow-closed pair. If no such shadow-closed pair is returned, then $A^*,B^*$ (and thus $A$ and $B$)
are not separable and we return the answer ``not''. Otherwise, the second step of the algorithm will return several shadow-closed osculating pairs $(A^{**}_i,B^{**}_i)$ of convex sets. 
Again, if one such pair $(A^{**}_i,B^{**}_i)$ partitions $V$, then we return the answer ``yes'' and return it as a pair of separating halfspaces.
Similarly to the first step, the second step requires polynomial time.

For each shadow-closed osculating pair $(A^{**},B^{**})$ returned by the second step, the third step have to solve the halfspace separation problem. Since the first two steps run in polynomial
time and the halfspace separation problem in NP-complete \cite{SeHoWr}, with no surprise, the problem is NP-complete for shadow-closed osculating pairs. Nevertheless, we can solve it efficiently in
several cases. Before going to particular cases, we formulate the framework for this specific separation problem. Let $(A^{**},B^{**})$ be a shadow-closed osculating pair and suppose that $V^+=V\setminus (A^{**}\cup B^{**})$ is a nonempty
residue, whose vertices have to be distributed with $A^{**}$ or  $B^{**}$. Pick any edge $uv$ of $G$ with $u\in A^{**}$ and $v\in B^{**}$. Since $(A^{**},B^{**})$ is shadow-closed, for any vertex $x\in V^+$ we must have $d(x,u)=d(x,v)$, otherwise
$x$ belongs to  $v/u\subseteq B^{**}/A^{**}$ or to $u/v\subseteq A^{**}/B^{**}$, which is impossible. In case of geodesic convexity in bipartite graphs, the residue $V^+$ is always empty, thus
the  halfspace separation problem can be solved using only the first two steps. 
Summarizing, the three steps method works in the following way:
\begin{enumerate}
\item[(1)] Given the input pair $(A,B)$, compute the shadow-closed pair $(A^*,B^*)$. Return ``not'' if $A^*$ and $B^*$ are not disjoint.
\item[(2)] Given a shadow-closed pair $(A^*,B^*)$,  pick a shortest path $P$ between a closest pair of $(A^*,B^*)$ and for each edge $u_iu_{i+1}$ of $P$
set  $A^+_i=\conv(A^*\cup \{ u_i\})$ and $B^+_i=\conv(B^*\cup \{ u_{i+1}\})$ and for $(A^+_i,B^+_i)$ execute the step 1 and compute the shadow-closed pair
$(A^{**}_i,B^{**}_i)$. If $A^{**}_i$ and $B^{**}_i$ are not disjoint for all edges of $P$, then return ``not''.
\item[(3)] For each shadow-closed osculating pair $(A^{**}_i,B^{**}_i)$ returned at step 2, solve the halfspace separation problem using a case-oriented algorithm. If ``yes'' is returned for at least one such pair, then
return the answer ``yes'' for the input pair $(A,B)$, otherwise, return ``not''.
\end{enumerate}

\subsubsection{Gated convexity} Let $\ccG$ be the family of all gated sets of $G$. 
As we noted above, each gated set
is geodesically convex. In general, a graph contains much less gated sets than geodesic convex sets, but in the important case of median graphs all geodesic convex sets are gated.  Let $\cg(S)$  be the gated hull of a set $S$;
$\cg(S)$ can be constructed as follows. First, we construct the geodesic convex hull $\conv(S)$ of $S$. Then for each $x\in V\setminus \conv(S)$, compute its imprint $\Imp_x(\conv(S))$ in $\conv(S)$. If $|\Imp_x(\conv(S))|=1$ for any $x\in V\setminus \conv(S)$,
then $\conv(S)$ is gated and we return it. Otherwise, we find  $x\in V\setminus \conv(S)$ such that $\Imp_x(\conv(S))$ contains two distinct vertices $u,v$. Then we find a quasi-median of $x,u,v$. Since $\conv(S)$ is convex and $u,v\in \Imp_x(\conv(S))$, this quasi-median has the form $x_0uv$, where $x_0$ is a furthest from $x$ vertex in  $[x,u]\cap [x,v]$ (where we consider the usual geodesic intervals). The next lemma (which can be viewed as a general
shows that $x_0$ belongs to $\cg(\conv(S))$: 

\begin{lemma} \label{trianglegated}  If $x_0uv$ is a metric triangle and $A$ is a gated set with $u,v\in A$, then $x_0$  belongs to $A$.
\end{lemma}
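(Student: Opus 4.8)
The statement to prove is Lemma~\ref{trianglegated}: if $x_0uv$ is a metric triangle and $A$ is a gated set with $u,v\in A$, then $x_0\in A$. The natural approach is to use the gate of $x_0$ in $A$, which exists since $A$ is gated. Let $g$ denote this gate, so $g\in A$ and $g\in [x_0,y]$ for every $y\in A$; in particular $g\in [x_0,u]$ and $g\in [x_0,v]$. Hence $g\in [x_0,u]\cap [x_0,v]$. The whole point of $x_0uv$ being a metric triangle is precisely that $[x_0,u]\cap [x_0,v]=\{x_0\}$ (the three intervals pairwise meet only in the common endpoints). Therefore $g=x_0$, which forces $x_0\in A$ since $g\in A$.

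First I would recall the definition of a gate: for $x_0\notin A$ there is $g\in A$ with $g\in[x_0,y]$ for all $y\in A$ (if $x_0\in A$ there is nothing to prove, so we may assume $x_0\notin A$). Then I would instantiate this with $y=u$ and $y=v$ to obtain $g\in[x_0,u]$ and $g\in[x_0,v]$, so $g$ lies in the intersection $[x_0,u]\cap[x_0,v]$. Second, I would invoke the definition of a metric triangle given in the excerpt: $v_1v_2v_3$ is a metric triangle when $[v_i,v_j]\cap[v_i,v_k]=\{v_i\}$ for all choices of indices. Applying this with $(v_1,v_2,v_3)=(x_0,u,v)$ gives $[x_0,u]\cap[x_0,v]=\{x_0\}$. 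Combining the two facts, $g\in\{x_0\}$, i.e.\ $g=x_0$, and since $g\in A$ we conclude $x_0\in A$, completing the proof.

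There is essentially no obstacle here: the lemma is a one-line consequence of the definition of gatedness and the definition of a metric triangle. The only thing to be careful about is the degenerate case $x_0\in A$ (trivial) and making sure we are using the correct interval convention — in this paper $[u,v]$ always denotes the geodesic interval of the graph, and the gate condition $g\in[x_0,y]$ is with respect to exactly these intervals, so there is no mismatch. I would state the proof in two or three sentences as above.

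\begin{proof}
If $x_0\in A$ there is nothing to prove, so assume $x_0\notin A$ and let $g$ be the gate of $x_0$ in $A$. By definition of the gate, $g\in [x_0,y]$ for every $y\in A$; in particular, $g\in [x_0,u]$ and $g\in [x_0,v]$, so $g\in [x_0,u]\cap [x_0,v]$. Since $x_0uv$ is a metric triangle, $[x_0,u]\cap [x_0,v]=\{ x_0\}$, hence $g=x_0$. As $g\in A$, we conclude that $x_0\in A$.
\end{proof}
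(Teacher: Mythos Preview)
Your proof is correct and essentially identical to the paper's own argument: take the gate $g$ of $x_0$ in $A$, note $g\in[x_0,u]\cap[x_0,v]$, and use the metric-triangle condition $[x_0,u]\cap[x_0,v]=\{x_0\}$ to conclude $g=x_0\in A$. The only cosmetic difference is that you explicitly dispose of the trivial case $x_0\in A$ first, which the paper leaves implicit.
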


\begin{proof} Since $A$ is gated, $x_0$ has a gate $x'_0$ in $A$. By definition of the gate, $x'_0\in [x_0,u]\cap [x_0,v]$. Since $x_0uv$ is a metric triangle, necessarily $x_0=x'_0$, i.e., $x_0\in A$.
\end{proof}

By Lemma \ref{trianglegated}, $x_0$ belongs to any gated set containing $u$ and $v$, thus $x_0\in \cg(\conv(S))$. Therefore, set $S:=\conv(\{ x_0\}\cup \conv(S))$ and continue.  Consequently, the gated hull $\cg(S)$ can be constructed in polynomial time.

Now, let $(A,B)$ be a shadow-closed osculating pair of  gated sets of  $G$. We assert that if the residue $V^{+}=V\setminus (A\cup B)$ is non-empty, then the halfspace separation problem for  $(A,B)$ has answer ``not''. Pick  $x\in V^{+}$ and pick any edge $uv$ with $u\in A$ and $v\in B$. As noted above, $d(x,u)=d(x,v)$. Let $x_0$ be the furthest from $x$ vertex belonging to $[x,u]\cap [x,v]$. Then $x_0uv$ is a metric triangle of $G$.
Note that $x_0$ also belongs to the residue  $V^{+}$. Indeed, if this is not the case and say $x_0\in A$, then $x_0\in [x,v]$, yielding $x\in x_0/v\subseteq A/B$, in contradiction with $x\in V^{+}$ and shadow-closeness of $(A,B)$.
Consequently, $x_0\in V^{+}$. Now, suppose by way of contradiction that the halfspace separation problem has a positive answer for  $(A,B)$ and $H',H''$ are complementary gated halfspaces with $A\subseteq H'$ and $B\subseteq H''$. Then $x_0$ belongs to $H'$ or $H''$, say $x_0\in H'$. Since $x_0uv$ is a metric triangle, $x_0,u\in H', v\in H''$, and $H',H''$ are gated, we obtain a contradiction with Lemma \ref{trianglegated}. This proves the following result:

\begin{proposition} \label{halfspacesepgated} If $(A,B)$ is a shadow-closed osculating pair of  gated sets of a graph $G$, then either $A$ and $B$ are complementary gated halfspaces of $G$ or  $A$ and $B$ are not separable. Consequently,
the gated halfspace separation problem can be solved in polynomial time.
\end{proposition}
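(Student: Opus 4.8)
The plan is to prove the stated dichotomy by a residue argument and then extract the algorithm from the three steps method, using that its first two steps run in polynomial time for the gated convexity. First I would dispose of the trivial case: if the residue $V^{+}:=V\setminus(A\cup B)$ is empty, then $A$ and $B$ partition $V$, and since both are gated, $A$ is a gated set whose complement $B$ is gated, so $A,B$ is already a pair of complementary gated halfspaces — the first alternative of the proposition.

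The substance is to show that a non-empty residue obstructs separation. Assume $V^{+}\neq\varnothing$, fix $x\in V^{+}$, and fix an edge $uv$ with $u\in A$ and $v\in B$ (such an edge exists since $(A,B)$ is osculating). Shadow-closedness, $A=A/B$ and $B=B/A$, forces $d(x,u)=d(x,v)$: if, say, $d(x,u)<d(x,v)$ then $u\in[x,v]\subseteq\cg(B\cup\{x\})$, so $x\in A/B=A$, contradicting $x\in V^{+}$ (the case $d(x,u)>d(x,v)$ is symmetric, and $u\sim v$ excludes any larger gap). Let $x_0$ be the vertex of $[x,u]\cap[x,v]$ farthest from $x$; a short verification shows $x_0uv$ is a metric triangle and that $x_0\in V^{+}$, since $x_0\in A$ would give $x_0\in[x,v]$, hence $x\in x_0/v\subseteq A/B=A$, and symmetrically $x_0\notin B$. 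Now suppose complementary gated halfspaces $H',H''$ separate $A,B$ with $A\subseteq H'$, $B\subseteq H''$. As $x_0\in V^{+}$ it lies in one of them, say $x_0\in H'$; then $H'$ is a gated set containing the two vertices $x_0,u$ of the metric triangle $x_0uv$, so Lemma \ref{trianglegated} — applied with $v$ in the role of the apex, which is legitimate because the conditions defining a metric triangle are symmetric in its three vertices — forces $v\in H'$, contradicting $v\in B\subseteq H''=V\setminus H'$. Hence no separating gated halfspaces exist, which is the second alternative, completing the dichotomy.

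For the algorithmic consequence I would run the three steps method for the gated convexity $\ccG$. The shadow-closure step iterates $(A,B)\mapsto(\cg(A/B),\cg(B/A))$ while the two sets stay disjoint; since $A\subseteq A/B$ and $B\subseteq B/A$, each round weakly enlarges both sets and stops once shadow-closed, so at most $|V|$ strict enlargements occur, and each round computes shadows and gated hulls in polynomial time — the gated hull of a set being polynomial-time computable by the procedure described just before Lemma \ref{trianglegated} (enlarging the geodesic convex hull through quasi-medians, using Lemma \ref{trianglegated}). The second step, applied to a shadow-closed pair with $d(A^*,B^*)=k>1$, fixes a shortest path of length $k$ between a closest pair and invokes the shadow-closure step on each of its $k-1\le n$ edge-enlargements, so it is polynomial and outputs polynomially many shadow-closed osculating pairs (or reports non-separability, or already returns a partition of $V$). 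Finally, by the dichotomy just proved, the third step is immediate for gated convexity: for each returned shadow-closed osculating pair $(A^{**},B^{**})$, test whether $A^{**}\cup B^{**}=V$; if so these are complementary gated halfspaces separating $A,B$, and otherwise that pair yields no separation. Answering ``yes'' exactly when some returned pair partitions $V$ is correct because the three steps method preserves separability at each step, and the whole procedure runs in polynomial time. I expect the middle paragraph — certifying that a non-empty residue makes separation impossible, which hinges on placing the metric-triangle apex $x_0$ in the residue and then contradicting gatedness via Lemma \ref{trianglegated} — to be the only genuine obstacle; the remainder is bookkeeping on the two polynomial closure steps.
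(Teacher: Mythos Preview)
Your proof is correct and follows essentially the same route as the paper: both argue that a non-empty residue $V^+$ yields a metric triangle $x_0uv$ with $x_0\in V^+$ (via shadow-closedness and the equality $d(x,u)=d(x,v)$ forced by an osculating edge), then derive a contradiction from Lemma~\ref{trianglegated} applied to whichever gated halfspace contains $x_0$; the polynomial algorithm is obtained identically from the three steps method with a trivial third step. The only cosmetic difference is that you spell out the empty-residue case and the complexity bookkeeping a bit more explicitly than the paper does.
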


In fact, the argument used to prove Proposition \ref{halfspacesepgated} can be also used to show that the gated halfspace separation problem can be solved by halfspace  enumeration.

\begin{proposition} If $uv$ is an edge of a graph $G$ and there exists a pair of complementary gated halfspaces $H',H''$ of $G=(V,E)$ with $u\in H'$ and $v\in H''$, then $H'=W(u,v)$ and $H''=W(v,u)$.
Consequently, $G$ contains at most $|E|$ pairs of complementary gated halfspaces, which can be enumerated in polynomial time.
\end{proposition}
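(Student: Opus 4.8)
The plan is to follow the pattern of the preceding lemma for bipartite graphs, using gatedness in place of bipartiteness to rule out vertices equidistant from $u$ and $v$. First I would apply Lemma~\ref{shadow-separation} with $A=\{u\}$ and $B=\{v\}$: since $A\subseteq H'$ and $B\subseteq H''$, this gives $W(u,v)=u/v\subseteq H'$ and $W(v,u)=v/u\subseteq H''$. As $H'$ and $H''$ are complementary, to conclude $H'=W(u,v)$ and $H''=W(v,u)$ it suffices to show that the middle set $W_=(u,v)=\{x:d(x,u)=d(x,v)\}$ is empty, so that $V=W(u,v)\cup W(v,u)$ (a disjoint union) and the two inclusions above are forced to be equalities.

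The crux is the observation that $u$ is the gate of $v$ in $H'$. Indeed, $H'$ is gated and $v\notin H'$, so $v$ has a gate $g\in H'$ with $g\in[v,w]$ for every $w\in H'$; taking $w=u\in H'$ gives $g\in[v,u]=\{u,v\}$, and $g\neq v$ since $v\notin H'$, hence $g=u$. Consequently, for every $x\in H'$ we get $u\in[v,x]$, i.e.\ $d(v,x)=1+d(u,x)>d(u,x)$, so $x\in W(u,v)$; thus $H'\subseteq W(u,v)$ and therefore $H'=W(u,v)$. The symmetric argument, with $v$ the gate of $u$ in $H''$, yields $H''=W(v,u)$, and in particular no vertex is equidistant from $u$ and $v$. (One may also phrase this via Lemma~\ref{trianglegated}, but the gate argument is the shortest route.)

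For the counting and enumeration claims, note that if $(H',H'')$ is a pair of complementary gated halfspaces with both parts non-empty, then, $G$ being connected, some edge $uv$ has $u\in H'$, $v\in H''$, and by the first part $(H',H'')=(W(u,v),W(v,u))$; so every such pair lies in the image of the map sending an oriented edge $u\!\to\!v$ to $(W(u,v),W(v,u))$. Since there are $2|E|$ oriented edges and the two orientations of a single edge give the same unordered pair, $G$ has at most $|E|$ (non-trivial) pairs of complementary gated halfspaces. To enumerate them I would precompute all pairwise distances by a single all-pairs BFS, then for each edge $uv$ form the candidate $(W(u,v),W(v,u))$, keep it only when $W(u,v)\cup W(v,u)=V$ and both $W(u,v)$ and $W(v,u)$ are gated (gatedness of a fixed set is tested in polynomial time: for each outside vertex compute its imprint, check it is a singleton $x'$, and verify $x'\in[x,y]$ for all $y$ in the set), and output the distinct survivors; by the first part every complementary gated halfspace pair appears this way.

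I do not expect a genuine obstacle here: the single idea ``the gate of $v$ in $H'$ is $u$'' does all the work, and the rest is bookkeeping. The only points needing a little care are the degenerate pair $(\varnothing,V)$, which I would either exclude by convention or handle separately, and the reminder in the enumeration step to test gatedness of \emph{both} sides, since the complement of a gated set need not be gated.
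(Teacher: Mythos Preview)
Your proof is correct. It differs from the paper's in the key step: the paper argues by contradiction, taking a vertex $x$ with $d(x,u)=d(x,v)$, forming a quasi-median $x_0uv$ (a metric triangle), and invoking Lemma~\ref{trianglegated} to conclude that $x_0$ (hence $v$ or $u$) would have to lie in the wrong halfspace. Your approach is more direct: you observe that $u$ must be the gate of $v$ in $H'$ (since the gate lies in $[v,u]=\{u,v\}$), which immediately gives $H'\subseteq W(u,v)$ and bypasses metric triangles entirely. Your argument is shorter and uses only the definition of gate; the paper's route has the minor advantage of reusing Lemma~\ref{trianglegated}, which was already established for the preceding proposition. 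The enumeration discussion in both cases is routine bookkeeping and essentially the same.
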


\begin{proof} By Lemma \ref{shadow-separation}, $W(u,v)=u/v\subseteq H'$ and $W(v,u)=v/u\subseteq H''$. If there exists $x\in V\setminus (W(u,v)\cup W(v,u))$, then $d(x,u)=d(x,v)$. Let $x_0$ be a furthest from
$x$ vertex in $[x,u]\cap [x,v]$. If $x_0\in H'$, since $x_0uv$ is a metric triangle of $G$ and $x_0,u\in H'$, by Lemma \ref{trianglegated}, $v$ must belong to $H'$, which is impossible because $v\in H''$.
\end{proof}

\subsubsection{Monophonic convexity} 
Let $\cM$ denote the set of all monophonically convex sets of a graph $G$ and let $\cm(S)$ denotes the \emph{monoponic (convex) hull} of $S$. Note that $(V,\cM)$ is an interval convexity space.
Since shortest paths are induced paths,
monophonically convex sets are geodesically convex. In general, a graph contains much less monophonic convex sets than geodesically convex sets. Monophonic convexity was introduced in \cite{FaJa_mo} and further investigated in \cite{Ba_intrinsic,Du_mo}; for complexity issues, see \cite{DouPrSzw}.  The monophonic  hull $\cm(S)$ can be computed in polynomial
time. First, we construct the geodesic convex hull $\conv(S)$ of $S$. Then for each pair of non-adjacent vertices $u,v$ of $\conv(S)$ we compute if $u$ and $v$ can be connected by a path outside $\conv(S)$, i.e., in the subgraph $G_{u,v}$ of $G$ induced
by $(V\setminus \conv(S))\cup \{ u,v\}$. If there exists such a pair $u,v\in \conv(S)$, then in $G_{uv}$ we compute a shortest $(u,v)$-path $P$, set $S:=\conv(S)\cup P$, and continue (i.e., compute the geodesic convex hull of the  new set $S$, find an outside path, etc). Now, if  there is no pair $u,v\in \conv(S)$ which can be connected outside $\conv(S)$, then we return the current $\conv(S)$ as the monophonic hull $\cm(S)$.


%

Let $(A,B)$ be a shadow-closed osculating pair of monophonic convex sets of $G$. 
Let $\partial A$ the set of all  $u\in A$ having a neighbor $v\in B$; $\partial B$ is defined analogously. Set $V^{+}=V\setminus (A\cup B)$ and let
$V^{+}_0$ be the set of all vertices $x_0\in V^{+}$ such that $x_0$ is adjacent to a vertex of $A\cup B$. 

\begin{lemma} \label{boundaries} $\partial A$ and $\partial B$ are cliques of $G$.
\end{lemma}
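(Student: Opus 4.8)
The claim is that for a shadow-closed osculating pair $(A,B)$ of monophonic convex sets, the boundaries $\partial A$ and $\partial B$ are cliques. I will prove that $\partial A$ is a clique; the argument for $\partial B$ is symmetric. Recall the key structural fact about the shadow-closed osculating pair: for any edge $uv$ with $u\in\partial A$, $v\in\partial B$, and any vertex $x\notin A\cup B$, shadow-closedness forces $d(x,u)=d(x,v)$ (otherwise $x$ lies in $u/v\subseteq A/B=A$ or in $v/u\subseteq B/A=B$, contradicting $x\notin A\cup B$). I will also use freely that monophonic convexity is stable under induced paths: if $a,b$ lie in a monophonic convex set $S$ then every induced $(a,b)$-path is contained in $S$.

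\textbf{Main step.} Suppose for contradiction that $u_1,u_2\in\partial A$ are non-adjacent. Pick $v_1,v_2\in\partial B$ with $u_1\sim v_1$ and $u_2\sim v_2$ (possibly $v_1=v_2$). The plan is to exhibit an induced path between two vertices of $A$ that leaves $A$, or between two vertices of $B$ that leaves $B$, contradicting monophonic convexity of $A$ or $B$. First consider the case $v_1=v_2=:v$: then $u_1,u_2$ are non-adjacent and have the common neighbour $v\in B$, so $u_1vu_2$ is an induced path from $u_1\in A$ to $u_2\in A$ passing through $v\notin A$ — contradicting that $A$ is monophonically convex. This immediately forces $v_1\ne v_2$. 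Now if $v_1\sim v_2$, then $u_1v_1v_2u_2$ is a path from $u_1\in A$ to $u_2\in A$; I would check it is induced (the only chords to rule out are $u_1v_2$, $v_1u_2$, and $u_1u_2$; the last is excluded by hypothesis, and the other two, if present, would put $u_1$ or $u_2$ adjacent to both $v_1,v_2\in\partial B$, reducing to a shorter induced path $u_iv_ju_{3-i}$ and the previous bullet's contradiction) — so again monophonic convexity of $A$ is violated. The remaining case is $v_1\ne v_2$ and $v_1\nsim v_2$: here one looks at the $(v_1,v_2)$-side. The path $v_1u_1u_2v_2$... wait, $u_1\nsim u_2$, so this is not a path; instead one must connect $v_1$ and $v_2$ through a detour. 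The natural candidate is to connect $v_1$ to $v_2$ by a shortest path inside $B$ (which exists since $B$ is connected) — but that path stays in $B$ and gives no contradiction. So instead I expect the right move is: take the edge $u_1v_1$, apply the shadow-closedness distance equality with $x=u_2$ (legitimate since $u_2\notin B$, and $u_2\notin A$-boundary-issue aside $u_2\in A$ so actually $d$-equality needs the version for $x\in A$)...

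\textbf{The obstacle.} The genuinely delicate point is the last case $v_1\nsim v_2$, where neither a direct induced path between $u_1,u_2$ nor between $v_1,v_2$ is visible, and one must invoke either the triangle condition / (QC$^-$) satisfied by $G$ (this lemma sits in the section on meshed $S_3$-graphs, so $G$ is meshed, hence satisfies (TC) and (QC$^-$)) or, more likely, the shadow-closedness equality $d(x,u)=d(x,v)$ together with a minimality/shortest-path argument to locate a vertex of the residue adjacent to the right vertices and build the forbidden induced path. Concretely I would argue: since $G$ is connected and $A,B$ partition-complement a residue, and since shadow-closedness pins distances, one can find a shortest $(v_1,v_2)$-path $P$ in $G$; if $P\subseteq B$ we are fine only if it has no chord to $u_1,u_2$ — and if it does, we again collapse to an earlier case. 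The heart of the matter is verifying that no such collapse can be avoided, i.e. that the geometry of an osculating shadow-closed pair in a meshed graph does not allow two non-adjacent boundary vertices. I would carry out this final case by a careful induction on $d(v_1,v_2)$ (or on $|A|+|B|$), each step either producing an induced path witnessing non-convexity of $A$ or of $B$, or using (QC$^-$) to find a common neighbour that reduces the distance while staying in $\partial A\cup\partial B\cup$ residue. This induction, and the bookkeeping of which chords are forbidden, is where essentially all the work lies; the base cases $d(v_1,v_2)\le 2$ are exactly the first two cases handled above.
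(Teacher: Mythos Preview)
You have misidentified the context: this lemma sits in the subsection on \emph{monophonic convexity}, not in the meshed $S_3$-graphs section. Here $G$ is an arbitrary graph and $(A,B)$ is a shadow-closed osculating pair of monophonically convex sets; meshedness, (TC), and (QC$^-$) are neither assumed nor needed.

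More importantly, you dismissed precisely the idea that makes the proof work. In the case $v_1\nsim v_2$ you wrote that connecting $v_1$ to $v_2$ by a path inside $B$ ``gives no contradiction'' because that path stays in $B$. But staying in $B$ is exactly what you want: take any path $P'$ from $v_1$ to $v_2$ inside $B$ (it exists because monophonically convex sets are connected), and form the walk
\[
u_1 \,-\, v_1 \,-\, P' \,-\, v_2 \,-\, u_2.
\]
All intermediate vertices of this walk lie in $B$. Since $u_1\nsim u_2$, this walk contains an induced $(u_1,u_2)$-path, whose intermediate vertices are still in $B\subseteq V\setminus A$. That is an induced path between two vertices of $A$ leaving $A$, contradicting monophonic convexity of $A$. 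This single argument handles all three of your cases at once (your first two cases are just the instances where $P'$ has length $0$ or $1$), and it is the paper's proof. No induction on $d(v_1,v_2)$, no shadow-closedness, no distance equalities, and no weak quadrangle condition are required.
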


\begin{proof} If $u,u'\in \partial A$ and $u\nsim u'$, then we assert that $u$ and $u'$ can be connected by a path $P$ whose all intermediate vertices belong to $B$. Indeed, $u$ is adjacent to  $v\in B$ and
$u'$ is adjacent to $v'\in B$. Since $B$ is monophonic convex, $v$ and $v'$ are connected inside $B$ by a path $P'$. Then the path $P'$ together with the edges $uv$ and $v'u'$ constitute the requested path $P$.
Now, since $u\nsim v$, $P$ contains an induced $(u,v)$-path, contrary to the assumption that $A$ is monophonic convex.
\end{proof}

\begin{lemma} \label{x_0} Any $x_0\in V^{+}_0$ is adjacent to all vertices of $\partial A\cup \partial B$. 
\end{lemma}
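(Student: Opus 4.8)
The plan is to prove Lemma \ref{x_0} by contradiction, leveraging the shadow-closedness of the osculating pair $(A,B)$ together with the monophonic convexity of $A$ and $B$, and then invoking the previously established structure (Lemmas \ref{boundaries}, \ref{shadow-separation}). First I would fix $x_0 \in V^+_0$ and suppose it is not adjacent to some vertex $u \in \partial A$ (the case of a vertex in $\partial B$ is symmetric by swapping the roles of $A$ and $B$). By definition of $V^+_0$, $x_0$ is adjacent to some vertex $w \in A \cup B$; without loss of generality assume $w \in A$ (again the other case is symmetric). Let $v$ be a neighbor of $u$ in $B$, which exists because $u \in \partial A$.

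The key step is to produce an induced path between two vertices of $A$ (or of $B$) that escapes $A$ (resp. $B$), contradicting monophonic convexity. Since $x_0 \in V^+$ and $(A,B)$ is shadow-closed, $x_0 \notin A/B$ and $x_0 \notin B/A$; in particular for the edge $uv$ with $u \in A$, $v \in B$ we must have $d(x_0,u)=d(x_0,v)$, and similarly for any edge between $\partial A$ and $\partial B$. Since $x_0 \sim w \in A$ and $x_0 \nsim u \in \partial A$, I would consider a path from $w$ to $u$ inside $A$ (which exists since $A$ is connected, being monophonically hence geodesically convex) together with the edge $w x_0$: this gives a walk from $x_0$ to $u$ whose only vertex outside $A$ is $x_0$. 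Extract from it an induced $(x_0,u)$-path $Q$. All internal vertices of $Q$ lie in $A$. Now glue $Q$ with the edge $uv$ and a path from $v$ to some neighbor $v'$ of $x_0$ in $B$ — but here I must be careful: $x_0$ may not have a neighbor in $B$ at all. To handle this cleanly, I would instead argue directly: take $v \in B$ adjacent to $u$, and consider the shortest path structure together with the fact that $w, u \in A$ while $x_0 \notin A$. The path $w \cdots u$ inside $A$ extended by $x_0$ shows $x_0$ lies on an induced path between two vertices of $A$ unless $x_0$ is adjacent to some vertex on that $A$-path other than $w$; chasing this down, the minimal such configuration forces $x_0 \in A$ (contradicting $x_0 \in V^+$) or forces the existence of an induced path through $x_0$ joining $w$ and $u$ that avoids the rest of $A$, contradicting monophonic convexity of $A$.

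The main obstacle I anticipate is the asymmetry: $x_0 \in V^+_0$ only guarantees adjacency to \emph{some} vertex of $A \cup B$, not to both sides, so the argument that forces $x_0$ onto an induced path may need to be run on whichever side $x_0$ touches, and then propagated to the other side using Lemma \ref{boundaries} (that $\partial A$ and $\partial B$ are cliques) and the osculating/shadow-closed hypotheses. Concretely, once I know $x_0$ is adjacent to every vertex of $\partial A$ (say), I would show $x_0$ must also be adjacent to every vertex of $\partial B$: if $v \in \partial B$ with $x_0 \nsim v$, pick $u \in \partial A$ adjacent to $v$ (such $u$ exists since $(A,B)$ is osculating); then $x_0 \sim u$, $u \sim v$, $x_0 \nsim v$, and combining a path from $v$ into $B$ with these edges yields an induced path from a vertex of $B$ through $u \notin B$ and possibly $x_0$, and one deduces via monophonic convexity of $B$ that $v$ together with another boundary vertex would be joined by an induced path outside $B$ — contradiction. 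I expect the bookkeeping to localize to a $4$- or $5$-vertex induced configuration, so the real work is choosing the right pair of endpoints for the induced path and checking inducedness; the underlying principle is simply that a vertex adjacent to one endpoint of an edge crossing the separation, but not the other, cannot sit in the residue of a shadow-closed osculating pair of monophonically convex sets.
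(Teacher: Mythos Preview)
Your setup is right—you correctly build an induced $(x_0,u)$-path $Q$ whose internal vertices all lie in $A$—but you then reach for the wrong contradiction. You try either to close a cycle back to $x_0$ through $B$ (which, as you note, fails when $x_0$ has no neighbor in $B$) or to place $x_0$ \emph{between} two vertices of $A$ on an induced path. The latter cannot work: $x_0$ sits at an \emph{endpoint} of $Q$, not in its interior, and there is no reason an induced $(w,u)$-path through $x_0$ outside $A$ should exist (for instance, $x_0$ may only reach $u$ via $A$).

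The step you are missing is much simpler and uses shadow-closedness, not monophonic convexity of $A$. Append the single edge $uv$ to $Q$: this gives an $(x_0,v)$-walk whose intermediate vertices all lie in $A$. If $x_0\nsim v$, extracting an induced $(x_0,v)$-path from this walk still leaves at least one intermediate vertex in $A$, so $\cm(x_0,v)\cap A\neq\varnothing$, i.e.\ $x_0\in A/v\subseteq A/B=A$, contradicting $x_0\in V^+$. Hence $x_0\sim u$ or $x_0\sim v$. Then the three-vertex observation you hint at in your final paragraph finishes it cleanly: if $x_0\sim u$ but $x_0\nsim v$, the path $(x_0,u,v)$ is induced, so $u\in\cm(x_0,v)$ and again $x_0\in A/B=A$; symmetrically for the other case. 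Thus $x_0$ is adjacent to both endpoints of every edge between $\partial A$ and $\partial B$, which covers all of $\partial A\cup\partial B$. This is exactly the paper's argument; your detour through ``$x_0$ on an induced path between two vertices of $A$'' is the gap.
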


\begin{proof} 
Suppose that $x_0$ is adjacent to $z\in A\cup B$, say $z\in A$. Pick $u\in \partial A$ and $v\in \partial B$ with $u\sim v$.
We assert that $x_0$ is adjacent to $u$ or to $v$.
Suppose that among all neighbors of $x_0$ in $A$, $z$ is closest to $u$.  Let $P'$ be a shortest path connecting $z$ and $u$. Since $A\in \cM$, $P'\subseteq A$. From the choice of $z$, either $x\sim u$ or
the path $P''$ consisting of $P'$ plus the edge $zx$ is an induced $(x,u)$-path. Hence, $x$ and $v$ are connected by a path $P$ ($P''$ plus the edge $uv$) whose all intermediate vertices
belong to $A$. If $x\nsim v$, from $P$ we can extract an $(x,v)$-path with all intermediate vertices in $A$, yielding $x\in A/v\subseteq A/B$, contrary to the assumption that $(A,B)$ is shadow-closed and $x\in V^+$.
Consequently, $x$ is adjacent to $u$ or $v$. Since $x\notin u/v\cup v/u$, $x$ is also adjacent to the second vertex. Since $u$ and $v$ with $u\sim v$ have been chosen arbitrarily,
$x$ is adjacent to all vertices of $\partial A\cup \partial B$.
%
\end{proof}


\begin{lemma} \label{monophonicquasimedian} If $x\in V^{+}, u\in \partial A, v\in \partial B$ with $u\sim v$, then any quasi-median of the triplet $x,u,v$ has the form $x_0uv$, where $x_0\sim u,v$ and $x_0\in V^+$.
\end{lemma}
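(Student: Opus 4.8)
The plan is to prove the statement in three stages, of which only the last uses monophonic convexity in an essential way. First I would pin down the shape of the quasi-median. Recall from the discussion of shadow-closed osculating pairs that for every $x\in V^+$ and every edge $uv$ with $u\in A$, $v\in B$ one has $d(x,u)=d(x,v)$: otherwise, say $d(x,u)=d(x,v)+1$, then $v\in[x,u]\subseteq\cm(\{x,u\})\subseteq\cm(A\cup\{x\})$, so $x\in B/A=B$, impossible. Now let $v_1v_2v_3$ be a quasi-median of $x,u,v$, with $v_1$ closest to $x$, $v_2$ to $u$, $v_3$ to $v$. Since $d(u,v)=1$, the quasi-median equality for the pair $u,v$ reads $d(u,v_2)+d(v_2,v_3)+d(v_3,v)=1$. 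If $d(v_2,v_3)=0$ the metric triangle collapses to one vertex $m$, a median of $x,u,v$; as $[u,v]=\{u,v\}$ this forces $m\in\{u,v\}$, whence $d(x,u)=d(x,v)+1$ or $d(x,v)=d(x,u)+1$, contradicting $d(x,u)=d(x,v)$. Hence $d(v_2,v_3)=1$ and $v_2=u$, $v_3=v$, so the quasi-median is $x_0uv$ with $x_0:=v_1$. The remaining quasi-median equalities give $x_0\in[x,u]\cap[x,v]$, and the metric-triangle conditions give $[x_0,u]\cap[x_0,v]=\{x_0\}$, $v\notin[u,x_0]$, $u\notin[v,x_0]$; the latter two together with $u\sim v$ force $d(x_0,u)=d(x_0,v)=:k$.

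Next I would show $x_0\in V^+$. If $x_0\in A$, then since geodesics are induced paths we have $x_0\in[x,v]\subseteq\cm(\{x,v\})\subseteq\cm(B\cup\{x\})$ by monotonicity of the monophonic hull, so $x_0\in\cm(B\cup\{x\})\cap A$, i.e.\ $x\in A/B$; shadow-closedness gives $A/B=A$, so $x\in A$, contradicting $x\in V^+$. Symmetrically $x_0\in B$ is excluded using $x_0\in[x,u]$ and $B/A=B$. Hence $x_0\in V^+$.

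It then remains to prove $k=1$, i.e.\ $x_0\sim u$ and $x_0\sim v$; this is the step I expect to be the main obstacle, as it is the only place where monophonic convexity enters (through Lemmas~\ref{boundaries} and~\ref{x_0}). Assume $k\ge 2$ and fix a geodesic $P=(x_0=p_0,p_1,\dots,p_k=u)$. First I would show every $p_i$ with $i\le k-1$ lies in $V^+$: letting $j\le k-1$ be the least index with $p_j\in A\cup B$, the vertex $p_{j-1}\in V^+$ is adjacent to $p_j\in A\cup B$, hence $p_{j-1}\in V^+_0$, and Lemma~\ref{x_0} gives $p_{j-1}\sim u$ (since $u\in\partial A$), so $1=d(p_{j-1},u)=k-(j-1)\ge 2$, a contradiction; thus no such $j$ exists and $p_0,\dots,p_{k-1}\in V^+$. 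In particular $p_{k-1}\in V^+$ and $p_{k-1}\sim p_k=u\in A$, so $p_{k-1}\in V^+_0$, whence Lemma~\ref{x_0} also yields $p_{k-1}\sim v$ (since $v\in\partial B$). But then $(x_0,p_1,\dots,p_{k-1},v)$ is a walk of length $k=d(x_0,v)$, hence a geodesic, so $p_{k-1}\in[x_0,v]$; as also $p_{k-1}\in[x_0,u]$, we get $p_{k-1}\in[x_0,u]\cap[x_0,v]=\{x_0\}$, forcing $k-1=0$ and contradicting $k\ge 2$. Therefore $k=1$, so $x_0\sim u$ and $x_0\sim v$, and combined with the first two stages this gives the claim. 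The crux throughout stage three is the observation that a vertex of $V^+_0$ cannot lie at distance $\ge 2$ from $u$ along a geodesic ending at $u$: this both forces $P$ to stay inside $V^+$ until its last step and manufactures the forbidden geodesic through $p_{k-1}$.
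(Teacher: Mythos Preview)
Your proof is correct. Stages 1 and 2 coincide with the paper's argument (identifying the quasi-median as $x_0uv$ with $d(x_0,u)=d(x_0,v)$, and placing $x_0$ in $V^+$ via shadow-closedness). Stage 3, however, takes a genuinely different route. The paper argues directly: picking shortest paths $P'$ from $x_0$ to $u$ and $P''$ from $x_0$ to $v$, the metric-triangle condition $[x_0,u]\cap[x_0,v]=\{x_0\}$ forces $v$ to be non-adjacent to every vertex of $P'\setminus\{u\}$ (and symmetrically), so $P'$ extended by the edge $uv$ is an induced $(x_0,v)$-path through $u$; hence $x_0\in u/v\cap v/u\subseteq A/B\cap B/A=A\cap B=\varnothing$, a contradiction. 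Your argument instead uses Lemma~\ref{x_0} as a black box: once you know the geodesic stays in $V^+$ up to $p_{k-1}$, that lemma hands you $p_{k-1}\sim v$, contradicting the metric-triangle condition. The paper's approach is more self-contained and shows directly how monophonic convexity forces $k=1$, while yours is economical in that it recycles the work already done in Lemma~\ref{x_0}. One minor remark: your aside that stage 3 is ``the only place where monophonic convexity enters'' is slightly imprecise, since stage 2 already uses that the shadows $A/B$, $B/A$ are taken with respect to $\cm$; what is true is that stage 3 is the only place requiring more than the inclusion $[\,\cdot\,,\cdot\,]\subseteq\cm(\cdot,\cdot)$.
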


\begin{proof} Since $x\in V^+$ and $(A,B)$ is shadow-closed, necessarily $d(x,u)=d(x,v)$. Since $u\sim v$, any quasi-median of $x,u,v$ has the form $x_0uv$ with $d(x_0,u)=d(x_0,v)=k$. First suppose that
$k>1$. Pick any shortest $(x_0,u)$-path $P'$ and any shortest $(x_0,v)$-path $P''$. Since $[x_0,u]\cap [x_0,v]=\{ x_0\}$, $u$ is not adjacent to any vertex of $P''\setminus \{ v\}$ and $v$ is not adjacent
to any vertex of $P'\setminus \{ u\}$. This implies that $P'$ plus the edge $uv$ is an induced $(x_0,v)$-path passing via $u$ and $P''$ plus the edge $vu$ is an induced $(x_0,u)$-path passing via $u$. Consequently,
$x_0\in u/v\cap v/u\subseteq A/B\cap B/A$, contrary to the assumption that $A$ and $B$ are disjoint. Consequently, $x_0$ is adjacent to $u$ and $v$. If $x_0$ belongs to $A$ or to $B$, say $x_0\in A$,
then $x\in x_0/v\subseteq A/B$, contrary to the choice of $x$ from $V^+$.
\end{proof}

For  $x\in V^{+}$, let $S_x$ be the set of all  $x_0\in V^{+}_0$ such that there exist $u\in \partial A,v\in B$ with $u\sim v$ and $x_0\in \cm(x,u)\cap \cm(x,v)$. Since $[x,u]\subseteq \cm(x,u)$ and
$[x,v]\subseteq \cm(x,v)$, Lemma \ref{monophonicquasimedian} implies that each  $S_x$ is nonempty because it contains the quasi-medians of $x,u,v$. Clearly, we can construct all sets $S_x, x\in V^{+}$ in total polynomial time.

%

\begin{lemma} \label{vertexxx} Let $x\in V^+$. If $S_x$ contains two non-adjacent vertices $x_0,x'_0$, then the monophonic halfspace separation problem for $(A,B)$ has answer ``not''.
If $S_x$ is a clique, then for all complementary halfspaces $H',H''$ with $A\subseteq H'$ and $B\subseteq H''$, either $S_x\cup \{ x\}\subseteq H'$ or $S_x\cup \{ x\}\subseteq H''$.
\end{lemma}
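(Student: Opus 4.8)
```latex
\textbf{Approach.} The statement is Lemma~\ref{vertexxx}, about a shadow-closed osculating pair $(A,B)$ of monophonic convex sets and the set $S_x$ (for a fixed residue vertex $x\in V^+$). The proof will lean entirely on the structural lemmas just established: that $\partial A,\partial B$ are cliques (Lemma~\ref{boundaries}), that any $x_0\in V^+_0$ is adjacent to all of $\partial A\cup\partial B$ (Lemma~\ref{x_0}), that quasi-medians of $x,u,v$ (for $u\in\partial A$, $v\in\partial B$, $u\sim v$) have the form $x_0uv$ with $x_0\sim u,v$ and $x_0\in V^+$ (Lemma~\ref{monophonicquasimedian}), and on the general principle that in a monophonically convex halfspace $H$, if two vertices of $H$ are joined by an induced path in the complement then one gets a contradiction. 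The guiding geometric picture is that the elements of $S_x$ together with $x$ should all land on the same side of any separating cut because they are ``tied together'' through induced paths via $\partial A$ or $\partial B$.

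\textbf{First part (non-adjacent $x_0,x_0'\in S_x$ $\Rightarrow$ answer ``not'').} Suppose $x_0,x_0'\in S_x$ with $x_0\nsim x_0'$, and suppose for contradiction there are complementary monophonic halfspaces $H',H''$ with $A\subseteq H'$, $B\subseteq H''$. By definition of $S_x$, there are $u\in\partial A$, $v\in B$ with $u\sim v$ and $x_0\in\cm(x,u)\cap\cm(x,v)$, and similarly $u',v'$ for $x_0'$. By Lemma~\ref{monophonicquasimedian} applied to the quasi-medians (and the description of $\cm(x,u)$, $\cm(x,v)$ given just before the lemma), both $x_0$ and $x_0'$ are adjacent to $u,v$ and $u',v'$ and lie in $V^+$. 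The key point is that $x_0$ and $x_0'$ cannot be separated: I will argue that $x_0$ and $x_0'$ are joined by an induced path all of whose interior vertices lie entirely in $A$, or entirely in $B$, forcing a monophonic convexity violation no matter how $x_0,x_0'$ are distributed between $H'$ and $H''$. Concretely, if $x_0\in H'$ and $x_0'\in H''$ (say): $x_0\sim u$ and $x_0'\sim u'$ with $u,u'\in\partial A\subseteq H'\cap A$; since $A$ is monophonically convex and connected, $u$ and $u'$ are joined inside $A$ by a path, and prepending the edge $x_0u$ and appending $u'x_0'$ — using $x_0\nsim x_0'$ and $x_0,x_0'\notin A$ — yields (after extracting an induced subpath) an induced $(x_0,x_0')$-path with interior in $A\subseteq H'$; but then this path witnesses that $x_0'\in H'/x_0$ or, more directly, that $H''$ fails monophonic convexity because $x_0,x_0'\in$ the endpoints of an induced path whose interior avoids $H''$. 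The symmetric case $x_0,x_0'$ on the same side is handled the same way (and is in fact consistent, not contradictory — so one must be careful: the genuine contradiction arises precisely when $x_0,x_0'$ are separated, which combined with the fact, proven next, that $S_x\cup\{x\}$ must lie on one side, gives the ``not'' conclusion). I would organize this so that the first assertion follows from the second: if $S_x$ contained two non-adjacent vertices that were forced onto the \emph{same} side by the second part, then $S_x\cup\{x\}$ would be a connected set on one side joined through $A$-paths and $B$-paths; tracing these one finds an induced path in $H'$ (resp.\ $H''$) between a vertex of $\partial A$ and a vertex of $\partial B$, which are on opposite sides — contradiction.

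\textbf{Second part ($S_x$ a clique $\Rightarrow$ $S_x\cup\{x\}$ on one side).} Fix complementary halfspaces $H',H''$ with $A\subseteq H'$, $B\subseteq H''$. First place $x$: pick any edge $uv$ with $u\in\partial A\subseteq H'$, $v\in\partial B\subseteq H''$; since $(A,B)$ is shadow-closed, $d(x,u)=d(x,v)$, so $x\notin u/v\cup v/u$, hence $x$ lies in $H'$ or $H''$ (it lies in exactly one, being a single vertex of a partition). Say $x\in H'$. Now take any $x_0\in S_x$; by definition $x_0\in\cm(x,u_0)\cap\cm(x,v_0)$ for some $u_0\in\partial A$, $v_0\in B$ with $u_0\sim v_0$. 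Since $x_0\in\cm(x,u_0)$ and $x,u_0\in H'$, and $H'$ is monophonically convex, $x_0\in H'$. This already shows every element of $S_x$ lies on the same side as $x$; the hypothesis that $S_x$ is a clique is what guarantees, via the first part's contrapositive, that no inconsistency forces the ``not'' answer, so this case is the live one. Hence $S_x\cup\{x\}\subseteq H'$, and by symmetry (swapping the roles of $H',H''$ when $x\in H''$) we get $S_x\cup\{x\}\subseteq H''$ in the other case.

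\textbf{Main obstacle.} The delicate step is the first part: showing that non-adjacent $x_0,x_0'\in S_x$ genuinely obstruct separation, rather than merely being forced onto one side. The crux is producing the right \emph{induced} path in the complement of a putative halfspace — one must carefully use $u_0,u_0'\in\partial A$ together with connectivity and monophonic convexity of $A$, the non-adjacency $x_0\nsim x_0'$, and the facts $x_0,x_0'\in V^+$ (so neither is in $A$) to extract an induced $(x_0,x_0')$-path with interior in $A$; then the same with $B$; and then argue that whichever side $x_0,x_0'$ land on, combined with $\partial A\subseteq H'$ and $\partial B\subseteq H''$, one of these two induced paths connects vertices on opposite sides of the cut, contradicting that both $H'$ and $H''$ are monophonically convex. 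I expect this extraction-and-contradiction argument, and keeping straight which of the several adjacency/non-adjacency hypotheses is being used at each juncture, to be where the real work lies; the remaining bookkeeping (placing $x$ via shadow-closedness, propagating to $S_x$ via $\cm(x,\cdot)\subseteq H$) is routine given Lemmas~\ref{boundaries}--\ref{monophonicquasimedian}.
```
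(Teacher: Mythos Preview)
Your second part is correct and is exactly the paper's argument: once $x\in H'$ (say), every $x_0\in S_x$ lies in $\cm(x,u_0)$ for some $u_0\in\partial A\subseteq H'$, so $x_0\in H'$ by monophonic convexity of $H'$. Note that this argument nowhere uses that $S_x$ is a clique, so you can and should state it first.

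Your first part, however, is muddled and contains an actual error. You write that if $x_0\in H'$ and $x_0'\in H''$, an induced $(x_0,x_0')$-path with interior in $A\subseteq H'$ ``witnesses that $H''$ fails monophonic convexity.'' It does not: monophonic convexity of $H''$ constrains induced paths between two vertices of $H''$, and here only one endpoint is in $H''$. You then oscillate between saying the same-side case is ``consistent, not contradictory'' and saying the contradiction comes from the different-sides case combined with the second part; the logic never settles.

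The clean fix (and the paper's route) is to use the full strength of Lemma~\ref{x_0}: every vertex of $V_0^+$, hence both $x_0$ and $x_0'$, is adjacent to \emph{every} vertex of $\partial A\cup\partial B$. So any $v\in\partial B$ is a common neighbour of $x_0$ and $x_0'$, and since $x_0\nsim x_0'$ the $2$-path $x_0\,v\,x_0'$ is induced, giving $v\in\cm(x_0,x_0')$. If $x_0,x_0'$ were both in $H'$ this would force $v\in H'$, impossible since $v\in B\subseteq H''$; symmetrically they cannot both lie in $H''$. Hence non-adjacent $x_0,x_0'$ must land on opposite sides. But your (correct) second-part argument forces them onto the same side as $x$. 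These two conclusions are incompatible, so no separating pair $(H',H'')$ exists. Your elaborate path-extraction through $A$ and through $B$ is not needed; the single $2$-path through a common $\partial B$-neighbour already does the work.
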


\begin{proof} Let $H',H''$ be a pair of complementary monophonic halfspaces with $A\subseteq H'$ and $B\subseteq H''$ and pick any $x_0,x'_0\in S_x$. First suppose that $x_0\nsim x'_0$.
By Lemma \ref{x_0}, $x_0$ and $x'_0$ are adjacent to all vertices of $\partial A\cup \partial B$. Therefore, $x_0$ and $x'_0$ must belong to different halfspaces $H',H''$, say $x_0\in H'$ and $x'_0\in H''$.
Since $H',H''$ are complementary halfspaces, $x$ belong to one of them, say $x\in H'$. But then $x'_0\in [u,x]\cap H''$ for any $u\in \partial A$, a contradiction. Consequently, $S_x$ is a clique.

Now suppose that $S_x$ contains two vertices $x_0,x'_0$ with $x_0\in H'$ and $x'_0\in H''$. Suppose without loss of generality that $x\in H'$.   Let $uv$ be an edge with $u\in \partial A, v\in \partial B$ such that
$x'_0\in \cm(x,u)\cap \cm(x,v)$ (the existence of such an edge follows since  $x'_0\in S_x$).  Consequently, $x'_0\in \cm(u,x)\cap H''\subset H'\cap H''$, which is impossible.  Thus
the clique $S_x$ is entirely contained in one of the halfspaces $H'$ or $H''$. If say $S_x\subset H'$, then since $x'_0\in \cm(x,v)$, $v\in H''$ and $x'_0\in H'$, we conclude that
$x$ belongs to  $H'$.
\end{proof}

Therefore, we can further suppose that all sets $S_x, x\in  V^{+}$  are cliques.

\begin{lemma} \label{separator} For any $x\in V^{+}$, any path from $x$ to a vertex of $A\cup B$ intersect the set $S_x$.
\end{lemma}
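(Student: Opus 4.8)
The plan is to reduce the statement to locating one explicit vertex of the given path that must lie in $S_x$, after first disposing of the trivial case where $x$ is itself adjacent to $A\cup B$. Throughout I use only that $(A,B)$ is a shadow-closed osculating pair of monophonic convex sets, so that $\partial A,\partial B$ are cliques (Lemma~\ref{boundaries}) and every vertex of $V^{+}_0$ is adjacent to all of $\partial A\cup\partial B$ (Lemma~\ref{x_0}); meshedness and the forbidden subgraphs of Section~\ref{s:mesheds3graphs} are not needed.

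First I would handle the case $x\in V^{+}_0$: since $(A,B)$ is osculating, $d(A,B)=1$, so there is an edge $uv$ with $u\in\partial A$ and $v\in\partial B$; trivially $x\in\cm(x,u)\cap\cm(x,v)$, and $x\in V^{+}_0$, so $x\in S_x$, and every path from $x$ passes through $x$. Hence assume $x\notin V^{+}_0$, and let $P=(x=p_0,p_1,\dots,p_m)$ be the given path truncated so that $p_m$ is its first vertex in $A\cup B$; then $p_0,\dots,p_{m-1}\in V^{+}$. Let $p_j$ be the first vertex of $P$ lying in $V^{+}_0$: such a $j$ exists because $p_{m-1}\sim p_m\in A\cup B$ forces $p_{m-1}\in V^{+}_0$, and $j\ge 1$ because $x=p_0\notin V^{+}_0$; by minimality of $j$, none of $p_0,\dots,p_{j-1}$ is adjacent to any vertex of $A\cup B$. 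I claim $p_j\in S_x$, which suffices. Fix again an edge $uv$ with $u\in\partial A$, $v\in\partial B$, $u\sim v$. By Lemma~\ref{x_0}, $p_j$ is adjacent to both $u$ and $v$. Since $\{p_0,\dots,p_j\}$ induces a connected subgraph of $G$, it contains an induced $(x,p_j)$-path $Q^{*}$ (which is then chordless in $G$ as well); all its vertices lie in $\{p_0,\dots,p_j\}$, so apart from $p_j$ none of them is adjacent to $u$. Therefore appending the edge $p_ju$ to $Q^{*}$ yields a chordless $(x,u)$-path which contains $p_j$, whence $p_j$ lies on an induced $(x,u)$-path and so $p_j\in\cm(x,u)$. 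The identical argument with $v$ in place of $u$ gives $p_j\in\cm(x,v)$. Combined with $p_j\in V^{+}_0$, this witnesses $p_j\in S_x$, and $p_j$ lies on $P$, so $P\cap S_x\ne\varnothing$.

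The only step that is not purely formal is verifying that the chordless path produced this way really passes through $p_j$: this is where it matters that $p_j$ is an \emph{endpoint} of $Q^{*}$ (so it is not shortcut away when extracting a chordless subpath), and that, by the defining property of $p_j$ together with Lemma~\ref{x_0}, the vertex $u$ is joined to $V(Q^{*})$ only at $p_j$ (so attaching $u$ creates no chord). Everything else — the case split, the existence of the edge $uv$ from the osculating hypothesis, and the implication ``$p_j$ on an induced $(x,u)$-path $\Rightarrow p_j\in\cm(x,u)$'', which is immediate from the definition of monophonic convexity — is routine.
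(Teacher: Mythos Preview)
Your argument is correct and is actually cleaner than the paper's. The paper argues by contradiction: assuming a path from $x$ to $A\cup B$ that avoids $S_x$, it prolongs this path inside $A$ to reach an arbitrary vertex of $\partial A$, extracts an induced $(x,p)$-path $P$ with $p\in\partial A\cup\partial B$ chosen to minimise $|P|$, and then, using shadow-closedness to rule out $p\in\cm(x,q)$ for a neighbour $q$ of $p$ on the other side, locates a vertex $z$ on $P$ adjacent to both $p$ and $q$; the minimality of $|P|$ is used again to force $z$ to be the neighbour of $p$, and then $z\in S_x$ contradicts the choice of $P$. Your route bypasses both the contradiction and the minimality device: you truncate the given path at its first entry into $A\cup B$, pick the \emph{first} vertex $p_j$ lying in $V^{+}_0$, and invoke Lemma~\ref{x_0} (which the paper's proof does not use here) to know at once that $p_j$ is adjacent to any chosen $u\in\partial A$ and $v\in\partial B$; since the earlier vertices $p_0,\dots,p_{j-1}$ lie in $V^{+}\setminus V^{+}_0$ and hence have no neighbours in $A\cup B$, the induced $(x,p_j)$-path extracted from $\{p_0,\dots,p_j\}$ extends by the edge $p_ju$ (resp.\ $p_jv$) to an induced $(x,u)$-path (resp.\ $(x,v)$-path) through $p_j$, giving $p_j\in S_x$ directly. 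Both proofs pin down a specific vertex of the path that must lie in $S_x$, but yours identifies it constructively from the outset, trading the paper's length-minimisation trick for a single appeal to Lemma~\ref{x_0}.
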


\begin{proof} Suppose that $x$ can be connected to a vertex $y$ of $A$ by a path avoiding $S_x$. Then $x\in V^+\setminus V^+_0$. Since $A\in {\mathfrak M}$, $y$
can be connected by a path inside $A$ with any $u\in \partial A$.
Concatenating those two paths, we get a $(x,u)$-path avoiding $S_x$. Consequently, $x$ can be connected to any vertex of $\partial A\cup \partial B$
by a path avoiding $S_x$, from which we can extract an induced such path. Now, pick $p\in \partial A\cup \partial B$, say $p\in \partial A$,
for which such a connecting induced path $P$ avoiding $S_x$
has minimal length. Let $q$ be a neighbor of $p$ in $\partial B$. Since $x\in V^{+}$, we have $p\notin \cm(x,q)$ (otherwise, $x\in p/q\subset A/B$, contrary with the assumption that $(A,B)$ is shadow-closed).
Thus, $P$ plus the edge $qp$ is not an induced $(x,q)$-path. Let $z\ne p$ be a neighbor of $q$ in $P$. If $z$ is not a neighbor of $p$ in $P$, then we get a contradiction with the
minimality choice of  $p$ and $P$. Consequently,  $z\sim p$ and $z$ is the unique vertex of $P\setminus \{ p\}$ adjacent to $q$. But this implies that $z$ belongs
to $S_x$ because $P$ and the path constituted by the edge $qz$ and the subpath of $P$ between $z$ and $x$ are both induced paths, yielding $z\in \cm(x,p)\cap \cm(x,q)$.
This contradicts that $P\cap S_x=\varnothing$.
\end{proof}

Summarizing, we are lead to the following. The set $V^{+}_0$ is covered by the sets $S_x, x\in V^{+}$, each $S_x$ is a non-empty clique of $G$ and separates $x$ from each of the sets $A$ and $B$.
Each vertex $x_0\in V^{+}_0$ is adjacent to all vertices of
$\partial A\cup \partial B$. 
Finally, if $H',H''$ are complementary monophonic halfspaces with $A\subseteq H'$ and $B\subseteq H''$, 
then each $S_x\cup \{ x\}, x\in V^{+}$ belong either to $H'$ or to $H''$
and if two vertices $x_0,x'_0\in V^{+}_0$, are not adjacent,
then they belong to different halfspaces $H',H''$.

To distribute the vertices of the residue $V^{+}$ to the halfspaces $H'$ and $H''$ or to decide that such a distribution does  not exist, for each $x\in V^{+}$ we define
a binary variable $a_x$; we used a similar method in \cite{ChSe}).
The set of variables $a_x, x\in V^{+}$ satisfies the following conditions:
\begin{enumerate}

\item[(1)] $a_x=a_{x_0}$ for any $x\in  V^{+}$  and $x_0\in S_x$;
\item[(2)] $a_{x_0}\ne a_{y_0}$ if $x_0,y_0\in V^{+}_0$ and $x_0$ and $y_0$ are not adjacent.
\end{enumerate}
We define a 2-SAT formula $\Phi$ by replacing every constraint of the form
$a=b$ by two clauses $(a \vee \overline{b})$ and $(\overline{a}\vee b)$ and every constraint of the form $a\ne b$ by
two clauses $(a\vee b)$ and $(\overline{a}\vee \overline{b})$.  $\Phi$ can be solved in linear time using \cite{AsPlTa}. From the previous discussion we conclude that if $(H',H'')$ is a solution
for the halfspace separation problem, then the sets $H'\cap V^{+}$ and $H''\cap V^{+}$ satisfy the constraints (1) and (2), thus
setting $\alpha(a_x)=0$ if $x\in H'\cap V^{+}$ and $\alpha(a_x)=1$ if $x\in H''\cap V^{+}$ (or, vice-versa, $\alpha(a_x)=1$ if $x\in H'\cap V^{+}$ and $\alpha(a_x)=0$ if $x\in H''\cap V^{+}$),
we will get a satisfying assignment $\alpha$ of $\Phi$. Therefore, if the 2-SAT formula $\Phi$ is not satisfiable, then the halfspace separation problem for $(A,B)$  has answer ``not''.
Conversely, suppose that $\Phi$ has a satisfying assignment $\alpha$. Set $A^+:=\{ x\in V^{+}: \alpha(a_x)=0\}$ and $B^+:=\{ x\in V^{+}: \alpha(a_x)=0\}$ and let $H'=A\cup A^+, H''=B\cup B^+$.

\begin{lemma} $H'$ and $H''$ are complementary monophonic halfspaces of $G$ separating $A$ and $B$.
\end{lemma}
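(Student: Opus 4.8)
The plan is to prove that $H'$ and $H''$ are monophonically convex; since $H'$ and $H''$ partition $V$, this makes each the complement of the other, and since $A\subseteq H'$ and $B\subseteq H''$ they are then complementary halfspaces separating $A$ and $B$. So I would first record the partition. We have $A\cap B=\varnothing$ because $(A,B)$ is an osculating (hence disjoint) pair; $A^+\cap B=A^+\cap A=\varnothing$ and $B^+\cap A=B^+\cap B=\varnothing$ because $A^+,B^+\subseteq V^+=V\setminus(A\cup B)$; and $A^+\cap B^+=\varnothing$ because $\alpha$ is a single valuation. Moreover $H'\cup H''=A\cup B\cup A^+\cup B^+=A\cup B\cup V^+=V$, so $H''=V\setminus H'$. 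Finally, the construction is invariant under simultaneously interchanging $A\leftrightarrow B$ (hence $\partial A\leftrightarrow\partial B$, $A^+\leftrightarrow B^+$): the sets $S_x$ and $V^+_0$ are unchanged, the clauses (1)--(2) of $\Phi$ are unchanged, and the globally-flipped valuation still satisfies $\Phi$. Hence it suffices to prove that $H'=A\cup A^+$ is monophonically convex.

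Suppose not. Then there is an induced $(u,v)$-path $P$ with $u,v\in H'$, $u\nsim v$, and a vertex of $P$ in $H''$; choose such a $P$ of minimum length. If $P$ had an interior vertex in $H'$ we could split $P$ there and get a shorter counterexample, so $u$ and $v$ are the only vertices of $P$ in $H'$, and every interior vertex of $P$ lies in $H''=B\cup B^+$. If $u,v\in A$ this contradicts the monophonic convexity of $A$. Otherwise at least one endpoint, say $v$, lies in $A^+\subseteq V^+$, so $\alpha(a_v)=0$. The key mechanism is: whenever $P$, or a subpath of it ending at $v$, reaches a vertex of $A\cup B$, Lemma~\ref{separator} provides a vertex $s\in P\cap S_v$; since $S_v\subseteq V^+_0\subseteq V^+$ and the only interior vertices of $P$ lying in $V^+$ lie in $B^+$, either $s=v$, forcing $v\in V^+_0$, or $s$ is interior with $s\in B^+$, in which case clause~(1) gives $\alpha(a_s)=\alpha(a_v)=0$, contradicting $s\in B^+$. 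So whenever $P$ touches $A\cup B$ we may conclude $v\in V^+_0$; in particular this settles the case $u\in A$ (then $P$ ends in $A\cup B$) and the case where some interior vertex of $P$ lies in $B$. Symmetrically, in the remaining situation both endpoints of $P$ lie in $A^+$ and all interior vertices lie in $B^+$, so $P\subseteq V^+$, and additionally any endpoint to which the above applies lies in $V^+_0$.

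The remaining (purely residual) configuration, together with the degenerate sub-cases where $P\cap S_x=\{x\}$ for an endpoint $x$, is where I expect the real work to lie. The tools would be Lemma~\ref{x_0} (every vertex of $V^+_0$ is adjacent to all of $\partial A\cup\partial B$), Lemma~\ref{boundaries} ($\partial A$ and $\partial B$ are cliques), and clause~(2). Walking along $P$ to its first $A^+\!\to\!B^+$ transition produces consecutive vertices $x\in A^+$, $y\in B^+$ of $P$, both in $V^+$; fixing an osculating edge $ab$ with $a\in\partial A$, $b\in\partial B$ and applying Lemma~\ref{separator} to $x$ and $y$ along paths to $a$, $b$ produces $s\in S_x$ and $s'\in S_y$ with, by Lemma~\ref{x_0}, $s$ and $s'$ adjacent to every vertex of $\partial A\cup\partial B$, and, by clause~(1), $\alpha(a_s)=\alpha(a_x)=0$ and $\alpha(a_{s'})=\alpha(a_y)=1$; clause~(2) then forces $s\sim s'$, and one pushes this — using that $P$ is induced and $\partial A,\partial B$ are cliques — to either chord $P$, or relocate the transition strictly closer to $u$ against minimality, or force $P$ to meet $A\cup B$ and fall back to the already-settled case. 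Making this argument airtight — i.e., verifying that clauses~(1) and~(2) together with the clique-like structure of $\partial A\cup\partial B\cup V^+_0$ leave no induced path free to cross from $A^+$ to $B^+$ — is the delicate step. It is precisely the converse of Lemma~\ref{vertexxx} (which shows $S_x\cup\{x\}$ cannot straddle a genuine separation), so the lemma amounts to promoting a satisfying assignment of $\Phi$ to an actual pair of complementary monophonic halfspaces, and by the symmetry noted above the same argument gives monophonic convexity of $H''$.
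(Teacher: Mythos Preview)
Your setup (partition, symmetry, reduction to an induced path $P$ with interior in $H''$) matches the paper, and so does your handling of the case $u,v\in A$. But two real gaps remain.

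First, the mixed case $u\in A$, $v\in A^+$ is not actually ``settled'' by concluding $v\in V^+_0$: that is a useful reduction, not a contradiction. The paper's proof of this case reads the conclusion of Lemma~\ref{separator} as giving an \emph{interior} vertex $x_0\in P\cap S_v\subseteq H'$, which is a contradiction directly; you are right that the sub-case $x_0=v$ (i.e.\ $v\in V^+_0$) needs an extra line, but the paper ultimately absorbs it into the second case.

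Second, and more importantly, your sketch for the main case ($u,v\in A^+$, interior in $B^+\cup B$) does not converge. You propose locating $s\in S_u$, $s'\in S_z$ and using clause~(2); but clause~(2) only says that \emph{non-adjacent} vertices of $V^+_0$ receive different values, it does not give adjacency from different values, so you cannot ``chord $P$'' this way. The paper's argument is different and sharper: for each endpoint $x\in\{u,v\}$ it shows that the neighbor $z$ of $x$ on $P$ lies in $V^+_0\cup B$. The key step is to take a quasi-median $x_0uv$ with $x_0\in S_x$, observe that every vertex $x'$ on a shortest $(x,x_0)$-path has $x_0\in S_{x'}$ (hence $x'\in H'$ by clause~(1)), and then, if $z\notin V^+_0\cup B$, splice the edge $xz$ onto that path to produce induced $(z,u)$- and $(z,v)$-paths through $x_0$, giving $x_0\in S_z$ and hence $z\in H'$ by clause~(1), a contradiction. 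From $z\in V^+_0\cup B$ one then deduces $x\in V^+_0$ (if $z\in B$ this is immediate; if $z\in V^+_0$ then $z\sim u,v$ forces $z\in S_x$ unless $x\in V^+_0$). Finally, with both endpoints in $V^+_0\cap H'$, condition~(2) makes $V^+_0\cap H'$ a clique, so $u\sim v$, contradicting that $P$ is a nontrivial induced path. This quasi-median/splicing step is the missing idea in your proposal.
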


\begin{proof} By condition (1), $x\in V^{+}$ belongs to $H'$ (respectively, to $H''$) if and only of $S_x$ belongs to $H'$ (respectively, to $H''$). By condition (2), the set
$V^{+}_0$ is partitioned into two cliques $V'=V^{+}_0\cap H'$ and $V''=V^{+}_0\cap H''$. Suppose by way of contradiction that $H'$ is not monophonically convex. Then $H'$ contains two non-adjacent vertices
$x,y$ which can be connected outside $H'$ by an induced path $P$. We can suppose without loss of generality that all inner vertices of $P$ belong to $H''$. Since $H'=A\cup A^+$ and $A$ is monophonically convex,
either $x,y\in A^+$ or $x\in A^+$ and $y\in A$.

First suppose that $x\in A^+$ and $y\in A$. By Lemma \ref{separator}, $P$ intersect $S_x$ in a vertex $x_0$. By condition (1), $x_0$ belongs to $H'$, contrary to our assumption that all
vertices of $P\setminus \{ x,y\}$ belong to $H''$.

Now suppose that $x,y\in A^+$. Let $z$ be the neighbor of $x$ in $P$ and $z'$ be a neighbor of $y$ in $P$.  We assert that $z,z'$ belongs to $V^{+}_0\cup B$. Let $x_0\in S_x$ such that
$x_0uv$ is a quasi-median of $x,u,v$, where $u\in \partial A, v\in \partial B$, and $u\sim v$ (if $x\in V^+_0$, then $x=x_0$). By Lemma \ref{monophonicquasimedian},
$x_0\sim u,v$, thus $x_0$ belongs to $S_x$.   
Let $Q$ be a shortest path from $x$ to $x_0$ and let $x'$ be any vertex of $Q$. Since $x_0\in [x',u]\cap [x',v]$, $x_0uv$ is also a quasi-median of $x',u,v$, thus $x_0\in S_{x'}$.
Applying condition (1) for each of the pairs $x,x_0$ and $x',x_0$, we conclude that each vertex $x'$ of the path $Q$ belongs to $H'$ and thus $z$ is not a vertex of $Q$.
If $z$ does not belong to  $V^{+}_0\cup B$, then $z$ is not adjacent to $u$ and $v$. Then from the path consisting of the edge $zx$, the path $Q$, and the edge $x_0u$ we extract an induced
$(z,u)$-path $Q'$ passing via $x_0$. Analogously, from the path consisting of the edge $zx$, the path $Q$, and the edge $x_0v$ we can extract an induced $(z,v)$-path passing via $x_0$.
Consequently, $x_0\in \cm(z,u)\cap \cm(z,v)$, showing that $x_0\in S_z$. By condition (1), we get $a_z=a_{x_0}=a_x$, thus $z$ belongs to $H'$, contrary to our assumption. Thus
$z\in V^{+}_0\cup B$. Analogously, one can show that $z'\in V^{+}_0\cup B$.

We assert that $z,z'\in V^{+}_0\cup B$ implies $x,y\in V^+_0$. If $z\in B$, then $x\in V^+_0$ by the definition of $V^+_0$. 
If $z\in V^+_0$,
then $z\sim u,v$. If $x\in V^+\setminus V^+_0$, then $x\nsim u,v$, thus  $z\in [x,u]\cap [x,v]$, and $zuv$ is a quasi-median of $x,u,v$. Consequently, $z\in S_x$, contrary to
the assumption that $z\in H''$. Therefore, $x\in V^+_0$. Analogously, if $z'\in V^+_0\cup B$, then $y\in V^+_0$. Consequently, $x,y\in V^+_0$. Since $x,y\in H'\cap V^+_0=V'$ and $V'$ is a clique, we get $P=(x,y)$.
This concludes the proof.
\end{proof}

Putting altogether, we obtain the following result:

\begin{theorem} \label{monophonic} Given two disjoint sets $A,B$ of a graph $G$ it can be decided in polynomial time if $A$ and $B$ can be separated by complementary monophonic halfspaces.
\end{theorem}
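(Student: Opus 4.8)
The plan is to follow exactly the three steps method described before the statement of Theorem \ref{monophonic}, and to observe that each of the three steps has already been shown to run in polynomial time for the monophonic convexity.

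\medskip\noindent\textbf{Step 1 (shadow-closure).} First I would recall that the monophonic hull $\cm(S)$ of any set $S$ can be computed in polynomial time (this was established right after the definition of $\cM$, by alternating geodesic-hull computations with outside-path searches). Since shadows $A/B$ and $B/A$ are unions of shadows $a/b$ over $a\in A,b\in B$, and each $a/b$ can be constructed by checking for each vertex $x$ whether $\cm(\{b,x\})$ meets $A$, one can compute $A/B$ and $B/A$ and then their monophonic hulls with a polynomial number of calls to the hull routine. Iterating while the two hulls strictly grow, the process terminates in at most $n$ rounds and produces either the answer ``not'' (if at some point $\conv_{\cM}(A/B)$ and $\conv_{\cM}(B/A)$ intersect) or a shadow-closed pair $(A^*,B^*)$ with $A\subseteq A^*$, $B\subseteq B^*$, $A^*=A^*/B^*$, $B^*=B^*/A^*$. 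By Lemma \ref{shadow-separation}, $A,B$ are separable iff $A^*,B^*$ are.

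\medskip\noindent\textbf{Step 2 (reduction to an osculating pair).} Here I would pick a closest pair $u^*\in A^*$, $v^*\in B^*$ and a shortest $(u^*,v^*)$-path $P$; any separating convex cut must cut some edge $u_iu_{i+1}$ of $P$, so for each such edge I set $A_i^+=\cm(A^*\cup\{u_i\})$, $B_i^+=\cm(B^*\cup\{u_{i+1}\})$, and re-run Step 1 on $(A_i^+,B_i^+)$. If every branch returns ``not'', then $A,B$ are not separable; otherwise I obtain one or more shadow-closed \emph{osculating} pairs $(A^{**},B^{**})$ of monophonic convex sets, and $A,B$ are separable iff one of these is. This step again uses only a polynomial number of hull computations.

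\medskip\noindent\textbf{Step 3 (the osculating case, the main obstacle).} The heart of the proof is solving the halfspace separation problem for a shadow-closed osculating pair $(A,B)$ of monophonic convex sets. I would invoke the structural lemmas already proved: $\partial A,\partial B$ are cliques (Lemma \ref{boundaries}); every $x_0\in V^+_0$ is adjacent to all of $\partial A\cup\partial B$ (Lemma \ref{x_0}); quasi-medians of a triplet $x,u,v$ with $x\in V^+$, $u\in\partial A$, $v\in\partial B$, $u\sim v$ have the form $x_0uv$ with $x_0\sim u,v$ and $x_0\in V^+$ (Lemma \ref{monophonicquasimedian}); each nonempty clique $S_x$ (for $x\in V^+$) lies entirely in one of two complementary halfspaces (Lemma \ref{vertexxx}, which also lets us assume each $S_x$ is a clique, else the answer is ``not''); and each $S_x$ separates $x$ from $A\cup B$ (Lemma \ref{separator}). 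Armed with these, I would build the $2$-SAT instance $\Phi$ over variables $a_x$, $x\in V^+$, encoding the constraints $a_x=a_{x_0}$ for $x_0\in S_x$ and $a_{x_0}\neq a_{y_0}$ for non-adjacent $x_0,y_0\in V^+_0$, solve it in linear time, and argue both directions: a satisfying assignment yields, via the final lemma, a pair of complementary monophonic halfspaces $H'=A\cup A^+$, $H''=B\cup B^+$ separating $A,B$; and conversely any separating pair induces a satisfying assignment. The main obstacle — already handled in the lemmas preceding the theorem — is the verification that the sets $H',H''$ produced from a satisfying assignment are genuinely monophonically convex; this is the delicate induced-path analysis distinguishing the cases $x,y\in A^+$ versus $x\in A^+,y\in A$ and tracing the neighbors $z,z'$ of the endpoints of a hypothetical violating induced path back into $V^+_0\cup B$, forcing $x,y\in V^+_0$ and hence $P$ to be a single edge. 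Assembling Steps 1–3, each polynomial, gives the claimed polynomial-time algorithm.
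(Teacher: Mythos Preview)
Your proposal is correct and follows essentially the same approach as the paper: the paper's proof of Theorem \ref{monophonic} is precisely the assembly of the three-steps method specialized to monophonic convexity, invoking in Step 3 the structural Lemmas \ref{boundaries}, \ref{x_0}, \ref{monophonicquasimedian}, \ref{vertexxx}, and \ref{separator}, the 2-SAT encoding of constraints (1) and (2), and the final unnamed lemma establishing that $H'=A\cup A^+$ and $H''=B\cup B^+$ are complementary monophonic halfspaces. Your identification of the main obstacle (the induced-path analysis showing convexity of $H',H''$) and its resolution match the paper exactly.
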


\begin{remark} Theorem \ref{monophonic} was very recently obtained in \cite{ElNouVi} and \cite{BrEsTh} with approaches which at some points intersect ours. However our method and proofs are different 
(and independent of those two papers) and our approach is shorter.
\end{remark}


\end{document}